\newtheorem*{thm*}{Theorem}
\newtheorem{thm}{Theorem}[section]
\newtheorem{cor}[thm]{Corollary}
\newtheorem{lem}[thm]{Lemma}
\newtheorem{claim}{Claim}
\newtheorem{prop}[thm]{Proposition}
\theoremstyle{definition}
\newtheorem{defn}[thm]{Definition}
\theoremstyle{remark}
\numberwithin{equation}{section}
\DeclareSymbolFont{bbold}{U}{bbold}{m}{n}
\DeclareSymbolFontAlphabet{\mathbbold}{bbold}
\newcommand{\N}{\mathbb{N}}
\newcommand{\Z}{\mathbb{Z}}
\newcommand{\R}{\mathbb{R}}
\newcommand{\Q}{\mathbb{Q}}
\newcommand{\id}{\mathrm{id}}
\newcommand{\acts}{\curvearrowright}
\newcommand{\Stab}{\mathrm{Stab}}
\newcommand{\pmp}{p{$.$}m{$.$}p{$.$}}
\newcommand{\pv}{\bar{p}}
\newcommand{\qv}{\bar{q}}
\newcommand{\rv}{\bar{r}}
\newcommand{\pone}{\mathbbold{1}}
\newcommand{\cA}{\mathcal{A}}
\newcommand{\cF}{\mathcal{F}}
\newcommand{\cP}{\mathcal{P}}
\newcommand{\cQ}{\mathcal{Q}}
\newcommand{\E}{\mathscr{E}}
\newcommand{\sP}{\mathscr{P}}
\newcommand{\M}{\mathfrak{M}}
\newcommand{\dist}{\mathrm{dist}}
\newcommand{\rng}{\mathrm{rng}}
\newcommand{\symd}{\triangle}
\newcommand{\salg}{\sigma \text{-}\mathrm{alg}}
\newcommand{\sH}{\mathrm{H}}
\newcommand{\dB}{d^{\mathrm{alg}}} 
\newcommand{\dR}{d^{\mathrm{Rok}}} 
\newcommand{\dC}{d^*} 
\newcommand{\Prt}{\mathscr{P}} 
\newcommand{\rh}{h}
\newcommand{\ksh}{h^{\mathrm{KS}}}
\newcommand{\Borel}{\mathcal{B}}
\newcommand{\given}{\mathbin{|}}
\renewcommand{\:}{\,:\,}
\newcommand{\res}{\restriction}
\newcommand{\Prob}{\mathrm{Prob}}
\newcommand{\Nbr}{\mathcal{N}}
\newcommand{\BL}{\mathscr{L}}
\newcommand{\sinv}{\mathscr{I}}
\newcommand{\Def}[4]{\mathscr{D}_{#1}^{#2}(#3 \given #4)}
\newcommand{\Bp}[3]{\mathscr{B}_{#1}(#2 \mathbin{;} #3)}
\newcommand{\psd}{\delta}
\newcommand{\aprod}{\rtimes_{\mathrm{alg}}}
\newcommand{\Sub}{\mathrm{Sub}}
\newcommand{\sinai}{Sinai}
\begin{document}

\title[Positive entropy actions factor onto Bernoulli shifts]{Positive entropy actions of countable groups factor onto Bernoulli shifts}
\author{Brandon Seward}
\address{Courant Institute of Mathematical Sciences, New York University, 251 Mercer Street, New York, NY 10003, U.S.A.}
\email{b.m.seward@gmail.com}
\keywords{Sinai's factor theorem, Bernoulli shift, Rokhlin entropy, sofic entropy, non-amenable groups, completely positive entropy, Koopman representation}
\subjclass[2010]{37A35, 37A15}

\begin{abstract}
We prove that if a free ergodic action of a countably infinite group has positive Rokhlin entropy (or, less generally, positive sofic entropy) then it factors onto all Bernoulli shifts of lesser or equal entropy. This extends to all countably infinite groups the well-known {\sinai} factor theorem from classical entropy theory.
\end{abstract}
\maketitle

\section{Introduction}

For a countable group $G$ and a standard probability space $(L, \lambda)$, the \emph{Bernoulli shift} over $G$ with base space $(L, \lambda)$ is the product measure space $(L^G, \lambda^G)$ together with the left shift-action of $G$: for $g \in G$ and $x \in L^G$, $g \cdot x$ is defined by $(g \cdot x)(t) = x(g^{-1} t)$ for $t \in G$. For $n \in \N$ we write $n^G$ for $\{0, \ldots, n - 1\}^G$ and $u_n$ for the normalized counting measure on $\{0, \ldots, n-1\}$. The \emph{Shannon entropy} of $(L, \lambda)$, denoted $\sH(L, \lambda)$, is $\sum_{\ell \in L} - \lambda(\ell) \log \lambda(\ell)$ if $\lambda$ has countable support and is $\infty$ otherwise. We note that $\sH(n, u_n) = \log(n)$.

Kolmogorov--{\sinai} entropy, which we denote $\ksh_\Z$, was introduced in 1958 in order to provide a negative answer to the roughly 20 year-old question of von Neumann that asked whether the Bernoulli shifts $\Z \acts (2^\Z, u_2^\Z)$ and $\Z \acts (3^\Z, u_3^\Z)$ are isomorphic \cite{Ko58,Ko59,Si59}. Shortly after, {\sinai} proved the following famous theorem.

\begin{thm*}[{\sinai}'s factor theorem, 1962 \cite{Si62}]
Let $\Z \acts (X, \mu)$ be a free ergodic {\pmp} action. If $(L, \lambda)$ is a probability space with $\ksh_\Z(X, \mu) \geq \sH(L, \lambda)$, then $\Z \acts (X, \mu)$ factors onto the Bernoulli shift $\Z \acts (L^\Z, \lambda^\Z)$.
\end{thm*}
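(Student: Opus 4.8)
The plan is to exploit the correspondence between $\Z$-factors and $\Z$-invariant sub-$\sigma$-algebras: producing a factor map onto $\Z \acts (L^\Z,\lambda^\Z)$ amounts to producing a measurable partition $\cP = \{P_\ell\}_{\ell\in L}$ of $X$ with $\mu(P_\ell) = \lambda(\ell)$ for every $\ell$ such that the $L$-valued process $\bigl(\cP(T^n x)\bigr)_{n\in\Z}$ is i.i.d.\ with marginal $\lambda$; the factor map is then $x \mapsto \bigl(\cP(T^{-n}x)\bigr)_{n\in\Z}$, which is $\Z$-equivariant and pushes $\mu$ to $\lambda^\Z$. First I would reduce to $L$ finite: a Bernoulli shift over a countably infinite base is an inverse limit of Bernoulli shifts over finite sub-bases, and a refining (coherent) sequence of factor maps onto those passes to the limit; when $\sH(L,\lambda)=\infty$ one has $\ksh_\Z(X,\mu)=\infty$ and instead builds finite Bernoulli factors of unbounded entropy. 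So fix finite $L$ with $H := \sH(L,\lambda) \le \ksh_\Z(X,\mu) =: h$, and the goal becomes: find such a $\cP$ generating an i.i.d.\ process.

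The core step I would isolate is an approximation lemma: for every $\varepsilon>0$ there is a partition $\cP_\varepsilon$ with distribution $\lambda$ whose process is within $\bar d$-distance $\varepsilon$ of the i.i.d.-$\lambda$ process. To prove this I would fix a finite partition $\cQ$ of $X$ whose entropy rate under $T$ is close to $h$ (a finite generator, by Krieger's theorem, when $h<\infty$; in general such $\cQ$ exist because $h$ is the supremum of these rates) and, for large $n$, invoke the Shannon--McMillan--Breiman theorem to isolate a family of $\approx e^{n h}$ ``typical'' atoms of $\bigvee_{i=0}^{n-1}T^{-i}\cQ$, each of measure $\approx e^{-n h}$, covering all but $\varepsilon$ of $X$; then use the Kakutani--Rokhlin tower lemma to erect a height-$n$ tower whose levels are unions of typical atoms and which exhausts all but an $\varepsilon$-fraction of $X$, so that its base splits into $\approx e^{n h}$ roughly equal ``columns''. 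Next I would assign each column a word $w\in L^n$ — declaring $\cP_\varepsilon$ on the $j$-th level of that column according to the $j$-th letter of $w$ — chosen so that the resulting empirical distribution of $L^n$-words is close to $\lambda^n$. The inequality $h\ge H$ is precisely what supplies ``enough room'': there are at least as many columns as there are $\lambda^n$-typical words, with the right multiplicities. The delicate part is upgrading this control of one block-distribution to genuine $\bar d$-closeness of the entire process; I expect to lean here on the copying lemma and the finitely-determined property of Bernoulli processes (Ornstein), which is also what makes the equality case $H=h$ — where the columns are only just barely numerous enough — go through.

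Finally I would show that the set of stationary $L$-processes arising as factors of $\Z \acts (X,\mu)$ is closed under $\bar d$-limits: given partitions $\cP_k$ whose processes are $\bar d$-Cauchy, a coupling/marriage argument produces a sequence of partitions that converges in the metric $d(\cP,\cP') = \sum_\ell \mu(P_\ell \symd P'_\ell)$, and its limit generates the limit process. Feeding in $\cP_{\varepsilon_k}$ with $\varepsilon_k\to 0$ then yields a partition $\cP$ generating an honest i.i.d.-$\lambda$ process, hence the sought factor map onto $\Z \acts (L^\Z,\lambda^\Z)$. I expect the approximation lemma — converting the ``enough room'' count into $\bar d$-control at the level of processes, especially when $\sH(L,\lambda)=\ksh_\Z(X,\mu)$ — to be the main obstacle; the reductions and the $\bar d$-closure step are comparatively routine.
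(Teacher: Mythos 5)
This statement is one the paper quotes as classical background and does not prove: it is cited to \cite{Si62}, and even the relative, perturbative $\Z$-version that the paper actually needs (Lemma \ref{lem:zfolk}) is attributed to Thouvenot and only sketched. So there is no in-paper argument to compare against; your proposal has to be judged as a reconstruction of the classical proof. As such, it is the standard Ornstein-theoretic architecture and is correct in outline: reduce to finite base, build a partition with distribution $\lambda$ whose process is $\bar d$-close to i.i.d.\ via SMB plus a Kakutani--Rokhlin tower with painted column names, and close up using the $\bar d$-closedness of the set of factor processes. Two remarks on where the weight really sits. First, the step you call delicate is genuinely the whole theorem: matching the empirical $n$-block distribution to $\lambda^n$ does not by itself control the entropy rate of the painted process from below (closeness of one block distribution bounds $\frac1n \sH$ of that block, but the entropy rate is an infimum over $n$), so to invoke the finitely-determined property you must also arrange $h(\cP_\varepsilon,T)\approx \sH(\lambda)$, e.g.\ by keeping the column-to-word assignment close to injective; and the finitely-determined property of i.i.d.\ processes together with the copying lemma are themselves the deep inputs of Ornstein theory, not routine facts — though they are proved independently of Sinai's theorem, so there is no circularity. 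Second, your reduction to finite $L$ in the infinite-entropy case quietly needs the \emph{relative} factor theorem (Thouvenot's version, stated just below in the paper): to stack countably many mutually independent finite Bernoulli factors you must at each stage find a Bernoulli partition independent of (and with positive entropy over) the $\sigma$-algebra already produced, which is exactly the relativized statement. With those caveats made explicit, the plan is sound.
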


One reason for this theorem's significance is its applicability. Specifically, the conclusion is quite strong as it is extremely difficult to construct Bernoulli factors. In fact, the conclusion was not even previously known in the case where $\Z \acts (X, \mu)$ is itself a Bernoulli shift. On the other hand, the assumption of the theorem is by comparison much easier to verify, as many practical techniques are known for obtaining bounds on Kolmogorov--{\sinai} entropy.

This theorem is also important for aesthetic and philosophical reasons. Previously, entropy merely happened to be a tool capable of showing the non-isomorphism of some $\Z$-Bernoulli shifts. However, {\sinai}'s theorem indicates that there is a deeper connection at play, and that entropy and Bernoullicity are intricately linked to one another. His theorem reveals that in fact Bernoulli shifts are the source of all positive entropy (as the original action has $0$ relative entropy over a full-entropy Bernoulli factor).

{\sinai}'s theorem also played a significant role in the historical development of entropy theory and our understanding of Bernoulli shifts. Specifically, {\sinai}'s theorem inspired the profound work of Ornstein that followed. Building off of {\sinai}'s theorem and the methods of its proof, Ornstein proved his famous isomorphism theorem stating that $\Z$-Bernoulli shifts are completely classified up to isomorphism by their Kolmogorov--{\sinai} entropy (equivalently, the Shannon entropy of their base) \cite{Or70a,Or70b}. Ornstein and other researchers continued to cultivate these techniques, ultimately creating `Ornstein theory,' a collection of practical necessary-and-sufficient conditions for $\Z$-actions to be isomorphic to Bernoulli. This led to the surprising discovery that many natural examples of $\Z$-actions turn out to be isomorphic to Bernoulli, such as: factors of Bernoulli shifts \cite{Or71}, inverse limits of Bernoulli shifts \cite{Or71b}, ergodic automorphisms of compact metrizable groups \cite{L77,MT78}, mixing Markov chains \cite{FO70}, geodesic flows on surfaces of negative curvature \cite{OW73}, Anosov flows with a smooth measure \cite{R74}, and two-dimensional billiards with a convex scatterer \cite{GO74}. These discoveries are generally considered to be the deepest results in entropy theory.

Over time, {\sinai}'s theorem was generalized in a few different ways. First, it was generalized to the realm of relative entropy by Thouvenot.

\begin{thm*}[The relative factor theorem, Thouvenot 1975 \cite{Th75} (see also \cite{Or70b})]
Let $\Z \acts (X, \mu)$ be a free ergodic {\pmp} action, let $\Z \acts (Y, \nu)$ be a factor action given by the map $\phi : X \rightarrow Y$, and set $\cF = \phi^{-1}(\Borel(Y))$. If $(L, \lambda)$ is a probability space with $\ksh_\Z(X, \mu \given \cF) \geq \sH(L, \lambda)$, then there is a map $\psi : X \rightarrow L^\Z$ so that $\psi \times \phi$ is a factor map from $\Z \acts (X, \mu)$ onto the direct-product action $\Z \acts (L^\Z \times Y, \lambda^\Z \times \nu)$.
\end{thm*}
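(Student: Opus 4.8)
The plan is to carry out the Ornstein-theory proof of {\sinai}'s theorem \emph{relative} to the factor $\sigma$-algebra $\cF$. Writing $T$ for the generator of the $\Z$-action, the goal is to produce a countable measurable partition $\beta = \{B_\ell : \ell \in L\}$ of $X$ that is \emph{relatively Bernoulli over $\cF$}, meaning: (i) $\mu(B_\ell \cap F) = \lambda(\ell)\,\mu(F)$ for all $\ell \in L$ and all $F \in \cF$; and (ii) the $\sigma$-algebras $\{\sigma(T^n\beta) : n \in \Z\}$ together with $\cF$ are mutually independent. Granting such a $\beta$, define $\psi : X \to L^\Z$ by $\psi(x)(n) = \ell \iff T^{-n}x \in B_\ell$. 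A direct computation shows $\psi(Tx)$ is the shift of $\psi(x)$, so $\psi$ is equivariant; and (i)--(ii) say exactly that, conditionally on $\phi(x)$, the point $\psi(x)$ is distributed as $\lambda^\Z$ and independent of $\phi(x)$, whence $(\psi\times\phi)_*\mu = \lambda^\Z\times\nu$. Thus $\psi\times\phi$ is the sought factor map onto the direct product.

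I would first reduce to the case $(L,\lambda)=(n,u_n)$, so the target is an $n$-part partition into sets of equal measure with $\log n \le \ksh_\Z(X,\mu\given\cF)$. The general base of finite Shannon entropy is then recovered by writing $\lambda$ as an increasing union of finite coarsenings $\lambda^{(k)}$ and building a refining sequence of relatively-Bernoulli-over-$\cF$ partitions, each refinement step consuming part of the leftover relative entropy $\ksh_\Z(X,\mu\given\cF)-\sH(\lambda^{(k)})$, which suffices because every finite coarsening of $\lambda$ has Shannon entropy at most $\ksh_\Z(X,\mu\given\cF)$; the infinite-entropy case follows by amalgamating Bernoulli factors of unbounded finite entropy.

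For the $(n,u_n)$ case, I would equip the space of $n$-part partitions of $X$ with Ornstein's $\bar d$-metric, and introduce a relative distributional distance that records, for a finite window $[-m,m]$ and a finite subalgebra $\cF_0\subseteq\cF$, how far the conditional law of $(T^{-m}\beta,\dots,T^m\beta)$ given $\cF_0$ is from the product law $u_n^{[-m,m]}$. Fixing finite algebras $\cF_0\subseteq\cF_1\subseteq\cdots$ that generate $\cF$, I would then construct partitions $\beta_0,\beta_1,\dots$, each with $n$ equal parts, such that $\sum_k \|\beta_{k+1}-\beta_k\|_1 < \infty$ (so the $\beta_k$ converge to some $\beta$) and such that $\beta_k$ is within $2^{-k}$, in the above sense, of being both $u_n$-distributed and independent across $[-k,k]$ relative to $\cF_k$. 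Passing to the limit and checking that the relatively-Bernoulli property is preserved under this mode of convergence yields (i)--(ii).

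The engine of the induction --- and the step I expect to be the crux --- is a \emph{relative copying lemma}: given $\beta_k$, $m$, and $\varepsilon$, the relative-entropy surplus $\ksh_\Z(X,\mu\given\cF)-\log n\ge 0$, together with ergodicity of the base action and a conditional (over $\cF$) Shannon--McMillan--Breiman theorem, must supply enough ``room'' to find $\beta'$ with $\|\beta'-\beta_k\|_1<\varepsilon$ whose conditional window-law over $\cF_{k+1}$ is within $\varepsilon$ of $u_n^{[-(k+1),k+1]}$. Concretely this should be a conditional-measure version of Ornstein's fundamental (copying) lemma: erect a Rokhlin tower adapted to $T$; count, using conditional entropy, the $\cF$-relative $\beta_k$-names available along the tower; and re-label a portion of the tower of small measure so as to overwrite the conditional window-statistics with the uniform product while altering $\beta_k$ by only $O(\varepsilon)$ in $\|\cdot\|_1$. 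That each correction strictly decreases the relevant error at a uniformly controlled $\|\cdot\|_1$-cost is precisely what makes the telescoping construction converge. The genuinely new work over the absolute theorem is threading the conditioning on $\cF$ through the tower argument and through the limit; the remainder is the classical {\sinai}-theorem scaffolding transported to the relative setting.
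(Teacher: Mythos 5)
Your outline is the standard relative Ornstein--Thouvenot argument---build a partition that is $\Z$-Bernoulli over $\cF$ by iterating a conditional copying/fundamental lemma on Rokhlin towers (with a conditional Shannon--McMillan--Breiman input) and passing to an $L^1$-limit, then translate the partition into the factor map---and this is exactly what the paper relies on: it does not reprove this classical theorem but cites it to \cite{Th75} (restated as Lemma \ref{lem:zfolk}), and your partition-to-factor-map dictionary is the paper's Lemma \ref{lem:oneone} combined with Lemma \ref{lem:simpbern}. The only slip is the proposed reduction to uniform bases $(n,u_n)$: finite coarsenings of a general $\lambda$ are not uniform, so that reduction does not literally cover the general finite-entropy case; instead one runs your induction verbatim with an arbitrary finite target distribution $\lambda^{(k)}$ in place of $u_n$, which the copying lemma handles with no extra work.
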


There was also a perturbative factor theorem established in the course of {\sinai}'s original proof which Thouvenot relativized in obtaining the above theorem. We refer the reader to Section \ref{sec:prelim} for the relevant definitions.

\begin{thm*}[The perturbative relative factor theorem, Thouvenot 1975 \cite{Th75} (see also \cite{Or70b})]
There exists a function $\psd: \N_+ \times \R_+ \rightarrow \R_+$ with the following property. Let $\Z \acts (X, \mu)$ be a free ergodic {\pmp} action, let $\cF$ be a $\Z$-invariant sub-$\sigma$-algebra, let $k \in \N$, and let $\epsilon > 0$. If $\xi$ is a $k$-piece ordered partition of $X$ and $\pv$ is a length-$k$ probability vector with $\sH(\pv) \leq \ksh_\Z(X, \mu \given \cF)$ and
$$|\dist(\xi) - \pv| + |\ksh_\Z(\xi \given \cF) - \sH(\pv)| < \psd(k, \epsilon),$$
then there is an ordered partition $\alpha$ with $\dist(\alpha) = \pv$ and $\dB_\mu(\alpha, \xi) < \epsilon$ such that the $\Z$-translates of $\alpha$ are mutually independent and $\salg_\Z(\alpha)$ is independent with $\cF$.
\end{thm*}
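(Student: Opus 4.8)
I would prove this by relativizing Ornstein's proof of {\sinai}'s theorem --- that is, via the Ornstein--Thouvenot ``$\bar d$'' machinery. Fix $k \in \N$ and $\epsilon > 0$, and let $\lambda$ be the probability measure on $\{0,\dots,k-1\}$ with weights $\pv$. What we must produce is a $k$-piece ordered partition $\alpha$ of $(X,\mu)$ with $\dist(\alpha) = \pv$, with the translates $(T^i\alpha)_{i \in \Z}$ mutually independent (equivalently, $\ksh_\Z(\alpha) = \sH(\pv)$, i.e.\ the $\alpha$-process is the Bernoulli shift over $\lambda$), and with $\salg_\Z(\alpha)$ independent of $\cF$. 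It is worth stressing that this last requirement is \emph{strictly stronger} than the scalar identity $\ksh_\Z(\alpha \given \cF) = \sH(\pv)$ --- a vanishing mutual-information \emph{rate} between the $\alpha$-process and $\cF$ does not force genuine independence --- so the construction has to be performed ``over $\cF$'' and cannot be reduced to driving entropies to extremal values.

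\textbf{The two ingredients.} The argument factors through relativized forms of the two pillars of Ornstein theory. (1) \emph{Relative finitely-determined property of Bernoulli extensions}: there is $\psd = \psd(k,\epsilon) > 0$ such that, in any free ergodic $\Z$-system with an invariant sub-$\sigma$-algebra $\cF$, any $k$-piece ordered partition $\beta$ with $|\dist(\beta) - \pv| < \psd$ and $|\ksh_\Z(\beta \given \cF) - \sH(\pv)| < \psd$ generates a process whose joint law with $\cF$ lies within $\epsilon$, in the relative $\bar d$-metric (normalized-Hamming transportation distance taken fiberwise over $\cF$), of the Bernoulli-$\lambda$ process placed independently of $\cF$. (The hypotheses force $\ksh_\Z(\beta) - \ksh_\Z(\beta\given\cF)$ and $\sH(\dist\beta) - \ksh_\Z(\beta)$ both small, i.e.\ $\beta$ is already ``nearly independent of $\cF$'' and ``nearly Bernoulli''; these near-extremalities are the input to the relative Fundamental Lemma.) (2) \emph{Relative copying lemma}: if a partition $\beta$ in our system is within relative $\bar d$-distance $\epsilon$ of the Bernoulli-$\lambda$ process over $\cF$, and $\ksh_\Z(X, \mu \given \cF) \ge \sH(\pv)$ (there is room to fit it), then there is an honest $k$-piece ordered partition $\alpha$ with $\dist(\alpha) = \pv$, $(T^i\alpha)_i$ mutually independent, $\salg_\Z(\alpha) \perp \cF$, and $\dB_\mu(\alpha, \beta) < \epsilon + o(1)$. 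Applying (1) and then (2) to $\beta := \xi$, with moduli tuned so the two $\epsilon$'s and the $o(1)$ sum below the target tolerance, produces $\alpha$; this fixes $\psd(k,\epsilon)$.

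\textbf{Building the ingredients.} Both rest on the \emph{relative Shannon--McMillan--Breiman theorem}: by ergodicity of $\Z \acts (X,\mu)$, $-\tfrac1N \log \mu\big(\bigvee_{i=0}^{N-1} T^{-i}\beta \mid \cF\big)(x) \to \ksh_\Z(\beta\given\cF)$ a.e.\ and in $L^1$, so on a Rokhlin tower $B, TB, \dots, T^{N-1}B$ of measure $> 1-\eta$ the conditional-on-$\cF$ law of the $\beta$-$N$-name is, off an $\eta$-set, spread nearly uniformly over about $e^{N \ksh_\Z(\beta\given\cF)}$ strings. For (2) one \emph{repaints} the tower, assigning the columns the $\lambda^{\otimes N}$-typical strings (about $e^{N \sH(\pv)}$ of them) by a transport plan that is: $\bar d$-optimal, hence moves only an $\epsilon + o(1)$ fraction of mass, so $\dB_\mu(\alpha,\beta)$ stays small; chosen by a rule not depending on the $\cF$-fiber (equivalently, carried on a tower with essentially $\cF$-independent floors via a relative Rokhlin lemma), which yields $\salg_\Z(\alpha) \perp \cF$; and such that long orbit segments of $\alpha$ realize the $\lambda^{\otimes N}$-empirical statistics exactly, making the translates genuinely i.i.d.\ --- a boundary cleanup plus a limit over $N \to \infty$ promotes ``approximately'' to ``exactly.'' Ingredient (1) is the relativization of Ornstein's Fundamental Lemma: longer and longer conditional block distributions of the $\beta$-process are corrected toward the Bernoulli product on towers of growing height, each correction small in $\bar d$ because the preceding block statistics and relative entropy were near-extremal, the corrections summable, the limit exactly relatively Bernoulli.

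\textbf{Main obstacle.} The substantive difficulty is ingredient (1) --- showing that near-equality in the two scalar invariants $\dist(\beta)$ and $\ksh_\Z(\beta\given\cF)$ propagates to relative-$\bar d$-closeness of \emph{all} conditional block distributions to the Bernoulli product over $\cF$ --- together with extracting, in (2), true independence of $\salg_\Z(\alpha)$ from $\cF$ rather than only the entropy identity. Threading the conditioning on $\cF$ through the marriage/transport constructions in the Fundamental Lemma and through the relative Rokhlin lemma is the only real addition to the classical ($\cF$ trivial) argument, and it is exactly the place requiring care.
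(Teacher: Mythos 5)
Your outline is the standard Thouvenot route---a relativized finitely-determined/Fundamental-Lemma step followed by a relative copying argument over Rokhlin towers using the relative Shannon--McMillan--Breiman theorem---and this is precisely the proof the paper relies on: the statement is quoted from \cite{Th75} as background and is not reproved here, the closest the paper comes being the proof sketch of Lemma \ref{lem:zfolk}, which only explains how to adjust Thouvenot's modulus $\psd_0(k,\epsilon)$ into one depending on $M = \sH(\pv)$. Your observation that smallness of $|\dist(\xi) - \pv|$ together with uniform continuity of Shannon entropy on length-$k$ probability vectors forces both $\sH(\xi) - \ksh_\Z(\xi)$ and $\ksh_\Z(\xi) - \ksh_\Z(\xi \given \cF)$ to be small is exactly the near-extremality that the paper's sketch also isolates as the input to Thouvenot's argument.
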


Later, the factor theorem was extended to non-ergodic actions by Kieffer and Rahe. Below, for a Borel action $G \acts X$ we let $\E_G(X)$ denote the set of invariant ergodic Borel probability measures.

\begin{thm*}[The non-ergodic factor theorem, Kieffer--Rahe 1981 \cite{KiRa}]
Let $\Z \acts X$ be a free Borel action on a standard Borel space $X$. If $\nu \mapsto \lambda_\nu$ is a Borel map associating to every $\nu \in \E_\Z(X)$ a probability measure $\lambda_\nu$ on $[0, 1]$ satisfying $\ksh_\Z(X, \nu) \geq \sH([0, 1], \lambda_\nu)$, then there is a $\Z$-equivariant factor map $\phi : X \rightarrow [0, 1]^\Z$ such that $\phi_*(\nu) = \lambda_\nu^\Z$ for every $\nu \in \E_\Z(X)$.
\end{thm*}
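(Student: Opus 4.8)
\emph{Proof proposal.} An equivariant Borel map $\phi : X \to [0,1]^\Z$ is the same data as a single Borel function $f : X \to [0,1]$, via $\phi(x)(g) = f(g^{-1}x)$, so the goal is to produce one Borel $f$ whose $\Z$-orbit $(f(g^{-1}x))_{g \in \Z}$ is, with respect to \emph{every} $\nu \in \E_\Z(X)$, an i.i.d.\ process with one-dimensional marginal $\lambda_\nu$. By the ergodic decomposition of the free Borel action, fix a $\Z$-invariant Borel map $x \mapsto \nu_x$ into the standard Borel space $\E_\Z(X)$; then $x \mapsto \lambda_{\nu_x}$ is Borel, and, the action being free, every $\nu \in \E_\Z(X)$ is non-atomic (an atom would spread under the $\Z$-action to infinitely many atoms of equal mass). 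The plan is to build $f$ as an almost-everywhere limit $f = \lim_m f_m$ of finite-valued Borel maps $f_m$.

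Choose, Borel-measurably in $\nu$, a refining sequence $\cQ_{1,\nu} \prec \cQ_{2,\nu} \prec \cdots$ of finite partitions of $[0,1]$ with $\cQ_{1,\nu} = \{[0,1]\}$, each obtained from the previous by one cell-split, with $|\cQ_{m,\nu}| = m$, with cell diameters tending to $0$, with $\bigcup_m \cQ_{m,\nu}$ generating $\Borel([0,1])$, and with Shannon-entropy increments $\sH(\pv_{m+1,\nu}) - \sH(\pv_{m,\nu}) < \beta_m$, where $\pv_{m,\nu}$ denotes the $\lambda_\nu$-distribution of $\cQ_{m,\nu}$ and $(\beta_m) \downarrow 0$ is prescribed in advance; this is possible uniformly in $\nu$ because the entropy gained by splitting off a piece of a cell tends to $0$ with the mass of that piece, so one can keep the increment below $\beta_m$ at each step while still eventually refining past every dyadic level. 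Now construct $f_m : X \to [0,1]$ such that, for every $\nu$, the $\Z$-orbit of $f_m$ is i.i.d.\ with marginal the image of $\pv_{m,\nu}$ under a Borel choice of cell-representatives of $\cQ_{m,\nu}$, and such that $\dB_\nu(f_{m+1}^{\downarrow}, f_m) < 2^{-m}$, where $f_{m+1}^{\downarrow}$ is the $\cQ_{m,\nu}$-reduction of $f_{m+1}$. The base case $f_1 \equiv \mathrm{const}$ is trivial. For the inductive step, split the cells of the partition of $X$ induced by $f_m$ according to $\cQ_{m+1,\nu}$, using non-atomicity of $\nu$ to realise the exact conditional masses; the resulting $(m+1)$-piece Borel partition $\xi$ refines $f_m$ and satisfies $\dist(\xi) = \pv_{m+1,\nu}$ exactly, so $\sH(\dist(\xi)) \le \sH(\lambda_\nu) \le \ksh_\Z(X,\nu)$, while by monotonicity of Kolmogorov--{\sinai} entropy under refinement $\ksh_\Z(\xi)$ lies between $\ksh_\Z(f_m) = \sH(\pv_{m,\nu})$ and the trivial upper bound $\sH(\pv_{m+1,\nu})$; hence the defect $|\dist(\xi) - \pv_{m+1,\nu}| + |\ksh_\Z(\xi) - \sH(\pv_{m+1,\nu})|$ is less than $\beta_m$. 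Having chosen $\beta_m < \psd(m+1, 2^{-m})$ from the outset, the perturbative relative factor theorem, applied to the ergodic component $\Z \acts (X,\nu)$ with $\cF$ trivial, produces an $(m+1)$-piece partition $\alpha$ with mutually independent $\Z$-translates, $\dist(\alpha) = \pv_{m+1,\nu}$, and $\dB_\nu(\alpha, \xi) < 2^{-m}$; set $f_{m+1} := \alpha$, read into $[0,1]$ via the cell-representatives, so that $\dB_\nu(f_{m+1}^{\downarrow}, f_m) \le \dB_\nu(\alpha,\xi) < 2^{-m}$. Since $\sum_m 2^{-m} < \infty$, the Borel--Cantelli lemma gives $f_m \to f$ $\nu$-a.e.\ for every $\nu$; as $\pv_{m,\nu} \to \lambda_\nu$ weakly and each $f_m$ has i.i.d.\ orbit, the $\Z$-orbit of $f$ is i.i.d.\ with marginal $\lambda_\nu$, and $\phi(x) := (f(g^{-1}x))_g$ is the desired factor map.

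The main obstacle is the step I have glossed: carrying out each appeal to the perturbative relative factor theorem Borel-measurably in $\nu$, i.e.\ selecting the partition $\alpha$ so that it depends in a Borel way on the ergodic component, uniformly over the continuum of components of $X$. Here the precise form of the cited theorem is essential: because $\psd$ does not depend on the action, the single bound $\beta_m < \psd(m+1, 2^{-m})$ validates the hypotheses on every component simultaneously. The selection itself I would perform by running the whole construction inside a fixed Polish model of the action (a Borel-isomorphic subshift) and within the $\dB$-dense class of partitions depending on only finitely many coordinates --- absorbing the resulting approximation errors into the same summable scheme, so that \emph{exact} i.i.d.-ness is recovered only in the limit --- which reduces the selection to one over an essentially countable parameter space, supplied by the Luzin--Novikov uniformization theorem. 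The remaining ingredients --- the greedy refinements $\cQ_{m,\nu}$, the splittings defining $\xi$, the summable $\dB$-bookkeeping, and the passage to the pointwise limit --- are routine and Borel in $\nu$ because the ergodic decomposition and $\nu \mapsto \lambda_\nu$ are Borel.
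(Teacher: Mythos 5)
Your route is essentially the one the paper itself follows in Section \ref{sec:nonerg} (and attributes to Kieffer--Rahe): reduce the non-ergodic statement to iterated applications of the ergodic perturbative factor theorem, observe that the perturbation constant $\psd$ is independent of the action so that a single tolerance validates the hypotheses on every ergodic component simultaneously, and resolve the measurable-selection problem by searching over a countable $\dB$-dense family of partitions. The paper packages the selection step as Lemma \ref{lem:openb} together with Theorem \ref{thm:universe} (Kieffer--Rahe's ``universal partition'' theorem), obtains a single partition that is Bernoulli with the prescribed distribution on every component in Theorem \ref{thm:kira}, and converts partitions into a $[0,1]^\Z$-valued factor map in Theorem \ref{thm:nefactor}, splitting into the countable-support and infinite-entropy cases and composing with a Borel-in-$\nu$ quantile map; you instead refine $[0,1]$ directly by partitions with small entropy increments and diameters tending to zero, which is a legitimate unified alternative.

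The one place where your sketch is internally in tension --- and it is the genuinely non-routine part of the bookkeeping --- is the interaction between the Borel-selection fix and the inductive entropy estimate. As written, the induction needs $f_m$ to be \emph{exactly} Bernoulli with \emph{exact} distribution $\pv_{m,\nu}$ in order to conclude $\ksh_{\Z,\nu}(f_m)=\sH(\pv_{m,\nu})$ and hence that the refinement $\xi$ has deficiency less than $\beta_m$; but the finite-coordinate approximation you substitute for $\alpha$ destroys both exactness properties. Repairing this is not pure $\dB$-bookkeeping: you must know that a partition $\dB_\nu$-close to an exact Bernoulli partition still has nearly full process entropy, and $\ksh_{\Z,\nu}(\cdot)$ is not $\dB_\nu$-continuous on the space of all countable partitions --- it is continuous in the Rokhlin metric $\dR_\nu$, which is uniformly equivalent to $\dB_\nu$ only on partitions with a bounded number of pieces. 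Since your stage-$m$ partitions have $m+1$ pieces you can choose the stage-$m$ tolerance accordingly and the argument closes, but this is precisely what the paper's formalism is built to handle: the deficiency $\Def{\nu}{\pv}{\cdot}{\cF}$ is arranged to be $\dC_\nu$-continuous, and Theorem \ref{thm:universe} is phrased in terms of continuous detector functions $\psi_n$ so that membership of the limit partition in $\cQ_\nu$ survives the approximation. With that point made explicit your argument is complete; note also that Luzin--Novikov is unnecessary, since taking the least index in a fixed enumeration of cylinder partitions satisfying Borel-in-$\nu$ conditions is already a Borel selection.
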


Finally, the most stream-lined proof came from the residual theorem of Burton, Keane, and Serafin.

\begin{thm*}[The residual factor theorem, Burton--Keane--Serafin 2000 \cite{BKS}]
Let $A$ be a finite set and let $\Z \acts (A^\Z, \mu)$ be a free ergodic {\pmp} action. If $(L, \lambda)$ is a finite probability space with $\ksh_\Z(A^\Z, \mu) = \sH(L, \lambda)$, then within the space of ergodic joinings of $\mu$ with $\lambda^\Z$, the set of joinings that come from graphs of factor maps, $\{(\id \times \phi)_*(\mu) : \phi : A^\Z \rightarrow L^\Z, \ \phi_*(\mu) = \lambda^\Z\}$, is a dense $G_\delta$ set in the weak$^*$-topology.
\end{thm*}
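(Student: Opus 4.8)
The plan is to recast the statement inside the space of joinings and show that the graph joinings form a dense $G_\delta$ there. Write $\nu = \lambda^\Z$; if $\sH(L,\lambda) = 0$ then $\lambda$ is a point mass, $L^\Z$ is a single point, and there is nothing to prove, so assume $\sH(L,\lambda) > 0$. Let $\mathcal{J}$ be the weak$^*$-compact metrizable space of shift-invariant Borel probability measures on $A^\Z \times L^\Z$ with first marginal $\mu$ and second marginal $\nu$, and let $\mathcal{J}^{\mathrm{e}} \subseteq \mathcal{J}$ be the set of ergodic joinings, a $G_\delta$ subset of $\mathcal{J}$ and hence a Polish space in its own right (so ``dense $G_\delta$'' has its Baire-category meaning there). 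A joining $\theta$ has the form $(\id \times \phi)_*(\mu)$ for a measurable $\phi : A^\Z \rightarrow L^\Z$ with $\phi_*(\mu) = \nu$ precisely when the $\sigma$-algebra of the second coordinate is contained, modulo $\theta$, in that of the first; any such $\phi$ is then automatically (after adjusting on a null set) a factor map, and any such $\theta$ is automatically ergodic, since $\id \times \phi$ is a measure isomorphism of $(A^\Z, \mu)$ onto $(A^\Z \times L^\Z, \theta)$. So it suffices to show that the set $\Gamma$ of these graph joinings is both $G_\delta$ and dense in $\mathcal{J}^{\mathrm{e}}$.

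\emph{$\Gamma$ is $G_\delta$.} Let $P$ and $Q$ be the time-zero partitions of $A^\Z$ and $L^\Z$ (finite, since $A$ and $L$ are finite) and let $\sigma$ denote the shift. Containment of the second coordinate's $\sigma$-algebra in the first's is equivalent to $\sH_\theta(Q \mid \bigvee_{n \in \Z}\sigma^n P) = 0$, hence, by the monotone decrease of conditional entropy, to $\inf_{N}\sH_\theta(Q \mid \bigvee_{|n|\le N}\sigma^n P) = 0$. For each fixed $N$ the quantity $\sH_\theta(Q \mid \bigvee_{|n|\le N}\sigma^n P)$ is a fixed continuous function of the $\theta$-measures of the finitely many atoms of $\bigvee_{|n|\le N}\sigma^n(P \vee Q)$, hence weak$^*$-continuous in $\theta$; therefore $\Gamma = \bigcap_{k \ge 1}\bigcup_{N \ge 1}\{\theta \in \mathcal{J}^{\mathrm{e}} : \sH_\theta(Q \mid \bigvee_{|n|\le N}\sigma^n P) < 1/k\}$ is $G_\delta$.

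\emph{$\Gamma$ is dense.} Fix $\theta_0 \in \mathcal{J}^{\mathrm{e}}$, $N \in \N$, and $\epsilon > 0$; it is enough to construct a factor map $\phi : A^\Z \rightarrow L^\Z$ with $\phi_*(\mu) = \nu$ whose graph joining agrees with $\theta_0$ to within $\epsilon$ on every cylinder supported in the coordinates $[-N, N]$. Put $k = |L|$, let $\pv = (\lambda(\ell))_{\ell \in L}$, fix a small $\epsilon' > 0$ (eventually $\ll \epsilon / N$), and let $\psd = \psd(k, \epsilon')$ be the constant from the perturbative relative factor theorem. The construction has three steps. First, the \emph{copying step} (the heart of the matter, discussed below): produce an $L$-valued ordered partition $\xi$ of $(A^\Z, \mu)$ that is generating --- so $\ksh_\Z(\xi) = \ksh_\Z(A^\Z, \mu) = \sH(\pv)$ --- with $|\dist(\xi) - \pv| < \psd/2$, and such that the joint $\mu$-law of $\big((x_i)_{|i|\le N},\, (\xi(\sigma^{-j}x))_{|j|\le N}\big)$ is within $\epsilon/2$ in total variation of $\theta_0$ restricted to the coordinates $[-N,N]$. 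Second, apply the perturbative relative factor theorem with $\cF$ the trivial $\sigma$-algebra: its hypotheses hold because $\sH(\pv) = \ksh_\Z(A^\Z, \mu)$ by assumption and $|\dist(\xi) - \pv| + |\ksh_\Z(\xi) - \sH(\pv)| = |\dist(\xi) - \pv| < \psd$, so we obtain an ordered partition $\alpha$ with $\dist(\alpha) = \pv$, with $\sigma$-translates mutually independent, and with $\dB_\mu(\alpha, \xi) < \epsilon'$. Third, define $\phi : A^\Z \rightarrow L^\Z$ by letting $\phi(x)(n)$ be the label of the $\alpha$-cell containing $\sigma^{-n}x$; since the $\sigma$-translates of $\alpha$ are mutually independent with common distribution $\pv$, the $\alpha$-process is i.i.d.-$\lambda$, so $\phi$ is a factor map with $\phi_*(\mu) = \lambda^\Z = \nu$ \emph{exactly}. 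Finally $\dB_\mu(\alpha, \xi) < \epsilon'$ gives $\mu(\{\alpha \ne \xi\}) = O(\epsilon')$, so the graph joining $(\id \times \phi)_*(\mu)$ differs on $[-N,N]$-coordinates by $O(N\epsilon')$ in total variation from the joining induced by the $\xi$-coding of $\mu$; taking $\epsilon' \ll \epsilon/N$, the graph joining is within $\epsilon$ of $\theta_0$, as required.

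The main obstacle is the copying step, and it is there that the hypothesis $\ksh_\Z(A^\Z, \mu) = \sH(L, \lambda)$ is used in an essential way. I would carry it out by the block-coding machinery of Ornstein theory. Fix $m$ very large compared to $N$. The Shannon--McMillan--Breiman theorem, applied to $\mu$, to $\nu$, and to the ergodic joining $\theta_0$, furnishes families of $\mu$-typical and of $\nu$-typical $m$-blocks, each block of roughly constant individual mass, whose cardinalities agree up to a subexponential factor \emph{because of the entropy equality}; it also shows that for a $\mu$-typical $A$-block $w$ the conditional $\theta_0$-law of the $L$-block over $w$ concentrates essentially uniformly on a set $\mathcal{L}(w)$ of $\theta_0$-typical $L$-blocks, and that drawing $w$ with the $\mu$-block law and then $v$ uniformly from $\mathcal{L}(w)$ recovers the $[-N,N]$-subblock statistics of $\theta_0$. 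A transportation / max-flow argument --- whose feasibility turns precisely on the equality of the two typical-block counts, so that one is transporting between near-uniform probability measures on sets of the same cardinality --- then yields a near-bijective coupling $w \rightsquigarrow v \in \mathcal{L}(w)$ pushing the $\mu$-block law onto the $\nu$-block law. One transfers this coupling to an equivariant code by the standard Rokhlin-tower construction (a tower of height $m$ and error $< \psd$, reading each column's $A$-block and writing the coupled $L$-block down the column, handling the tower boundary and the required randomization by the usual devices), and splices the partition $P$ into $\xi$ along a sparse marker sequence of density $\ll \psd$ to make $\xi$ generating. All error terms --- the Rokhlin-tower error, the defect from near- rather than exact bijectivity, the marker density, the finite-$m$ sampling fluctuations --- are forced below the prescribed thresholds by taking $m$ large; checking this, and in particular the feasibility of the transportation problem to the needed precision, is the technical core. (A strict inequality $\ksh_\Z(A^\Z, \mu) > \sH(L, \lambda)$ would replace the near-bijective coupling by a genuinely many-to-one one and, as in {\sinai}'s original argument, would yield a single factor map rather than this residual statement.)
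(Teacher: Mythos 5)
This theorem is quoted in the introduction as background from Burton--Keane--Serafin \cite{BKS}; the paper gives no proof of it (it only shows, in Proposition \ref{prop:resfail}, that the statement fails for non-amenable groups), so there is no in-paper argument to compare against. Judged on its own terms, your architecture is the standard and correct one for this result: identify graph joinings inside the Polish space of ergodic joinings as the zero set of $\inf_N \sH_\theta(Q \given \bigvee_{|n| \le N} \sigma^n P)$, get the $G_\delta$ property from weak$^*$-continuity of cylinder measures (this half of your argument is complete and correct), and get density by copying the window statistics of an arbitrary ergodic joining into a partition $\xi$ of $(A^\Z,\mu)$ and then invoking the perturbative factor theorem to replace $\xi$ by an exactly i.i.d.\ partition $\alpha$ of distribution $\pv$. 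The bookkeeping in steps two and three (the $O(N\epsilon')$ estimate, the exactness of $\phi_*(\mu) = \lambda^\Z$) is fine.

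The genuine gap is that the copying step, which you correctly identify as the heart of the matter, is invoked rather than proved, and two of its internal assertions need real work. First, the perturbative factor theorem as stated requires $\xi$ to be a $k$-piece partition with $|\dist(\xi) - \pv|$ small for the \emph{length-$k$} vector $\pv$; your device of ``splicing $P$ into $\xi$ along a sparse marker sequence'' to force $\salg_\Z(\xi) = \Borel(A^\Z)$ either enlarges the alphabet of $\xi$ beyond $|L|$ pieces or, if the $A$-information is instead block-coded into rare $L$-patterns, is precisely the Krieger-type coding argument being elided; and in fact exact generation is not what you need --- the hypothesis only requires $\ksh_\Z(\xi) > \sH(\pv) - \psd/2$. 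Second, near-bijectivity of the block coupling $w \rightsquigarrow v$ does not by itself give this entropy lower bound: it shows the $\xi$-name over a tower column nearly determines the $P$-name \emph{given the location of the tower base}, and one must build synchronization into the code (again via rare marker patterns in the $L$-alphabet, of density small enough not to disturb $\dist(\xi)$ or the $[-N,N]$-statistics) before concluding $\sH(P \given \salg_\Z(\xi))$ is small and hence $\ksh_\Z(\xi) \ge \ksh_\Z(A^\Z,\mu) - \sH(P \given \salg_\Z(\xi)) > \sH(\pv) - \psd/2$. These are exactly the points where the Ornstein copying lemma earns its keep, and where the hypothesis $\ksh_\Z(A^\Z,\mu) = \sH(L,\lambda)$ enters; as written, your proof defers them to ``standard machinery'' without verifying that the error terms can be made compatible with the $|L|$-piece, distribution, and entropy constraints simultaneously.
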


{\sinai}'s theorem was also extended to actions of other groups.  In 1972, Katznelson and Weiss extended the original factor theorem to free ergodic actions of $\Z^n$ \cite{KaW72}, and in 1987 Ornstein and Weiss generalized the original factor theorem to free ergodic actions of countable amenable groups \cite{OW87}. Later, Danilenko and Park extended Thouvenot's relative factor theorem to free ergodic actions of countable amenable groups in 2002 \cite{DP02}.

For countable non-amenable groups, the study of Bernoulli shifts and the development of an entropy theory were long considered unattainable. Today it is still open, for example, whether the Bernoulli shifts $G \acts (2^G, u_2^G)$ and $G \acts (3^G, u_3^G)$ are non-isomorphic for all countable groups $G$. However, dramatic advancements have recently occurred due to breakthrough work of Bowen in 2008.

Work of Bowen \cite{B10b}, combined with improvements by Kerr and Li \cite{KL11a,KL13}, created the notion of sofic entropy for {\pmp} actions of sofic groups. We remind the reader that the class of sofic groups contains the countable amenable groups, and it is an open problem whether every countable group is sofic. Sofic entropy is an extension of Kolmogorov--{\sinai} entropy, as when the acting group is amenable the two notions coincide \cite{B12,KL13}.  For sofic groups $G$, the Bernoulli shift $G \acts (L^G, \lambda^G)$ has sofic entropy $\sH(L, \lambda)$ as expected \cite{B10b,KL11b}, thus implying the non-isomorphism of many Bernoulli shifts.

In this paper, we will work with a different notion of entropy which was introduced by the author in 2014 \cite{S1} and is defined for actions of arbitrary (not necessarily sofic) countable groups. If $G$ is a countable group, $G \acts (X, \mu)$ is an ergodic {\pmp} action, and $\xi \subseteq \Borel(X)$ is a collection of sets, then we define the \emph{(outer) Rokhlin entropy of $\xi$} to be
$$\rh_G(\xi) = \inf \Big\{ \sH(\alpha) : \alpha \text{ a countable partition with } \xi \subseteq \salg_G(\alpha) \Big\},$$
where $\salg_G(\alpha)$ is the smallest $G$-invariant $\sigma$-algebra containing $\alpha$. When $\xi = \Borel(X)$ is the Borel $\sigma$-algebra of $X$, we simply write $\rh_G(X, \mu)$ for the \emph{Rokhlin entropy of $(X, \mu)$}. In this paper, $\rh_G$ will always denote Rokhlin entropy. We caution the reader that Rokhlin entropy is defined in a distinct way for non-ergodic actions; see Section \ref{sec:prelim}.

For free actions of amenable groups, Rokhlin entropy coincides with Kolmogorov--{\sinai} entropy \cite[Cor. 7.2]{AS}. Furthermore, for free actions of sofic groups, sofic entropy is a lower-bound to Rokhlin entropy \cite[Prop. 1.10]{AS}, and it is an important open question whether the two coincide (excluding cases where sofic entropy is minus infinity). The equality $\rh_G(L^G, \lambda^G) = \sH(L, \lambda)$ holds for sofic groups $G$, and it is conjectured to hold for all countably infinite groups \cite{S2}.

At face value there's not necessarily any good reason to expect the factor theorem to hold for non-amenable groups. Specifically, we know Bernoulli shifts behave in drastically different ways for non-amenable groups. For example, for amenable groups factors of Bernoulli shifts, inverse limits of Bernoulli shifts, and d-bar limits of Bernoulli shifts are all isomorphic to Bernoulli shifts, but all of these assertions are false for non-amenable groups. Furthermore, all prior proofs of {\sinai}'s factor theorem and its variations rely critically upon three properties: the Rokhlin lemma, the Shannon-McMillan-Breiman theorem, and the monotonicity of entropy under factor maps. Yet all three of these properties fail miserably for non-amenable groups. Finally, a concerning fact is that there are simple examples showing that the residual factor theorem is false for non-amenable groups (see Proposition \ref{prop:resfail}). Despite these warning signs, we generalize the factor theorem to all countably infinite groups. For simplicity, here we state our main theorem only in its most basic form. (For the strongest versions, see Section \ref{sec:sinai}).

\begin{thm} \label{intro:mysinai}
Let $G$ be a countably infinite group and let $G \acts (X, \mu)$ be a free ergodic {\pmp} action. If $(L, \lambda)$ is a probability space with $\rh_G(X, \mu) \geq \sH(L, \lambda)$ then $G \acts (X, \mu)$ factors onto the Bernoulli shift $G \acts (L^G, \lambda^G)$.
\end{thm}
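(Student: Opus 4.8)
The plan is to build the Bernoulli factor one ``bit'' at a time, transferring Shannon entropy out of the source action in small increments, and to use the flexibility of Rokhlin entropy (specifically, the fact that it is defined via an infimum over generating partitions) in place of the Rokhlin lemma and the Shannon--McMillan--Breiman theorem, neither of which is available. First I would reduce to the case where $(L,\lambda)$ is a finite probability space: a Bernoulli shift with countable base is an inverse limit of Bernoulli shifts with finite bases of entropy approaching $\sH(L,\lambda)$ from below, and since our hypothesis is the non-strict inequality $\rh_G(X,\mu)\geq \sH(L,\lambda)$, it suffices to produce, for each $\epsilon>0$, a factor onto a Bernoulli shift whose base is finite with entropy within $\epsilon$ of $\sH(L,\lambda)$, and then assemble these compatibly into an inverse limit. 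So assume $(L,\lambda)$ is finite with probability vector $\pv$.

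The heart of the argument is a \emph{perturbative} step analogous to Thouvenot's perturbative relative factor theorem, but proved without the amenable toolkit. Concretely: given a $G$-invariant sub-$\sigma$-algebra $\cF$ over which we have already extracted a partial Bernoulli factor, and given a partition $\xi$ whose distribution is close to a target vector $\qv$ and whose relative Rokhlin entropy $\rh_G(\xi \given \cF)$ is close to $\sH(\qv)$, I would produce a genuinely Bernoulli partition $\alpha$ with $\dist(\alpha)=\qv$, with $\alpha$ independent of $\cF$ and with $\dB_\mu(\alpha,\xi)$ small. The mechanism for this is that Rokhlin entropy being small forces the existence of an approximately generating partition of small Shannon entropy; one then runs a ``coupling/Rokhlin-entropy'' argument: since $\salg_G(\alpha\vee\cF\vee(\text{rest}))$ must recover all of $\Borel(X)$ but $\rh_G(X,\mu\given\cF)$ is large, there is ``enough entropy left over'' to absorb the discrepancies and make $\alpha$ exactly Bernoulli and exactly independent of $\cF$. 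I would then iterate: starting from $\cF_0$ trivial, repeatedly apply the perturbative step to push a fresh independent Bernoulli column into the factor, each time adjusting the target distribution so that the accumulated factor converges (in $\dB$, hence in distribution) to the full Bernoulli shift $G\acts(L^G,\lambda^G)$; the limit factor map $\psi:X\to L^G$ exists because the partial maps form a $\dB$-Cauchy sequence.

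The main obstacle, and where essentially all the work lies, is the perturbative step itself, because it is exactly here that monotonicity of entropy under factors, SMB, and the Rokhlin lemma were used classically. The substitute has to be an argument that manufactures \emph{exact} independence and \emph{exact} Bernoullicity from \emph{approximate} versions plus a surplus of Rokhlin entropy in the complement. I expect this to require: (i) a careful analysis of how $\rh_G(\cdot\given\cF)$ behaves under small $\dB$-perturbations and under taking joins (subadditivity-type and continuity-type estimates for outer Rokhlin entropy, some of which presumably appear in the author's earlier papers \cite{S1,S2,AS}); (ii) a ``filling in'' construction that, given a partition which is only approximately independent of $\cF$, modifies it on a small-measure set to be exactly independent while keeping its $G$-orbit Bernoulli — the budget for this modification is precisely the gap $\rh_G(X,\mu\given\cF)-\sH(\qv)$; and (iii) bookkeeping to ensure the perturbations at each stage of the iteration are summable so the limit is well-defined. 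A secondary point requiring care is the reduction to finite base and the non-ergodic/relative strengthenings alluded to in Section~\ref{sec:sinai}, but the ergodic finite-base case is the crux, and once the perturbative lemma is in hand the iteration is routine.
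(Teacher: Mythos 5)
There is a genuine gap, and it sits exactly where you place it: the perturbative step. Your proposal correctly identifies that everything reduces to a non-amenable analogue of Thouvenot's perturbative relative factor theorem, but the ``mechanism'' you offer for proving it --- that a surplus of relative Rokhlin entropy $\rh_G(X,\mu\given\cF)-\sH(\qv)$ can be ``spent'' to upgrade approximate independence and approximate Bernoullicity to exact ones via a filling-in construction --- is a restatement of the problem rather than a solution. The classical filling-in/copying arguments that do this for $\Z$ (and for amenable groups) run on the Rokhlin lemma and Shannon--McMillan--Breiman, precisely the tools that are unavailable here; and the budgeting heuristic itself is unsound for Rokhlin entropy, which is an infimum with no monotonicity under factors (factors can have strictly larger entropy) and no additivity, so ``$\salg_G(\alpha\vee\cF\vee(\text{rest}))$ recovers $\Borel(X)$ while $\rh_G(X,\mu\given\cF)$ is large'' does not produce any leftover entropy to absorb discrepancies. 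Nothing in the proposal explains how exact mutual independence of the $G$-translates of $\alpha$ would actually be manufactured.

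The paper's route is entirely different and is worth contrasting with what you wrote. It never perturbs directly within the $G$-action. Instead it constructs two aperiodic transformations $S,T$ in the full group $[E_G^X]$ that are expressible over an auxiliary small-entropy class-bijective $\sigma$-algebra $\cF$. The transformation $T$ comes from a smooth Borel division of the orbit equivalence relation into hyperfinite pieces (Theorem \ref{thm:slice}), which yields a $T$-invariant function $f$ ordering the $T$-orbits inside each $G$-orbit and hence an ``external past'' $\sP_\xi$. The decisive entropy input is Lemma \ref{lem:sent}: $\rh_G(\xi\given\cF)\leq \ksh_T(\xi\given\sP_\xi)$, which converts the Rokhlin entropy hypothesis into a conditional Kolmogorov--Sinai entropy hypothesis for the single transformation $T$. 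One then applies the \emph{classical} relative (non-ergodic, perturbative) Sinai--Thouvenot theorem to $T$ relative to $\sP_\xi$, transfers the resulting weak Bernoullicity from $T$ to $G$ (Lemma \ref{lem:useT}), and finishes with a second application of the classical relative theorem to the ergodic transformation $S$ followed by a transfer back to $G$ (Lemma \ref{lem:useS}). So the genuinely new ideas --- the hyperfinite slicing, the external past, and the inequality $\rh_G(\xi\given\cF)\leq\ksh_T(\xi\given\sP_\xi)$ --- are exactly the missing content that your sketch would need in order to become a proof.
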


Let us recast the above theorem in a more concrete form that highlights the simplicity of the assumption. Recall that a Borel partition $\alpha$ of $X$ is \emph{generating} if $\salg_G(\alpha) = \Borel(X)$ mod null sets. The classical finite generator theorem of Krieger extends to Rokhlin entropy \cite{S1}, allowing the following reformulation of Theorem \ref{intro:mysinai}.

\begin{cor}
Let $G$ be a countably infinite group and let $G \acts (X, \mu)$ be a free ergodic {\pmp} action. If $G \acts (X, \mu)$ does not admit any $n$-piece generating partition then it factors onto the Bernoulli shift $(n^G, u_n^G)$. If $G \acts (X, \mu)$ does not admit any finite generating partition, then it factors onto every $G$-Bernoulli shift.
\end{cor}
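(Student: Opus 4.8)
The plan is to derive the corollary directly from Theorem~\ref{intro:mysinai} together with the Rokhlin-entropy version of Krieger's finite generator theorem from \cite{S1}. The form of the latter I will use is that a free ergodic \pmp{} action $G \acts (X, \mu)$ with $\rh_G(X, \mu) < \log n$ admits a generating partition with (at most, hence exactly --- since freeness of the infinite group $G$ forces $(X,\mu)$ to be non-atomic, so pieces may be split --- ) $n$ pieces. The one point of care is that this is a \emph{strict} inequality, which is precisely what permits Theorem~\ref{intro:mysinai} to take the non-strict hypothesis $\rh_G(X,\mu) \geq \sH(L,\lambda)$: the two statements meet exactly at the boundary value $\rh_G(X,\mu) = \log n$.

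For the first assertion I would argue by contraposition. If $G \acts (X, \mu)$ admits no $n$-piece generating partition, then the generator theorem forces $\rh_G(X, \mu) \geq \log n = \sH(n, u_n)$, and Theorem~\ref{intro:mysinai} applied with base space $(L, \lambda) = (\{0, \dots, n-1\}, u_n)$ produces a factor map from $G \acts (X, \mu)$ onto $G \acts (n^G, u_n^G)$. For the second assertion, if $G \acts (X,\mu)$ admits no finite generating partition, then in particular it admits no $n$-piece generating partition for any $n$, so by the previous step $\rh_G(X, \mu) \geq \log n$ for every $n \in \N$ and hence $\rh_G(X, \mu) = \infty$. Then $\rh_G(X,\mu) = \infty \geq \sH(L, \lambda)$ for every probability space $(L, \lambda)$, and a final application of Theorem~\ref{intro:mysinai} supplies a factor map onto $G \acts (L^G, \lambda^G)$.

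I expect no real obstacle here: the corollary is a formal consequence of its two inputs, the only bookkeeping being the strict-versus-non-strict matching noted above (and the harmless non-atomicity observation). All of the genuine difficulty of course resides in Theorem~\ref{intro:mysinai} itself, which is proved elsewhere in the paper.
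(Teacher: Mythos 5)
Your proposal is correct and is exactly the derivation the paper intends: the paper gives no separate proof, merely remarking that the corollary is a reformulation of Theorem \ref{intro:mysinai} via the Rokhlin-entropy version of Krieger's generator theorem from \cite{S1}, whose contrapositive yields $\rh_G(X,\mu) \geq \log n$ (and hence $\rh_G(X,\mu) = \infty$ in the second case) precisely as you argue. The strict-versus-non-strict boundary observation is the right point of care and is handled correctly.
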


Since sofic entropy is a lower-bound to Rokhlin entropy \cite[Prop. 1.10]{AS}, as a bonus we automatically obtain {\sinai}'s factor theorem for sofic entropy as well.

\begin{cor} \label{intro:sofic}
Let $G$ be a sofic group and let $G \acts (X, \mu)$ be a free ergodic {\pmp} action. If $(L, \lambda)$ is a probability space and the sofic entropy of $G \acts (X, \mu)$ (with respect to some sofic approximation of $G$) is greater than or equal to $\sH(L, \lambda)$, then $G \acts (X, \mu)$ factors onto the Bernoulli shift $G \acts (L^G, \lambda^G)$.
\end{cor}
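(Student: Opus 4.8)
The plan is to deduce this immediately from Theorem~\ref{intro:mysinai}. Let me think about what needs to happen. We have $G$ sofic, $G \acts (X,\mu)$ free ergodic p.m.p., and a probability space $(L,\lambda)$ with sofic entropy of the action (w.r.t. some sofic approximation $\Sigma$) at least $\sH(L,\lambda)$.

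First I would invoke the comparison \cite[Prop. 1.10]{AS}, which states that for a free p.m.p. action of a sofic group, the sofic entropy with respect to any sofic approximation is a lower bound for the Rokhlin entropy: $h_\Sigma(X,\mu) \le \rh_G(X,\mu)$. Hence from the hypothesis $h_\Sigma(X,\mu) \ge \sH(L,\lambda)$ we get $\rh_G(X,\mu) \ge \sH(L,\lambda)$.

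Then I would apply Theorem~\ref{intro:mysinai} directly: since $G$ is sofic it is in particular a countably infinite group (if $G$ were finite the Bernoulli shift statement is vacuous or handled separately — but in the sofic entropy setting one always takes $G$ infinite), the action is free ergodic p.m.p., and we have just verified $\rh_G(X,\mu) \ge \sH(L,\lambda)$. The theorem yields a factor map from $G \acts (X,\mu)$ onto $G \acts (L^G,\lambda^G)$, which is exactly the conclusion.

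There is essentially no obstacle here — the corollary is a formal consequence. The only point requiring care is making sure the inequality from \cite{AS} is quoted in the right direction and applies to the relevant approximation (it holds for every sofic approximation, so the "with respect to some sofic approximation" in the hypothesis is more than enough), and noting that one should exclude or handle the degenerate case where the sofic entropy is $-\infty$, in which case the hypothesis $h_\Sigma(X,\mu) \ge \sH(L,\lambda)$ forces $\sH(L,\lambda) = -\infty$ as well, which does not occur for an honest probability space, so in fact the hypothesis already rules this out. Thus the entire content of the corollary is packaged in Theorem~\ref{intro:mysinai}.
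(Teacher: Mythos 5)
Your proposal is correct and is exactly the paper's argument: the corollary is deduced from Theorem \ref{intro:mysinai} via the inequality of \cite[Prop. 1.10]{AS} stating that sofic entropy (with respect to any sofic approximation) is a lower bound for Rokhlin entropy for free actions. Your remarks about the direction of the inequality and the degenerate $-\infty$ case are sensible but add nothing beyond what the paper already treats implicitly.
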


A recent striking result of Bowen states that when $G$ is non-amenable, all $G$-Bernoulli shifts factor onto one another \cite{B17}. This leads to the following corollary.

\begin{cor}
Let $G$ be a countable non-amenable group and let $G \acts (X, \mu)$ be a free ergodic {\pmp} action. If $\rh_G(X, \mu) > 0$ then $G \acts (X, \mu)$ factors onto every $G$-Bernoulli shift.
\end{cor}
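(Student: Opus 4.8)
The plan is to derive this corollary from Theorem~\ref{intro:mysinai} together with Bowen's theorem \cite{B17} stating that over a non-amenable group all Bernoulli shifts factor onto one another. Since every non-amenable group is infinite, $G$ is in particular countably infinite, so Theorem~\ref{intro:mysinai} applies to the free ergodic action $G \acts (X, \mu)$.

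First I would exhibit a Bernoulli factor of small positive entropy. Because $\rh_G(X, \mu) > 0$, and because the biased two-point space $(\{0,1\}, \lambda_t)$ with $\lambda_t(\{0\}) = t$ has Shannon entropy $-t\log t - (1-t)\log(1-t)$, which depends continuously on $t$ and sweeps out the interval $(0, \log 2]$ as $t$ ranges over $(0, 1/2]$, I can choose $t$ so that the resulting base space $(L_0, \lambda_0)$ satisfies $0 < \sH(L_0, \lambda_0) \leq \rh_G(X, \mu)$. Theorem~\ref{intro:mysinai} then produces a factor map from $G \acts (X, \mu)$ onto the Bernoulli shift $G \acts (L_0^G, \lambda_0^G)$.

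Next, for an arbitrary probability space $(L, \lambda)$, I would invoke Bowen's theorem: since $G$ is non-amenable, $G \acts (L_0^G, \lambda_0^G)$ factors onto $G \acts (L^G, \lambda^G)$ (note that $G \acts (L_0^G, \lambda_0^G)$ is itself free and ergodic, being a Bernoulli shift over an infinite group). Composing the two factor maps yields a factor map from $G \acts (X, \mu)$ onto $G \acts (L^G, \lambda^G)$, which is exactly the assertion.

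The argument is a two-step composition and carries no real obstacle of its own; all of the difficulty has been outsourced to Theorem~\ref{intro:mysinai} (whose proof occupies the bulk of the paper) and to Bowen's theorem. The only minor points to verify are the elementary claim that a biased two-point space realizes every Shannon entropy value in $(0, \log 2]$, and that the form of Bowen's theorem being cited applies to all base spaces, including those of infinite Shannon entropy, so that the conclusion covers \emph{every} $G$-Bernoulli shift.
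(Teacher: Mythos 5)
Your proposal is correct and is exactly the argument the paper intends: the corollary is stated as an immediate combination of Theorem \ref{intro:mysinai} with Bowen's result that all Bernoulli shifts over a non-amenable group factor onto one another, and your choice of a small-entropy two-point base followed by composition with Bowen's factor map is precisely that combination.
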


In particular, the converse of Theorem \ref{intro:mysinai} fails for non-amenable groups (but holds for amenable groups by monotonicity of entropy).

As with the original factor theorem, we believe the conclusion of Theorem \ref{intro:mysinai} is strong. In fact, the conclusion was not previously known in the case where $G \acts (X, \mu)$ is itself a Bernoulli shift (though this was implied by independent results obtained later \cite{B17,S}). To point to other concrete examples, we mention that Hayes has computed the sofic entropy of Gaussian actions and principal algebraic actions of sofic groups \cite{H16,Hb}, many of which have positive sofic entropy and thus posses Bernoulli factors by our theorem. Let us describe more precisely this latter class of actions. Let $G$ be a sofic group and let $f \in \Z G$ be an element of the group ring. Let $X_f$ be the compact abelian group (using coordinate-wise addition) of $x \in (\R / \Z)^G$ such that the right-convolution $x * f$ is trivial mod $\Z^G$, and let $m$ be the Haar probability measure on $X_f$. Hayes proved that if $f$ is injective as a left-convolution operator on $\ell^2(G)$, then the sofic entropy of $G \acts (X_f, m)$ equals $\int_{[0, \infty)} \log(t) \ d \mu_{|f|}(t)$ (the logarithm of the Fuglede--Kadison determinant of $f$), where $\mu_{|f|}$ is the spectral measure of the operator $|f|$ acting on $\ell^2(G)$. Many explicit examples will give positive entropy values (see for instance \cite[Prop. 4.16.(iii) and Thm. 6.7]{KL13b} and \cite[Thm. 1.2]{H17}), but directly finding Bernoulli factors would appear to be extremely difficult without our theorem.

To give a brief but intriguing glimpse into the applicability of Theorem \ref{intro:mysinai}, we mention the following. This corollary relies upon the recent powerful result of Bowen \cite{B17} that all Bernoulli shifts over non-amenable groups satisfy the measurable von Neumann--Day conjecture.

\begin{cor}
Let $G$ be a countable non-amenable group and let $G \acts (X, \mu)$ be a free ergodic action. If $\rh_G(X, \mu) > 0$ then this action satisfies the measurable von Neumann--Day conjecture; specifically there is a free action of the rank $2$ free group $F_2 \acts (X, \mu)$ such that $\mu$-almost-every $F_2$-orbit is contained in a $G$-orbit.
\end{cor}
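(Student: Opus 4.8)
The plan is to deduce this corollary directly from the preceding one together with Bowen's result \cite{B17} on the measurable von Neumann--Day conjecture for Bernoulli shifts. The logic is a two-step chain: first transfer the hypothesis of positive Rokhlin entropy into the existence of a Bernoulli factor, then transfer a free $F_2$-subaction from the Bernoulli factor back up to $X$.

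\medskip

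\noindent\textbf{Step 1: produce a Bernoulli factor.} Since $G$ is countable non-amenable and $\rh_G(X,\mu) > 0$, the previous corollary (via Theorem \ref{intro:mysinai} and Bowen's factoring result \cite{B17}) gives a factor map $\phi : X \to L^G$ onto some nontrivial $G$-Bernoulli shift $G \acts (L^G, \lambda^G)$ with $\phi_*\mu = \lambda^G$; in fact we get every $G$-Bernoulli shift, so we may just fix one convenient base, e.g.\ $(2^G, u_2^G)$.

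\medskip

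\noindent\textbf{Step 2: pull back the $F_2$-action.} By Bowen's theorem \cite{B17}, the Bernoulli shift $G \acts (2^G, u_2^G)$ satisfies the measurable von Neumann--Day conjecture: there is a free {\pmp} action $F_2 \acts (2^G, u_2^G)$ whose orbits are $u_2^G$-a.e.\ contained in $G$-orbits. The key observation is that a factor map is, on a co-null set, a bijection between each $G$-orbit in $X$ and the corresponding $G$-orbit in $2^G$ — indeed, since $G \acts (X,\mu)$ is free, the restriction of $\phi$ to a.e.\ $G$-orbit is injective (if $\phi(x) = \phi(x')$ with $x' = gx$ then $\phi(x) = g^{-1}\phi(x)$, and the set of $y$ with nontrivial stabilizer in the Bernoulli shift is null, forcing $g = e$), hence a $G$-equivariant bijection onto the orbit $G\phi(x)$. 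Thus $\phi$ restricts to a Borel isomorphism between the orbit equivalence relations $E_G^X$ and $E_G^{2^G}$ on co-null invariant sets. We can therefore \emph{transport} the $F_2$-action along $\phi^{-1}$: for $w \in F_2$ and ($\mu$-a.e.) $x \in X$, define $w \cdot x := \phi^{-1}_{G\text{-orbit}}\big(w \cdot \phi(x)\big)$, the unique point in the $G$-orbit of $x$ mapping to $w \cdot \phi(x)$. One checks this is a Borel $F_2$-action, it is measure-preserving because $\phi_*\mu = u_2^G$ and $\phi$ is an orbit-equivalence, it is free because the original $F_2$-action on $2^G$ is and $\phi$ is injective on orbits, and by construction each $F_2$-orbit in $X$ maps into a $G$-orbit in $2^G$ hence lies inside a single $G$-orbit in $X$.

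\medskip

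\noindent\textbf{Main obstacle.} The substantive input is entirely external — Bowen's result \cite{B17} and the factoring corollary built on Theorem \ref{intro:mysinai} — so the only thing to be careful about here is the orbit-wise bookkeeping in Step 2: verifying that $\phi$ is injective on a.e.\ $G$-orbit (which uses freeness of $G \acts X$ and nullity of the non-free part of the Bernoulli shift), checking Borelness of the selector $x \mapsto \phi^{-1}_{G\text{-orbit}}(\cdot)$ so that the transported action is genuinely Borel, and confirming measure-preservation and freeness transfer across $\phi$. None of these is difficult, but the non-amenability of $G$ means one cannot invoke any Rokhlin-lemma or SMB machinery — the argument must stay purely orbit-equivalence-theoretic, which is exactly why the factoring conclusion of Theorem \ref{intro:mysinai}, rather than any weaker entropy statement, is what makes this work.
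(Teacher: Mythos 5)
Your argument is correct and is exactly the (unwritten) argument the paper intends: use the preceding corollary to factor onto a fixed Bernoulli shift, note that freeness of $G \acts (X,\mu)$ plus essential freeness of the Bernoulli shift forces the factor map to be class-bijective, and pull Bowen's free $F_2$-subaction back along the resulting orbit-wise bijections (each generator of $F_2$ then acts as an element of the full group $[E_G^X]$, hence preserves $\mu$). No gaps.
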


In analogy with the original factor theorem, we believe Theorem \ref{intro:mysinai} holds philosophical and aesthetic value. While sofic entropy and Rokhlin entropy are useful for distinguishing Bernoulli shifts, its not at all clear a priori what properties of the action are reflected in these entropies. Our theorem indicates the surprising fact that both sofic entropy and Rokhlin entropy continue to be intimately tied to Bernoullicity. Specifically, our theorem shows that all positive entropy phenomena comes from Bernoulli shifts. (However, the picture is not yet as clear as in the classical setting; see Section \ref{sec:discuss}).

Theorem \ref{intro:mysinai} is only the most basic form of our main theorem. Specifically, we prove relative, perturbative, and non-ergodic factor theorems. With the exception of the residual factor theorem which is known to be false for non-amenable groups, our work extends all prior versions of the factor theorem to general countably infinite groups. Furthermore, through the use of non-free variants of Bernoulli shifts, our theorems apply to non-free actions as well.

We believe Theorem \ref{intro:mysinai} and Corollary \ref{intro:sofic} mark an important step forward in the development of both sofic entropy theory and Rokhlin entropy theory. An ambitious long-term goal in this field is the development of an Ornstein theory for non-amenable groups. To a large extent, our understanding of Bernoulli shifts over non-amenable groups is still quite poor. One peculiar exception is the (positive direction of) the Ornstein isomorphism theorem itself. Work of Bowen \cite{B12b} and recent work of the author \cite{S} (occurring after the present results were obtained) show that for every countably infinite group, Bernoulli shifts with base spaces of equal Shannon entropy must be isomorphic (in particular, Bernoulli shifts over sofic groups are completely classified by their sofic entropy, i.e. the Shannon entropy of their base). This result is peculiar in the sense that the proof relies upon very restrictive and specialized tricks that do not use any entropy theory aside from the original Ornstein isomorphism theorem for $\Z$. The proof is an isolated achievement, seemingly incapable of generalizing or of revealing more about the structure of Bernoulli shifts. On the other hand, we believe the techniques and theorems of this paper may lay a new path to studying Bernoulli shifts, just as the original factor theorem historically served as the foundation for Ornstein theory.

One immediate consequence of our main theorem is that Bernoulli shifts $G \acts (L^G, \lambda^G)$ are finitely-determined whenever $\rh_G(L^G, \lambda^G) = \sH(L, \lambda)$. Recall that for a finite set $L$ and two $G$-invariant probability measures $\mu, \nu$ on $L^G$, their d-bar distance $\bar{d}(\mu, \nu)$ is defined to be the infimum of
$$\lambda(\{(x, y) \in L^G \times L^G : x(1_G) \neq y(1_G)\}),$$
where $\lambda$ ranges over all joinings of $\mu$ with $\nu$. The measure $\mu$ is \emph{finitely-determined} if for every $\epsilon > 0$ there is a weak$^*$-open neighborhood $U$ of $\mu$ and $\delta > 0$ such that whenever $\nu \in U$ satisfies $\Stab_*(\nu) = \Stab_*(\mu)$ and $|\rh_G(L^G, \nu) - \rh_G(L^G, \mu)| < \delta$ we have $\bar{d}(\mu, \nu) < \epsilon$. The notion of finitely-determined played a prominent role in Ornstein theory, and it was proven that for an amenable group $G$ a measure $\mu$ on $L^G$ is finitely-determined if and only if $G \acts (L^G, \mu)$ is isomorphic to a Bernoulli shift \cite{OW87}.

\begin{cor} \label{intro:fd}
Let $G$ be a countably infinite group and let $(L, \lambda)$ be a finite probability space. If $\rh_G(L^G, \lambda^G) = \sH(L, \lambda)$ then $\lambda^G$ is finitely-determined.
\end{cor}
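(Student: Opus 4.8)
The plan is to deduce finite determinism directly from Theorem \ref{intro:mysinai} (in fact from its relative and perturbative refinements announced in Section \ref{sec:sinai}, which I am free to invoke). Fix a finite probability space $(L,\lambda)$ with $\rh_G(L^G,\lambda^G)=\sH(L,\lambda)$, and fix $\epsilon>0$. I must produce a weak${}^*$-neighborhood $U$ of $\lambda^G$ and a $\delta>0$ so that any $G$-invariant $\nu\in U$ on $L^G$ with $\Stab_*(\nu)=\Stab_*(\lambda^G)$ (here $\lambda^G$ is the product measure, so $\Stab_*(\lambda^G)$ is concentrated on the trivial subgroup, forcing $\nu$ to be the measure of a free action) and $|\rh_G(L^G,\nu)-\sH(L,\lambda)|<\delta$ satisfies $\bar d(\lambda^G,\nu)<\epsilon$. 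The idea is: such a $\nu$ is a free ergodic (or at least, after an ergodic-decomposition argument, effectively ergodic) action whose Rokhlin entropy is close to $\sH(L,\lambda)$, so by the factor theorem it has a Bernoulli factor onto $(L^G,\lambda^G)$, and moreover — this is the point — the \emph{perturbative} version gives such a factor map $\psi\colon L^G\to L^G$ that is $\dB_\nu$-close to the tautological coordinate partition of $L^G$, i.e. to the identity. The joining $(\id\times\psi)_*(\nu)$ of $\nu$ with $\lambda^G$ then witnesses $\bar d(\lambda^G,\nu)$ being small.

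Here are the steps in order. (1) Reduce to ergodic $\nu$: apply the non-ergodic factor theorem, or integrate over the ergodic decomposition of $\nu$, using that $\bar d$, Rokhlin entropy, and the weak${}^*$-topology all interact reasonably with ergodic decompositions; the stabilizer hypothesis guarantees almost every ergodic component is free. (2) Let $\xi=\{[\ell]_{1_G}:\ell\in L\}$ be the canonical generating partition of $L^G$ indexed by the value at the identity. Since $\xi$ generates, $\rh_G(L^G,\nu)\le \sH_\nu(\xi)$, and in fact for $\nu$ in a small weak${}^*$-neighborhood of $\lambda^G$ we have $\dist_\nu(\xi)$ close to $\lambda$ and hence $\sH_\nu(\xi)$ close to $\sH(L,\lambda)$. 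Combined with the lower bound $\rh_G(L^G,\nu)\ge \sH(L,\lambda)-\delta$, this pins $\rh_G(L^G,\nu)$ and $\sH_\nu(\xi)$ both within $O(\delta)$ of $\sH(L,\lambda)=\sH(\lambda)$. (3) Feed this into the perturbative factor theorem (the $G$-version proved in Section \ref{sec:sinai}, with trivial $\cF$): choosing $\delta$ and the neighborhood $U$ small enough that $|\dist_\nu(\xi)-\lambda|+|\rh_G(\xi)-\sH(\lambda)|$ is below the relevant threshold $\psd(|L|,\epsilon)$, we obtain an ordered partition $\alpha$ of $L^G$ with $\dist_\nu(\alpha)=\lambda$, with $\dB_\nu(\alpha,\xi)<\epsilon$, and whose $G$-translates are $\nu$-mutually independent. (4) The partition $\alpha$ defines a factor map $\psi\colon (L^G,\nu)\to(L^G,\lambda^G)$ sending $x$ to the point whose $g$-coordinate records which piece of $\alpha$ contains $g^{-1}x$. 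The joining $\theta=(\id\times\psi)_*(\nu)$ on $L^G\times L^G$ is a joining of $\nu$ with $\lambda^G$, and its "disagreement at $1_G$" mass is exactly $\nu(\{x:x(1_G)\text{-piece of }\xi\neq \psi(x)(1_G)\text{-piece of }\alpha\})=\dB_\nu(\alpha,\xi)<\epsilon$ — here I am using that $\dB_\nu(\alpha,\xi)$ is literally the $\nu$-measure of the symmetric-difference set, after matching up the names of $\alpha$ and $\xi$ correctly. Hence $\bar d(\lambda^G,\nu)<\epsilon$, as required.

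The main obstacle is step (3)–(4): making sure the perturbative factor theorem is available in the exact form needed over a general countable group $G$ (this is the substantive content of Section \ref{sec:sinai}, not of this corollary), and, more delicately, verifying that the $\dB_\nu$-closeness of $\alpha$ to the \emph{specific} generating partition $\xi$ — rather than to some arbitrary partition — is what the perturbative statement delivers, and that this translates cleanly into smallness of $\bar d$ rather than just of some abstract partition-distance. One must also be slightly careful that $\bar d(\lambda^G,\nu)$ is defined via joinings of the two measures as measures on $L^G$, matching the coordinates via the identity map on $L$, so the labels of $\alpha$ must be chosen (as the perturbative theorem permits, since it prescribes $\dist_\nu(\alpha)=\lambda$ with the given ordering) to align with those of $\lambda$. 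A secondary, purely bookkeeping obstacle is the ergodic-decomposition reduction in step (1): one needs the neighborhood $U$ and the constant $\delta$ to be chosen uniformly so that a definite fraction of ergodic components land in the good region, and then integrate the resulting fiberwise joinings; this is routine given the measurability statements underlying the non-ergodic factor theorem quoted above, so I would handle it by simply citing that theorem in its relative form.
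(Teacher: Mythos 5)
Your overall strategy—feed the canonical partition $\xi$ into the perturbative factor theorem, read off a Bernoulli partition $\alpha$ close to $\xi$ in $\dB_\nu$, and use the graph joining of the resulting factor map to bound $\bar{d}$—is exactly the paper's strategy. But there is one genuine gap in step (3). The perturbative statement (Theorem \ref{thm:main}) only produces a $G$-Bernoulli partition with prescribed distribution $\pv$ under the standing hypothesis $\sH(\pv)\leq \rh_G(X,\mu\given\Sigma)$. You want to take $\pv=\lambda$, but the measures $\nu$ you must handle only satisfy $|\rh_G(L^G,\nu)-\sH(\lambda)|<\delta$, so it is entirely possible that $\rh_G(L^G,\nu)<\sH(\lambda)$, in which case the theorem simply does not apply with target distribution $\lambda$ and no Bernoulli factor of full base entropy $\sH(\lambda)$ need exist. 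Your step (2) notes the two-sided pinning of $\rh_G(L^G,\nu)$ near $\sH(\lambda)$ but does not resolve this directional issue.

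The paper's fix is to introduce an auxiliary probability vector $\omega$ on $L$ with $\sH(\omega)<\sH(\lambda)$ and $|\omega-\lambda|+2|\sH(\omega)-\sH(\lambda)|$ below the perturbative threshold. For $\delta$ small enough one then has $\rh_G(L^G,\nu)>\sH(\omega)$, so the perturbative theorem applies with target $\omega$ and yields $\bar{d}(\nu,\omega^G)<\epsilon/2$; applying the same argument to $\nu=\lambda^G$ itself (which trivially satisfies all the hypotheses placed on $\nu$) gives $\bar{d}(\lambda^G,\omega^G)<\epsilon/2$, and one concludes by the triangle inequality for $\bar{d}$. Without this detour through a strictly lower-entropy base, the argument does not close. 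Your remaining concerns (aligning the labels of $\alpha$ with those of $\xi$ so that $\dB_\nu(\alpha,\xi)$ is literally the disagreement mass of the graph joining, and the reduction to ergodic components) are handled correctly in spirit and are not where the difficulty lies.
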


The techniques we create to prove our main theorem also lead to new insights on the spectral structure of actions having positive Rokhlin entropy. This will be presented in upcoming work \cite{Sa} and will mirror and expand upon similar results obtained by Hayes in the setting of sofic entropy \cite{H, Ha}. In fact, we would like to explicitly mention that our main theorem directly combines with the work of Hayes \cite{H, Ha} to produce the following corollary (see \cite{Ha} for relevant notation). This corollary extends a similar result for actions of amenable groups obtained by Dooley--Golodets \cite{DoGo}.

\begin{cor}
Let $G$ be a sofic group. Let $G \acts (X, \mu)$ be a free ergodic {\pmp} action and let $G \acts (Y, \nu)$ and $G \acts (Y_0, \nu_0)$ be the (sofic entropy) Pinsker factor and the (sofic entropy) outer Pinsker factor, respectively. Then, as a representation of $\BL^\infty (Y) \aprod G$, we have that $\BL^2(X) \ominus \BL^2(Y)$ is isomorphic to $\BL^2(Y, \ell^2(G))^{\infty}$. Similarly, $\BL^2(X) \ominus \BL^2(Y_0)$ is isomorphic to $\BL^2(Y_0, \ell^2(G))^{\infty}$.
\end{cor}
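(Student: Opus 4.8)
The plan is to combine the relative form of Theorem~\ref{intro:mysinai} with the spectral analysis of Hayes \cite{H,Ha}; our new input is only the production of a relative Bernoulli factor, everything else being quoted from \cite{Ha}. We argue for the outer Pinsker factor $G \acts (Y_0, \nu_0)$; the Pinsker factor $G \acts (Y, \nu)$ is handled identically, as both factors are built precisely so that the ambient action has completely positive entropy over them in the appropriate (outer, resp.\ ordinary) sense. If $X = Y_0$ then $\BL^2(X) \ominus \BL^2(Y_0) = 0$ and there is nothing to prove, so assume $X \neq Y_0$. The first ingredient, due to Hayes \cite{Ha}, is that $\BL^2(X) \ominus \BL^2(Y_0)$ is \emph{tempered relative to $Y_0$}: as a representation of $\BL^\infty(Y_0) \aprod G$ it is weakly contained in the bimodule $\BL^2(Y_0, \ell^2(G))$. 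This is the sofic-entropy analogue---underlying the Dooley--Golodets theorem \cite{DoGo}---of the classical fact that the maximal spectral type of a completely positive entropy action is dominated by Haar measure; it holds unconditionally and supplies one of the two weak containments we need.

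For the reverse containment we use our main theorem. By the defining property of the outer Pinsker factor, $G \acts (X, \mu)$ has completely positive outer sofic entropy over $Y_0$, so in particular---since $X \neq Y_0$---the relative outer sofic entropy of $X$ over $Y_0$ is strictly positive. Outer sofic entropy is a lower bound for outer Rokhlin entropy \cite[Prop.~1.10]{AS}, and this bound passes to the relative setting, so the relative outer Rokhlin entropy of $X$ over $Y_0$ is positive as well. The relative form of Theorem~\ref{intro:mysinai} (Section~\ref{sec:sinai}) then yields a finite probability space $(L, \lambda)$ with $\sH(L, \lambda) > 0$ such that $G \acts (X, \mu)$ factors over $Y_0$ onto the relative Bernoulli shift $G \acts (L^G \times Y_0, \lambda^G \times \nu_0)$. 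Since $|\supp(\lambda)| \geq 2$, a nonzero mean-zero function of the coordinate at the identity, together with its $G$-translates, spans a $G$-subrepresentation of $\BL^2(L^G, \lambda^G) \ominus \C$ isomorphic to $\ell^2(G)$; relativizing over $Y_0$ gives an embedding of $\BL^2(Y_0, \ell^2(G))$ as a sub-bimodule of $\BL^2(L^G \times Y_0) \ominus \BL^2(Y_0)$, and hence of $\BL^2(X) \ominus \BL^2(Y_0)$, over $\BL^\infty(Y_0) \aprod G$. This is the reverse weak containment.

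To finish, one upgrades the resulting weak equivalence to an isomorphism using the rigidity of the coarse-type bimodule $\BL^2(Y_0, \ell^2(G))$ over $\BL^\infty(Y_0) \aprod G$ established in \cite{Ha}: because $G$ is infinite this bimodule is properly infinite and absorbs amplifications, so any bimodule that embeds it and is in turn weakly contained in it must be isomorphic to $\BL^2(Y_0, \ell^2(G))^\infty$. Applying this to $\BL^2(X) \ominus \BL^2(Y_0)$ gives the asserted isomorphism, and repeating the argument verbatim with $Y$, $\nu$, and ordinary sofic entropy in place of $Y_0$, $\nu_0$, and outer sofic entropy completes the proof.

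The only step requiring care on our side is the entropy bookkeeping in the second paragraph: confirming that ``completely positive outer sofic entropy over $Y_0$'' translates into strictly positive relative outer Rokhlin entropy (via the relative analogue of \cite[Prop.~1.10]{AS}) and that the hypotheses of the relative factor theorem hold for the pair $(X, Y_0)$---freeness is given, and $Y_0$ is a genuine factor. The temperedness input and the weak-equivalence-to-isomorphism step are Hayes's and are treated as black boxes; no new ideas beyond those already in this paper and in \cite{Ha} are needed to assemble them.
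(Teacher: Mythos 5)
The paper gives no proof of this corollary: it is stated in the introduction with the remark that ``our main theorem directly combines with the work of Hayes [H, Ha]'', so the intended argument is exactly the assembly you describe --- Hayes's relative temperedness of $\BL^2(X) \ominus \BL^2(Y_0)$ in one direction, the relative factor theorem of Section \ref{sec:sinai} supplying a nontrivial relative Bernoulli factor over $Y_0$ in the other, and Hayes's bimodule rigidity to close the loop. Your reconstruction is therefore the same approach as the paper's, and the entropy bookkeeping you single out (completely positive outer sofic entropy over $Y_0$, hence positive relative outer Rokhlin entropy by the relative analogue of \cite[Prop.~1.10]{AS}, hence a relative Bernoulli factor) is the genuinely new input this paper contributes.

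Two soft spots are worth flagging. First, the final upgrade step as you state it --- ``any bimodule that embeds \emph{one copy of} $\BL^2(Y_0,\ell^2(G))$ and is weakly contained in it must be isomorphic to $\BL^2(Y_0,\ell^2(G))^\infty$'' --- is not a valid implication on its own: a single copy of the coarse bimodule satisfies both hypotheses and is not the infinite amplification, and weak containment is too coarse for the Schr\"oder--Bernstein argument, which requires actual containment in $\BL^2(Y_0,\ell^2(G))^{\oplus\infty}$ (this stronger containment is what Hayes actually proves). To get the reverse containment with infinite multiplicity one should either quote Hayes's theorem in the form in which a single nontrivial relative Bernoulli factor suffices, or iterate the relative factor theorem (as in Corollary \ref{cor:fullfactor}) to produce infinitely many mutually independent Bernoulli partitions over $Y_0$ and hence infinitely many orthogonal copies of the coarse bimodule. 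Second, your dismissal of the case $X = Y_0$ as ``nothing to prove'' is backwards: there the left side is $0$ while the right side is not, so the statement is simply false in that degenerate case and must be read with the implicit assumption that the (outer) Pinsker factor is proper, i.e.\ that the action has positive sofic entropy. Neither issue reflects a wrong approach; both are matters of quoting \cite{Ha} in its precise form.
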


In \cite{Sa} this corollary will be adapted to Rokhlin entropy and extended to actions of general countable groups.

\subsection*{Acknowledgments}
The author thanks Tim Austin for pointing out the consequence that Bernoulli measures are finitely determined, Lewis Bowen for permission to include the fact that the residual factor theorem fails for non-amenable groups, and Benjy Weiss for referring the author to the very useful non-ergodic factor theorem of Kieffer and Rahe. The author is grateful to
Mikl\'{o}s Ab\'{e}rt,
Tim Austin,
Lewis Bowen,
Damien Gaboriau,
Ben Hayes,
Mike Hochman,
Ralf Spatzier,
and Benjy Weiss
for valuable conversations and encouragement and thanks the referee for a careful reading. The author was partially supported by ERC grant 306494 and Simons Foundation grant 328027 (P.I. Tim Austin).

\section{Preliminaries} \label{sec:prelim}

Our probability spaces $(X, \mu)$ will always be assumed to be standard, meaning $X$ is a standard Borel space, with its Borel $\sigma$-algebra denoted $\Borel(X)$, and $\mu$ is a Borel probability measure. The space $\Prob(X)$ of all Borel probability measures on $X$ is itself a standard Borel space. Its Borel $\sigma$-algebra is the smallest $\sigma$-algebra making all of the maps $\mu \mapsto \mu(A)$ measurable for every $A \in \Borel(X)$.

Recall that for every sub-$\sigma$-algebra $\Sigma$ there is a probability space $(Y, \nu)$, unique up to isomorphism, and a measure-preserving map $\phi : X \rightarrow Y$ with $\phi^{-1}(\Borel(Y)) = \Sigma$. Furthermore, there is a $\nu$-almost-everywhere unique Borel map $y \in Y \mapsto \mu_y \in \Prob(X)$ satisfying $\mu_y(\phi^{-1}(y)) = 1$ and $\mu = \int_Y \mu_y \ d \nu(y)$. We refer to this as the \emph{disintegration of $\mu$ over $\nu$} or the \emph{disintegration of $\mu$ with respect to $\phi$}. If $\alpha$ is a countable Borel partition of $X$ then the \emph{Shannon entropy of $\alpha$} is
$$\sH(\alpha) = \sH_\mu(\alpha) = \sum_{A \in \alpha} - \mu(A) \log \mu(A),$$
and the \emph{conditional Shannon entropy of $\alpha$ relative to $\Sigma$} is
$$\sH(\alpha \given \Sigma) = \int_Y \sH_{\mu_y}(\alpha) \ d \nu(y).$$
If $\beta$ is another countable Borel partition of $X$ then we write $\sH(\alpha \given \beta)$ for $\sH(\alpha \given \salg(\beta))$. We will assume familiarity with the basic properties of conditional Shannon entropy; see \cite{Do11} for reference.

A collection of $\sigma$-algebras $(\cF_i)_{i \in I}$ on $(X, \mu)$ are said to be \emph{mutually independent} if for any disjoint subsets $J_1, J_2 \subseteq I$ and sets $A_i \in \bigvee_{k \in J_i} \cF_k$ we have $\mu(A_1 \cap A_2) = \mu(A_1) \mu(A_2)$. More generally, if $\Sigma$ is another $\sigma$-algebra, $\phi : (X, \mu) \rightarrow (Y, \nu)$ is the associated factor, and $\mu = \int_Y \mu_y \ d \nu(y)$ is the disintegration of $\mu$ over $\nu$, then we say that $(\cF_i)_{i \in I}$ are \emph{mutually independent relative to $\Sigma$} if for any disjoint subsets $J_1, J_2 \subseteq I$ and sets $A_i \in \bigvee_{k \in J_i} \cF_k$ we have $\mu_y(A_1 \cap A_2) = \mu_y(A_1) \mu_y(A_2)$ for $\nu$-almost-every $y \in Y$. In the case $|I| = 2$, we simply say that the two $\sigma$-algebras are independent (or independent relative to $\Sigma$). By associating a (possibly uncountable) partition $\alpha$ with the $\sigma$-algebra of Borel sets that can be expressed as unions of sets from $\alpha$, and by associating a function with the smallest $\sigma$-algebra making it measurable, we can also speak of independence of partitions and of functions. We record a basic lemma.

\begin{lem} \label{lem:ind}
Let $(X, \mu)$ be a standard probability space, let $\alpha$ be a partition, and let $\Sigma$ and $\cF$ be sub-$\sigma$-algebras. If $\alpha$ is independent with $\Sigma$ relative to $\cF$ and $\alpha$ is independent with $\cF$, then $\alpha$ is independent with $\Sigma$.
\end{lem}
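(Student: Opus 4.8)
The statement is a purely measure-theoretic fact about independence, so I would prove it by unwinding the definitions in terms of disintegrations and conditional expectations. Let $\phi : (X,\mu) \to (Y,\nu)$ be the factor map associated to $\cF$, with disintegration $\mu = \int_Y \mu_y \, d\nu(y)$. The hypothesis that $\alpha$ is independent with $\cF$ means that for every $A$ in the $\sigma$-algebra generated by $\alpha$, the conditional probability $\mu_y(A)$ is $\nu$-almost-everywhere equal to the constant $\mu(A)$. The hypothesis that $\alpha$ is independent with $\Sigma$ relative to $\cF$ means that for $A$ in the $\sigma$-algebra of $\alpha$ and $S \in \Sigma$, we have $\mu_y(A \cap S) = \mu_y(A)\,\mu_y(S)$ for $\nu$-a.e.\ $y$. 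The goal is to show $\mu(A \cap S) = \mu(A)\,\mu(S)$ for all such $A, S$.

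**Main computation.** The plan is to integrate the relative-independence identity over $Y$:
$$\mu(A \cap S) = \int_Y \mu_y(A \cap S)\, d\nu(y) = \int_Y \mu_y(A)\,\mu_y(S)\, d\nu(y).$$
Now substitute $\mu_y(A) = \mu(A)$ for $\nu$-a.e.\ $y$, using independence of $\alpha$ with $\cF$, to get $\int_Y \mu(A)\,\mu_y(S)\, d\nu(y) = \mu(A) \int_Y \mu_y(S)\, d\nu(y) = \mu(A)\,\mu(S)$. This establishes independence of $\alpha$ with $\Sigma$ on the level of pairs of sets, which is exactly the definition (for $|I| = 2$, taking $J_1 = \{1\}$ indexing the $\sigma$-algebra generated by $\alpha$ and $J_2 = \{2\}$ indexing $\Sigma$).

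**Anticipated obstacle.** The only genuinely delicate point is the measurability and the handling of the almost-everywhere clauses: one must know that $y \mapsto \mu_y(A)$ and $y \mapsto \mu_y(S)$ are Borel (this is part of the disintegration statement recalled in the Preliminaries), that the exceptional null set in "$\mu_y(A) = \mu(A)$ a.e." can be chosen uniformly enough, and—if $\alpha$ is uncountable—that it suffices to verify the identity for $A$ ranging over the $\sigma$-algebra associated to $\alpha$ rather than over all of $\Borel(X)$, which is immediate from how independence of a partition is defined in the excerpt. Since $\alpha$ and $\Sigma$ are each a single $\sigma$-algebra here, no simultaneous choice over uncountably many sets is needed, so this is routine. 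I would also remark that the argument in fact shows the stronger conclusion that $\alpha$ is independent with $\cF \vee \Sigma$ if one additionally integrates against $\cF$-measurable sets, but that is not needed for the stated lemma.
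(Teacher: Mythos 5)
Your proposal is correct and follows essentially the same route as the paper's proof: disintegrate $\mu$ over the factor associated to $\cF$, use relative independence to write $\mu_y(A\cap S)=\mu_y(A)\mu_y(S)$, replace $\mu_y(A)$ by the constant $\mu(A)$ via independence with $\cF$, and integrate. The measurability points you flag are indeed routine and are implicitly handled by the disintegration statement in the preliminaries.
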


\begin{proof}
Let $\phi : (X, \mu) \rightarrow (Y, \nu)$ be the factor associated to $\cF$, and let $\mu = \int_Y \mu_y \ d \nu(y)$ be the corresponding disintegration of $\mu$. Fix $A \in \Borel(X)$ that is a union of sets from $\alpha$ and fix $C \in \Sigma$. Since $\alpha$ is independent with $\Sigma$ relative to $\cF$, we have that $\mu_y(A \cap C) = \mu_y(A) \mu_y(C)$ for $\nu$-almost-every $y \in Y$. Since $\alpha$ is also independent with $\cF$, we have that $\mu_y(A) = \mu(A)$ for $\nu$-almost-every $y \in Y$. Thus $\mu_y(A \cap C) = \mu(A) \mu_y(C)$ for $\nu$-almost-every $y \in Y$. Therefore
$$\mu(A \cap C) = \int_Y \mu_y(A \cap C) \ d \nu(y) = \int_Y \mu(A) \mu_y(C) \ d \nu(y) = \mu(A) \mu(C).$$
We conclude that $\alpha$ and $\Sigma$ are independent.
\end{proof}

A \emph{probability vector} $\pv$ is a finite or countable tuple $(p_i)_{0 \leq i < |\pv|}$ of non-negative real numbers that sum to $1$. We write $|\pv|$ for the length of $\pv$. If $\qv$ is another probability vector then their $\ell^1$-distance is
$$|\pv - \qv| = \sum_i |p_i - q_i|,$$
with the convention that $p_i = 0$ when $i \geq |\pv|$ and $q_j = 0$ when $j \geq |\qv|$.

In this paper we will implicitly assume that all countable partitions $\alpha$ of $(X, \mu)$ are both Borel and ordered: $\alpha = \{A_i : 0 \leq i < |\alpha|\}$. We write $\dist(\alpha)$, or $\dist_\mu(\alpha)$, for the \emph{distribution of $\alpha$}, which is the probability vector having $i^{\text{th}}$ term $\mu(A_i)$. If $\beta = \{B_i : 0 \leq i < |\beta|\}$ is another ordered partition, then their distance in the measure algebra is
$$\dB_\mu(\alpha, \beta) = \sum_i \mu(A_i \symd B_i),$$
with the convention $A_i = \varnothing$ when $i \geq |\alpha|$ and $B_j = \varnothing$ when $j \geq |\beta|$. Notice that
$$|\dist(\alpha) - \dist(\beta)| \leq \dB_\mu(\alpha, \beta).$$
We say that $\beta$ refines $\alpha$, written $\beta \geq \alpha$, if each $A \in \alpha$ is a union of members of $\beta$. Similarly, we say $\qv$ is a refinement of $\pv$ if there is a function $f : \{0, \ldots, |\qv|\} \rightarrow \{0, \ldots, |\pv|\}$ satisfying
$$\forall 0 \leq i < |\pv| \qquad \sum_{j \in f^{-1}(i)} q_j = p_i.$$

For a countable group $G$ and a Borel action $G \acts X$ we let $\M_G(X)$ and $\E_G(X)$ denote the sets of invariant Borel probability measures and invariant ergodic Borel probability measures, respectively. We also write $\sinv = \sinv_G$ for the $\sigma$-algebra of $G$-invariant Borel subsets of $X$. The action $G \acts X$ is \emph{aperiodic} if every orbit is infinite. We similarly call a {\pmp} action $G \acts (X, \mu)$ \emph{aperiodic} if $\mu$-almost-every orbit is infinite. We write $\Stab(x) = \{g \in G : g \cdot x = x\}$ for the stabilizer of $x$. For $\mathcal{C} \subseteq \Borel(X)$ we write $\salg(\mathcal{C})$ for the $\sigma$-algebra on $X$ generated by $\mathcal{C}$ and $\salg_G(\mathcal{C})$ for the $G$-invariant $\sigma$-algebra generated by $\mathcal{C}$. A sub-$\sigma$-algebra $\Sigma$ is \emph{countably generated} if there is a countable collection $\mathcal{C}$ so that $\salg(\mathcal{C}) = \Sigma$ (equality in the purely Borel sense). Of course, for any fixed probability measure $\mu$ on $X$, every sub-$\sigma$-algebra $\Sigma$ coincides, modulo $\mu$-null sets, with a countably generated $\sigma$-algebra. If $\Sigma$ is a $G$-invariant sub-$\sigma$-algebra and $(Y, \nu)$ is the corresponding factor via $\phi : X \rightarrow Y$, then there is a $\nu$-almost-everywhere unique {\pmp} action of $G$ on $(Y, \nu)$ so that $\phi$ is $G$-equivariant. We say that $\Sigma$ (or $\phi$) is \emph{class-bijective} if $\Stab(\phi(x)) = \Stab(x)$ for $\mu$-almost-every $x \in X$, and we say that $\mu \res \Sigma$ is ergodic if $\phi_*(\mu)$ is ergodic (or equivalently, if every $G$-invariant set in $\Sigma$ is either $\mu$-null or $\mu$-conull). In the purely Borel context, we call $\Sigma$ class-bijective if it is countably generated and for all $x \neq y \in X$ lying in a common $G$-orbit there is $A \in \Sigma$ with $x \in A$ and $y \not\in A$. For a partition $\alpha$ of $X$ and $g \in G$ we set $g \cdot \alpha = \{g \cdot A : A \in \alpha\}$, and for a finite set $W \subseteq G$ we set $\alpha^W = \bigvee_{w \in W} w^{-1} \cdot \alpha$.

If $G$ is a countable group, $G \acts (X, \mu)$ is a {\pmp} action, $\cF$ is a $G$-invariant sub-$\sigma$-algebra, and $\xi \subseteq \Borel(X)$ is a collection of sets, then we define the \emph{(outer) Rokhlin entropy of $\xi$ relative to $\cF$} to be
$$\rh_G(\xi \given \cF) = \inf \Big\{ \sH(\alpha \given \cF \vee \sinv) : \alpha \text{ a countable partition with } \xi \subseteq \salg_G(\alpha) \vee \cF \vee \sinv \Big\}.$$
When we wish to emphasize the measure $\mu$ we write $\rh_{G,\mu}(\xi \given \cF)$. Notice that for ergodic actions $\sinv = \{\varnothing, X\}$ modulo null sets. When $\xi = \Borel(X)$ is the Borel $\sigma$-algebra of $X$, we simply write $\rh_G(X, \mu \given \cF)$ for the \emph{Rokhlin entropy of $(X, \mu)$ relative to $\cF$}.

One of the key features of Rokhlin entropy that we will use is that it is countably sub-additive \cite[Cor. 4.6]{AS}. Specifically, fix a {\pmp} action $G \acts (X, \mu)$, let $\xi \subseteq \Borel(X)$, let $\cF$ be a $G$-invariant $\sigma$-algebra, and let $(\Sigma_n)_{n \geq 1}$ be an increasing sequence of $G$-invariant $\sigma$-algebras with $\xi \subseteq \cF \vee \bigvee_n \Sigma_n$. Then
$$\rh_G(\xi \given \cF) \leq \rh_G(\Sigma_1 \given \cF) + \sum_{n = 2}^\infty \rh_G(\Sigma_n \given \Sigma_{n-1} \vee \cF).$$

We will also need the following fact.

\begin{lem} \label{lem:enlarge}
Let $X$ be a standard Borel space and let $G \acts X$ be an aperiodic Borel action. For every $\epsilon > 0$ there exists a countably generated class-bijective $G$-invariant sub-$\sigma$-algebra $\Omega$ with $\rh_{G,\mu}(\Omega) < \epsilon$ for all $\mu \in \M_G(X)$. In particular, if $\Sigma$ is a $G$-invariant sub-$\sigma$-algebra, $\xi \subseteq \Borel(X)$, and $\mu \in \M_G(X)$, then setting $\cF = \Sigma \vee \Omega$ we have that $\cF$ is class-bijective and
$$\rh_{G,\mu}(\xi \given \Sigma) - \epsilon \leq \rh_{G,\mu}(\xi \given \cF) \leq \rh_{G,\mu}(\xi \given \Sigma).$$
\end{lem}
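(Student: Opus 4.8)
The plan is to split this into two parts: the ``in particular'' clause, which follows easily once $\Omega$ is constructed, and the existence of $\Omega$, which is where the work lies.

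Granting $\Omega$: the $\sigma$-algebra $\cF = \Sigma \vee \Omega$ is class-bijective (with respect to $\mu$) because it refines the class-bijective $\Omega$, so the factor map of $\cF$ factors through that of $\Omega$ and $\Stab(x) \subseteq \Stab(\phi_\cF(x)) \subseteq \Stab(\phi_\Omega(x)) = \Stab(x)$ for a.e.\ $x$. The bound $\rh_{G,\mu}(\xi \given \cF) \le \rh_{G,\mu}(\xi \given \Sigma)$ is immediate from the definition of Rokhlin entropy, since any countable partition witnessing the right side also witnesses the left (as $\cF \supseteq \Sigma$) and has no larger conditional entropy. For the reverse bound I would fix $\delta > 0$, choose $\alpha$ with $\xi \subseteq \salg_G(\alpha) \vee \cF \vee \sinv$ and $\sH_\mu(\alpha \given \cF \vee \sinv) < \rh_{G,\mu}(\xi \given \cF) + \delta$, and apply the countable sub-additivity of Rokhlin entropy recalled above to the increasing chain $\Omega \subseteq \Omega \vee \salg_G(\alpha)$ over the base $\Sigma \vee \sinv$, getting
$$\rh_{G,\mu}(\xi \given \Sigma) \le \rh_{G,\mu}(\Omega \given \Sigma \vee \sinv) + \rh_{G,\mu}\big(\Omega \vee \salg_G(\alpha) \,\big|\, \Omega \vee \Sigma \vee \sinv\big) \le \rh_{G,\mu}(\Omega) + \sH_\mu(\alpha \given \cF \vee \sinv) < \epsilon + \rh_{G,\mu}(\xi \given \cF) + \delta ,$$
and then letting $\delta \downarrow 0$.

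To construct $\Omega$ it suffices, given $\eta > 0$, to produce a Borel complete section $S \subseteq X$ (a Borel set meeting every $G$-orbit) with $\mu(S) \le \eta$ for all $\mu \in \M_G(X)$ and with $\salg_G(S)$ class-bijective: choosing $\eta$ with $\sH(\eta, 1 - \eta) < \epsilon$ and setting $\Omega = \salg_G(S)$ then gives a countably generated $G$-invariant class-bijective $\sigma$-algebra with $\rh_{G,\mu}(\Omega) \le \sH_\mu(\{S, X \setminus S\}) = \sH(\mu(S), 1 - \mu(S)) < \epsilon$ for all $\mu$. The uniform measure bound is automatic for a suitable $S$: for finite $F \subseteq G$, let $S$ be a maximal (with respect to $\subseteq$) Borel set that is $(F^{-1}F)$-discrete within orbits, i.e.\ containing no two distinct $G$-orbit-equivalent points $s, e\cdot s$ with $e \in F^{-1}F$. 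Such $S$ exists since the relevant ``$(F^{-1}F)$-neighbor'' graph on $X$ is Borel of degree $\le |F|^2$ and bounded-degree Borel graphs have maximal independent Borel sets; $S$ meets every orbit by maximality; and for a free action the translates $\{f \cdot S : f \in F\}$ are pairwise disjoint (from $f_1 \cdot s_1 = f_2 \cdot s_2$ one gets $f_2^{-1} f_1 \in F^{-1}F$ with $s_1 = s_2$, forcing $f_1 = f_2$), so $\mu(S) \le 1/|F|$; taking $|F| \ge 1/\eta$ suffices.

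The remaining and main point is to also make $\salg_G(S)$ class-bijective. For a free action this is exactly the requirement that for every $x$ the trace $A_x := \{g \in G : g \cdot x \in S\}$ has trivial right stabilizer $\{g : A_x g = A_x\}$, since the $\salg_G(S)$-class of $x$ inside its orbit equals $\{g \cdot x : A_x g = A_x\}$. A plain maximal discrete section generally fails this (over a $\Z^d$-action, for instance, its traces can be sublattices), so one must superimpose on $S$ further Borel marker structure --- a family of much sparser ``offset'' sections positioned to destroy every periodicity of the trace within each orbit --- without spoiling the uniform measure bound. Performing this symmetry-breaking Borel-uniformly across all orbits and all invariant measures simultaneously is the crux of the argument; it is carried out with the marker-lemma techniques for aperiodic countable Borel equivalence relations developed for the Rokhlin-entropy form of Krieger's finite generator theorem \cite{S1}. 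The general aperiodic (non-free) case requires only minor modifications of the same method. The rest --- assembling $\Omega$ and the entropy bookkeeping --- is routine.
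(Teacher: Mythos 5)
Your treatment of the ``in particular'' clause is exactly the paper's: the second inequality is monotonicity of relative Rokhlin entropy in the conditioning algebra, and the first is sub-additivity, $\rh_{G,\mu}(\xi \given \Sigma) \leq \rh_{G,\mu}(\cF \given \Sigma) + \rh_{G,\mu}(\xi \given \cF) \leq \epsilon + \rh_{G,\mu}(\xi \given \cF)$; your class-bijectivity argument for $\cF$ is also fine. The divergence is in how $\Omega$ is obtained. The paper constructs nothing: it cites the Seward--Tucker-Drob structurability theorem \cite{ST14}, which provides a $G$-equivariant class-bijective Borel map $\phi : X \rightarrow \{0,1\}^G$ with $\rh_G(\{0,1\}^G, \phi_*(\mu)) < \epsilon$ for \emph{all} $\mu \in \M_G(X)$, and sets $\Omega = \phi^{-1}(\Borel(\{0,1\}^G))$. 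Your reduction to a Borel complete section $S$ of uniformly small measure with $\salg_G(S)$ class-bijective is equivalent to that theorem (take $\phi(x)(g) = \chi_S(g^{-1} \cdot x)$), and your diagnosis --- the measure bound is easy via maximal discrete sections, the class-bijectivity of the trace is the crux --- is accurate. But the proposal stops precisely at that crux: the Borel-uniform symmetry-breaking you describe is the entire content of \cite{ST14}, not a routine application of the marker techniques of \cite{S1}, and the non-free case is not a ``minor modification'' --- the paper explicitly notes that it is separate work in preparation. As written, the existence of $\Omega$ is asserted rather than proved; the correct fix is to invoke \cite{ST14} for exactly this statement, which is what the paper does.
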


\begin{proof}
By work of the author and Tucker-Drob (see \cite{ST14} for free actions; the case of non-free actions is in preparation), there is a $G$-equivariant class-bijective Borel map $\phi : X \rightarrow \{0, 1\}^G$ satisfying $\rh_G(\{0,1\}^G, \phi_*(\mu)) < \epsilon$ for all $\mu \in \M_G(X)$. Set $\Omega = \phi^{-1}(\Borel(\{0,1\}^G))$. Clearly $\rh_{G,\mu}(\Omega) < \epsilon$ for all $\mu \in \M_G(X)$. Now fix $\Sigma$, $\xi$, and $\mu$ and set $\cF = \Sigma \vee \Omega$. Clearly $\rh_{G,\mu}(\xi \given \cF) \leq \rh_{G,\mu}(\xi \given \Sigma)$ since $\cF \supseteq \Sigma$, and the other inequality follows from sub-additivity:
\begin{equation*}
\rh_{G,\mu}(\xi \given \Sigma) \leq \rh_{G,\mu}(\cF \given \Sigma) + \rh_{G,\mu}(\xi \given \cF) \leq \epsilon + \rh_{G,\mu}(\xi \given \cF)\qedhere
\end{equation*}
\end{proof}

\section{Non-free Bernoulli shifts and Bernoulli partitions} \label{sec:nonfree}

For a countable group $G$ we let $\Sub(G)$ denote the space of all subgroups of $G$. A base for the topology on $\Sub(G)$ is given by the basic open sets $\{H \in \Sub(G) : H \cap T = F\}$ as $F \subseteq T$ range over the finite subsets of $G$. An \emph{invariant random subgroup}, or \emph{IRS}, of $G$ is a Borel probability measure $\theta$ on $\Sub(G)$ which is invariant under the conjugation action of $G$. This concept was first introduced in \cite{AGV}. Every {\pmp} action $G \acts (X, \mu)$ produces an IRS $\Stab_*(\mu)$ via the push-forward of $\mu$ under the stabilizer map $\Stab : X \rightarrow \Sub(G)$. We call $\Stab_*(\mu)$ the \emph{stabilizer type} of $G \acts (X, \mu)$.

\begin{lem} \label{lem:stabequal}
Let $G \acts (X, \mu)$ and $G \acts (Y, \nu)$ be {\pmp} actions and let $\phi : X \rightarrow Y$ be a factor map. The following are equivalent:
\begin{enumerate}
\item [\rm (i)] $\phi$ is class-bijective;
\item [\rm (ii)] $\Stab(\phi(x)) = \Stab(x)$ for $\mu$-almost-every $x \in X$;
\item [\rm (iii)] $\Stab_*(\mu) = \Stab_*(\nu)$.
\end{enumerate}
\end{lem}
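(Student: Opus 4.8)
The plan is to establish the chain (i) $\Rightarrow$ (ii) $\Rightarrow$ (iii) $\Rightarrow$ (i), so it suffices to verify three implications, the last being the only substantive one. The first implication (i) $\Rightarrow$ (ii) is essentially unwinding the definition: class-bijectivity of $\phi$ in the measure-theoretic sense was \emph{defined} in Section~\ref{sec:prelim} to mean $\Stab(\phi(x)) = \Stab(x)$ for $\mu$-almost-every $x$, so there is nothing to prove. For (ii) $\Rightarrow$ (iii), note that $\phi$ is $G$-equivariant, so $\Stab \circ \phi$ and $\Stab$ agree as maps $X \to \Sub(G)$ on a $\mu$-conull set; pushing $\mu$ forward gives $\Stab_*(\phi_*\mu) = \Stab_*(\mu)$, and since $\phi_*\mu = \nu$ this is exactly $\Stab_*(\nu) = \Stab_*(\mu)$. (One should observe in passing that $\Stab$ is Borel, which is standard.)

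The real content is (iii) $\Rightarrow$ (i), i.e.\ recovering the almost-everywhere pointwise equality of stabilizers from the mere equality of their distributions. First I would record the general fact that, by $G$-equivariance, $\Stab(\phi(x)) \supseteq \Stab(x)$ \emph{always} holds for every $x \in X$: if $g \cdot x = x$ then $g \cdot \phi(x) = \phi(g \cdot x) = \phi(x)$. So what must be shown is that the reverse inclusion holds $\mu$-almost-everywhere, i.e.\ that the set $B = \{x : \Stab(x) \subsetneq \Stab(\phi(x))\}$ is $\mu$-null. The idea is that on $B$ the stabilizer strictly shrinks under $\phi$, which forces the distribution $\Stab_*(\mu)$ to ``lose mass'' relative to $\Stab_*(\nu)$ at the corresponding subgroups, contradicting (iii). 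To make this precise I would use the disintegration $\mu = \int_Y \mu_y \, d\nu(y)$ of $\mu$ over $\nu = \phi_*\mu$: for $\nu$-almost-every $y$, the fiber measure $\mu_y$ is supported on $\phi^{-1}(y)$, and for every $x$ in that fiber $\Stab(x) \le \Stab(y)$. Thus $\Stab_*(\mu_y)$ is a probability measure on the (countable) set $\Sub(\Stab(y))$ of subgroups of $\Stab(y)$, and $\Stab_*(\mu) = \int_Y \Stab_*(\mu_y)\, d\nu(y)$. Meanwhile $\Stab_*(\nu)$ is the push-forward of $\nu$ under $y \mapsto \Stab(y)$. Comparing these via a suitable test function — e.g.\ for a fixed finite set $T \subseteq G$, integrate the indicator of $\{H : H \cap T = \Stab(y) \cap T\}$, which detects whether the stabilizer of $x$ already contains all of $\Stab(y) \cap T$ — one finds that equality $\Stab_*(\mu) = \Stab_*(\nu)$ forces $\Stab(x) \cap T = \Stab(y) \cap T$ for $\mu_y$-almost-every $x$, for $\nu$-almost-every $y$. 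Letting $T$ range over an exhaustion of $G$ by finite sets and taking a countable intersection gives $\Stab(x) = \Stab(\phi(x))$ for $\mu$-almost-every $x$, which is (ii), hence (i).

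The main obstacle I anticipate is the measurability bookkeeping in the last step: one must ensure that $y \mapsto \Stab(y)$, the disintegration $y \mapsto \mu_y$, and the various subgroup-valued maps are jointly Borel so that the integral identities $\Stab_*(\mu) = \int \Stab_*(\mu_y)\, d\nu(y)$ and the test-function comparisons are legitimate. This is routine given that $\Sub(G)$ is a standard Borel space with the indicated topology and that $\Stab : X \to \Sub(G)$ is Borel, but it is where care is needed. An alternative, possibly cleaner, route to (iii) $\Rightarrow$ (ii) avoids disintegration entirely: consider the Borel map $x \mapsto (\Stab(x), \Stab(\phi(x))) \in \Sub(G) \times \Sub(G)$; its push-forward $\rho$ is a coupling of $\Stab_*(\mu)$ with $\Stab_*(\nu)$ concentrated on the set $\{(H, K) : H \le K\}$, and when the two marginals coincide such a coupling must be concentrated on the diagonal (because on a countable group a sub-probability-measure comparison forces $H = K$ $\rho$-a.s. when $H \le K$ always and the laws of $H$ and $K$ agree). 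I would present whichever of these two arguments turns out shorter, but expect the coupling argument to be the most economical.
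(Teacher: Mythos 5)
Your proposal is correct, and in its streamlined form it is essentially the paper's argument. The chain (i) $\Leftrightarrow$ (ii) $\Rightarrow$ (iii) is handled the same way, and your coupling route for (iii) $\Rightarrow$ (ii) collapses to exactly the paper's computation: since $\phi$ is equivariant, $\Stab(x) \subseteq \Stab(\phi(x))$ for all $x$, so for each fixed $g \in G$ the event $\{g \in \Stab(x)\}$ is contained in $\{g \in \Stab(\phi(x))\}$, and equality of the two pushforward measures on the single test set $\{\Gamma \leq G : g \in \Gamma\}$ gives $\mu(\{x : g \in \Stab(\phi(x)) \setminus \Stab(x)\}) = 0$; countability of $G$ then finishes. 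This is also precisely how one proves the coupling fact you invoke (a coupling concentrated on $\{(H,K) : H \leq K\}$ with equal marginals sits on the diagonal), so you should make that one-line verification explicit rather than cite it as a known principle. Your first route via disintegration works but is heavier than needed, and it contains one false parenthetical: $\Sub(\Stab(y))$ need not be countable (a countable group can have uncountably many subgroups, e.g. $\bigoplus_{\N} \Z/2\Z$); fortunately nothing in your test-function comparison actually uses that claim — what matters is only the countability of $G$, which your coupling argument identifies correctly.
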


\begin{proof}
(i) and (ii) are equivalent by definition, and (ii) clearly implies (iii). We will argue (iii) implies (ii). So assume $\Stab_*(\mu) = \Stab_*(\nu)$. Then for every $g \in G$ we have
\begin{align*}
0 & = \Stab_*(\nu)(\{\Gamma \leq G : g \in \Gamma\}) - \Stab_*(\mu)(\{\Gamma \leq G : g \in \Gamma\})\\
 & = \mu(\{x \in X : g \in \Stab(\phi(x)) \setminus \Stab(x)\}).
\end{align*}
As $G$ is countable it follows that $\mu(\{x \in X : \Stab(\phi(x)) \neq \Stab(x)\}) = 0$.
\end{proof}

Every IRS of $G$ is the stabilizer type of some {\pmp} action of $G$ \cite{AGV}. This fact follows from the construction of non-free Bernoulli shifts which we now discuss. Let $(L, \lambda)$ be a standard probability space. We let $G$ act on $L^G$ by the standard left-shift action: $(g \cdot x)(t) = x(g^{-1} t)$ for $g, t \in G$ and $x \in L^G$. For $H \in \Sub(G)$, we identify $L^{H \backslash G}$ with the set of points $x \in L^G$ with $H \subseteq \Stab(x)$, and we consider the corresponding Borel probability measure $\lambda^{H \backslash G}$ on $L^G$ which is supported on $L^{H \backslash G}$. If $\theta$ is an IRS of $G$ which is supported on the infinite-index subgroups of $G$, then we define the \emph{type-$\theta$ Bernoulli shift with base space $(L, \lambda)$} to be the standard shift-action of $G$ on $L^G$ equipped with the $G$-invariant probability measure
$$\lambda^{\theta \backslash G} : = \int_{H \in \Sub(G)} \lambda^{H \backslash G} \ d \theta(H).$$
We say simply `Bernoulli shift' when we are referring to the type-$\delta_{\{1_G\}}$ Bernoulli shift, where $\delta_{\{1_G\}}$ is the single-point mass. If $H \in \Sub(G)$ has infinite index in $G$ and $\lambda$ is non-trivial, then $\Stab(x) = H$ for $\lambda^{H \backslash G}$-almost-every $x \in L^G$. Thus $\theta$ is indeed the stabilizer type of $G \acts (L^G, \lambda^{\theta \backslash G})$. Note that if $\theta = \Stab_*(\mu)$ for a {\pmp} action $G \acts (X, \mu)$, then $\theta$ is supported on the infinite-index subgroups of $G$ if and only if the action $G \acts (X, \mu)$ is aperiodic.

It is well known that Bernoulli factors are in one-to-one correspondence with partitions possessing certain independence properties, commonly referred to as Bernoulli partitions. As we work in the less familiar setting of non-free Bernoulli shifts, we review this correspondence in detail.

\begin{defn} \label{defn:bover}
Let $G \acts (X, \mu)$ be an aperiodic {\pmp} action. Set $\theta = \Stab_*(\mu)$ and let $\mu = \int \mu_\Gamma \ d \theta(\Gamma)$ be the disintegration of $\mu$ over $\theta$. We say that a (possibly uncountable) partition $\alpha$ is \emph{$G$-Bernoulli} if: (i) $\alpha$ is independent with the stabilizer map; and (ii) for $\theta$-almost-every $\Gamma \leq G$ and every finite set $W \subseteq G$ which maps injectively into $G / \Gamma$, the partitions $w^{-1} \cdot \alpha$, $w \in W$, are mutually $\mu_\Gamma$-independent. Furthermore, if $\cF$ is a $G$-invariant sub-$\sigma$-algebra then we say $\alpha$ is \emph{$G$-Bernoulli over $\cF$} if in addition we have: (iii) $\salg_G(\alpha)$ is independent with $\cF$ relative to the stabilizer map.
\end{defn}

We remark that it is standard in the literature to say that a partition $\alpha$ as in the above definition is \emph{$G$-Bernoulli relative to $\cF$}. We have no intention of overturning this convention. However, as we frequently use the term 'relatively independent' we feel that in the present paper its best to say `$G$-Bernoulli over' in order to avoid any potential misunderstanding.

We verify the correspondence between Bernoulli factors and Bernoulli partitions.

\begin{lem} \label{lem:oneone}
Let $G \acts (X, \mu)$ be an aperiodic {\pmp} action with stabilizer type $\theta$, and let $\mu = \int_{\Gamma \leq G} \mu_\Gamma \ d \theta(\Gamma)$ be the disintegration of $\mu$ with respect to $\Stab$. Also let $G \acts (Y, \nu)$ be a factor via the map $f : X \rightarrow Y$, set $\nu_\Gamma = f_*(\mu_\Gamma)$, and set $\cF = f^{-1}(\Borel(Y))$. Fix a non-trivial (possibly uncountable) probability space $(L, \lambda)$ and let $\beta$ be the partition of $L$ into points. Then the following two classes of objects are naturally in one-to-one correspondence:
\begin{enumerate}
\item[\rm (1)] $G$-equivariant maps $q : X \rightarrow L^G$ such that $q \times f$ is a factor map from $G \acts (X, \mu)$ onto $G \acts (L^G \times Y, \int \lambda^{\Gamma \backslash G} \times \nu_\Gamma \ d \theta(\Gamma))$;
\item[\rm (2)] Borel measure-preserving maps $Q : (X, \mu) \rightarrow (L, \lambda)$ such that the partition $\alpha = Q^{-1}(\beta)$ is $G$-Bernoulli over $\cF$.
\end{enumerate}
Specifically, the correspondence is given by $q(x)(g) = Q(g^{-1} \cdot x)$ for $g \in G$, $x \in X$.
\end{lem}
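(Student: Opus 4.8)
The plan is to make the displayed correspondence $q(x)(g) = Q(g^{-1}\cdot x)$ precise in both directions and check it is a bijection. The essential point is to translate, one at a time, each clause of the definition of ``$G$-Bernoulli over $\cF$'' (independence from $\Stab$, mutual independence of translates along $W$-sets injecting into $G/\Gamma$, and independence of $\salg_G(\alpha)$ from $\cF$ relative to $\Stab$) into the statement that $q\times f$ is a factor map onto the non-free Bernoulli shift $G\acts(L^G\times Y,\int \lambda^{\Gamma\backslash G}\times\nu_\Gamma\,d\theta(\Gamma))$.

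First I would address measurability and equivariance. Given $Q:(X,\mu)\to(L,\lambda)$ Borel, define $q(x)(g)=Q(g^{-1}\cdot x)$; this is Borel $X\to L^G$, and equivariance $q(h\cdot x)(g)=Q(g^{-1}h\cdot x)=q(x)(h^{-1}g)=(h\cdot q(x))(g)$ is immediate. Conversely, given equivariant $q:X\to L^G$, set $Q(x)=q(x)(1_G)$; then $q(x)(g)=q(x)(g\cdot 1_G)$ and, using equivariance of $q$ with $h=g^{-1}$, we get $q(x)(g) = (g^{-1}\cdot q(x))(1_G) \cdot$\,---\,more carefully, $(g^{-1}\cdot q(x))(t) = q(x)(gt)$, so $q(g^{-1}\cdot x)(1_G) = q(x)(g)$, i.e.\ $Q(g^{-1}\cdot x) = q(x)(g)$, recovering the formula. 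So the two assignments are mutually inverse as maps of Borel functions, and it remains to match up the two defining properties (factor map onto the stated target, versus $\alpha=Q^{-1}(\beta)$ being $G$-Bernoulli over $\cF$).

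Next I would identify what ``$q_* $ followed by the shift structure'' means coordinate-wise. For a finite $W\subseteq G$, the $W$-marginal of $q(x)$ records $(Q(w^{-1}\cdot x))_{w\in W}$, i.e.\ the $\alpha$-name of $x$ along $W$, which is exactly the atom of $\alpha^W=\bigvee_{w\in W}w^{-1}\cdot\alpha$ containing $x$. Thus the pushforward $q_*(\mu)$ has $W$-marginal equal to $\dist_\mu(\alpha^W)$, and the assertion that $(q\times f)_*(\mu) = \int\lambda^{\Gamma\backslash G}\times\nu_\Gamma\,d\theta(\Gamma)$ is equivalent, by working in the disintegration over $\theta=\Stab_*(\mu)$ (note $\Stab$ is $f$- and $q$-compatible since both maps are equivariant and the target's stabilizer data is as described), to: for $\theta$-a.e.\ $\Gamma$, for every finite $W$ injecting into $G/\Gamma$ and every $B\in\cF$, the $\mu_\Gamma$-joint law of $(w^{-1}\cdot\alpha)_{w\in W}$ together with $B$ is the product of the marginals. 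Splitting this into the ``$\alpha^W$ part'' and the ``$\cF$ part'' gives precisely clauses (ii) and (iii) of Definition \ref{defn:bover}; clause (i) (independence of $\alpha$ from $\Stab$) is the statement that $q_*(\mu)$, restricted to a single coordinate and disintegrated over $\theta$, has $\Gamma$-independent marginal equal to $\lambda$, which is what makes the measure $\int\lambda^{\Gamma\backslash G}\times\nu_\Gamma\,d\theta$ rather than some $\Gamma$-dependent product. One also needs that $q\times f$ is genuinely onto its image measure space and $G$-equivariant: equivariance is already checked, and ``onto'' in the measure-theoretic sense is automatic once the pushforward measure is identified.

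The step I expect to require the most care is the bookkeeping in the disintegration over the IRS: one must check that $\Stab\circ q = \Stab$ $\mu$-a.e.\ (so that $q$ does not collapse or enlarge stabilizers in a way that spoils the identification of the target measure), verify that a finite $W\subseteq G$ ``maps injectively into $G/\Gamma$'' is exactly the condition under which the $W$-coordinates of a point of $L^{\Gamma\backslash G}$ are unconstrained, and handle the measurability of $\Gamma\mapsto\mu_\Gamma$ and the a.e.\ quantifiers uniformly. Everything else reduces to the standard fact that a Borel map into a product space is determined by, and its pushforward computed from, its finite-dimensional marginals, together with the observation that mutual independence of a family of $\sigma$-algebras is equivalent to factorization of all finite joint distributions. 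I would organize the write-up as: (a) the two assignments are well-defined and mutually inverse on Borel functions; (b) $(q\times f)_*(\mu)$ has the claimed form $\iff$ all finite marginals factor appropriately $\iff$ clauses (i)--(iii) hold for $\alpha=Q^{-1}(\beta)$; (c) conclude the bijection.
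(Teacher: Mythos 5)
Your proposal is correct and follows essentially the same route as the paper: both directions are verified fiberwise in the disintegration over the stabilizer IRS, matching the product structure of $\int \lambda^{\Gamma \backslash G} \times \nu_\Gamma \, d\theta(\Gamma)$ against clauses (i)--(iii) of Definition \ref{defn:bover}. The one step you only flag---that $q \times f$ preserves stabilizers almost everywhere, which is what lets you pass from equality of the averaged pushforward to the fiberwise identities---is supplied exactly as in the paper by noting that $\lambda^{\Gamma \backslash G} \times \nu_\Gamma$-almost-every point has stabilizer precisely $\Gamma$ (non-triviality of $\lambda$ plus aperiodicity), so the target has stabilizer type $\theta$ and Lemma \ref{lem:stabequal} makes the factor map class-bijective.
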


\begin{proof}
(1) $\rightarrow$ (2). Fix $q : X \rightarrow L^G$ with the stated property. Setting $Q(x) = q(x)(1_G)$ for all $x \in X$, its immediate that $Q_*(\mu) = \lambda$. Let us abuse notation slightly by letting $\beta = \{B_\ell : \ell \in L\}$ also denote the partition of $L^G \times Y$ defined by $B_\ell = \{(z, y) : z(1_G) = \ell\}$, and by letting $\Borel(Y)$ also denote the $\sigma$-algebra of Borel subsets of $L^G \times Y$ measurable with respect to the second factor. Set $\kappa = \int \lambda^{\Gamma \backslash G} \times \nu_\Gamma \ d \theta(\Gamma)$, set $\phi = \Stab_*(\kappa)$, and let $\kappa = \int \kappa_\Gamma \ d \phi(\Gamma)$ be the disintegration of $\kappa$ relative to $\Stab$. Notice that $\lambda^{\Gamma \backslash G} \times \nu_\Gamma$-almost-every point in $L^G \times Y$ has stabilizer $\Gamma$, so in fact $\kappa_\Gamma = \lambda^{\Gamma \backslash G} \times \nu_\Gamma$. From this observation it now quickly follows that $\beta$ is $G$-Bernoulli over $\Borel(Y)$. Since $(q \times f)$ preserves stabilizers, meaning $\Stab(x) = \Stab((q \times f)(x))$ for $\mu$-almost-every $x \in X$, we conclude that $\alpha = (q \times f)^{-1}(\beta)$ is $G$-Bernoulli over $\cF = (q \times f)^{-1}(\Borel(Y))$.

(2) $\rightarrow$ (1). Suppose that $Q : (X, \mu) \rightarrow (L, \lambda)$ and $\alpha = Q^{-1}(\beta)$ have the stated properties, and define $q(x)(g) = Q(g^{-1} \cdot x)$ for $g \in G$, $x \in X$. Let's say $\alpha = \{A_\ell : \ell \in L\}$ where $A_\ell = Q^{-1}(\ell)$. It suffices to show that $(q \times f)_*(\mu_\Gamma) = \lambda^{\Gamma \backslash G} \times \nu_\Gamma$ for $\theta$-almost-every $\Gamma$. Property (iii) of Definition \ref{defn:bover} says that, for $\theta$-almost-every $\Gamma$, $\salg_G(\alpha)$ and $\cF$ are $\mu_\Gamma$-independent and thus $(q \times f)_*(\mu_\Gamma) = q_*(\mu_\Gamma) \times \nu_\Gamma$. Finally, properties (i) and (ii) immediately imply that $q_*(\mu_\Gamma) = \lambda^{\Gamma \backslash G}$ for $\theta$-almost-every $\Gamma$.
\end{proof}

We will also work with the following closely related notion.

\begin{defn} \label{defn:wbover}
Let $G \acts (X, \mu)$ be an aperiodic {\pmp} action, and let $\cF$ be a $G$-invariant sub-$\sigma$-algebra. Set $\theta = \Stab_*(\mu)$ and let $\mu = \int \mu_\Gamma \ d \theta(\Gamma)$ be the disintegration of $\mu$ over $\theta$. We say that a partition $\alpha$ is \emph{weakly $G$-Bernoulli over $\cF$} if (a) $\alpha$ is independent with the stabilizer map relative to $\cF$, and (b) for $\theta$-almost-every $\Gamma \leq G$ and every finite set $W \subseteq G$ which maps injectively into $G / \Gamma$, the partitions $w^{-1} \cdot \alpha$, $w \in W$, are mutually $\mu_\Gamma$-independent relative to $\cF$.
\end{defn}

We would like to explicitly observe how these two definitions simplify in the case of free actions. We leave the proof of the following lemma as an easy exercise.

\begin{lem} \label{lem:simpbern}
Let $G \acts (X, \mu)$ be a free {\pmp} action (such as an aperiodic action with $G = \Z$), let $\cF$ be a $G$-invariant sub-$\sigma$-algebra, and let $\alpha$ be a Borel partition. Then:
\begin{enumerate}
\item[\rm (1)] $\alpha$ is $G$-Bernoulli over $\cF$ if and only if the $G$-translates of $\alpha$ are mutually independent and $\salg_G(\alpha)$ is independent with $\cF$;
\item[\rm (2)] $\alpha$ is weakly $G$-Bernoulli over $\cF$ if and only if the $G$-translates of $\alpha$ are mutually independent relative to $\cF$.
\end{enumerate}
\end{lem}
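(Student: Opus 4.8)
The plan is to exploit freeness to collapse all of the structure coming from stabilizers. Since $G \acts (X,\mu)$ is free, the stabilizer map $\Stab : X \to \Sub(G)$ equals $\mu$-almost-everywhere the constant map $x \mapsto \{1_G\}$. Consequently $\theta = \Stab_*(\mu) = \delta_{\{1_G\}}$ is a point mass, the disintegration $\mu = \int \mu_\Gamma \, d\theta(\Gamma)$ is trivial (so $\mu_\Gamma = \mu$ for the unique relevant $\Gamma = \{1_G\}$), and the sub-$\sigma$-algebra generated by $\Stab$ agrees modulo $\mu$-null sets with the trivial $\sigma$-algebra $\{\varnothing, X\}$. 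For the parenthetical remark, note that every subgroup of $\Z$ of infinite index is trivial, so an aperiodic $\Z$-action is automatically free and the lemma applies.

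First I would record three immediate consequences. (1) Any partition is independent with the trivial $\sigma$-algebra, so condition (i) of Definition \ref{defn:bover} and condition (a) of Definition \ref{defn:wbover} hold automatically and may be discarded. (2) Relative independence with respect to an almost-everywhere trivial $\sigma$-algebra coincides with absolute independence: this is immediate from the definition of relative independence, since the disintegration of $\mu$ over the trivial $\sigma$-algebra is $\mu$ itself (alternatively, apply Lemma \ref{lem:ind}). Hence condition (iii) of Definition \ref{defn:bover} becomes ``$\salg_G(\alpha)$ is independent with $\cF$''. (3) Since $\Gamma = \{1_G\}$ we have $G/\Gamma = G$, so a finite set $W \subseteq G$ maps injectively into $G/\Gamma$ if and only if it is an arbitrary finite subset of $G$.

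Next I would reassemble the definitions using (1)--(3). For Definition \ref{defn:bover}, what remains is: for every finite $W \subseteq G$, the partitions $w^{-1}\cdot\alpha$, $w\in W$, are mutually $\mu$-independent; equivalently, since mutual independence of a family is equivalent to mutual independence of each of its finite subfamilies, the full collection of $G$-translates of $\alpha$ is mutually independent --- together with $\salg_G(\alpha)$ being independent with $\cF$. This is exactly statement (1). For Definition \ref{defn:wbover}, condition (b) becomes: for every finite $W\subseteq G$ the partitions $w^{-1}\cdot\alpha$ are mutually independent relative to $\cF$, i.e.\ the $G$-translates of $\alpha$ are mutually independent relative to $\cF$, which is statement (2).

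There is no real obstacle here: the content is entirely the observation that freeness trivializes the stabilizer map, and the only points requiring a word of care are the equivalence between relative independence over a $\mu$-almost-everywhere trivial $\sigma$-algebra and absolute independence, and the standard reduction of mutual independence of an infinite family to that of its finite subfamilies.
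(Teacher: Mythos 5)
Your proof is correct and is precisely the routine verification the paper leaves as an exercise: freeness trivializes the stabilizer map, so conditions (i)/(a) become vacuous, the disintegration over $\Stab$ collapses to $\mu$ itself (turning relative independence over the stabilizer $\sigma$-algebra into absolute independence), and injectivity into $G/\{1_G\}$ is no constraint. The two points you flag for care — relative independence over an a.e.\ trivial $\sigma$-algebra versus absolute independence, and reducing mutual independence of the full family to finite subfamilies — are exactly the right ones, and both are standard.
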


The notion of weak Bernoullicity will be particularly convenient when discussing free non-ergodic actions, as the following lemma illustrates.

\begin{lem} \label{lem:nonergb}
Let $G \acts (X, \mu)$ be a free {\pmp} action, let $\alpha$ be a partition, and let $\cF$ be a $G$-invariant sub-$\sigma$-algebra. The following are equivalent:
\begin{enumerate}
\item[\rm (1)] $\alpha$ is weakly $(G, \mu)$-Bernoulli over $\sinv_G$ and $\salg_G(\alpha)$ is independent with $\cF$ relative to $\sinv_G$;
\item[\rm (2)] $\alpha$ is $(G, \nu)$-Bernoulli over $\cF$ for almost-every ergodic component $\nu$ of $\mu$.
\end{enumerate}
\end{lem}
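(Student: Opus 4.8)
The natural strategy is to disintegrate everything over the invariant $\sigma$-algebra $\sinv_G$ and argue that each of the two conditions is equivalent to a statement about the disintegrated measures $\mu_y$, which are exactly (representatives of) the ergodic components $\nu$ of $\mu$. Let $\pi : (X,\mu) \to (Z,\zeta)$ be the factor associated to $\sinv_G$, so that $\mu = \int_Z \mu_z \, d\zeta(z)$ with each $\mu_z$ ergodic, and for $\zeta$-almost every $z$ the action $G \acts (X,\mu_z)$ is free (freeness is inherited almost surely on ergodic components). Since $G$ is countable and the action is free, $\Stab_*(\mu_z) = \delta_{\{1_G\}}$ for a.e. $z$, so by Lemma \ref{lem:simpbern} being $(G,\mu_z)$-Bernoulli over $\cF$ just means the $G$-translates of $\alpha$ are mutually $\mu_z$-independent and $\salg_G(\alpha)$ is $\mu_z$-independent with $\cF$.

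First I would handle the Bernoullicity part. By Lemma \ref{lem:simpbern}(2), $\alpha$ is weakly $(G,\mu)$-Bernoulli over $\sinv_G$ precisely when the $G$-translates of $\alpha$ are mutually independent \emph{relative to} $\sinv_G$; unwinding the definition of relative independence via the disintegration $\mu = \int_Z \mu_z \, d\zeta(z)$, this says exactly that for $\zeta$-a.e. $z$ the $G$-translates of $\alpha$ are mutually $\mu_z$-independent — i.e. the weakly-Bernoulli condition over $\sinv_G$ is, essentially by definition, the a.e.-fiberwise version of (unconditional, since $\Stab$ is a.e. trivial) Bernoullicity. So condition (1)'s first clause is equivalent to: $\alpha$'s $G$-translates are mutually $\nu$-independent for a.e. ergodic component $\nu$.

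Next I would handle the independence-with-$\cF$ part. The clause in (1) that ``$\salg_G(\alpha)$ is independent with $\cF$ relative to $\sinv_G$'' again unwinds, through the disintegration over $Z$, to the statement that $\salg_G(\alpha)$ and $\cF$ are $\mu_z$-independent for $\zeta$-a.e. $z$. Combining with the previous paragraph and Lemma \ref{lem:simpbern}(1) applied fiberwise then gives exactly condition (2). Conversely, if (2) holds then for a.e. $z$ the $G$-translates are $\mu_z$-mutually independent and $\salg_G(\alpha)$ is $\mu_z$-independent with $\cF$; integrating these fiberwise statements against $\zeta$ recovers the two relative-independence clauses of (1). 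Throughout, one should be slightly careful about the measurability of $z \mapsto \mu_z$-independence statements so that ``$\zeta$-almost every $z$'' and ``almost every ergodic component $\nu$'' refer to the same null set — this is routine since countably many sets $A, C$ suffice to test independence (after passing to countably generated versions of the $\sigma$-algebras mod null sets) and each map $z \mapsto \mu_z(A \cap C) - \mu_z(A)\mu_z(C)$ is Borel.

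The only mildly delicate point — and the step I would flag as the main obstacle — is reconciling the two ways the disintegration enters: the disintegration $\mu = \int \mu_\Gamma \, d\theta(\Gamma)$ over the stabilizer map (used in Definitions \ref{defn:bover} and \ref{defn:wbover}) versus the disintegration over $\sinv_G$. Because the action is free, $\theta = \delta_{\{1_G\}}$ and the stabilizer disintegration is trivial ($\mu_\Gamma = \mu$ for the single $\Gamma = \{1_G\}$), so the conditioning on the stabilizer map in those definitions drops out; but when we then further disintegrate over $\sinv_G$ we must make sure that for a.e. $z$ the measure $\mu_z$ is itself free (so that Lemma \ref{lem:simpbern} applies on the fiber) and that the stabilizer disintegration of $\mu_z$ is again trivial. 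This is where ``$G$ countable'' is used: the set of $x$ with $\Stab(x) \neq \{1_G\}$ is $G$-invariant (hence $\sinv_G$-measurable) and $\mu$-null, so it is $\mu_z$-null for a.e. $z$. Once this is in place, everything else is a direct translation between ``relative independence over $\sinv_G$'' and ``independence $\mu_z$-a.e.,'' which is immediate from the definitions in Section \ref{sec:prelim}.
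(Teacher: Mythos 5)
Your proposal is correct and follows essentially the same route as the paper: the paper's proof is a two-sentence remark that relative independence over a $\sigma$-algebra $\Sigma$ is the same as fiberwise independence in the disintegration over $\Sigma$, together with the observation that the disintegration over $\sinv_G$ is the ergodic decomposition. Your additional care about freeness passing to almost every ergodic component and about measurability of the fiberwise independence conditions is sound and simply fills in details the paper leaves implicit.
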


\begin{proof}
The equivalence is based off of the simple fact that, for a $\sigma$-algebra $\Sigma$, the property of independence relative to $\Sigma$ is equivalent to the property of $\nu$-independence for almost-every fiber measure $\nu$ in the disintegration of $\mu$ relative to $\Sigma$. One needs to only recall Definitions \ref{defn:bover} and \ref{defn:wbover} and observe that the disintegration of $\mu$ relative to $\sinv_G$ coincides with the ergodic decomposition of $\mu$.
\end{proof}

Finally, we clarify the difference between Bernoullicity and weak Bernoullicity in a special case.

\begin{lem} \label{lem:wsbern}
Let $G \acts (X, \mu)$ be an aperiodic {\pmp} action, let $\cF$ be a $G$-invariant sub-$\sigma$-algebra, and let $\alpha$ be a Borel partition. If $\Stab$ is $\cF$-measurable then the following are equivalent:
\begin{enumerate}
\item[\rm (1)] $\alpha$ is independent with $\cF$ and weakly $G$-Bernoulli over $\cF$;
\item[\rm (2)] $\alpha$ is $G$-Bernoulli over $\cF$.
\end{enumerate}
\end{lem}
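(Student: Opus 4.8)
\textbf{Proof plan for Lemma \ref{lem:wsbern}.}

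The plan is to unwind the relevant definitions and show that, under the hypothesis that $\Stab$ is $\cF$-measurable, each clause of the definition of ``$G$-Bernoulli over $\cF$'' (Definition \ref{defn:bover}) matches up with the corresponding clause of ``independent with $\cF$ and weakly $G$-Bernoulli over $\cF$'' (Definition \ref{defn:wbover} plus the extra independence). Write $\theta = \Stab_*(\mu)$ and let $\mu = \int \mu_\Gamma \, d\theta(\Gamma)$ be the disintegration of $\mu$ over $\theta$, equivalently the disintegration of $\mu$ with respect to the map $\Stab$. The key structural observation is that since $\Stab$ is $\cF$-measurable, the $\sigma$-algebra generated by $\Stab$ is contained in $\cF$; consequently the disintegration of $\mu$ over $\cF$ refines the disintegration over $\Stab$, and for each fixed $\Gamma$ the conditional measures $\mu_\Gamma$ of the latter disintegrate further into the conditional measures of the former. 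This lets me convert ``independence relative to $\cF$ for $\mu$'' into ``independence relative to $\cF$ for $\mu_\Gamma$'' for $\theta$-almost-every $\Gamma$, and conversely.

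First I would treat the direction (2) $\Rightarrow$ (1). Suppose $\alpha$ is $G$-Bernoulli over $\cF$. Clause (iii) of Definition \ref{defn:bover} says $\salg_G(\alpha)$ is independent with $\cF$ relative to $\Stab$; since $\salg(\Stab) \subseteq \cF$, Lemma \ref{lem:ind}-type reasoning (applied with $\Sigma = \cF$, with $\salg(\Stab)$ playing the role of the small algebra) combined with clause (i) — which says $\alpha$ is independent with $\Stab$ — yields that $\salg_G(\alpha)$, and in particular $\alpha$, is independent with $\cF$. This gives the first assertion of (1) and also shows $\salg_G(\alpha)$ is automatically independent with $\cF$ relative to $\cF$ (trivially). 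For weak Bernoullicity over $\cF$, clause (a) of Definition \ref{defn:wbover} (independence of $\alpha$ with $\Stab$ relative to $\cF$) is immediate since $\Stab$ is $\cF$-measurable, so that independence is trivial. For clause (b), I use that clause (ii) of Definition \ref{defn:bover} gives mutual $\mu_\Gamma$-independence of the translates $w^{-1}\cdot\alpha$ for $\theta$-a.e.\ $\Gamma$; since the disintegration of $\mu$ over $\cF$ is subordinate to that over $\Stab$, and clause (iii) makes $\salg_G(\alpha)$ independent from $\cF$ within each $\mu_\Gamma$, the mutual independence of the translates persists in each further fiber, i.e.\ relative to $\cF$.

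For (1) $\Rightarrow$ (2), I run the same correspondence in reverse. Clause (i) of Definition \ref{defn:bover} is immediate because $\Stab$ being $\cF$-measurable together with $\alpha$ being independent with $\cF$ forces $\alpha$ independent with $\Stab$. For clause (iii), since $\Stab$ is $\cF$-measurable, independence of $\salg_G(\alpha)$ with $\cF$ relative to $\Stab$ is implied by (indeed equivalent to, given (i)) plain independence of $\salg_G(\alpha)$ with $\cF$; and I get the latter from the hypothesis in (1) that $\salg_G(\alpha)$ is independent with $\cF$ relative to $\sinv$ — wait, more precisely, (1) directly assumes $\alpha$ is independent with $\cF$, and I would need to promote this to $\salg_G(\alpha)$ independent with $\cF$; this uses weak Bernoullicity over $\cF$ together with $\alpha \perp \cF$ in a standard way (the translates $w^{-1}\cdot\alpha$ are mutually independent relative to $\cF$ and each is independent with $\cF$, so any finite join $\alpha^W$ is independent with $\cF$, and these exhaust a generating family for $\salg_G(\alpha)$, so $\salg_G(\alpha) \perp \cF$). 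Finally clause (ii), mutual $\mu_\Gamma$-independence of translates for $\theta$-a.e.\ $\Gamma$, follows by integrating out: ``relative to $\cF$'' independence of the translates, combined with $\salg_G(\alpha) \perp \cF$ (hence $\salg_G(\alpha)\perp\cF$ within each $\mu_\Gamma$), collapses to unconditional $\mu_\Gamma$-independence.

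I expect the main obstacle to be the careful bookkeeping relating the three relevant disintegrations — over $\Stab$, over $\cF$, and the trivial one — and in particular justifying the passage ``independence relative to $\cF$ plus $\salg_G(\alpha)\perp\cF$ implies independence relative to $\Stab$ (equivalently, within each $\mu_\Gamma$).'' This is exactly the kind of measure-theoretic transitivity encapsulated in Lemma \ref{lem:ind}, applied fiberwise; the delicate point is that one must apply it inside the disintegration over $\Stab$, checking the relevant null-set quantifiers compose correctly. None of the individual steps is deep, but assembling them so that every ``$\theta$-almost-every $\Gamma$'' clause lines up is where the care is needed.
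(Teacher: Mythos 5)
Your plan is correct and follows essentially the same route as the paper's proof: use the containment $\salg(\Stab)\subseteq\cF$ to line up the clauses of Definitions \ref{defn:bover} and \ref{defn:wbover}, and apply Lemma \ref{lem:ind} fiberwise over the disintegration $\mu=\int\mu_\Gamma\,d\theta(\Gamma)$ to pass between conditional and unconditional independence. The bookkeeping you flag at the end (promoting independence of the individual translates to independence of $\alpha^W$ and hence of $\salg_G(\alpha)$, working within each $\mu_\Gamma$) is exactly how the paper's argument proceeds.
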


\begin{proof}
We will refer to properties (i), (ii), and (iii) of Definition \ref{defn:bover} and properties (a) and (b) of Definition \ref{defn:wbover}.

(1) $\Rightarrow$ (2). Since $\Stab$ is $\cF$-measurable and $\alpha$ is independent with $\cF$, we get that $\alpha$ is independent with $\Stab$, establishing (i). An equivalent formulation of property (b) is that for $\theta$-almost-every $\Gamma \leq G$, every finite $W \subseteq G$, and every $g \in G$ with $g \Gamma \not\subseteq W \Gamma$, we have that $g^{-1} \cdot \alpha$ and $\alpha^W$ are $\mu_\Gamma$-independent relative to $\cF$. By $G$-invariance, $g^{-1} \cdot \alpha$ is independent with $\cF$. Furthermore, since $\Stab$ is $\cF$-measurable, it follows that $g^{-1} \cdot \alpha$ is $\mu_\Gamma$-independent with $\cF$ for $\theta$-almost-every $\Gamma \leq G$. So it follows from Lemma \ref{lem:ind} that $g^{-1} \cdot \alpha$ and $\alpha^W$ are $\mu_\Gamma$-independent. This establishes property (ii). So for $\theta$-almost-every $\Gamma \leq G$ and every finite $W \subseteq G$ mapping injectively into $G / \Gamma$, the partitions $w^{-1} \cdot \alpha$, $w \in W$, are $\mu_\Gamma$-independent with $\cF$ and mutually $\mu_\Gamma$-independent relative to $\cF$. Hence $\salg_G(\alpha)$ is $\mu_\Gamma$-independent with $\cF$ for $\theta$-almost-every $\Gamma \leq G$. Equivalently, $\salg_G(\alpha)$ and $\cF$ are independent relative to $\Stab$, proving (iii).

(2) $\Rightarrow$ (1). Property (iii) implies $\alpha$ is independent with $\cF$ relative to $\Stab$. Combined with property (i) and Lemma \ref{lem:ind}, it follows that $\alpha$ is independent with $\cF$. Since $\Stab$ is $\cF$-measurable, property (a) is trivially true. Properties (ii) and (iii) imply that for $\theta$-almost-every $\Gamma \leq G$ and every finite $W \subseteq G$ mapping injectively into $G / \Gamma$, the partitions $w^{-1} \cdot \alpha$, $w \in W$, are mutually $\mu_\Gamma$-independent and $\alpha^W$ is $\mu_\Gamma$-independent with $\cF$. It follows immediately that the partitions $w^{-1} \cdot \alpha$, $w \in W$, are mutually $\mu_\Gamma$-independent relative to $\cF$, establishing (b).
\end{proof}

\section{Transformations in the full-group}

For a {\pmp} action $G \acts (X, \mu)$, we let $E_G^X$ denote the \emph{induced orbit equivalence relation}:
$$E_G^X = \{(x, y) : \exists g \in G \ g \cdot x = y\}.$$
The \emph{full-group} of $E_G^X$, denoted $[E_G^X]$, is the set of all Borel bijections $T : X \rightarrow X$ satisfying $T(x) \ E_G^X \ x$ for all $x \in X$. In order to prove our main theorem we will apply classical results, such as {\sinai}'s original factor theorem, to the $\Z$-actions induced by aperiodic elements $T \in [E_G^X]$. Its a simple exercise to check that every $T \in [E_G^X]$ preserves $\mu$.

The group $[E_G^X]$ is quite large, but the following definition introduces an important constraint that is useful in entropy theory.

\begin{defn} \label{defn:express}
Let $G \acts (X, \mu)$ be a {\pmp} action, let $T \in [E_G^X]$, and let $\cF$ be a $G$-invariant sub-$\sigma$-algebra. We say that $T$ is \emph{$\cF$-expressible} if there is a $\cF$-measurable partition $\{Q_g \: g \in G\}$ of $X$ such that $T(x) = g \cdot x$ for every $x \in Q_g$ and all $g \in G$.
\end{defn}

We recall two elementary lemmas from \cite{S2}.

\begin{lem} \label{lem:expmove}
Let $G \acts (X, \mu)$ be a {\pmp} action, let $\cF$ be a $G$-invariant sub-$\sigma$-algebra, and let $T \in [E_G^X]$ be $\cF$-expressible. Then $T(A) \in \salg_G(A) \vee \cF$ for every set $A \in \Borel(X)$. In particular, every $G$-invariant $\sigma$-algebra containing $\cF$ must be $T$-invariant.
\end{lem}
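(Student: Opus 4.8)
\textbf{Proof plan for Lemma~\ref{lem:expmove}.}
The plan is to directly unwind the definition of $\cF$-expressibility and then invoke it twice: once to show $T(A)$ lies in $\salg_G(A) \vee \cF$, and once (applied to $T^{-1}$, which is again $\cF$-expressible) to control $T^{-1}$. First I would fix the $\cF$-measurable partition $\{Q_g : g \in G\}$ witnessing expressibility, so that $T$ agrees with the action of $g$ on $Q_g$. Then for a Borel set $A$, I would write $T(A) = \bigcup_{g \in G} T(A \cap Q_g) = \bigcup_{g \in G} \big( g \cdot (A \cap Q_g) \big)$, using that $T$ restricted to $Q_g$ is the bijection $x \mapsto g\cdot x$. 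The hard part, such as it is, is the bookkeeping: $g\cdot(A\cap Q_g)$ need not equal $(g\cdot A)\cap Q_g$, so I must instead observe that $g\cdot(A\cap Q_g) = (g\cdot A) \cap (g\cdot Q_g)$, and here each $g\cdot A \in \salg_G(A)$ while each $g \cdot Q_g \in \cF$ since $\cF$ is $G$-invariant and $Q_g$ is $\cF$-measurable. Hence every term of the countable union lies in $\salg_G(A) \vee \cF$, and therefore so does $T(A)$.

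For the final sentence, let $\Sigma$ be a $G$-invariant $\sigma$-algebra with $\cF \subseteq \Sigma$, and let $A \in \Sigma$. By what was just shown, $T(A) \in \salg_G(A) \vee \cF \subseteq \Sigma$ (using $G$-invariance of $\Sigma$ for $\salg_G(A) \subseteq \Sigma$ and $\cF \subseteq \Sigma$). To conclude $\Sigma$ is $T$-invariant, I would also need $T^{-1}(A) \in \Sigma$; this follows because $T^{-1} \in [E_G^X]$ is itself $\cF$-expressible, witnessed by the partition $\{g^{-1} \cdot Q_g : g \in G\}$ — indeed $T^{-1}(y) = g^{-1}\cdot y$ exactly when $y \in g\cdot Q_g = g^{-1}\cdot Q_g$'s image under $g$, i.e. when $y \in T(Q_g)$, and $T(Q_g) = g\cdot Q_g \in \cF$. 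Applying the first part to $T^{-1}$ gives $T^{-1}(A) \in \salg_G(A)\vee\cF \subseteq \Sigma$. Thus $\Sigma$ is invariant under both $T$ and $T^{-1}$, completing the proof. I expect no genuine obstacle here; the only subtlety worth stating explicitly is the identity $g\cdot(A\cap Q_g) = (g\cdot A)\cap(g\cdot Q_g)$ together with the $G$-invariance of $\cF$.
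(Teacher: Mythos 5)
Your argument is correct; the paper itself only recalls this lemma from \cite{S2} without proof, and your computation $T(A)=\bigcup_g (g\cdot A)\cap(g\cdot Q_g)$ together with $G$-invariance of $\cF$ is exactly the standard argument it relies on. The only blemish is a harmless indexing slip in the witnessing partition for $T^{-1}$: it should be $\{T(Q_{h^{-1}})\}_{h\in G}=\{h^{-1}\cdot Q_{h^{-1}}\}_{h\in G}$, which is what your subsequent sentence in effect describes.
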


\begin{lem} \label{lem:expgroup}
Let $G \acts (X, \mu)$ be a {\pmp} action and let $\cF$ be a $G$-invariant sub-$\sigma$-algebra. Then the set of $\cF$-expressible elements of $[E_G^X]$ is a group under the operation of composition.
\end{lem}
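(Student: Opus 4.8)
The plan is to verify the three group axioms directly, working from Definition \ref{defn:express}. The identity map $\id \in [E_G^X]$ is $\cF$-expressible via the trivial partition $Q_{1_G} = X$ (and $Q_g = \varnothing$ for $g \neq 1_G$), so the set is nonempty and contains the identity. The two remaining points are closure under inverses and closure under composition; the composition case is where the only real bookkeeping lies.

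For \emph{inverses}: suppose $T \in [E_G^X]$ is $\cF$-expressible by the $\cF$-measurable partition $\{Q_g : g \in G\}$, so $T(x) = g \cdot x$ for $x \in Q_g$. Then for $x \in Q_g$ we have $T^{-1}(g \cdot x) = x = g^{-1} \cdot (g \cdot x)$, so $T^{-1}$ sends $g \cdot Q_g$ onto $Q_g$ by the action of $g^{-1}$. Thus if I set $R_h = (h^{-1})\cdot Q_{h^{-1}} = T(Q_{h^{-1}})$, then $\{R_h : h \in G\}$ is a partition of $X$ (it is the image under the bijection $T$ of the partition $\{Q_{h^{-1}} : h \in G\}$, reindexed), it witnesses $T^{-1}(x) = h \cdot x$ for $x \in R_h$, and each $R_h$ is $\cF$-measurable: indeed $R_h = T(Q_{h^{-1}})$ and $Q_{h^{-1}} \in \cF$, so $R_h \in \salg_G(Q_{h^{-1}}) \vee \cF = \cF$ by Lemma \ref{lem:expmove} together with the fact that $Q_{h^{-1}} \in \cF$ already gives $\salg_G(Q_{h^{-1}}) \subseteq \cF$ (as $\cF$ is $G$-invariant). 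Hence $T^{-1}$ is $\cF$-expressible.

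For \emph{composition}: let $S, T \in [E_G^X]$ be $\cF$-expressible via $\cF$-measurable partitions $\{P_g : g \in G\}$ and $\{Q_g : g \in G\}$ respectively, so $S(x) = g \cdot x$ on $P_g$ and $T(x) = g\cdot x$ on $Q_g$. For $x \in X$, if $x \in Q_g$ then $T(x) = g \cdot x$, and if moreover $g \cdot x \in P_h$ then $S(T(x)) = h \cdot (g \cdot x) = (hg) \cdot x$. So define, for each $k \in G$,
\[
R_k = \bigcup_{\substack{g, h \in G \\ hg = k}} \bigl(Q_g \cap g^{-1}\cdot P_h\bigr).
\]
These sets partition $X$ (the sets $Q_g \cap g^{-1}\cdot P_h$ over all pairs $(g,h)$ form a partition refining $\{Q_g\}$, since $\{g^{-1}\cdot P_h : h \in G\}$ is a partition for each fixed $g$), and by construction $(S \circ T)(x) = k \cdot x$ for every $x \in R_k$. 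Each $R_k$ is $\cF$-measurable because $Q_g \in \cF$ and $g^{-1} \cdot P_h \in \cF$ (as $\cF$ is $G$-invariant and $P_h \in \cF$), and $R_k$ is a countable union of such sets. Therefore $S \circ T$ is $\cF$-expressible. (It is automatic that $S \circ T \in [E_G^X]$, since $[E_G^X]$ is a group.) Combining the three verifications, the $\cF$-expressible elements form a subgroup of $[E_G^X]$.

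I do not anticipate a genuine obstacle here; the only point requiring care is confirming in the composition step that the doubly-indexed family $\{Q_g \cap g^{-1}\cdot P_h\}_{g,h}$ is actually a partition and that summing the blocks with $hg = k$ recovers a partition indexed by $k$ — this is a routine check that the "multiplication" of two $\cF$-measurable cocycle-like partitions is again one.
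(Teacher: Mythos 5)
Your proof is correct, and since the paper only recalls this lemma from \cite{S2} without reproducing a proof, there is nothing to diverge from: the cocycle-style bookkeeping you carry out (identity via the trivial partition, $R_h = h^{-1}\cdot Q_{h^{-1}}$ for inverses, and $R_k = \bigcup_{hg=k}(Q_g \cap g^{-1}\cdot P_h)$ for composition) is exactly the standard argument. One minor simplification: for the inverse step you do not need Lemma \ref{lem:expmove} at all, since $R_h = h^{-1}\cdot Q_{h^{-1}}$ lies in $\cF$ directly by $G$-invariance of $\cF$.
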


In the more familiar language of cocycles, the condition that $T$ be $\cF$-expressible is equivalent to the existence of a cocycle $c : \Z \times X \rightarrow G$ that connects the action of $T$ to the action of $G$ and is $\cF$-measurable in the second coordinate. Since we work with non-free actions, the partition described in Definition \ref{defn:express} (and the cocycle) might not be unique. However the following fact will be useful to us.

\begin{lem} \label{lem:eqtest}
Let $G \acts (X, \mu)$ be a {\pmp} action, let $\cF$ be a $G$-invariant class-bijective sub-$\sigma$-algebra, and let $T \in [E_G^X]$ be $\cF$-expressible. Then for each $h \in G$ the set $\{x \in X : T(x) = h \cdot x\}$ is $\cF$-measurable.
\end{lem}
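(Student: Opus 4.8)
### Proof proposal for Lemma \ref{lem:eqtest}

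The plan is to exploit class-bijectivity to pin down the $G$-element implementing $T$ at $\mu$-almost-every point, even though the $\cF$-measurable partition $\{Q_g : g \in G\}$ witnessing $\cF$-expressibility need not be unique. Fix such a partition $\{Q_g : g \in G\}$ and fix $h \in G$; I want to show that $\{x : T(x) = h\cdot x\}$ differs from $Q_h$ by a set that is itself $\cF$-measurable (indeed, I will show they are equal mod $\mu$, and with a little care equal on the nose after discarding a $G$-invariant $\mu$-null set, which lies in $\cF$ since $\cF$ is $G$-invariant — but for the statement as phrased, working mod $\mu$ suffices once we note $\cF$ is complete, or alternatively we argue pointwise off a $\mu$-null $\cF$-set).

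First I would observe that for $x \in Q_g$ we have $T(x) = g\cdot x$, so $T(x) = h\cdot x$ holds together with $x \in Q_g$ exactly when $g\cdot x = h\cdot x$, i.e. $h^{-1}g \in \Stab(x)$. Hence
$$\{x : T(x) = h\cdot x\} = \bigcup_{g \in G} \bigl(Q_g \cap \{x : h^{-1}g \in \Stab(x)\}\bigr).$$
So it is enough to show that for every $g \in G$ the set $\{x : h^{-1}g \in \Stab(x)\} = \{x : \Stab(x) \ni h^{-1}g\}$ is $\cF$-measurable, since then the displayed set is a countable union of $\cF$-measurable sets. This is where class-bijectivity enters: let $\psi : X \to Y$ be the factor associated to $\cF$ and recall that, $\mu$-almost-everywhere, $\Stab(\psi(x)) = \Stab(x)$. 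The stabilizer map $\Stab : Y \to \Sub(G)$ is Borel, and for each fixed $k \in G$ the set $\{y \in Y : k \in \Stab(y)\}$ is Borel in $Y$ (it is one of the defining sub-basic clopen-type sets for the topology on $\Sub(G)$ restricted appropriately, or just: $\{y : k\cdot y = y\}$ is Borel since the $G$-action on $Y$ is Borel). Pulling back through $\psi$, the set $\psi^{-1}(\{y : k \in \Stab(y)\})$ is $\cF$-measurable, and by class-bijectivity it agrees mod $\mu$ with $\{x : k \in \Stab(x)\}$. Applying this with $k = h^{-1}g$ for each $g$ gives what we need.

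Combining, $\{x : T(x) = h\cdot x\}$ is, mod $\mu$, a countable union of intersections of $\cF$-measurable sets, hence $\cF$-measurable. (If a genuinely pointwise — not mod-$\mu$ — conclusion is wanted, replace each $\{x : k \in \Stab(x)\}$ by its $\cF$-measurable version on the $\mu$-conull $G$-invariant set where class-bijectivity holds exactly; the complement is a $G$-invariant $\mu$-null set, which we may absorb into $\cF$ after completing, or simply note the statement is understood mod $\mu$ as is standard in this paper.)

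The only subtle point — the part I would be most careful about — is the non-uniqueness of the expressing partition: a priori $Q_h$ depends on the chosen partition, so "$\{x : T(x) = h\cdot x\}$" could fail to be $\cF$-measurable if it were merely $\bigcup_{g:\,g \text{ "}=\text{"} h} Q_g$ in some loose sense. The argument above sidesteps this entirely by never referring to $Q_h$ in the final answer: it expresses the target set intrinsically in terms of $T$ and the stabilizer map, and reduces $\cF$-measurability of the stabilizer-condition sets to class-bijectivity. So there is no real obstacle; the main content is just the reduction to measurability of $\{x : k \in \Stab(x)\}$ and the use of Lemma-type facts about class-bijective $\sigma$-algebras already developed (cf. Lemma \ref{lem:stabequal}).
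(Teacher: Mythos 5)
Your proof is correct and follows essentially the same route as the paper: decompose $\{x : T(x) = h\cdot x\}$ as $\bigcup_{g}\bigl(Q_g \cap \{x : g^{-1}h \in \Stab(x)\}\bigr)$ and use class-bijectivity to see that the stabilizer map is $\cF$-measurable. Your extra care about the mod-$\mu$ versus pointwise distinction and about the non-uniqueness of the expressing partition is sound but not needed beyond what the paper's one-line argument already contains.
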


\begin{proof}
Fix a $\cF$-measurable partition $\{Q_g : g \in G\}$ as in Definition \ref{defn:express}. Our assumption that $\cF$ is class-bijective implies that the map $\Stab$ is $\cF$-measurable. Thus
\begin{equation*}
\{x \in X : T(x) = h \cdot x\} = \bigcup_{g \in G} \Big( Q_g \cap \{y \in X : g^{-1} h \in \Stab(y)\} \Big) \in \cF.\qedhere
\end{equation*}
\end{proof}

We recall the following simple lemma that highlights the connection between $\cF$-expressibility and entropy.

\begin{lem} \cite[Lem. 8.5]{AS} \label{lem:expent}
Let $G \acts (X, \mu)$ be a {\pmp} ergodic action, let $\cF$ be a $G$-invariant sub-$\sigma$-algebra, and let $T \in [E_G^X]$ be aperiodic and $\cF$-expressible. Then $\rh_G(\alpha \given \cF) \leq \ksh_T(\alpha \given \cF)$ for every partition $\alpha$.
\end{lem}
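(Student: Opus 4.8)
\emph{Proof plan.} The plan is to build, for every $\epsilon>0$, a single countable partition $\beta$ of $X$ with $\alpha\subseteq\salg_G(\beta)\vee\cF$ modulo $\mu$-null sets and $\sH_\mu(\beta\given\cF)\le\ksh_T(\alpha\given\cF)+\epsilon$; since $G\acts(X,\mu)$ is ergodic we have $\sinv_G=\{\varnothing,X\}$ mod null, so such a $\beta$ immediately witnesses $\rh_G(\alpha\given\cF)\le\sH_\mu(\beta\given\cF\vee\sinv_G)=\sH_\mu(\beta\given\cF)$, and letting $\epsilon\to 0$ finishes. We may assume $\sH_\mu(\alpha)<\infty$, as otherwise $\ksh_T(\alpha\given\cF)=\infty$ and there is nothing to prove. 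Two preliminary reductions: first, by Lemma~\ref{lem:enlarge} we may enlarge $\cF$ by a class-bijective $\sigma$-algebra of arbitrarily small Rokhlin entropy and so assume $\cF$ is class-bijective (this changes $\rh_G(\alpha\given\cF)$ and $\ksh_T(\alpha\given\cF)$ by at most $\epsilon$, using Lemma~\ref{lem:expent}'s inequality in reverse for the latter term is not needed — only the cheapness of the enlargement is). Second, since $T$ is $\cF$-expressible, Lemma~\ref{lem:expmove} shows that $\cF$, and more generally any $G$-invariant $\sigma$-algebra containing $\cF$, is $T$-invariant, and that $\salg_T(\xi)\vee\cF\subseteq\salg_G(\xi)\vee\cF$ for every countable partition $\xi$. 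Consequently it suffices to produce $\beta$ with $\alpha\subseteq\salg_T(\beta)\vee\cF$ and the stated entropy bound, provided the construction genuinely uses only $T$-dynamics; as explained below the non-ergodic case forces us to also bring in the ambient $G$-action.

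The core construction is a relative Krieger/Rokhlin-tower coding for the aperiodic $\Z$-action generated by $T$. Because $\cF$ is $T$-invariant, the sequence $n\mapsto\sH_\mu(\alpha_T^{[0,n)}\given\cF)$ is subadditive, so by Fekete's lemma $\tfrac1N\sH_\mu(\alpha_T^{[0,N)}\given\cF)\downarrow\ksh_T(\alpha\given\cF)$; fix $N$ with $\tfrac1N\sH_\mu(\alpha_T^{[0,N)}\given\cF)<\ksh_T(\alpha\given\cF)+\epsilon/2$. Apply the Rokhlin lemma for the aperiodic transformation $T$ to obtain, for a small parameter $\delta>0$, a Borel set $B$ with $B,TB,\dots,T^{N-1}B$ pairwise disjoint and $\mu\big(\bigcup_{i<N}T^iB\big)\ge 1-\delta$; let $E=X\setminus\bigcup_{i<N}T^iB$ be the error set. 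Define $\beta$ to record three things: (i) a bounded-entropy ``skeleton'' marker pinning down the tower structure (i.e.\ membership in $B$), of entropy $O(h(1/N))$; (ii) at each base point $b\in B$ the $\alpha_T^{[0,N)}$-name of its column, and $\bot$ elsewhere; (iii) the value of $\alpha$ itself on the error set $E$, of entropy at most $h(\delta)+\sH_{\mu_E}(\alpha)\cdot\delta\to 0$ as $\delta\to0$. From $\salg_T(\beta)\vee\cF$ one recovers $\alpha$: on the tower, the skeleton locates $b$ and the level $i$, the column name is read off via the appropriate $T$-translate of $\beta$, and then $\alpha$ is its $i$-th coordinate; on $E$ the value of $\alpha$ is stored directly. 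Hence $\alpha\subseteq\salg_T(\beta)\vee\cF\subseteq\salg_G(\beta)\vee\cF$, modulo $\mu$-null sets.

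The crux — and the step I expect to be the main obstacle — is the entropy estimate for part (ii). When $T$ is ergodic this is routine: by the Shannon--McMillan--Breiman theorem the column names of length $N$ concentrate on roughly $2^{N\ksh_T(\alpha\given\cF)}$ values, so the name-partition contributes $\approx \tfrac1N\sH_\mu(\alpha_T^{[0,N)}\given\cF)<\ksh_T(\alpha\given\cF)+\epsilon/2$ to $\sH_\mu(\beta\given\cF)$, giving $\sH_\mu(\beta\given\cF)\le\ksh_T(\alpha\given\cF)+\epsilon$ outright. When $T$ is merely aperiodic, however, a column in a $T$-ergodic component $c$ carries a name typical for $c$, and a codebook covering all components has size essentially $2^{N\cdot\operatorname{esssup}_c\ksh_{T,\mu_c}(\alpha\given\cF)}$; this overshoots the correct target $\ksh_T(\alpha\given\cF)=\int\ksh_{T,\mu_c}(\alpha\given\cF)\,dc$ by the gap between essential-supremum and average. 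To recover the average rate one must amortize along the \emph{$G$-orbit} rather than the $T$-orbit: since $G\acts(X,\mu)$ is ergodic, along a $G$-orbit the $T$-ergodic components are visited with their correct proportions, so the tower construction should be performed with respect to the $G$-action — e.g.\ using a Rokhlin-type lemma for $G$, or a suitably chosen aperiodic, $\cF$-expressible, \emph{ergodic} element of $[E_G^X]$ (which exists precisely because $G$ is ergodic) — in which case the coding cost for the names becomes $\int\ksh_{T,\mu_c}(\alpha\given\cF)\,dc=\ksh_T(\alpha\given\cF)$. Finally one sends $\delta\to0$ by iterating the construction along a sequence of towers whose error sets shrink to a null set and whose extra contributions sum to less than $\epsilon$, and assembles the pieces using the countable subadditivity of Rokhlin entropy recorded in Section~\ref{sec:prelim}; this yields a single partition $\beta$ (the join of the pieces) with $\alpha\subseteq\salg_G(\beta)\vee\cF$ mod null and $\sH_\mu(\beta\given\cF)\le\ksh_T(\alpha\given\cF)+\epsilon$, completing the proof.
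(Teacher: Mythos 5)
Your overall strategy is the right one — for each $\epsilon>0$ produce a single partition $\beta$ with $\alpha\subseteq\salg_G(\beta)\vee\cF$ mod null and $\sH_\mu(\beta\given\cF)\le\ksh_T(\alpha\given\cF)+\epsilon$, using Lemma \ref{lem:expmove} to pass from $\salg_T(\beta)\vee\cF$ to $\salg_G(\beta)\vee\cF$ — and you correctly identify the crux: $T$ is only aperiodic, and a tower coding along $T$-orbits pays per-$T$-ergodic-component rates (equivalently, pays for $\sinv_T$ relative to $\cF$), while the target $\ksh_T(\alpha\given\cF)$ is the \emph{average} over the $T$-ergodic components. But at exactly that point you substitute two suggestions for an argument, and neither is available. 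A ``Rokhlin-type lemma for $G$'' does not exist for general countable $G$; the lemma must hold for arbitrary (in particular non-amenable) groups, where the paper stresses that the Rokhlin lemma and SMB fail. And the existence of an aperiodic, $\cF$-expressible, \emph{ergodic} element of $[E_G^X]$ is unjustified: ergodicity of $G\acts(X,\mu)$ does give an ergodic $S\in[E_G^X]$ (via an ergodic hyperfinite subrelation), but nothing makes such an $S$ $\cF$-expressible (for trivial $\cF$ this would require a single group element to act ergodically), and the paper's own tool in this direction, Lemma \ref{lem:grabs}, only yields $S$ with $\mu\res\cF$ $S$-ergodic, not $S$ ergodic on $(X,\mu)$. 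If you drop $\cF$-expressibility to gain ergodicity you lose $\salg_S(\beta)\subseteq\salg_G(\beta)\vee\cF$, which your whole scheme needs; if you keep it, ergodicity is unproved. Moreover, even granting such an $S$, ``performing the tower construction with respect to $S$'' while the objects being coded are $\alpha_T^{[0,N)}$-names along $T$-orbits is not a computation you have set up: the claim that the coding cost then becomes $\int\ksh_{T,\mu_c}(\alpha\given\cF)\,dc$ is precisely the content of the lemma, and the equidistribution heuristic (``along a $G$-orbit the $T$-ergodic components are visited in correct proportions'') has no pointwise-ergodic-theorem backing for non-amenable $G$. So the heart of the proof is missing.

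Two further points. First, your opening reduction is wrong as stated: $\sH_\mu(\alpha)=\infty$ does not force $\ksh_T(\alpha\given\cF)=\infty$ (take $\alpha\subseteq\cF$ with infinite Shannon entropy; then $\ksh_T(\alpha\given\cF)=0$). The correct cheap reduction is to $\sH(\alpha\given\cF)<\infty$, since $\tfrac1n\sH(\alpha_T^{[0,n)}\given\cF)\ge\tfrac1n\sH(\alpha\given\cF)$ for every $n$; this also matters later where you bound the error-set cost by $\sH_{\mu_E}(\alpha)$. Second, for context: the paper does not prove this lemma at all — it is imported from \cite{AS} — so there is no in-paper proof to compare against; but the tools the paper does provide (Corollary \ref{cor:slice} and Lemma \ref{lem:grabs}, which deliver $\cF$-measurable/$\cF$-expressible structure along $T$-orbits rather than an ergodic expressible transformation, and Lemma \ref{lem:enlarge} making $\cF$ class-bijective) indicate that a correct argument should exploit arbitrarily sparse $\cF$-measurable marker data along $T$-orbits, so that the tower skeleton and the $T$-invariant bookkeeping are free relative to $\cF$, rather than appeal to ergodicity of an element of the full group or to a Rokhlin lemma for $G$.
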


The next lemma illustrates how Bernoullicity can be transferred between the action of $G$ and the action of a $\cF$-expressible transformation. Ultimately, when we prove our main theorem we will need to use two elements $S, T \in [E_G^X]$. For ease of later reference, we will denote the transformation by $S$ in the following lemma.

\begin{lem} \label{lem:useS}
Let $G \acts (X, \mu)$ be an aperiodic {\pmp} action, let $\cF$ be a $G$-invariant class-bijective sub-$\sigma$-algebra, and let $S \in [E_G^X]$ be aperiodic and $\cF$-expressible. Assume $\beta$ is a partition which is $G$-weakly Bernoulli over $\cF$. Then
\begin{enumerate}
\item [\rm (i)] $\beta$ is $S$-weakly Bernoulli over $\cF$;
\item [\rm (ii)] if $\mu \res \cF$ is $S$-ergodic then $\mu \res \salg_S(\beta) \vee \cF$ is $S$-ergodic as well;
\item [\rm (iii)] if $\beta$ is furthermore $G$-Bernoulli over $\cF$ then it is $S$-Bernoulli over $\cF$;
\item [\rm (iv)] if $\alpha \subseteq \salg_S(\beta) \vee \cF$ is any partition which is $S$-Bernoulli over $\cF$, then $\alpha$ is $G$-Bernoulli over $\cF$.
\end{enumerate}
\end{lem}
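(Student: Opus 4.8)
The plan is to reduce everything to statements about conditional independence of the partitions $g^{-1}\cdot\beta$ as $g$ ranges over a transversal, together with the fact that $S$, being $\cF$-expressible, acts on $\beta$ in an $\cF$-measurable piecewise-$G$ fashion. Throughout I fix the $\cF$-measurable partition $\{Q_g : g \in G\}$ witnessing $\cF$-expressibility of $S$, so that $S^n(x)$ is of the form $c(n,x)\cdot x$ for an $\cF$-measurable cocycle $c : \Z\times X\to G$. The key preliminary observation is that, since $\cF$ is class-bijective, the stabilizer map is $\cF$-measurable (as in the proof of Lemma \ref{lem:eqtest}), and hence for each $n$ the sets $\{x : S^n(x) = h\cdot x\}$ lie in $\cF$; in particular, on each such piece $S^n$ agrees with a fixed group element.

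For (i), I must show that for almost every $\Gamma$ in $\Stab_*(\mu)$ and every finite $W\subseteq\Z$ (which automatically maps injectively into $\Z$, and all relevant orbits are infinite by aperiodicity of $S$), the partitions $S^{-w}\cdot\beta$, $w\in W$, are mutually $\mu_\Gamma$-independent relative to $\cF$; and that $\beta$ is independent of $\Stab$ relative to $\cF$ — but the latter is property (a) for $G$, which holds by hypothesis. For the former, I would disintegrate further over $\cF$ itself: working over a point $y$ in the factor associated to $\cF$, the cocycle values $c(-w,\cdot)$ are constant (equal to some $g_w\in G$) on a piece of positive measure, and on that piece $S^{-w}\cdot\beta = g_w^{-1}\cdot\beta$. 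Because $S$ is a bijection of each orbit, distinct $w$ give $\cF$-measurably distinct displacements along the orbit, so the $g_w$ land in distinct cosets $g_w\Gamma$; the hypothesis that $\beta$ is weakly $G$-Bernoulli over $\cF$ then yields exactly the required relative independence on that piece. Integrating over the pieces and over $\cF$ gives (i). Part (iii) is the same argument but now the relevant independence statements are unconditional once we condition on $\Stab$: one invokes Lemma \ref{lem:wsbern} or else directly runs the above with the stronger (unconditional, $\mu_\Gamma$-level) independence furnished by $G$-Bernoullicity over $\cF$, keeping property (iii) of Definition \ref{defn:bover} ($\salg_G(\beta)$ independent of $\cF$ relative to $\Stab$) — which passes to $\salg_S(\beta)$ because $\salg_S(\beta)\vee\cF \subseteq \salg_G(\beta)\vee\cF$ by Lemma \ref{lem:expmove}.

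For (ii), the cleanest route is to note that $\salg_S(\beta)\vee\cF$ is generated over $\cF$ by the $S$-translates of $\beta$, which by (i) form a relatively (over $\cF$) i.i.d.\ process under $S$; a relatively Bernoulli extension of an ergodic base is ergodic (this is the relative ergodicity of Bernoulli extensions, applicable here to the $\Z$-action of $S$ since $\mu\res\cF$ is $S$-ergodic), giving $S$-ergodicity of $\mu\res\salg_S(\beta)\vee\cF$. For (iv), I reverse the transfer: given $\alpha\subseteq\salg_S(\beta)\vee\cF$ that is $S$-Bernoulli over $\cF$, I must produce the three properties of Definition \ref{defn:bover} for the $G$-action. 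Since $\alpha\subseteq\salg_S(\beta)\vee\cF\subseteq\salg_G(\beta)\vee\cF$ and $\salg_G(\beta)$ is independent of $\cF$ relative to $\Stab$ (from (iii), noting the hypothesis of (iv) presupposes the setting of (iii)), we get $\salg_G(\alpha)$ independent of $\cF$ relative to $\Stab$, which is (iii); independence of $\alpha$ with $\Stab$ follows since $\Stab$ is $\cF$-measurable and $\alpha$ is independent of $\cF$ relative to $\Stab$ plus Lemma \ref{lem:ind}. The substance is property (ii): that $g^{-1}\cdot\alpha$, $g$ in a transversal, are mutually $\mu_\Gamma$-independent. Here I would express each $g\in G$, on the appropriate $\cF$-piece, as a power of $S$ composed with an element fixing the point — more precisely, use that on a suitable $\cF$-measurable partition $g\cdot x = S^{n(x)}(x)$, so $g^{-1}\cdot\alpha$ agrees piecewise with $S^{-n}\cdot\alpha$; then the $S$-Bernoullicity of $\alpha$ over $\cF$ gives the needed relative-over-$\cF$ independence on each piece, and one upgrades to $\mu_\Gamma$-independence using $\cF$-measurability of $\Stab$ together with property (iii) again via Lemma \ref{lem:ind}, exactly as in the proof of Lemma \ref{lem:wsbern}.

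The main obstacle I anticipate is bookkeeping in the passage between "displacement along the $S$-orbit" and "coset of $\Gamma$": one must check that the $\cF$-measurable partition refining simultaneously the cocycle values $c(n,\cdot)$ for all relevant $n$ and the stabilizer value $\Gamma$ genuinely separates the group elements into distinct cosets so that weak $G$-Bernoullicity applies, and that the resulting piecewise independence statements can be reassembled (this is where $\cF$-measurability of all the pieces, guaranteed by class-bijectivity, is essential). The conceptual content is small; the care lies in conditioning on $\cF$ and on $\Stab$ in the right order and invoking Lemma \ref{lem:ind} to combine relative-independence facts into absolute ones.
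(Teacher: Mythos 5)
Parts (i)--(iii) of your sketch track the paper's argument: disintegrate over the factor associated to $\cF$, use $\cF$-expressibility to realize each $S^k$ fiberwise as a fixed group element, use aperiodicity of the $S$-orbit to get injectivity into $G/\Gamma$, and pass between Bernoullicity and weak Bernoullicity via Lemma \ref{lem:wsbern}. For (ii) you cite the general fact that a relatively Bernoulli extension of an ergodic $\Z$-system is ergodic, where the paper proves this by hand with an approximation-by-cylinders argument; that is a legitimate shortcut.

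Part (iv), however, has a genuine gap. You propose that on a suitable $\cF$-measurable partition one has $g \cdot x = S^{n(x)}(x)$, so that $g^{-1}\cdot\alpha$ agrees piecewise with $S^{-n}(\alpha)$ and the $S$-Bernoullicity of $\alpha$ supplies all the needed independence. But $S$ is only an element of the full group $[E_G^X]$: its orbits are in general proper subsets of the $G$-orbits (in the intended applications, e.g.\ the $T$ of Corollary \ref{cor:slice} or the $S$ of Lemma \ref{lem:grabs}, each $G$-orbit decomposes into many $S$-orbits, and for non-amenable $G$ the hyperfinite relation $E_S^X$ can never equal $E_G^X$ almost everywhere). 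So for $w, w' \in W$ with $w\cdot y$ and $w'\cdot y$ in distinct $S$-orbits, $S$-Bernoullicity of $\alpha$ says nothing about the joint distribution of $w^{-1}\cdot\alpha$ and $(w')^{-1}\cdot\alpha$. This cross-orbit independence is precisely where the hypothesis $\alpha \subseteq \salg_S(\beta)\vee\cF$ must enter the independence argument itself, not merely the bookkeeping: one partitions $W$ into classes $V_i$ according to the $S$-orbit of $w\cdot y$, gets mutual $\mu_y$-independence of the $w^{-1}\cdot\alpha$, $w\in V_i$, within each class from $S$-Bernoullicity of $\alpha$, observes that $\bigvee_{w\in V_i} w^{-1}\cdot\alpha$ is contained in $\bigvee_{g\in H_i} g^{-1}\cdot\beta$ for the corresponding classes $H_i \subseteq G$, and then invokes the weak $G$-Bernoullicity of $\beta$ to make these $\sigma$-algebras mutually independent across classes. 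Relatedly, your remark that the hypothesis of (iv) ``presupposes the setting of (iii)'' is incorrect and would break the main application: in Claim \ref{claim1} the partition $\beta$ is only weakly $G$-Bernoulli over $\cF$, so you cannot use independence of $\salg_G(\beta)$ with $\cF$ to obtain property (iii) of Definition \ref{defn:bover} for $\alpha$; the paper instead shows directly that $\alpha$ is $G$-weakly Bernoulli over $\cF$ and independent with $\cF$, and concludes via Lemma \ref{lem:wsbern}.
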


\begin{proof}
(i). Since $S$ acts freely on $X$, its enough to show that for every finite $K \subseteq \Z$ the $S^k$-translates of $\beta$, $k \in K$, are mutually independent relative to $\cF$. Let $G \acts (Y, \nu)$ be the factor of $(X, \mu)$ associated to $\cF$, and let $\mu = \int \mu_y \ d \nu(y)$ be the disintegration of $\mu$ over $\nu$. Since $\cF$ is class-bijective, the stabilizer map $\Stab : X \rightarrow \Sub(G)$ is constant $\mu_y$-almost-everywhere for $\nu$-almost-every $y$. Also observe that $S$ descends to a transformation of $Y$ since $S$ is $\cF$-expressible. Let $Y_0$ be the set of $y \in Y$ which have an infinite $S$-orbit and which have the property that for every finite $W \subseteq G$ that maps injectively to $W \cdot y$, the $W^{-1}$-translates of $\beta$ are mutually $\mu_y$-independent. Our assumptions imply that $\nu(Y_0) = 1$. Now fix $y \in Y_0$ and a finite set $K \subseteq \Z$. By Lemma \ref{lem:expgroup}, each $S^k$ is $\cF$-expressible. So there are $w(k) \in G$, $k \in K$, with $w(k) \cdot x = S^k(x)$ for $\mu_y$-almost-every $x \in X$. This implies that $w(k)^{-1} \cdot \beta = S^{-k}(\beta)$ $\mu_y$-almost-everywhere. Additionally, since the $S$-orbit of $y$ is infinite, the map $\{w(k) : k \in K\} \rightarrow \{w(k) \cdot y : k \in K\}$ is injective. So the partitions $w(k)^{-1} \cdot \beta = S^{-k}(\beta)$, $k \in K$, are mutually $\mu_y$-independent. We conclude $\beta$ is $S$-weakly Bernoulli over $\cF$.

(ii). Fix an $S$-invariant set $A \subseteq \salg_S(\beta) \vee \cF$. Since $\mu$ is $S$-invariant we have that $S_*(\mu_y) = \mu_{S(y)}$ for $\nu$-almost-every $y \in Y$. Consequently, $\mu_y(A) = \mu_{S(y)}(S(A)) = \mu_{S(y)}(A)$ so the map $y \mapsto \mu_y(A)$ is $S$-invariant. Our assumption that $\mu \res \cF$ is $S$-ergodic implies that $\nu$ is $S$-ergodic and therefore there is a constant $a$ with $\mu_y(A) = a$ for $\nu$-almost-every $y$. Then $\mu(A) = a$ so it suffices to show $a \in \{0, 1\}$. So fix $\epsilon > 0$, pick $m \in \N$, and let $Y'$ be the set of $y \in Y$ such that $\mu_y(A) = a$ and such that there exists a set $D$ that is a union of sets from $\bigvee_{i=-m}^m S^i(\beta)$ satisfying $\mu_y(D \symd A) < \epsilon$. By picking $m$ large enough we can assume that $\nu(Y') > 0$. In particular, $Y'$ must have an infinite intersection with some $S$-orbit, so we can find $y, y' \in Y'$ and $k > 2m$ with $S^k(y) = y'$. Let $D, D'$ be unions of sets from $\bigvee_{i=-m}^m S^i(\beta)$ with $\mu_y(D \symd A), \mu_{y'}(D' \symd A) < \epsilon$. Notice that $\mu_y(S^{-k}(D') \symd S^{-k}(A)) = \mu_{S^k(y)}(D' \symd A) = \mu_{y'}(D' \symd A) < \epsilon$. By (i), $\beta$ is $S$-weakly Bernoulli over $\cF$ and therefore $\mu_y(D \cap S^{-k}(D')) = \mu_y(D) \mu_y(S^{-k}(D'))$. So
\begin{align*}
a = \mu_y(A) = \mu_y(A \cap S^{-k}(A)) & < \mu_y(D \cap S^{-k}(D')) + 2\epsilon\\
 & = \mu_y(D) \mu_y(S^{-k}(D')) + 2 \epsilon < a^2 + 2 \epsilon(a + 1) + 4 \epsilon^2.
\end{align*}
Letting $\epsilon$ tend to $0$, we obtain $a \leq a^2$ and hence $a \in \{0, 1\}$.

(iii). Note that the $G$-stabilizer map is $\cF$-measurable and that the $S$-stabilizer map is essentially trivial. Thus the equivalence in Lemma \ref{lem:wsbern} holds for both the actions of $G$ and $S$. In particular, our assumption implies $\beta$ is independent with $\cF$, and (i) tells us that $\beta$ is $S$-weakly Bernoulli over $\cF$. Therefore $\beta$ is $S$-Bernoulli over $\cF$.

(iv). Now suppose that $\alpha \subseteq \salg_S(\beta) \vee \cF$ is $S$-Bernoulli over $\cF$. Since $S$ acts freely, Lemma \ref{lem:wsbern} implies that $\alpha$ is independent with $\cF$. Additionally, since $\cF$ and $\mu$ are $G$-invariant, we have that for fixed $g \in G$ the partitions $g \cdot S^k(\alpha)$, $k \in \Z$, are mutually independent and $g \cdot \salg_S(\alpha)$ is independent with $\cF$. Let $Y_0$ be the set of $y \in Y$ such that: (a) for every $g \in G$ the partitions $g \cdot S^k(\alpha)$, $k \in \Z$, are mutually $\mu_y$-independent, (b) the partitions $w^{-1} \cdot \beta$, $w \in W$, are mutually $\mu_y$-independent whenever $W$ is a finite subset of $G$ that maps injectively to $W \cdot y$, and (c) such that $g \cdot \alpha \subseteq g \cdot \salg_S(\beta)$ modulo $\mu_y$-null sets for every $g \in G$. Note $\nu(Y_0) = 1$. Fix $y \in Y_0$ and fix a finite $W \subseteq G$ that maps injectively to $W \cdot y$. Let $\{H_i : i \in \N\}$ be the partition of $G$ where $g, g' \in G$ lie in the same piece of this partition if and only if there is $k \in \Z$ with $S^k(g \cdot y) = g' \cdot y$. Set $V_i = H_i \cap W$, fix $v_i \in V_i$ for each $i$, and set $K_i = \{k \in \Z : S^k(v_i \cdot y) \in V_i \cdot y\}$. Note that $|K_i| = |V_i|$. For each $i$, modulo $\mu_y$-null sets we have
$$\{w^{-1} \cdot \alpha : w \in V_i\} = \{v_i^{-1} \cdot S^{-k}(\alpha) : k \in K_i\}.$$
Since $y \in Y_0$, for each fixed $i$ the partitions $w^{-1} \cdot \alpha$, $w \in V_i$, are mutually $\mu_y$-independent. Furthermore, we have (modulo $\mu_y$-null sets)
$$\bigvee_{w \in V_i} w^{-1} \cdot \alpha \subseteq \bigvee_{w \in V_i} w^{-1} \cdot \salg_S(\beta) = \bigvee_{g \in H_i} g^{-1} \cdot \beta.$$
As $y \in Y_0$ it follows that the $\sigma$-algebras $\bigvee_{g \in H_i} g^{-1} \cdot \beta$, $i \in \N$, are mutually $\mu_y$-independent. Therefore the $\sigma$-algebras $\bigvee_{w \in V_i} w^{-1} \cdot \alpha$, $i \in \N$, are mutually $\mu_y$-independent. We conclude that the $W^{-1}$-translates of $\alpha$ are mutually $\mu_y$-independent. Since the $G$-stabilizer map is $\cF$-measurable, its also trivially true that $\alpha$ is independent with this map relative to $\cF$. So $\alpha$ is $G$-weakly Bernoulli over $\cF$. We previously noted that $\alpha$ is independent with $\cF$, so Lemma \ref{lem:wsbern} implies that $\alpha$ is $G$-Bernoulli over $\cF$.
\end{proof}

Finally, we end this section with a technical lemma about the existence of automorphisms $S \in [E_G^X]$ having nice ergodicity properties.

\begin{lem} \label{lem:grabs}
Let $X$ be a standard Borel space, let $G \acts X$ be an aperiodic Borel action, and let $\cF$ be a countably generated class-bijective $G$-invariant sub-$\sigma$-algebra. Then there exists a $\cF$-expressible $S \in [E_G^X]$ so that $\nu \res \cF$ is $S$-ergodic for every $\nu \in \E_G(X)$.
\end{lem}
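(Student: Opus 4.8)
The plan is to reduce the statement to a construction on the factor of $X$ associated with $\cF$, and there to produce the transformation by a Borel cutting-and-stacking argument whose ergodicity output is uniform over all invariant measures.

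\emph{Reduction.} Since $\cF$ is countably generated and $G$-invariant, fix a standard Borel $G$-space $Y$ and a $G$-equivariant Borel map $\pi\colon X\to Y$ with $\pi^{-1}(\Borel(Y))=\cF$ in the Borel sense. Class-bijectivity of $\cF$ means $\pi$ is injective on each $G$-orbit, so by aperiodicity of $G\acts X$ the map $\pi$ carries every $G$-orbit bijectively onto an infinite $G$-orbit in $Y$; discarding the $G$-invariant Borel set of $y\in Y$ with finite orbit, on which we simply set the transformation to be the identity, we may assume $G\acts Y$ is aperiodic, so every $\mu\in\M_G(Y)$ is non-atomic. The key point is the following dictionary: if $T\in[E_G^Y]$ is equipped with a Borel partition $\{P_g:g\in G\}$ of $Y$ with $T(y)=g\cdot y$ on $P_g$, then $S(x):=g\cdot x$ for $x\in\pi^{-1}(P_g)$ defines $S\in[E_G^X]$ which is $\cF$-expressible, via the $\cF$-measurable partition $\{\pi^{-1}(P_g)\}$, and satisfies $\pi\circ S=T\circ\pi$. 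Since $S$ is $\cF$-expressible, Lemma \ref{lem:expmove} shows $\cF$ is $S$-invariant, so $S$ descends through $\pi$ to $T$, and unwinding the induced actions on the factor, $\nu\res\cF$ is $S$-ergodic exactly when $\pi_*\nu$ is $T$-ergodic. As $\pi_*$ sends $\E_G(X)$ into $\E_G(Y)$, it suffices to produce a single Borel $T\in[E_G^Y]$ of this expressible form that is $\mu$-ergodic for every $\mu\in\E_G(Y)$.

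\emph{Construction on $Y$.} Work with $E:=E_G^Y$, an aperiodic countable Borel equivalence relation. Using the Borel marker lemma for $E$, build an increasing sequence $F_1\subseteq F_2\subseteq\cdots$ of \emph{finite} Borel sub-equivalence relations of $E$ with class sizes tending to infinity, together with Borel ``adic'' orderings describing how each $F_{n+1}$-class is assembled from $F_n$-classes; the associated adic (odometer) transformation is then a Borel element $T$ of $[E_G^Y]$, and since each $F_n$ is a Borel subset of $E$, the cocycle $y\mapsto g$ with $T(y)=g\cdot y$ is Borel, so $T$ has the required expressible form. All the freedom is in how $F_{n+1}$-classes are built from $F_n$-classes, and there we impose two measure-free combinatorial conditions: \emph{generation} --- the position of a point within its $F_n$-class, as $n$ varies, separates points, so the $\sigma$-algebras $\mathcal C_n$ recording this position increase to $\Borel(Y)$ modulo every $\mu$; and \emph{uniform distribution} --- inside each $F_{n+1}$-class every ``type'' of constituent $F_n$-class (a type recording how that $F_n$-class sits with respect to finitely many fixed Borel generators of $\Borel(Y)$ and the adic order) occurs with equal multiplicity. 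Both are arranged by the usual cutting-and-stacking bookkeeping. Given this, if $\mu\in\E_G(Y)$ and $A$ is $T$-invariant then $A$ is $F_n$-invariant for all $n$; on each $F_n$-class $T$ permutes the class cyclically apart from one carry position, so the proportion of the class in $A$ is, up to an $O(1/|F_n\text{-class}|)$ error, its $\mu$-conditional probability of $A$, and uniform distribution forces $\Exp_\mu[\mathbf 1_A\mid\mathcal C_n]\to\mu(A)$, while by generation and the martingale theorem the same conditional expectations converge to $\mathbf 1_A$; hence $\mu(A)\in\{0,1\}$. The construction and this estimate are purely combinatorial, so they apply verbatim to every $\mu\in\E_G(Y)$ at once. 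Lifting $T$ through $\pi$ produces the desired $\cF$-expressible $S$.

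\emph{Main obstacle.} The real difficulty is entirely in the second step: carrying out the cutting-and-stacking in the \emph{Borel} category (which rests on the Borel marker lemma for aperiodic actions of countable groups) while making the ergodicity conclusion uniform over the uncountable, unenumerable family of invariant ergodic measures. This uniformity is precisely what rules out a soft appeal to the genericity of ergodic transformations in a single full group, and it is why the mixing input must be recast as measure-free conditions on how successive finite equivalence relations are glued. A minor point worth noting is that $E$ need not be hyperfinite, so the $F_n$ cannot be chosen F\o lner in $E$; but only ergodicity of $\bigcup_n F_n$ is needed, not amenability, and an ergodic hyperfinite subequivalence relation can always be produced in this way.
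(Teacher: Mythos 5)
Your proposal follows essentially the same route as the paper: reduce via the class-bijective factor $Y$ associated to $\cF$ (the paper's Lemma \ref{lem:borelfactor}), build on $Y$ a single Borel automorphism generating a hyperfinite subrelation that is ergodic for every invariant ergodic measure simultaneously (the paper's Lemma \ref{lem:pregrabs}, a non-ergodic version of Zimmer's tower construction with exactly your ``generation'' and ``uniform distribution'' conditions), and lift it back through the factor map to get an $\cF$-expressible $S$. The only place your sketch is thinner than the paper is the claim that the uniform-distribution condition is ``arranged by the usual bookkeeping'' uniformly over all ergodic measures --- the paper devotes Lemmas \ref{lem:unisize} and \ref{lem:nepush}, plus a reduction to finitely many $G$-invariant pieces on which the relevant measures are constant, to exactly that point --- but you correctly identify it as the crux.
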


Before proving this we need a few lemmas.

\begin{lem} \label{lem:unisize}
Let $X$ be a standard Borel space, let $G \acts X$ be an aperiodic Borel action, and let $A \subseteq X$ be Borel. If $\nu(A) \geq r$ for every $\nu \in \E_G(X)$ then there is a Borel set $B \subseteq A$ with $\nu(B) = r$ for every $\nu \in \E_G(X)$.
\end{lem}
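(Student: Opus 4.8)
The plan is to reduce the statement to a Borel ``uniform splitting'' property of the orbit equivalence relation $E_G^X$, to prove that property using an aperiodic transformation in the full-group together with a Borel Rokhlin lemma, and to ``aim'' the splitting with the help of the ergodic decomposition. If $r = 0$ we take $B = \varnothing$, so assume $r > 0$. First I would pass to the conservative part: by the ergodic decomposition theorem for countable Borel equivalence relations there is a $G$-invariant Borel set $X_0 \subseteq X$ with $\nu(X_0) = 1$ for every $\nu \in \E_G(X)$, together with a $G$-invariant Borel map $e : X_0 \to \E_G(X)$ such that $e(x) = \nu$ for $\nu$-almost-every $x$, for each $\nu \in \E_G(X)$; moreover $E_G^X$ carries no invariant probability measure once $X_0$ is deleted, and it is conservative on $X_0$. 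Since no $\nu \in \E_G(X)$ charges $X \setminus X_0$, we are free to set $B \cap (X \setminus X_0) = \varnothing$ and build $B$ inside $A_0 := A \cap X_0$ (note $\nu(A_0) = \nu(A)$ for $\nu \in \E_G(X)$).

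The heart of the argument would be the following claim: \emph{for every Borel $C \subseteq X_0$ there is a family $(C_t)_{t \in [0,1]}$ of Borel subsets of $C$, decreasing in $t$, jointly Borel in $(x,t)$, with $C_0 = C$, $C_1 = \varnothing$, and $\nu(C_t) = (1-t)\,\nu(C)$ for every $G$-invariant $\nu$.} Granting this, since $\nu \mapsto \nu(A)$ is a Borel function on $\E_G(X)$ bounded below by $r$, the function $s(\nu) := 1 - r/\nu(A)$ is Borel with values in $[0,1)$, so $B := \{\, x \in A_0 \: x \in (A_0)_{s(e(x))}\,\}$ is Borel, $B \subseteq (A_0)_0 = A_0 \subseteq A$, and for $\nu \in \E_G(X)$, using $e = \nu$ $\nu$-a.e.,
$$\nu(B) = \nu\big((A_0)_{s(\nu)}\big) = (1 - s(\nu))\,\nu(A) = \frac{r}{\nu(A)}\cdot\nu(A) = r,$$
which is the desired conclusion.

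To prove the claim, by iterating a ``uniform halving'' step applied to various Borel subsets of $X_0$ one reaches all dyadic levels nested, general $t$ follows by monotone limits, and joint Borel dependence then follows formally; so it suffices to produce, for any Borel $C \subseteq X_0$, a Borel $C' \subseteq C$ with $\nu(C') = \tfrac12\nu(C)$ for all invariant $\nu$. Here I would first replace $C$ by $C^* := \{x \in C : [x]_{E_G^X} \cap C \text{ is infinite}\}$: this is Borel, $E_G^X \res C^*$ is aperiodic, and $\nu(C \setminus C^*) = 0$ for every invariant $\nu$ since $C \setminus C^*$ meets each orbit in finitely many points and $E_G^X$ is conservative on $X_0$ (by Poincar\'e recurrence, a Borel set meeting almost every orbit finitely is null for every invariant probability measure). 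Being an aperiodic countable Borel equivalence relation, $E_G^X \res C^*$ contains an aperiodic Borel automorphism $T$ (e.g.\ one generating an aperiodic hyperfinite subrelation); extended by the identity, $T \in [E_G^X]$, hence $T$ preserves every $G$-invariant measure. Applying the Borel Rokhlin lemma to $T$ on $C^*$ decomposes $C^*$ into Borel $T$-towers of heights that are arbitrarily large (and uniformly bounded below); keeping a ``near-half'' collection of levels in each tower yields Borel subsets of $C^*$ whose measures approximate $\tfrac12\nu(C^*) = \tfrac12\nu(C)$ uniformly over all invariant $\nu$, and a nested exhaustion whose measures increase uniformly to $\tfrac12\nu(C)$ from below then gives an exact uniform half upon taking the union.

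I expect the genuine obstacle to be precisely this last point: extracting sets of \emph{exactly} the prescribed measure \emph{simultaneously} for all of the (possibly uncountably many) ergodic invariant measures, since the Borel Rokhlin lemma only gives approximate and measure-dependent control and one must check that the monotone-exhaustion repair produces a genuinely Borel set with errors controlled uniformly in $\nu$. The ergodic-component map $e$ is exactly what allows the final fraction to be chosen differently in each ergodic component so that a single Borel set attains the constant value $r$ everywhere. The remaining ingredients — the conservativity/null-set statement, the bookkeeping passing from halves to arbitrary $t$ with joint Borel dependence, and the standard descriptive–set–theoretic inputs (ergodic decomposition for countable Borel equivalence relations, the Borel Rokhlin lemma, existence of aperiodic elements of full-groups) — should be routine.
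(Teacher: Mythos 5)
Your overall strategy is viable but takes a genuinely different --- and considerably heavier --- route than the paper. The paper's proof is essentially a two-line trick: since the action is aperiodic, every $\nu \in \E_G(X)$ is non-atomic, so $X$ is uncountable and may be identified with $[0,1]$; letting $e : X \to \E_G(X)$ be the ergodic decomposition map, one sets $B = \{x \in A : e(x)([0,x] \cap A) \le r\}$. This set is Borel (quantify over rationals $q < x$), and for each ergodic $\nu$ the continuity of the non-decreasing function $x \mapsto \nu([0,x]\cap A)$ forces $\nu(B) = r$ exactly. In other words, the linear order on $[0,1]$ already provides, for free, the jointly Borel monotone family $(A_t)$ whose construction occupies the bulk of your argument via full-group automorphisms and Rokhlin towers; your final device of aiming the parameter at $s(e(x))$ is precisely the paper's device. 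Your route can in principle be completed, but the step you yourself flag as the obstacle is real and is not routine as written: independent applications of the Borel Rokhlin lemma at different scales do not produce nested sets, so obtaining ``a nested exhaustion whose measures increase uniformly to $\tfrac12\nu(C)$'' requires a refining sequence of marker sets and tower decompositions with the level assignments kept consistent from one scale to the next (the usual cutting-and-stacking bookkeeping, in which a small unassigned remainder of vanishing measure is carried along and discarded in the limit). If you keep your approach, that construction must be carried out explicitly; otherwise the order trick on $[0,1]$ eliminates the need for the Rokhlin lemma, the full group, and the halving induction entirely.
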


\begin{proof}
Without loss of generality we may assume $\E_G(X) \neq \varnothing$ and $r > 0$ as otherwise there is nothing to show. Since $G$ acts aperiodically, each $\nu \in \E_G(X)$ must be non-atomic. In particular, $X$ is uncountable. Since all uncountable standard Borel spaces are Borel isomorphic \cite[Thm. 15.6]{K95}, we can assume $X = [0, 1]$. By the ergodic decomposition theorem \cite{Far62,Var63}, there is a $G$-invariant Borel map $e : X \rightarrow \E_G(X)$ such that $\nu(e^{-1}(\nu)) = 1$ for every $\nu \in \E_G(X)$. Now define
$$B = \{x \in A : e(x)([0,x] \cap A) \leq r\}$$
Equivalently, $x \in A$ belongs to $B$ if and only if for every rational $q < x$ we have $e(x)([0,q] \cap A) \leq r$. Thus $B$ is a Borel set and clearly for every $\nu \in \E_G(X)$ we have $\nu(B) = r$ since $\nu$ is non-atomic.
\end{proof}

\begin{lem} \label{lem:nepush}
Let $X$ be a standard Borel space, let $G \acts X$ be a Borel action, and let $A, B \subseteq X$ with $\nu(A) = \nu(B)$ for all $\nu \in \E_G(X)$. Then there exists Borel sets $A' \subseteq A$, $B' \subseteq B$, and a Borel bijection $\phi : A' \rightarrow B'$ such that $\phi(x) \in G \cdot x$ for all $x \in A'$ and $\nu(A \setminus A') = \nu(B \setminus B') = 0$ for all $\nu \in \E_G(X)$.
\end{lem}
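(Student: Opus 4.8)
\emph{Proof plan.}
The plan is to build $\phi$ by a greedy exhaustion along a fixed enumeration of $G$, matching points of $A$ to points of $B$ inside the same $G$-orbit, and then to show that whatever is left unmatched is null for every ergodic invariant measure. Since the statement assumes neither aperiodicity nor ergodicity, and yet demands a \emph{single} pair $(A',B')$ that works simultaneously for all of $\E_G(X)$, this soft matching argument along the orbit equivalence relation $E_G^X$ seems the cleanest route. (One could instead apply the Lusin--Novikov uniformization theorem to $E_G^X \cap (A \times B)$ together with a maximality argument, but the explicit construction below avoids uniformization entirely.)

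First I would fix an enumeration $G = \{g_n : n \in \N\}$ and recursively construct Borel sets $A_n \subseteq A$, $B_n \subseteq B$ together with a Borel injection $\phi_n$ of $A_n$ onto $B_n$ satisfying $\phi_n(x) \in G \cdot x$. Start with $A_0 = B_0 = \varnothing$; at stage $n$ set
$$C_n = (A \setminus A_n) \cap g_n^{-1} \cdot (B \setminus B_n),$$
the set of currently unmatched points of $A$ whose $g_n$-translate is currently unmatched in $B$, and put $A_{n+1} = A_n \cup C_n$, $B_{n+1} = B_n \cup g_n \cdot C_n$, extending $\phi_n$ by $x \mapsto g_n \cdot x$ on $C_n$. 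The bookkeeping here is routine: each $C_n$ is Borel because $x \mapsto g_n \cdot x$ is a Borel automorphism of $X$; the sets $C_n$ are pairwise disjoint, the sets $g_n \cdot C_n$ are pairwise disjoint, and $g_n$ is injective on $C_n$; hence $A' := \bigcup_n A_n = \bigsqcup_n C_n$ and $B' := \bigcup_n B_n = \bigsqcup_n g_n \cdot C_n$ are Borel with $A' \subseteq A$, $B' \subseteq B$, and $\phi := \bigcup_n \phi_n$ is a Borel bijection $A' \to B'$ with $\phi(x) \in G \cdot x$.

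The crux, and the only step I expect to require care, is showing $\nu(A \setminus A') = \nu(B \setminus B') = 0$ for every $\nu \in \E_G(X)$; write $A^* = A \setminus A'$ and $B^* = B \setminus B'$. The key is the combinatorial consequence of the greedy rule: if $x \in A^*$ and $g \cdot x \in B$ with $g = g_n$, then since $x \in A \setminus A_n$ (as $A_n \subseteq A'$) yet $x \notin C_n$, the definition of $C_n$ forces $g_n \cdot x \in B_n \subseteq B'$; thus the $G$-saturation $[A^*]_{E_G^X} = \bigcup_{g \in G} g \cdot A^*$, which is a Borel $G$-invariant set, meets $B$ only inside $B'$ and in particular is disjoint from $B^*$. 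Next, $G$-invariance of $\nu$ gives $\nu(g_n \cdot C_n) = \nu(C_n)$ for each $n$, so summing over the disjoint pieces yields $\nu(A') = \sum_n \nu(C_n) = \sum_n \nu(g_n \cdot C_n) = \nu(B')$; combined with the hypothesis $\nu(A) = \nu(B)$ this gives $\nu(A^*) = \nu(B^*)$. Finally, ergodicity of $\nu$ forces $\nu([A^*]_{E_G^X}) \in \{0,1\}$: if $\nu(A^*) > 0$ then $\nu([A^*]_{E_G^X}) = 1$, hence $\nu(B^*) = 0$ since $B^* \cap [A^*]_{E_G^X} = \varnothing$, contradicting $\nu(A^*) = \nu(B^*) > 0$. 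Therefore $\nu(A^*) = \nu(B^*) = 0$, which completes the argument.
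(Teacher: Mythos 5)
Your proof is correct and is essentially the same as the paper's: the same greedy matching along an enumeration of $G$ (your $C_n$ is the paper's $A_n$), the same key observation that the $G$-saturation of the unmatched part of $A$ is disjoint from the unmatched part of $B$, and the same conclusion via invariance plus ergodicity. No issues.
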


\begin{proof}
Fix an enumeration $g_0, g_1, \ldots$ of $G$. Set $A_0 = A \cap g_0^{-1} \cdot B$, $B_0 = g_0 \cdot A_0$, and inductively define
$$A_n = \left( A \setminus \bigcup_{k<n} A_k \right) \cap g_n^{-1} \cdot \left( B \setminus \bigcup_{k<n} B_k \right), \qquad B_n = g_n \cdot A_n.$$
Set $A' = \cup_n A_n$, $B' = \cup_n B_n$, and define $\phi : A' \rightarrow B'$ by setting $\phi(a) = g_n \cdot a$ for $a \in A_n$. Now fix $\nu \in \E_G(X)$. Since $\nu$ is $G$-invariant we have $\nu(A') = B'$ so $\nu(A \setminus A') = \nu(B \setminus B')$. It is immediate from the definitions that $G \cdot (A \setminus A')$ is disjoint from $G \cdot (B \setminus B')$. If $\nu(A \setminus A') = \nu(B \setminus B') > 0$ then ergodicity of $\nu$ would imply that these disjoint sets would each have full measure, which is impossible.
\end{proof}

The following lemma is a non-ergodic analogue of the well-known fact that every ergodic {\pmp} countable Borel equivalence relation contains an ergodic hyperfinite sub-equivalence relation \cite[Thm. 3.5]{K10}. Recall that a Borel equivalence relation $R$ on $X$ is \emph{hyperfinite} if there is an increasing sequence of Borel equivalence relations $(R_n)_{n \in \N}$ such that $R = \bigcup_n R_n$ and each $R_n$ is finite (i.e. each class of $R_n$ is finite). Equivalently, $R$ is hyperfinite if and only if there is a Borel action $\Z \acts X$ with $R = E_\Z^X$ \cite[Prop. 1.2]{JKL}.

\begin{lem} \label{lem:pregrabs}
Let $X$ be a standard Borel space and let $G \acts X$ be an aperiodic Borel action. Then there exists $S \in [E_G^X]$ such that $\E_G(X) \subseteq \E_S(X)$.
\end{lem}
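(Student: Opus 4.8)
The plan is to reduce the statement to the construction of a single hyperfinite Borel sub-equivalence relation $F \subseteq E_G^X$ that is $\nu$-ergodic for \emph{every} $\nu \in \E_G(X)$ simultaneously. Granting such an $F$, the characterization of hyperfiniteness recalled above \cite[Prop. 1.2]{JKL} produces a Borel action $\Z \acts X$ with $E_\Z^X = F$; letting $S$ be its generator, we have $S \in [E_G^X]$ since $E_S^X = F \subseteq E_G^X$, every $\nu \in \E_G(X)$ is $S$-invariant (elements of $[E_G^X]$ preserve invariant measures), and $\nu$ is $S$-ergodic because the $S$-invariant sets are exactly the $F$-invariant ones, which are $\nu$-trivial by the choice of $F$. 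Hence $\nu \in \E_S(X)$, i.e.\ $\E_G(X) \subseteq \E_S(X)$.

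To build $F$, I would first use the ergodic decomposition theorem \cite{Far62,Var63} to fix a $G$-invariant Borel map $e : X \to \E_G(X)$ with $\nu(e^{-1}(\nu)) = 1$ for all $\nu$, and fix a countable algebra $\{A_k : k \in \N\}$ generating $\Borel(X)$. One then constructs an increasing sequence $F_0 \subseteq F_1 \subseteq \cdots$ of finite Borel sub-equivalence relations of $E_G^X$, with all $F_n$-classes of a common size $p_n$ satisfying $p_{n-1}\mid p_n$ and $p_n\to\infty$ (so $F:=\bigcup_n F_n$ is hyperfinite and aperiodic), equipped with compatible Borel orderings of the classes ensuring that the conditional measures on $F_n$-classes are uniform for every $\nu\in\M_G(X)$. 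Passing from $F_{n-1}$ to $F_n$ consists of partitioning the $F_{n-1}$-classes into blocks of a fixed size $q_n$, each block contained in a single $G$-orbit (possible since $E_G^X$ is aperiodic), and declaring each block to be an $F_n$-class. These blocks must be chosen, with each $A_k$ scheduled at cofinally many stages, so that for every $\nu$ and $\nu$-a.e.\ $x$ the empirical fraction $\tfrac1{p_n}|[x]_{F_n}\cap A_k|$ comes within $2^{-n}$ of $e(x)(A_k)$. Because $e$ is $G$-invariant this target is constant on $G$-orbits, and since $\int \tfrac1{p_{n-1}}|[x]_{F_{n-1}}\cap A_k|\ d\nu = \nu(A_k) = \int e(x)(A_k)\ d\nu(x)$, the only genuine obstructions live on orbits that are degenerate relative to $A_k$ (essentially missing $A_k$, or essentially contained in it), and these form a set that is $\nu$-null for every $\nu$ and may be handled arbitrarily. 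Realizing the requisite grouping as a Borel function of $x$ that is valid uniformly across all of $\E_G(X)$—which amounts to transporting pieces of the sets $A_k$ between classes along orbits—is exactly what Lemmas \ref{lem:unisize} and \ref{lem:nepush} are for. This measure-uniform Borel grouping is the heart of the argument, and the step I expect to be the main obstacle; the remainder is bookkeeping.

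Granting the construction, one verifies $F$ as follows. Fix $\nu\in\E_G(X)$ and $k\in\N$. By uniformity of the conditional measures on $F_n$-classes, $x\mapsto \tfrac1{p_n}|[x]_{F_n}\cap A_k|$ equals $\Exp_\nu[\pone_{A_k}\mid\Sigma_n]$, where $\Sigma_n$ is the $\sigma$-algebra of $F_n$-invariant Borel sets; as $(\Sigma_n)$ is decreasing, the reverse martingale theorem gives $\nu$-a.e.\ convergence to $\Exp_\nu[\pone_{A_k}\mid\Sigma]$, where $\Sigma:=\bigcap_n\Sigma_n$ is the $\sigma$-algebra of $F$-invariant sets. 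On the other hand, since each $A_k$ is scheduled cofinally and $e(x)=\nu$ for $\nu$-a.e.\ $x$, the construction (with Borel--Cantelli applied to the degenerate orbits set aside at each stage) forces the same sequence to converge $\nu$-a.e.\ to $e(x)(A_k)=\nu(A_k)$. Hence $\Exp_\nu[\pone_{A_k}\mid\Sigma]=\nu(A_k)$ $\nu$-a.e.\ for every $k$, so $\Sigma$ is $\nu$-trivial and $\nu$ is $F$-ergodic; since this holds for all $\nu\in\E_G(X)$, the construction is complete and $S$ is obtained as above.
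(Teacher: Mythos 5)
Your reduction to constructing an aperiodic hyperfinite $F \subseteq E_G^X$ that is ergodic for every $\nu \in \E_G(X)$ simultaneously, and the passage from $F$ to $S$ via \cite[Prop. 1.2]{JKL}, are exactly right; the paper's proof (a non-ergodic version of \cite[Prop. 9.3.2]{Zi}) likewise builds an increasing union of finite subrelations with uniform class sizes using Lemmas \ref{lem:unisize} and \ref{lem:nepush}. But the step you defer --- grouping $F_{n-1}$-classes into blocks of fixed size $q_n$ so that for every $\nu$ and $\nu$-a.e.\ $x$ the empirical fraction $\frac{1}{p_n}|[x]_{F_n}\cap A_k|$ is within $2^{-n}$ of $e(x)(A_k)$ --- is the entire content of the lemma, and your reason for believing it is routine is not correct. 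The identity $\int \frac{1}{p_{n-1}}|[x]_{F_{n-1}}\cap A_k|\,d\nu = \nu(A_k)$ only controls the \emph{mean} of the class-averages; on a set of positive measure (not merely on ``degenerate'' orbits) individual $F_{n-1}$-classes will be entirely inside or entirely outside $A_k$, so forcing every block to be a good sample requires matching classes with excess against classes with deficit, within a single orbit, by a Borel rule valid for all ergodic measures at once. Lemmas \ref{lem:unisize} and \ref{lem:nepush} move \emph{sets of points} of equal measure along orbits; they do not regroup whole $F_{n-1}$-classes, so they do not deliver this. For one ergodic measure this matching is the classical Rokhlin-lemma/ergodic-theorem argument; doing it uniformly over $\E_G(X)$ is precisely the difficulty, and it cannot be dismissed as bookkeeping.

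The paper sidesteps the pointwise requirement by carrying different structure through the induction: a finite group $\Gamma_n \leq [E_G^X]$ together with a partition $\alpha_n$ whose pieces have equal measure for every $\nu \in \E_G(X)$, such that $\Gamma_n$ permutes $\alpha_n$ transitively mod null sets and each $C_n$ is $1/n$-approximated in $\nu$-measure by a union of pieces of $\alpha_n$. Any $E_\Gamma^X$-invariant set $Y$ is then automatically independent of each $\alpha_n$ (since $\nu(Y\cap A)=\nu(Y)/|\alpha_n|$ for $A\in\alpha_n$), hence of every $C\in\mathcal{C}$, hence of itself --- no reverse martingale or per-class distribution control is needed. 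The inductive step then reduces exactly to what Lemmas \ref{lem:unisize} and \ref{lem:nepush} provide: cutting a piece of $\alpha_{n-1}$, refined by the $\Gamma_{n-1}$-translates of $C_n$, into sets of exactly equal measure for all ergodic measures simultaneously, and cyclically permuting them along orbits. To complete your argument you would need to either supply the uniform Borel matching step or switch to maintaining the transitively-permuted-partition invariant.
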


\begin{proof}
We provide a non-ergodic version of the argument in \cite[Prop. 9.3.2]{Zi}. Since $X$ is standard Borel, there is a countable algebra $\mathcal{C} \subseteq \Borel(X)$ with $\salg(\mathcal{C}) = \Borel(X)$. Fix a sequence $(C_n)_{n \in \N}$ of elements of $\mathcal{C}$ with $C_0 = \varnothing$ and with the property that every $C \in \mathcal{C}$ appears in the sequence infinitely many times. We will inductively build a sequence of finite Borel partitions $\alpha_n$ and a sequence of finite groups $\Gamma_n \leq [E_G^X]$ satisfying the following conditions for every $n \in \N$:
\begin{enumerate}
\item[\rm (i)] $\alpha_{n+1}$ is finer than $\alpha_n$ and $\Gamma_n \subseteq \Gamma_{n+1}$;
\item[\rm (ii)] for every $A \in \alpha_n$ and $\nu \in \E_G(X)$ the map $\gamma \in \Gamma_n \mapsto \gamma(A)$ is a bijection from $\Gamma_n$ to $\alpha_n$ up to $\nu$-null sets;
\item[\rm (iii)] for every $\nu \in \E_G(X)$ there is a set $D$ that is a union of sets from $\alpha_n$ with $\nu(C_n \symd D) < 1/n$.
\end{enumerate}
Note that (ii) implies that $\nu(A) = |\alpha_n|^{-1}$ for every $\nu \in \E_G(X)$ and $A \in \alpha_n$. To begin the construction, let $\alpha_0 = \{X\}$ be the trivial partition and let $\Gamma_0 = \{\id_X\}$ consist of the identity map on $X$.

Inductively assume that $\alpha_{n-1}$ and $\Gamma_{n-1}$ have been constructed satisfying condition (ii). Fix any set $A \in \alpha_{n-1}$. Let $\beta$ be the partition of $A$ induced by the sets $\gamma^{-1}(C_n) \cap A$, $\gamma \in \Gamma_{n-1}$. Set $m = n |\beta| + 1$. The map $\nu \in \E_G(X) \mapsto (\lfloor m |\alpha_{n-1}| \nu(B) \rfloor)_{B \in \beta}$ is Borel and takes finitely many values (here $\lfloor r \rfloor$ denotes the greatest integer less than or equal to $r \in \R$). By the ergodic decomposition theorem \cite{Far62,Var63}, there is a $G$-invariant Borel map $e : X \rightarrow \E_G(X)$ such that $\nu(e^{-1}(\nu)) = 1$ for every $\nu \in \E_G(X)$. Consequently, by breaking $X$ into a finite number of $G$-invariant Borel sets and constructing a $m |\alpha_{n-1}|$-piece partition $\alpha_n$ on each piece separately, we may assume that there are fixed integers $(k_B)_{B \in \beta}$ with $\lfloor m |\alpha_{n-1}| \nu(B) \rfloor = k_B$ for every $\nu \in \E_G(X)$ and $B \in \beta$. By repeatedly applying Lemma \ref{lem:unisize}, we can obtain, for each $B \in \beta$, disjoint Borel sets $B_1, B_2, \ldots, B_{k_B}$ contained in $B$ with $\nu(B_i) = 1/(m |\alpha_{n-1}|)$ for every $\nu \in \E_G(X)$.

Using Lemma \ref{lem:unisize}, extend the family $\{B_i : B \in \beta, \ 1 \leq i \leq k_B\}$ to a partition $\xi$ of $A$ with $\nu(D) = 1/(m |\alpha_{n-1}|)$ for all $D \in \xi$, $\nu \in \E_G(X)$ and with $|\xi| = m$. By Lemma \ref{lem:nepush}, there is a Borel bijection $\theta : A \rightarrow A$ so that, for every $\nu \in \E_G(X)$, $\theta$ cyclically permutes the members of $\xi$ up to $\nu$-null sets and satisfies $\theta^{|\xi|} = \id_A$ everywhere. Let $A'$ be the set of $x \in A$ for which the set $\{\gamma(x) : \gamma \in \Gamma_{n-1}\}$ meets every class of $\alpha_{n-1}$, and set $A'' = \bigcap_{i=0}^{|\xi|-1} \theta^i(A')$. Note that $\nu(A \setminus A'') = 0$ for all $\nu \in \E_G(X)$. Now define $\phi \in [E_G^X]$ by setting $\phi(x) = \gamma \circ \theta \circ \gamma^{-1}(x)$ if $\gamma \in \Gamma_{n-1}$ and $x \in \gamma(A'')$ and set $\phi(x) = x$ otherwise. Notice that $\phi$ commutes with every $\gamma \in \Gamma_{n-1}$ and $\phi^{|\xi|} = \id_X$ everywhere.

Now let $\alpha_n$ be any Borel partition of cardinality $|\xi| \cdot |\alpha_{n-1}|$ that is finer than $\alpha_{n-1}$ and satisfies
$$\alpha_n \res \bigcup_{\gamma \in \Gamma_{n-1}} \gamma(A'') = \{\gamma(D \cap A'') : D \in \xi, \ \gamma \in \Gamma_{n-1}\}.$$
Also let $\Gamma_n$ be the group generated by $\Gamma_{n-1}$ and $\phi$. It is immediate that clauses (i) and (ii) are satisfied. Now consider (iii) and fix $\nu \in \E_G(X)$. By construction, for any $\gamma \in \Gamma_{n-1}$ the set $\gamma^{-1}(C_n) \cap A$ is a union of sets from $\beta$. Also by construction, for each $B \in \beta$ there is a set $D$ that is a union of sets from $\xi$ with $D \subseteq B$ and $\nu(B \setminus D) < 1/(m |\alpha_{n-1}|)$. Consequently, there is a set $D_\gamma$ that is a union of sets from $\xi$ with $D_\gamma \subseteq \gamma^{-1}(C_n) \cap A$ and $\nu((\gamma^{-1}(C_n) \cap A) \setminus D_\gamma) < |\beta|/(m |\alpha_{n-1}|)$. It follows that $D = \bigcup_{\gamma \in \Gamma_{n-1}} \gamma(D_\gamma)$ is a union of sets from $\alpha_n$ and $\nu(C_n \symd D) < |\beta| / m < 1/n$.

Set $\Gamma = \bigcup_n \Gamma_n$. Then $E_\Gamma^X = \bigcup_n E_{\Gamma_n}^X$ so $E_\Gamma^X$ is hyperfinite and there is a Borel automorphism $S : X \rightarrow X$ with $E_S^X = E_\Gamma^X$ \cite[Prop. 1.2]{JKL}. Since $E_\Gamma^X \subseteq E_G^X$ we necessarily have $S \in [E_G^X]$. Now fix $\nu \in \E_G(X)$ and consider a $S$-invariant Borel set $Y \subseteq X$. Then $Y$ is $\Gamma$-invariant. So for any $n$ and $A \in \alpha_n$ we have $\nu(Y \cap A) = \nu(\gamma(Y \cap A)) = \nu(Y \cap \gamma(A))$ for each $\gamma \in \Gamma_n$, hence clause (ii) implies that $\nu(Y \cap A) = \nu(Y) / |\alpha_n| = \nu(Y) \nu(A)$. As each $C \in \mathcal{C}$ equals $C_n$ for infinitely many $n$, it follows from (iii) that $\nu(Y \cap C) = \nu(Y) \nu(C)$ for all $C \in \mathcal{C}$. In fact, this holds for all $C \in \Borel(X) = \salg(\mathcal{C})$. Plugging in $C = Y$ gives $\nu(Y) \in \{0, 1\}$. Thus $\nu \in \E_S(X)$.
\end{proof}

\begin{lem} \label{lem:borelfactor}
Let $X$ and $Z$ be standard Borel spaces, $G \acts X$ an aperiodic Borel action, $\cF$ a countably generated class-bijective $G$-invariant sub-$\sigma$-algebra, and $f : X \rightarrow Z$ a $\cF$-measurable function. Then there is a standard Borel space $Y$, an aperiodic Borel action $G \acts Y$, a Borel function $\hat{f} : Y \rightarrow Z$, and a class-bijective $G$-equivariant Borel map $\phi : X \rightarrow Y$ with $\cF = \phi^{-1}(\Borel(Y))$ and $f = \hat{f} \circ \phi$.
\end{lem}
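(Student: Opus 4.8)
The plan is to realize $Y$ as a space of orbit-labelings coming from $\cF$ together with the $\cF$-measurable value of $f$. Since $\cF$ is countably generated and class-bijective, fix a countable algebra $\mathcal{C} = \{C_n : n \in \N\} \subseteq \cF$ with $\salg(\mathcal{C}) = \cF$ (in the purely Borel sense), and fix a countable separating family of Borel subsets of $Z$, which we may as well encode by taking $Z \subseteq [0,1]$ after a Borel isomorphism with a subset of a standard Borel space (or, if $Z$ is uncountable standard Borel, with $[0,1]$ itself). Define a Borel map $\phi : X \rightarrow \{0,1\}^{\N} \times Z$ by $\phi(x) = \big((\pone_{C_n}(x))_{n \in \N},\, f(x)\big)$. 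Because $\mathcal{C}$ generates $\cF$ and $f$ is $\cF$-measurable, we have $\phi^{-1}(\Borel(\{0,1\}^\N \times Z)) = \cF$. Let $Y_0 = \phi(X)$; I would first check that $\phi$ is a Borel injection on each $G$-orbit: if $x, y$ lie in a common orbit with $\phi(x) = \phi(y)$ then $\pone_{C_n}(x) = \pone_{C_n}(y)$ for all $n$, so $x$ and $y$ are not separated by any set of $\cF$, contradicting class-bijectivity of $\cF$ in the Borel sense (unless $x = y$). Hence $\phi$ restricted to each orbit is injective.

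Next I would transport the $G$-action. Define a partial action of $G$ on $Y_0$ by $g \cdot \phi(x) := \phi(g \cdot x)$; this is well-defined because if $\phi(x) = \phi(x')$ then, for it to be a legitimate definition we need $\phi(g\cdot x) = \phi(g \cdot x')$. This is where class-bijectivity is essential: since $\phi$ separates points of each orbit, $\phi(x) = \phi(x')$ with $x, x'$ possibly in different orbits does not immediately force $\phi(gx) = \phi(gx')$. To handle this cleanly, I would instead pass to the quotient more carefully: let $Y$ be the standard Borel quotient of $X$ by the equivalence relation $x \sim x' \iff \phi(x) = \phi(x')$, which is a smooth equivalence relation (it has a Borel selector, namely $x \mapsto$ the $\sim$-class coded by $\phi(x)$), so $Y$ is standard Borel and the quotient map, which we again call $\phi : X \to Y$, has $\phi^{-1}(\Borel(Y)) = \salg(\{\phi^{-1}(C_n)\} \cup \{\phi^{-1}(f^{-1}(\cdot))\}) = \cF$. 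Since $G \acts X$ and $\cF$ is $G$-invariant, the relation $\sim$ is $G$-invariant, so $G$ descends to a Borel action on $Y$ with $\phi$ equivariant. The map $\hat f : Y \to Z$ is the one induced by $f$ through the $\sim$-invariance of $f$ (as $f$ is $\cF = \phi^{-1}(\Borel(Y))$-measurable, it factors as $\hat f \circ \phi$ with $\hat f$ Borel), and $f = \hat f \circ \phi$ by construction.

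It remains to verify: (a) $\phi$ is class-bijective as a map $X \to Y$; (b) $G \acts Y$ is aperiodic. For (a): if $y \in Y$ and $x \in \phi^{-1}(y)$, I must show $\Stab(\phi(x)) = \Stab(x)$. Trivially $\Stab(x) \subseteq \Stab(\phi(x))$ by equivariance. Conversely, if $g \cdot \phi(x) = \phi(x)$, then $\phi(g\cdot x) = \phi(x)$, i.e.\ $g \cdot x \sim x$; since $g \cdot x$ and $x$ lie in the same $G$-orbit and $\phi$ is injective on orbits (shown above), $g \cdot x = x$, so $g \in \Stab(x)$. Thus $\Stab(\phi(x)) = \Stab(x)$, giving class-bijectivity. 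For (b): since $\cF$ is class-bijective it separates points within orbits, so $\phi$ injective on orbits means $\phi$ maps each infinite $G$-orbit of $X$ bijectively onto a $G$-orbit of $Y$; as $G \acts X$ is aperiodic, every orbit of $X$ is infinite, hence every orbit of $Y$ in $\phi(X)$ is infinite. If $\phi$ is not surjective I would simply replace $Y$ by the $G$-invariant Borel set $\phi(X)$ (this is Borel since $\phi$ has Borel, even injective-on-orbits, image — more carefully, $\phi(X)$ is the image of a Borel injection-on-a-Borel-transversal, hence Borel by Lusin--Souslin), so $G \acts Y$ is aperiodic.

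The main obstacle I anticipate is the well-definedness of the descended $G$-action and the verification that the quotient $Y$ is genuinely standard Borel with $\phi^{-1}(\Borel(Y)) = \cF$ on the nose (Borel equality, not just mod-null): this requires being careful that $\sim$ is a smooth Borel equivalence relation and that $\cF$ is exactly the $\sigma$-algebra of $\sim$-saturated Borel sets, which is where the hypothesis that $\cF$ is countably generated is used (it lets us code $\sim$ by a single Borel map into a standard Borel space). The rest is bookkeeping with class-bijectivity, which by Lemma \ref{lem:stabequal} is equivalent to the stabilizer-equality condition and is what makes $\phi$ injective on orbits and the action descend.
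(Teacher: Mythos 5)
The genuine gap is the construction of $Y$. You take $Y$ to be the quotient of $X$ by $x \sim x' \Leftrightarrow \phi(x)=\phi(x')$, whose classes are the atoms of $\cF$ and are in general uncountable. Smoothness of $\sim$ does not deliver what you need: the map you call a ``Borel selector'' ($x \mapsto$ the code $\phi(x)$) is just the reduction to equality, not a selector, and smooth Borel equivalence relations with uncountable classes need not admit any Borel transversal. For the same reason your fallback claim that $\phi(X)$ is Borel fails: Lusin--Souslin would require $\phi$ to be injective on a Borel set whose image is all of $\phi(X)$, and injectivity on each (countable) orbit gives nothing of the sort, so $\phi(X)$ is a priori only analytic. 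Concretely, if $\phi$ behaves like the projection of a closed subset of $\N^\N \times 2^\N$ with non-Borel projection, then $\sim$ is smooth but has no Borel transversal and the quotient Borel space is not standard; nothing in your argument rules this out, because you never use the $G$-action in building the target. (Ironically, the two points you flag as the main danger are the unproblematic ones: $\sim$-equivalent points agree on every set of $\cF$, so $G$-invariance of $\cF$ makes $\sim$ invariant and the descended action Borel, and Blackwell's theorem identifies the $\sim$-saturated Borel sets with $\cF$, giving $\phi^{-1}(\Borel(Y))=\cF$ exactly --- \emph{if} the quotient were known to be standard.)

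The paper avoids the quotient entirely by building the target inside a shift space, which is the repair you need. Fix a countable family $\cA=\{A_n : n\in\N\}$ with $\salg_G(\cA)=\cF$ and define $\phi : X \rightarrow (2^\N)^G$ by $\phi(x)(g)(n)=\chi_{A_n}(g^{-1}\cdot x)$; this is automatically $G$-equivariant for the shift action, and $Y$ is taken to be the set of points of $(2^\N)^G$ with infinite orbit, which is a Borel (hence standard) subset carrying an aperiodic action --- the lemma does not ask $\phi$ to be onto, so Borelness of $\phi(X)$ never arises. Class-bijectivity of $\cF$ makes $\phi$ class-bijective and forces $\phi(X)\subseteq Y$, and $\phi^{-1}(\Borel(Y))=\salg_G(\cA)=\cF$ because the cylinder sets of $(2^\N)^G$ are the $G$-translates of the sets $\{y : y(1_G)(n)=1\}$. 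The function $f$ is then handled by reducing to $Z=\{0,1\}$ coordinatewise: $f^{-1}(1)\in\cF=\phi^{-1}(\Borel(Y))$, so $f^{-1}(1)=\phi^{-1}(B)$ for some $B\in\Borel(Y)$ and one sets $\hat f=\chi_B$. Note that your map is exactly $x\mapsto(\phi(x)(1_G),f(x))$; the fix is to record the $\cF$-data along the whole orbit rather than only at the identity, so that the ambient shift supplies both the standard Borel structure and the action.
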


\begin{proof}
Fix a countable algebra $\cA = \{A_n : n \in \N\}$ with $\salg_G(\cA) = \cF$. Write $2 = \{0, 1\}$ and define $\phi : X \rightarrow (2^\N)^G$ by $\phi(x)(g)(n) = \chi_{A_n}(g^{-1} \cdot x)$, where $\chi_{A_n}$ is the characteristic function of $A_n$. This map is $G$-equivariant since $\phi(t \cdot x)(g)(n) = \chi_{A_n}(g^{-1} t \cdot x) = \phi(x)(t^{-1} g)(n) = (t \cdot \phi(x))(g)(n)$. Let $Y$ be the set of points in $(2^\N)^G$ having an infinite $G$-orbit. Then $Y$ is Borel so $(Y, \Borel(Y))$ is a standard Borel space where $\Borel(Y) = \Borel((2^\N)^G) \res Y$ \cite[Cor. 13.4]{K95}. Also, $\phi$ is class-bijective since $\cF$ is and hence $\phi(X) \subseteq Y$. It is clear that $\phi^{-1}(C_n) = A_n$ where $C_n = \{y \in Y : y(1_G)(n) = 1\}$. Since $\salg_G(\{C_n : n \in \N\}) = \Borel(Y)$ and $\salg_G(\cA) = \cF$ it follows that $\phi^{-1}(\Borel(Y)) = \cF$. Finally, since every standard Borel space is Borel isomorphic to a Borel subset of $2^\N$ and since each $z \in 2^\N$ is described by its coordinates $z(n) \in \{0, 1\}$, it suffices to consider the case where $f : X \rightarrow \{0, 1\}$. As $f^{-1}(1) \in \cF = \phi^{-1}(\Borel(Y))$, there is $B \in \Borel(Y)$ with $f^{-1}(1) = \phi^{-1}(B)$. Now set $\hat{f} = \chi_B : Y \rightarrow \{0, 1\}$.
\end{proof}

\begin{proof}[Proof of Lemma \ref{lem:grabs}]
By Lemma \ref{lem:borelfactor} there is an aperiodic Borel action $G \acts Y$ on a standard Borel space $Y$ and a class-bijective $G$-equivariant Borel map $\phi : X \rightarrow Y$ with $\cF = \phi^{-1}(\Borel(Y))$. Apply Lemma \ref{lem:pregrabs} to get $S' \in [E_G^Y]$ with $\E_G(Y) \subseteq \E_{S'}(Y)$. Fix an enumeration $g_0, g_1, \ldots$ of $G$ and for each $y \in Y$ let $c(y) = g_i$ where $i$ is least with $g_i \cdot y = S'(y)$. Now lift $S'$ to a $\cF$-expressible $S \in [E_G^X]$ by setting $S(x) = c(\phi(x)) \cdot x$. Then for any $\nu \in \E_G(X)$ we have $\phi_*(\nu) \in \E_G(Y) \subseteq \E_{S'}(Y)$. So $\phi_*(\nu)$ is $S'$-ergodic and hence $\nu \res \cF$ is $S$-ergodic.
\end{proof}

\section{Non-ergodic actions of $\Z$} \label{sec:nonerg}

In this section we retrace and elaborate upon the work of Kieffer and Rahe which, in the case of $\Z$-actions, revealed the non-ergodic factor theorem to be a direct consequence of the perturbative factor theorem \cite{KiRa}. The purpose for including this section is that we will later require a factor theorem for $\Z$-actions which is simultaneously relative, non-ergodic, and perturbative, and no such result is formally stated in the literature (though it is known to experts). Let us first recall the relative perturbative factor theorem for $\Z$.

\begin{lem}[essentially Thouvenot \cite{Th75} (see also \cite{Or70b})] \label{lem:zfolk}
Let $\Z \acts (X, \mu)$ be an aperiodic {\pmp} ergodic action, let $\cF$ be a $\Z$-invariant sub-$\sigma$-algebra, and let $\pv$ be a probability vector. If $\sH(\pv) \leq \ksh_\Z(X, \mu \given \cF)$ then there is a partition $\alpha$ with $\dist(\alpha) = \pv$ which is $\Z$-Bernoulli over $\cF$.

Moreover, there is a function $\psd_\Z: \R_+^2 \rightarrow \R_+$ such that, for any $\Z \acts (X, \mu)$, $\cF$, and $\pv$ satisfying the assumptions above, if $\sH(\pv) \leq M$ and $\xi$ is a partition satisfying
$$|\dist(\xi) - \pv| + |\sH(\xi) - \sH(\pv)| + |\ksh_\Z(\xi \given \cF) - \sH(\pv)| < \psd_\Z(M, \epsilon)$$
then the partition $\alpha$ above may be chosen so that $\dB_\mu(\alpha, \xi) < \epsilon$.
\end{lem}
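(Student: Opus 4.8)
The plan is to derive both assertions from the one‑dimensional theory recalled in the introduction, using the dictionary between Bernoulli partitions and Bernoulli factors (Lemma~\ref{lem:oneone}, in the simplified free form of Lemma~\ref{lem:simpbern}(1)). For the first assertion, let $(L,\lambda)$ be the (possibly countably infinite) probability space whose distribution is $\pv$, so that $\sH(L,\lambda)=\sH(\pv)\le\ksh_\Z(X,\mu\given\cF)$, and let $\phi\colon(X,\mu)\to(Y,\nu)$ be the factor map associated to $\cF$. Thouvenot's relative factor theorem --- which is already stated for arbitrary standard base spaces --- produces a $\Z$-equivariant $\psi\colon X\to L^\Z$ with $\psi\times\phi$ a factor map onto $\Z\acts(L^\Z\times Y,\ \lambda^\Z\times\nu)$; setting $Q=\psi(\cdot)(0)\colon X\to L$ and $\alpha=Q^{-1}(\beta)$, where $\beta$ is the partition of $L$ into points, one has $\dist(\alpha)=\pv$, and the implication $(1)\Rightarrow(2)$ of Lemma~\ref{lem:oneone} (using that $\Z$ acts freely) shows $\alpha$ is $\Z$-Bernoulli over $\cF$. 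This gives the first assertion.

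For the perturbative assertion the only real issue is that the available perturbative theorem is stated for $k$-piece partitions with $\psd$ depending on $k$, whereas $\xi$ and $\pv$ may have infinitely many atoms and $\psd_\Z$ may depend only on $(M,\epsilon)$. The key observation is that an entropy bound confines almost all of the mass of $\pv$ to boundedly many atoms: given $M$ and $\epsilon$, put $\epsilon'=\epsilon/10$, pick $t=t(M,\epsilon')\in(0,e^{-1})$ with $M/\log(1/t)<\epsilon'$, and set $k_0=k_0(M,\epsilon')=\lceil 1/t\rceil+1$. If $\sH(\pv)\le M$ and $H=\{i:p_i\ge t\}$, then $|H|<k_0$ (as $\sum_i p_i\le 1$) and, since $-x\log x> x\log(1/t)$ for $0<x<t$, the residual mass $m_\star:=\sum_{i\notin H}p_i$ obeys $m_\star\log(1/t)\le\sH(\pv)\le M$, hence $m_\star<\epsilon'$. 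Let $\pv^\flat$ be the vector with entries $(p_i)_{i\in H}$ together with $m_\star$, let $\xi^\flat$ be the correspondingly indexed partition with atoms $(X_i)_{i\in H}$ together with $\bigcup_{i\notin H}X_i$, and pad both to length $k_0$ with zero‑mass atoms; note $\sH(\pv^\flat)\le\sH(\pv)\le\ksh_\Z(X,\mu\given\cF)$.

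I would then check that the three hypothesis quantities for $(\xi^\flat,\pv^\flat)$ are dominated by a $k_0$-dependent modulus of those for $(\xi,\pv)$: one has $|\dist(\xi^\flat)-\pv^\flat|\le|\dist(\xi)-\pv|$ since coarsening is $1$-Lipschitz for the $\ell^1$-distance; $|\sH(\xi^\flat)-\sH(\pv^\flat)|$ is small by uniform continuity of Shannon entropy on the $k_0$-simplex; and $|\ksh_\Z(\xi^\flat\given\cF)-\sH(\pv^\flat)|$ is small by the Abramov--Rokhlin addition formula $\ksh_\Z(\xi\given\cF)=\ksh_\Z(\xi^\flat\given\cF)+\ksh_\Z(\xi\given\cF\vee\salg_\Z(\xi^\flat))$ combined with $\ksh_\Z(\xi^\flat\given\cF)\le\sH(\xi^\flat)$ and $\ksh_\Z(\xi\given\cF\vee\salg_\Z(\xi^\flat))\le\sH(\xi\given\xi^\flat)=\sH(\xi)-\sH(\xi^\flat)$ --- it is exactly here that the term $|\sH(\xi)-\sH(\pv)|$ in the hypothesis is needed, since for infinite partitions $\sH(\xi)\approx\sH(\pv)$ does not follow from $\ell^1$-closeness of the distributions. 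Choosing $\psd_\Z(M,\epsilon)>0$ small enough in terms of $M$, $\epsilon$, and the finitely many values $\psd(k,\epsilon/4)$, $k\le k_0$, the pair $(\xi^\flat,\pv^\flat)$ satisfies the hypotheses of Thouvenot's perturbative relative factor theorem with tolerance $\epsilon/4$, yielding a length‑$k_0$ partition $\alpha^\flat$ that is $\Z$-Bernoulli over $\cF$, with $\dist(\alpha^\flat)=\pv^\flat$ and $\dB_\mu(\alpha^\flat,\xi^\flat)<\epsilon/4$.

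Finally I would re‑expand the lumped atom $A^\flat_\star$ (the atom of $\alpha^\flat$ of mass $m_\star$), using the conditional tail distribution $\qv=(p_i/m_\star)_{i\notin H}$, for which $m_\star\,\sH(\qv)=\sH(\pv)-\sH(\pv^\flat)$. Since $\alpha^\flat$ is $\Z$-Bernoulli over $\cF$ one has $\ksh_\Z(\alpha^\flat\given\cF)=\sH(\pv^\flat)$, so by Abramov--Rokhlin $\ksh_\Z(X,\mu\given\cF\vee\salg_\Z(\alpha^\flat))=\ksh_\Z(X,\mu\given\cF)-\sH(\pv^\flat)\ge\sH(\pv)-\sH(\pv^\flat)=m_\star\,\sH(\qv)$. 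Applying to $\Z\acts(X,\mu)$ relative to $\cF\vee\salg_\Z(\alpha^\flat)$ the classical relative factor theorem in its standard form permitting a base space that varies measurably over the conditioning factor --- a point on the atoms $A^\flat_i$, $i\in H$, and $(L_{\mathrm{tail}},\qv)$ on $A^\flat_\star$, with average base entropy $m_\star\sH(\qv)$ --- one obtains a partition $\gamma$ refining $\alpha^\flat$ by subdividing only $A^\flat_\star$ into atoms of masses $(p_i)_{i\notin H}$, such that $\alpha:=\alpha^\flat\vee\gamma$ is (after relabeling) the time‑zero coordinate partition of a Bernoulli‑over‑$\cF$ factor with base distribution $\pv$; by Lemma~\ref{lem:oneone} again, $\alpha$ is $\Z$-Bernoulli over $\cF$ with $\dist(\alpha)=\pv$. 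As $\alpha$ agrees with $\alpha^\flat$ off $A^\flat_\star$, a short bookkeeping gives $\dB_\mu(\alpha,\xi)\le\dB_\mu(\alpha^\flat,\xi^\flat)+2m_\star+|\dist(\xi)-\pv|<\epsilon/4+2\epsilon'+\psd_\Z(M,\epsilon)<\epsilon$. The step I expect to require the most care is invoking the base‑bundle form of Thouvenot's relative factor theorem (classical, cf.\ \cite{Or70b}, but not the statement recalled in the introduction) and the entropy bookkeeping above; the only genuinely new point, modest as it is, is the reduction of $\pv$ to a number of atoms depending only on $(M,\epsilon)$, which is what lets $\psd_\Z$ depend only on $(M,\epsilon)$ and what forces the extra $|\sH(\xi)-\sH(\pv)|$ term into the hypothesis.
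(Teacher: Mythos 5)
Your route is genuinely different from the paper's. The paper does not use Thouvenot's $k$-piece perturbative theorem as a black box: it opens up his proof, trades the $k$-dependence of $\psd_0(k,\epsilon)$ for an $M$-dependence by invoking the stronger Lemma \ref{lem:drop} (this is exactly where the extra hypothesis $|\sH(\xi)-\sH(\pv)|$ enters there), treats the case $\ksh_\Z(\xi\given\cF)\geq\sH(\pv)$ by a path of partitions via Lemma \ref{lem:path}, and then handles countable $\pv$ by a Cauchy sequence of Bernoulli partitions with finite distributions $\pv^n\to\pv$ --- a scheme that only closes because the modulus already depends on $(M,\epsilon)$ rather than on the lengths of the $\pv^n$. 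Your truncation reduction (bounding $|H|$ and the tail mass $m_\star$ by the entropy bound, the three deficiency estimates for $(\xi^\flat,\pv^\flat)$, and the final $\dB$ bookkeeping) is correct, and your diagnosis of why the $|\sH(\xi)-\sH(\pv)|$ term is indispensable is sound, though it serves a different purpose than in the paper.

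The genuine gap is the re-expansion step. What you need is: given $\alpha^\flat$ Bernoulli over $\cF$ with distribution $\pv^\flat$ and $\ksh_\Z(X,\mu\given\cF\vee\salg_\Z(\alpha^\flat))\geq m_\star\sH(\qv)$, a refinement of $\alpha^\flat$ subdividing only $A^\flat_\star$, with conditional distribution $\qv$ there, whose full process is i.i.d.\ and independent of $\cF$. The relative factor theorem actually recalled in this paper would give a partition $\delta$ with $\dist(\delta)=\qv$ whose process is independent of $\cF\vee\salg_\Z(\alpha^\flat)$ (and then $\alpha^\flat\vee(\delta\res A^\flat_\star)$ would indeed work), but that costs entropy $\sH(\qv)$, not $m_\star\sH(\qv)$, and $\sH(\qv)$ can exceed the available relative entropy --- e.g.\ when $\sH(\pv)=\ksh_\Z(X,\mu\given\cF)$ and $\pv$ has a long thin tail, the available entropy is exactly $m_\star\sH(\qv)$. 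The ``base-bundle'' form you invoke, with the base varying measurably over the fibers of the conditioning factor at average cost $m_\star\sH(\qv)$, is not in \cite{Or70b}, and is not the statement of \cite{Th75} quoted in the introduction. I believe it is true, but obtaining it requires real work: either rerun the proof of the relative {\sinai} theorem with a fiber-varying base, or show that the lumping extension of an i.i.d.\ $\pv$-process over the i.i.d.\ $\pv^\flat$-process splits, relatively over the base, as a direct product with a Bernoulli of entropy $m_\star\sH(\qv)$ (a relative-Ornstein-theory fact resting on Thouvenot's relative \emph{isomorphism} theorem, not just his relative {\sinai} theorem), after which the plain relative {\sinai} theorem over $\cF\vee\salg_\Z(\alpha^\flat)$ finishes. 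Note also that you cannot sidestep this by iterating your truncation in a Cauchy scheme: successive approximations would need a modulus independent of the number of atoms of the intermediate distributions, which is precisely what the $k$-dependent black box does not provide and why the paper upgrades to $\psd_1(M,\epsilon)$ before running its limiting argument.
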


In order to explain the proof of the above lemma, we will need the following fact.

\begin{lem} \cite[Fact 1.1.11]{Do11} \label{lem:drop}
For every $M, \epsilon > 0$ there exists $\eta = \eta(M, \epsilon) > 0$ with the following property. Let $(X, \mu)$ be a standard probability space, let $\cF$ be a sub-$\sigma$-algebra, let $(Y, \nu)$ be the factor of $(X, \mu)$ associated to $\cF$, and let $\mu = \int \mu_y \ d \nu(y)$ be the disintegration of $\mu$ over $\nu$. If $\gamma$ is a partition of $X$ with $\sH(\gamma) \leq M$ and $\sH(\gamma \given \cF) > \sH(\gamma) - \eta(M, \epsilon)$, then
$$\nu(\{y \in Y : |\dist_{\mu_y}(\gamma) - \dist_\mu(\gamma)| < \epsilon\}) > 1 - \epsilon.$$
\end{lem}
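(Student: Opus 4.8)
The plan is to reduce the statement to Pinsker's inequality via the standard identification of the information gap $\sH(\gamma) - \sH(\gamma \given \cF)$ with an expected relative entropy. Write $\gamma = \{A_i : 0 \le i < |\gamma|\}$. Since $\mu(A_i) = \int_Y \mu_y(A_i)\,d\nu(y)$, any index $i$ with $\mu(A_i) = 0$ also has $\mu_y(A_i) = 0$ for $\nu$-almost-every $y$, so we may harmlessly discard these indices and assume $\mu(A_i) > 0$ for all $i$. Expanding $\sH(\gamma) = -\sum_i \bigl(\int_Y \mu_y(A_i)\,d\nu(y)\bigr)\log\mu(A_i)$ and subtracting $\sH(\gamma \given \cF) = \int_Y \bigl(-\sum_i \mu_y(A_i)\log\mu_y(A_i)\bigr)\,d\nu(y)$ yields
$$\sH(\gamma) - \sH(\gamma \given \cF) \;=\; \int_Y \sum_i \mu_y(A_i)\log\frac{\mu_y(A_i)}{\mu(A_i)} \, d\nu(y) \;=\; \int_Y D\bigl(\dist_{\mu_y}(\gamma) \,\big\|\, \dist_\mu(\gamma)\bigr) \, d\nu(y),$$
where $D(p \| q) = \sum_i p_i \log(p_i/q_i) \ge 0$ denotes relative entropy. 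The map $y \mapsto \dist_{\mu_y}(\gamma)$ is $\nu$-measurable by measurability of the disintegration, so the integrand is measurable and nonnegative, and the hypothesis $\sH(\gamma \given \cF) > \sH(\gamma) - \eta$ says precisely that this integral is less than $\eta$.

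Next I would combine Markov's inequality with Pinsker's inequality. For $t > 0$ set $E_t = \{y \in Y : D(\dist_{\mu_y}(\gamma) \| \dist_\mu(\gamma)) \ge t\}$; by Markov, $\nu(E_t) < \eta / t$. By Pinsker's inequality there is a universal constant $c > 0$, depending only on the base of the logarithm, with $D(p \| q) \ge c\,|p - q|^2$ for all probability vectors $p, q$; hence every $y \notin E_t$ satisfies $|\dist_{\mu_y}(\gamma) - \dist_\mu(\gamma)| < \sqrt{t/c}$.

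It remains to choose the parameters. Put $t = c\,\epsilon^2$, so that $y \notin E_t$ forces $|\dist_{\mu_y}(\gamma) - \dist_\mu(\gamma)| < \epsilon$, and define $\eta = \eta(M, \epsilon) := \tfrac12\, c\,\epsilon^3$ (any positive value strictly below $t\epsilon$ works). Then $\nu(E_t) < \eta/t = \tfrac12 \epsilon$, and consequently $\nu(\{y \in Y : |\dist_{\mu_y}(\gamma) - \dist_\mu(\gamma)| \ge \epsilon\}) \le \nu(E_t) < \epsilon$, as required. I expect no genuine obstacle: the argument is short, and in fact it yields an $\eta$ independent of $M$, so the parameter $M$ in the statement is simply harmless slack. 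The only points needing a word of care — the $\nu$-a.e.\ finiteness of the relative entropy and the measurability of the disintegration-dependent quantities — are dispatched by the reduction to indices with $\mu(A_i) > 0$ made at the outset.
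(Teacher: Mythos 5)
Your argument is correct. Note that the paper itself does not prove this lemma at all --- it is quoted verbatim from Downarowicz \cite[Fact 1.1.11]{Do11} --- so there is no in-paper proof to compare against; your write-up is a valid self-contained derivation. The identification $\sH(\gamma) - \sH(\gamma \given \cF) = \int_Y D\bigl(\dist_{\mu_y}(\gamma) \,\|\, \dist_\mu(\gamma)\bigr)\, d\nu(y)$ is legitimate here because both $\sH(\gamma) \leq M$ and $\sH(\gamma \given \cF) \leq \sH(\gamma)$ are finite, so the two series $\sum_i -\mu_y(A_i)\log\mu(A_i)$ and $\sum_i -\mu_y(A_i)\log\mu_y(A_i)$ are each finite for $\nu$-a.e.\ $y$ and may be subtracted termwise; your reduction to indices with $\mu(A_i)>0$ disposes of the degenerate terms. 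Pinsker's inequality holds for countable (indeed arbitrary) probability distributions and Markov's inequality finishes the job, and you are right that the resulting $\eta$ is independent of $M$ --- the hypothesis $\sH(\gamma)\leq M$ is only needed to make the entropy difference meaningful. The one stylistic caveat: since the lemma is an external citation, your stronger conclusion ($\eta$ uniform in $M$) is a bonus rather than a discrepancy.
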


\begin{proof}[Proof sketch of Lemma \ref{lem:zfolk}]
The first statement is well-known and is implied by the work of Thouvenot \cite{Th75}. The second statement is stated a tad differently than it has previously appeared in the literature but is certainly known to experts. We briefly describe how to obtain the precise statement above, but we leave the details to the reader.

Working with $k$-piece partitions and length-$k$ probability vectors, Thouvenot proved the above statement with two modifications: (i) in place of $\psd_\Z$ he used a quantity $\psd_0(k, \epsilon)$ depending only on $k$ and $\epsilon$; and (ii) he omitted the $|\sH(\xi) - \sH(\pv)|$ term from the inequality. Inspection of his proof reveals that, in the special case where $\ksh_\Z(\xi \given \cF) < \sH(\pv)$ \cite[Prop. 2]{Th75}, the quantity $\psd_0(k, \epsilon)$ ultimately depends upon the uniform continuity of Shannon entropy on the space of length-$k$ probability vectors, and on a weakened version of Lemma \ref{lem:drop}. By insisting that $|\sH(\xi) - \sH(\pv)|$ be small, and by using the stronger Lemma \ref{lem:drop}, we can instead choose a $\psd_1(M, \epsilon)$ depending only upon $M$ and $\epsilon$. Next, in the case $\ksh_\Z(\xi \given \cF) \geq \sH(\pv)$, one takes a continuous path of partitions $(\xi_t)_{t \in [0, 1]}$ with $\xi_0 = \xi$ and $\sH(\xi_1) < \sH(\pv)$. Then there will be $s \in [0, 1]$ with $\ksh_\Z(\xi_s \given \cF) < \sH(\pv)$, allowing the prior argument to be applied. Along this path, the Shannon entropy does not need to drop by much, so Lemma \ref{lem:path} from the next section implies that there is a path where the distribution of $\xi_s$ does not change too much either, in fact the change in distribution is bounded by a function of the change in Shannon entropy. So our statement above holds for $k$-piece partitions and length-$k$ probability vectors simultaneously for every $k$. Finally, given a countable partition $\xi$ we can find finite partitions $\xi'$ coarser than $\xi$ which approximate $\dist(\xi)$, $\sH(\xi)$, and $\ksh_\Z(\xi \given \cF)$ arbitrarily well. Similarly, if $|\pv| = \infty$ we can choose a sequence of finite probability vectors $\pv^n$ satisfying $|\pv^n - \pv| + 2 |\sH(\pv^n) - \sH(\pv)| < (1/2) \psd_1(M, 2^{-n-1} \epsilon )$. One can then perform a standard construction of a Cauchy sequence of partitions $\alpha^n$ which are $\Z$-Bernoulli over $\cF$ and have distribution $\pv^n$, and the limiting partition $\alpha$ will have the desired properties. It suffices to set $\psd_\Z(M, \epsilon) = \psd_1(M, \epsilon / 2)$ for this construction.
\end{proof}

For a standard Borel space $X$ we write $\Prt$ for the set of countable ordered Borel partitions $\xi = \{C_i : 0 \leq i < |\xi|\}$ of $X$. If $\mu$ is a Borel probability measure on $X$, then we write $\Prt_\sH(\mu)$ for the set of $\xi \in \Prt$ with $\sH_\mu(\xi) < \infty$. For $\cQ \subseteq \Prt$ and $\gamma \in \Prt$ we set
$$\dB_\mu(\gamma, \cQ) = \inf_{\xi \in \cQ} \dB_\mu(\gamma, \xi).$$
Recall that for a Borel probability measure $\mu$, the Rokhlin distance between $\xi, \gamma \in \Prt$ is
$$\dR_\mu(\xi, \gamma) = \sH_\mu(\xi \given \gamma) + \sH_\mu(\gamma \given \xi)$$
(this may be infinite). The Rokhlin distance doesn't take the order of the partitions into account, so we adjust it by setting
$$\dC_\mu(\xi, \gamma) = \dR_\mu(\xi, \gamma) + \dB_\mu(\xi, \gamma).$$
Observe that the map $\xi \in \Prt \mapsto \sH_\mu(\xi)$ is $\dC_\mu$-continuous.

For an ergodic aperiodic {\pmp} action $G \acts (X, \mu)$, a $G$-invariant sub-$\sigma$-algebra $\cF$,  and a probability vector $\pv$, set
$$\Bp{\mu}{\pv}{\cF} = \Big\{ \alpha \in \Prt : \alpha \text{ is } (G, \mu)\text{-Bernoulli over } \cF \text{ and } \dist_\mu(\alpha) = \pv \Big\}.$$
Additionally, if $\xi \in \Prt$ and $\sH(\pv) < \infty$ then we define the deficiency
$$\Def{\mu}{\pv}{\xi}{\cF} = |\dist_\mu(\xi) - \pv| + |\sH_\mu(\xi) - \sH(\pv)| + |\rh_{G,\mu}(\xi \given \cF) - \sH(\pv)|.$$
By \cite[Lem. 5.2.(iii)]{AS}, the map $\xi \in \Prt \mapsto \rh_{G,\mu}(\xi \given \cF)$ is $\dC_\mu$-continuous. Hence $\xi \in \Prt \mapsto \Def{\mu}{\pv}{\xi}{\cF}$ is $\dC_\mu$-continuous as well. Similarly, the map $\mu \mapsto \sH_\mu(\xi)$ is Borel and by \cite[Cor. 6.5.(ii)]{AS} the map $\mu \in \{\nu \in \M_G(X) : \sH_\nu(\xi) < \infty\} \mapsto \rh_{G,\mu}(\xi \given \cF)$ is Borel provided $\cF$ is countably generated. So the map $\mu \in \M_G(X) \mapsto \Def{\mu}{\pv}{\xi}{\cF}$ is Borel whenever $\cF$ is countably generated. In this section and the next we will focus on aperiodic $\Z$-actions, in which case $\rh_{\Z,\mu}(\xi \given \cF)$ can be replaced with $\ksh_{\Z,\mu}(\xi \given \cF)$ in the definition of $\Def{\mu}{\pv}{\xi}{\cF}$ as the two quantities are equal \cite[Cor. 7.2]{AS}.

\begin{lem}\cite[Lem. 3]{KiRa} \label{lem:openb}
Let $X$ be a standard Borel space and let $G \acts X$ be a Borel action. Pick a subset $\cQ_\nu \subseteq \Prt_\sH(\nu)$ for each $\nu \in \E_G(X)$. Assume that $\{\nu \in \E_G(X) : \gamma \in \cQ_\nu\}$ is Borel for every $\gamma \in \Prt$ and that $\cQ_\nu$ is $\dC_\nu$-open for every $\nu \in \E_G(X)$. Then for fixed $\gamma \in \Prt$ the map $\nu \in \E_G(X) \mapsto \dB_\nu(\gamma, \cQ_\nu)$ is Borel.
\end{lem}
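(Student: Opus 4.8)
The plan is to replace the infimum defining $\dB_\nu(\gamma,\cQ_\nu)$ — which ranges over the uncountable, $\nu$-dependent set $\cQ_\nu$ — by an infimum over a single \emph{fixed countable} family $\sP\subseteq\Prt$ of partitions. Two observations make this possible: $\cQ_\nu$ is $\dC_\nu$-open, and $\dB_\nu\le\dC_\nu$ pointwise (since $\dC_\nu=\dR_\nu+\dB_\nu$ and $\dR_\nu\ge 0$). Granting for the moment a countable $\sP$ that is $\dC_\nu$-dense in $\Prt_\sH(\nu)$ for \emph{every} Borel probability measure $\nu$, I would first prove, for every $\nu\in\E_G(X)$, the identity
$$\dB_\nu(\gamma,\cQ_\nu)\;=\;\inf\bigl\{\dB_\nu(\gamma,\eta):\eta\in\sP\cap\cQ_\nu\bigr\},$$
with the convention that both sides equal $+\infty$ when $\cQ_\nu=\varnothing$. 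The inequality ``$\ge$'' is immediate. For ``$\le$'', fix $\xi\in\cQ_\nu$ and $\delta>0$; since $\cQ_\nu$ is $\dC_\nu$-open there is $r>0$ with the $\dC_\nu$-ball of radius $r$ about $\xi$ contained in $\cQ_\nu$, and by density (using $\cQ_\nu\subseteq\Prt_\sH(\nu)$, so $\xi$ lies in the relevant space) I may pick $\eta\in\sP$ with $\dC_\nu(\xi,\eta)<\min(r,\delta)$. Then $\eta\in\sP\cap\cQ_\nu$, and by the triangle inequality for $\dB_\nu$ together with $\dB_\nu\le\dC_\nu$ we get $\dB_\nu(\gamma,\eta)\le\dB_\nu(\gamma,\xi)+\delta$. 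Letting $\delta\to 0$ and then taking the infimum over $\xi\in\cQ_\nu$ gives the identity.

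It remains to construct the universal countable dense family $\sP$. I would fix a countable algebra $\cA\subseteq\Borel(X)$ with $\salg(\cA)=\Borel(X)$ and let $\sP$ be the (countable) set of all \emph{finite} ordered partitions of $X$ whose pieces all belong to $\cA$; note $\sP\subseteq\Prt_\sH(\mu)$ for every $\mu$, since finite partitions have finite Shannon entropy. Given $\nu$, $\xi=\{C_0,C_1,\dots\}\in\Prt_\sH(\nu)$, and $\epsilon>0$, the approximation is a standard two-step argument. First, coarsen $\xi$ to the finite partition $\xi_N$ obtained by merging $C_N,C_{N+1},\dots$ into one piece; a routine computation gives $\dB_\nu(\xi,\xi_N)\to 0$, and since $\xi_N$ is $\salg(\xi)$-measurable we have $\sH_\nu(\xi_N\given\xi)=0$ while $\sH_\nu(\xi\given\xi_N)\to 0$ because $\sH_\nu(\xi)<\infty$, so $\dC_\nu(\xi,\xi_N)\to 0$. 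Second, fix $N$ with $\dC_\nu(\xi,\xi_N)<\epsilon/2$, approximate each of the finitely many pieces of $\xi_N$ in $\nu$-measure by an element of $\cA$, and disjointify these within $\cA$ to produce $\eta\in\sP$ having the same number of pieces as $\xi_N$ and with $\dB_\nu(\xi_N,\eta)$ arbitrarily small; since $\xi_N$ and $\eta$ have a uniformly bounded number of pieces, smallness of $\dB_\nu(\xi_N,\eta)$ forces smallness of $\dR_\nu(\xi_N,\eta)$ (this is the continuity of Shannon entropy on probability vectors of bounded length, already used elsewhere in the paper), so one arranges $\dC_\nu(\xi_N,\eta)<\epsilon/2$ and hence $\dC_\nu(\xi,\eta)<\epsilon$.

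Finally, the displayed identity reduces Borel measurability to a countable infimum. For each fixed $\eta=\{D_i\}\in\sP$ and $\gamma=\{C_i\}\in\Prt$, the map $\nu\mapsto\dB_\nu(\gamma,\eta)=\sum_i\nu(C_i\symd D_i)$ is Borel on $\E_G(X)$, being a countable sum of the Borel functions $\nu\mapsto\nu(C_i\symd D_i)$; and $\{\nu\in\E_G(X):\eta\in\cQ_\nu\}$ is Borel by hypothesis applied to $\eta\in\Prt$. Hence $f_\eta:\E_G(X)\to[0,+\infty]$, defined as $\dB_\nu(\gamma,\eta)$ on $\{\nu:\eta\in\cQ_\nu\}$ and $+\infty$ elsewhere, is Borel, and $\nu\mapsto\dB_\nu(\gamma,\cQ_\nu)=\inf_{\eta\in\sP}f_\eta(\nu)$ is a countable infimum of Borel functions, hence Borel.

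I expect the only genuinely substantive point to be the construction of the one countable family $\sP$ that is $\dC_\nu$-dense \emph{for all $\nu$ simultaneously}, and within that, controlling the Rokhlin (entropy) part $\dR_\nu$ of the distance under approximation by partitions from $\cA$; once the coarsening step reduces to finite partitions with boundedly many pieces, this control follows from the continuity properties of Shannon entropy recorded just before the statement, and everything else is bookkeeping.
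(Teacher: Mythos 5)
Your proof is correct, and it supplies in full the argument that the paper delegates to Kieffer--Rahe with ``obvious minor modifications'': the reduction of the infimum over the uncountable, $\nu$-dependent set $\cQ_\nu$ to an infimum over a single countable family $\sP$ that is dense for every measure simultaneously is exactly the mechanism of \cite[Lem.~3]{KiRa}, and you correctly identify the two modifications needed here, namely passing from $k$-piece to countable finite-entropy partitions (handled by the tail-coarsening $\xi\mapsto\xi_N$, which uses $\sH_\nu(\xi)<\infty$) and upgrading $\dB_\nu$-density to $\dC_\nu$-density (handled by the uniform equivalence of $\dB$ and $\dR$ on partitions with a bounded number of pieces, i.e.\ \cite[Fact 1.7.7]{Do11}). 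The only blemish is that your labels for the two inequalities in the displayed identity are swapped: since $\sP\cap\cQ_\nu\subseteq\cQ_\nu$, the immediate direction is $\dB_\nu(\gamma,\cQ_\nu)\leq\inf\{\dB_\nu(\gamma,\eta):\eta\in\sP\cap\cQ_\nu\}$, and your density-plus-openness argument establishes the reverse inequality; the arguments themselves are correct as written.
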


\begin{proof}
For fixed $k \in \N$, this is proven in \cite{KiRa} in the setting of $k$-piece partitions with the assumption that each $\cQ_\nu$ is $\dB_\nu$-open. Their proof works in our setting with obvious minor modifications.
\end{proof}

\begin{thm}\cite[Theorem 3]{KiRa} \label{thm:universe}
Let $X$ be a standard Borel space and let $G \acts X$ be a Borel action. Pick a non-empty subset $\cQ_\nu \subseteq \Prt_\sH(\nu)$ for each $\nu \in \E_G(X)$. Also pick functions $\psi_n : \E_G(X) \times \Prt \rightarrow [0, \infty)$, $n \in \N$, with the property that for all $\nu \in \E_G(X)$ the map $\gamma \mapsto \psi_n(\nu, \gamma)$ is $\dB_\nu$-continuous and for all $\gamma \in \Prt$ the map $\nu \mapsto \psi_n(\nu, \gamma)$ is Borel. Assume that for every $\nu \in \E_G(X)$ and $\gamma \in \Prt$, $\inf_n \psi_n(\nu, \gamma) = 0$ if and only if $\gamma \in \cQ_\nu$, and assume that for all $\gamma \in \Prt$ the map $\nu \in \E_G(X) \mapsto \dB_\nu(\gamma, \cQ_\nu)$ is Borel. Then there is a Borel partition $\alpha$ with $\alpha \in \bigcap_{\nu \in \E_G(X)} \cQ_\nu$.
\end{thm}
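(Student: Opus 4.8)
The plan is to construct $\alpha$ by a Borel selection / exhaustion argument of the kind used by Kieffer and Rahe, building $\alpha$ as a countable combination of local approximations across a Borel partition of $\E_G(X)$ that is progressively refined. First I would reduce to the ergodic case in the following sense: by the ergodic decomposition theorem there is a $G$-invariant Borel map $e : X \to \E_G(X)$ with $\nu(e^{-1}(\nu)) = 1$ for all $\nu \in \E_G(X)$, so a Borel partition $\alpha$ of $X$ restricts, on each fiber $e^{-1}(\nu)$, to a partition of that fiber, and the requirement $\alpha \in \bigcap_\nu \cQ_\nu$ is a statement about these fiberwise restrictions. The idea is to choose, for each $\nu$, a partition $\beta_\nu \in \cQ_\nu$ (possible since $\cQ_\nu \neq \varnothing$), and then glue the $\beta_\nu$ together in a Borel way; the obstruction is that a naive gluing is neither Borel nor lands in $\cQ_\nu$ for the $\nu$ whose fiber was "cut" by the gluing, since $\cQ_\nu$-membership is only an open condition, not a closed one.

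The key mechanism is to exploit the hypothesis that $\nu \mapsto \dB_\nu(\gamma,\cQ_\nu)$ is Borel together with the openness of $\cQ_\nu$ and the characterization $\inf_n \psi_n(\nu,\gamma) = 0 \iff \gamma \in \cQ_\nu$. I would proceed inductively, constructing a sequence of Borel partitions $\gamma^{(j)} \in \Prt$ and a decreasing sequence of Borel sets $E_j \subseteq \E_G(X)$ (the "not yet handled" ergodic measures) with $\bigcap_j E_j = \varnothing$, such that: on $e^{-1}(\E_G(X) \setminus E_j)$ the partition $\gamma^{(j)}$ already "matches" a member of $\cQ_\nu$ to within $2^{-j}$ in $\dB_\nu$ simultaneously for all $\psi_n$, $n \le j$; the $\gamma^{(j)}$ are $\dB$-Cauchy on every fiber; and the limit $\alpha = \lim_j \gamma^{(j)}$ lies in $\cQ_\nu$ for every $\nu$ because on the fiber of $\nu$ we have $\psi_n(\nu,\alpha) = \lim_j \psi_n(\nu,\gamma^{(j)}) = 0$ for every $n$ (using $\dB_\nu$-continuity of each $\psi_n(\nu,\cdot)$ and that we drove each $\psi_n$ to zero once $\nu$ left $E_j$). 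Borelness of all the sets and partitions appearing is where Lemma \ref{lem:openb} and the stated Borelness of $\nu \mapsto \dB_\nu(\gamma,\cQ_\nu)$ and of $\nu \mapsto \psi_n(\nu,\gamma)$ get used; at each stage one selects, via a Borel uniformization of the relation $\{(\nu,\gamma) : \dB_\nu(\gamma^{(j)},\gamma) < 2^{-j},\ \psi_n(\nu,\gamma) < 2^{-j}\ (n \le j),\ \gamma \in \cQ_\nu\}$ restricted to an appropriate Borel subset of $\E_G(X)$, a Borel assignment $\nu \mapsto \gamma_\nu$, then patches these together over a sufficiently fine Borel partition of $\E_G(X)$ (refining $e$) so that the patched partition $\gamma^{(j+1)}$ is globally Borel, agrees with $\gamma^{(j)}$ outside a small-measure set on each fiber, and inherits the desired estimates except on a new exceptional set $E_{j+1}$ of controlled size.

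The main obstacle — and the step I would spend the most care on — is this patching: arranging that the exceptional sets $E_j$ genuinely shrink to $\varnothing$ while the fiberwise perturbations $\dB_\nu(\gamma^{(j+1)},\gamma^{(j)})$ stay summable, so that the pointwise limit $\alpha$ exists as a genuine Borel partition and simultaneously, for each fixed $\nu$, all but finitely many stages have already certified $\psi_n(\nu,\cdot)$ small for each fixed $n$. This requires choosing the refinement of $\E_G(X)$ at stage $j$ fine enough (in a metric compatible with the standard Borel structure and with the maps $\nu \mapsto \dB_\nu(\gamma,\cQ_\nu)$, $\nu \mapsto \psi_n(\nu,\gamma)$) that the selected $\gamma_\nu$ can be replaced by a single partition on each block of the refinement with only a further $2^{-j}$-error — this uses continuity of $\gamma \mapsto \dB_\nu(\gamma,\cdot)$ and $\gamma \mapsto \psi_n(\nu,\gamma)$ in $\gamma$ but requires, in $\nu$, only measurability, so one must pass the error into the exceptional set rather than into a continuity estimate. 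Modulo this bookkeeping, which follows the Kieffer–Rahe template cited as \cite[Thm.~3]{KiRa}, the result follows; I would present the construction as an inductive lemma and then verify $\alpha \in \cQ_\nu$ for arbitrary fixed $\nu$ by the limiting argument above.
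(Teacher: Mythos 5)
The paper's own ``proof'' of this statement is only a citation: it notes that Kieffer--Rahe's argument for $k$-piece partitions works verbatim for countable partitions, so the relevant comparison is with that argument, whose general template (a fixed countable family of partitions, countable Borel decompositions of $\E_G(X)$ glued along the ergodic decomposition, and a fiberwise-Cauchy sequence of globally Borel partitions) your sketch does follow in broad strokes.

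There is, however, a genuine gap at the one point where the functions $\psi_n$ actually matter. You propose to maintain the invariant that $\gamma^{(j)}$ makes $\psi_n(\nu,\cdot)$ small ``simultaneously for all $n \le j$'' and to conclude in the limit that $\psi_n(\nu,\alpha) = \lim_j \psi_n(\nu,\gamma^{(j)}) = 0$ for \emph{every} $n$. This is both stronger than needed and in general impossible: membership in $\cQ_\nu$ only asserts $\inf_n \psi_n(\nu,\gamma) = 0$, and nothing in the hypotheses prevents, say, $\psi_1 \equiv 1$, in which case no partition at all satisfies your stage-$j$ invariant for $j \ge 1$ even though the theorem's hypotheses hold. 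What the argument actually requires is that for each $\nu$ and each tolerance $2^{-m}$ you select \emph{one} certifying index $n = n(\nu,m)$ with $\psi_n(\nu,\gamma^{(j)})$ small, and then protect that single value against all later perturbations, so that $\limsup_j \psi_n(\nu,\gamma^{(j)}) \le 2^{-m}$ and continuity of $\psi_n(\nu,\cdot)$ at $\alpha$ gives $\psi_n(\nu,\alpha) \le 2^{-m}$, whence $\inf_n \psi_n(\nu,\alpha) = 0$. Since $\psi_n(\nu,\cdot)$ is merely continuous --- not uniformly, and not uniformly in $\nu$ --- protecting a certificate forces the allowed $\dB_\nu$-size of every later perturbation to be bounded by a positive quantity depending on $\nu$ (a modulus of continuity at the current partition), which must be arranged to be Borel in $\nu$ and then handled by a further countable invariant decomposition of $\E_G(X)$; this ``locking in'' bookkeeping is the heart of the Kieffer--Rahe proof and is exactly what your sketch misstates. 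A secondary point: you should not appeal to a Borel uniformization of a relation on $\E_G(X) \times \Prt$ ($\Prt$ is not given a standard Borel structure, and Borel relations need not admit Borel uniformizations); the standard fix, and what Kieffer--Rahe do, is to select from a fixed countable family of partitions built from a countable generating algebra of $X$ (which is $\dB_\nu$-dense for every $\nu$ simultaneously) by taking the least index satisfying conditions that are Borel in $\nu$ for each fixed partition --- this makes the blockwise choice Borel automatically and removes any need for uniformization.
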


\begin{proof}
This is proved in \cite{KiRa} for $k$-piece partitions, but their proof works in our setting without modification.
\end{proof}

We now present the non-ergodic relative perturbative factor theorem for $\Z$-actions.

\begin{thm} \label{thm:kira}
Let $X$ be a standard Borel space, let $\Z \acts X$ be an aperiodic Borel action, and let $\cF$ be a countably generated $\Z$-invariant sub-$\sigma$-algebra. Fix $M, \epsilon > 0$ and a countable partition $\xi$ of $X$, and let $\psd : \R_+^2 \rightarrow \R_+$ be as in Lemma \ref{lem:zfolk}. If $\nu \mapsto \pv^\nu$ is a Borel map assigning to each $\nu \in \E_\Z(X)$ a probability vector $\pv^\nu$ satisfying $\sH(\pv^\nu) < \infty$ and $\sH(\pv^\nu) \leq \ksh_\Z(X, \nu \given \cF)$, then there is a Borel partition $\alpha$ with $\alpha \in \Bp{\nu}{\pv^\nu}{\cF}$ for every $\nu \in \E_\Z(X)$ and satisfying
$$\forall \nu \in \E_\Z(X) \quad \Big( \sH(\pv^\nu) \leq M \text{ and } \Def{\nu}{\pv^\nu}{\xi}{\cF} < \psd(M, \epsilon) \Longrightarrow \dB_\nu(\alpha, \xi) < \epsilon \Big).$$
\end{thm}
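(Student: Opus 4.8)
The plan is to deduce this from the abstract selection result Theorem \ref{thm:universe}, exactly mirroring the Kieffer--Rahe strategy but now feeding in the \emph{relative perturbative} input Lemma \ref{lem:zfolk} rather than the classical non-relative one. First I would set, for each $\nu \in \E_\Z(X)$,
\[
\cQ_\nu = \Bp{\nu}{\pv^\nu}{\cF} \qquad \text{if } \sH(\pv^\nu) > M \text{ or } \Def{\nu}{\pv^\nu}{\xi}{\cF} \geq \psd(M,\epsilon),
\]
and
\[
\cQ_\nu = \big\{ \alpha \in \Bp{\nu}{\pv^\nu}{\cF} : \dB_\nu(\alpha, \xi) < \epsilon \big\} \qquad \text{otherwise}.
\]
The hypothesis $\sH(\pv^\nu) \leq \ksh_\Z(X,\nu \given \cF)$ together with Lemma \ref{lem:zfolk} (its first assertion) guarantees $\Bp{\nu}{\pv^\nu}{\cF} \neq \varnothing$, and in the second case the quantitative part of Lemma \ref{lem:zfolk} (with $M$ as the entropy bound and using that $\Def{\nu}{\pv^\nu}{\xi}{\cF}$ is precisely the sum of the three deficiency terms appearing there) guarantees $\cQ_\nu \neq \varnothing$ as well. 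Each $\cQ_\nu$ consists of partitions with $\sH_\nu(\cdot) = \sH(\pv^\nu) < \infty$, so $\cQ_\nu \subseteq \Prt_\sH(\nu)$.

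Next I would verify the Borel-measurability and continuity hypotheses of Theorem \ref{thm:universe}. For the required functions $\psi_n$, one takes a countable family of $\dC_\nu$-continuous (hence $\dB_\nu$-continuous after the usual reconciliation, since the Bernoullicity conditions are closed in $\dB$) ``defect functionals'' whose common zero set is exactly $\cQ_\nu$: the independence conditions of Definition \ref{defn:bover}/\ref{defn:wbover} for the $\Z$-action reduce, by Lemma \ref{lem:simpbern}, to mutual independence of finitely many translates, each expressible as a countable supremum of $|\mu(A\cap B) - \mu(A)\mu(B)|$-type terms; the condition $\dist_\nu(\alpha) = \pv^\nu$ and (in the second case) $\dB_\nu(\alpha,\xi) < \epsilon$ are handled by adding the terms $|\dist_\nu(\alpha) - \pv^\nu|$ and $(\dB_\nu(\alpha,\xi) - \epsilon)_+$. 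Borelness in $\nu$ of each $\psi_n(\cdot,\gamma)$ follows from Borelness of $\nu \mapsto \nu(A)$, of $\nu \mapsto \pv^\nu$ (assumed), of $\nu \mapsto \sH_\nu(\xi)$, and of $\nu \mapsto \ksh_{\Z,\nu}(\xi \given \cF)$ (using \cite[Cor. 6.5.(ii)]{AS} and $\cF$ countably generated), so that $\nu \mapsto \Def{\nu}{\pv^\nu}{\xi}{\cF}$ is Borel and the case-split defining $\cQ_\nu$ is along a Borel set. Borelness of the map $\nu \mapsto \dB_\nu(\gamma,\cQ_\nu)$ then follows from Lemma \ref{lem:openb}, once I check its hypotheses: each $\cQ_\nu$ is $\dC_\nu$-open within $\Bp{\nu}{\pv^\nu}{\cF}$ — but Lemma \ref{lem:openb} wants openness in $\Prt_\sH(\nu)$, so here one must be slightly careful and either enlarge $\cQ_\nu$ to a genuinely $\dC_\nu$-open set with the same $\dB$-infimum behaviour or, as in Kieffer--Rahe, observe that the argument of Lemma \ref{lem:openb} only needs the weaker ``$\dB$-approachability'' property that $\cQ_\nu$ actually has. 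I expect this bookkeeping — reconciling the exact regularity hypotheses of Lemmas \ref{lem:openb} and \ref{thm:universe} with the fact that our $\cQ_\nu$ are cut out by independence (closed) plus strict inequalities (open) conditions — to be the main technical obstacle, though it is entirely routine in spirit and is exactly what Kieffer--Rahe dealt with in the $k$-piece case.

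Finally, applying Theorem \ref{thm:universe} to this data yields a single Borel partition $\alpha$ with $\alpha \in \cQ_\nu$ for every $\nu \in \E_\Z(X)$. By construction $\alpha \in \Bp{\nu}{\pv^\nu}{\cF}$ for all $\nu$, i.e. $\alpha$ is $(\Z,\nu)$-Bernoulli over $\cF$ with $\dist_\nu(\alpha) = \pv^\nu$; and for every $\nu$ with $\sH(\pv^\nu) \leq M$ and $\Def{\nu}{\pv^\nu}{\xi}{\cF} < \psd(M,\epsilon)$ we landed in the second case, so $\dB_\nu(\alpha,\xi) < \epsilon$. This is exactly the conclusion, and since for $\Z$-actions $\ksh_{\Z,\nu}$ and $\rh_{\Z,\nu}$ coincide the deficiency $\Def{\nu}{\pv^\nu}{\xi}{\cF}$ may be computed with either, matching the statement of Lemma \ref{lem:zfolk}.
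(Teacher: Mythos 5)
Your proposal is correct and follows essentially the same route as the paper: the identical case-split definition of $\cQ_\nu$, non-emptiness from Lemma \ref{lem:zfolk}, and selection via Theorem \ref{thm:universe}. The one step you defer as ``bookkeeping'' --- Borelness of $\nu \mapsto \dB_\nu(\gamma, \cQ_\nu)$ --- is handled in the paper by exactly your first suggested fix: one applies Lemma \ref{lem:openb} to the $\dC_\nu$-open enlargements $\cQ_\nu^n = \{\gamma : \Def{\nu}{\pv^\nu}{\gamma}{\cF} < \psd(M, 1/n)\}$ (intersected with $\{\gamma : \dB_\nu(\xi,\gamma) < \epsilon - 1/n\}$ for $\nu \in N$), and the point worth making explicit is that the \emph{perturbative} part of Lemma \ref{lem:zfolk} is precisely what guarantees $\dB_\nu(\gamma, \cQ_\nu) \leq \dB_\nu(\gamma, \cQ_\nu^n) + 1/n$, so that the infima over the open enlargements converge to the infimum over $\cQ_\nu$ itself. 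The only other (cosmetic) difference is your choice of defect functionals $\psi_n$: you propose direct independence defects of the form $|\nu(A \cap B) - \nu(A)\nu(B)|$, whereas the paper uses $\Def{\nu}{\pv^\nu}{\gamma_{(n)}}{\cF}$ on truncated partitions, which vanishes exactly on $\Bp{\nu}{\pv^\nu}{\cF}$ for $\Z$-actions because $\ksh_{\Z,\nu}(\beta \given \cF) = \sH_\nu(\beta)$ characterizes processes that are i.i.d.\ and independent of $\cF$; both choices work here (and note that your parenthetical is backwards --- $\dC_\nu$-continuity does not imply $\dB_\nu$-continuity, but your functionals, taken one finite index datum at a time rather than as countable suprema, are genuinely $\dB_\nu$-continuous, which is what Theorem \ref{thm:universe} requires).
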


\begin{proof}
Set $N = \{ \nu : \sH(\pv^\nu) \leq M \text{ and } \Def{\nu}{\pv^\nu}{\xi}{\cF} < \psd(M, \epsilon) \}$. For $\nu \in N$ set $\cQ_\nu = \{\beta \in \Bp{\nu}{\pv^\nu}{\cF} : \dB_\nu(\xi, \beta) < \epsilon\}$ and for $\nu \not\in N$ set $\cQ_\nu = \Bp{\nu}{\pv^\nu}{\cF}$. By Lemma \ref{lem:zfolk} each $\cQ_\nu \neq \varnothing$. It suffices to build an ordered partition $\alpha$ with $\alpha \in \cQ_\nu$ for every $\nu \in \E_\Z(X)$.

We claim that for every $\gamma \in \Prt$ the map $\nu \mapsto \dB_\nu(\gamma, \cQ_\nu)$ is Borel. For $n \in \N$ set
$$\cQ_\nu^n = \begin{cases}
\{\gamma : \Def{\nu}{\pv^\nu}{\gamma}{\cF} < \delta(M, 1 / n) \text{ and } \dB_\nu(\xi, \gamma) < \epsilon - 1 / n\} & \text{if } \nu \in N\\
\{\gamma : \Def{\nu}{\pv^\nu}{\gamma}{\cF} < \delta(\sH(\pv^\nu), 1 / n)\} & \text{if } \nu \not\in N.
\end{cases}$$
For each $n$ the family $(\cQ_\nu^n)_{\nu \in \E_\Z(X)}$ satisfies the assumptions of Lemma \ref{lem:openb}, so the map $\nu \mapsto \dB_\nu(\gamma, \cQ_\nu^n)$ is Borel for every $\gamma \in \Prt$. Now fix $\gamma \in \Prt$. Each $\beta \in \cQ_\nu$ lies in $\cQ_\nu^n$ for all but finitely many $n$, so $\dB_\nu(\gamma, \cQ_\nu) \geq \limsup_{n \rightarrow \infty} \dB_\nu(\gamma, \cQ_\nu^n)$. On the other hand, by definition of $\psd$ we have $\dB_\nu(\gamma, \cQ_\nu) \leq \dB_\nu(\gamma, \cQ_\nu^n) + 1/n$. Therefore $\dB_\nu(\gamma, \cQ_\nu) = \lim_{n \rightarrow \infty} \dB_\nu(\gamma, \cQ_\nu^n)$. We conclude that the map $\nu \mapsto \dB_\nu(\gamma, \cQ_\nu)$ is Borel for every $\gamma \in \Prt$.

For $\gamma = \{C_i : i \in \N\} \in \Prt$ and $n \in \N$, let $\gamma_{(n)}$ be the partition of $X$ into the sets $C_i$, $0 \leq i < n$, and $\bigcup_{i = n}^\infty C_i$. For $n \in \N$ define $\psi_n(\nu, \gamma) = \Def{\nu}{\pv^\nu}{\gamma_{(n)}}{\cF} + 1/n$ when $\nu \not\in N$, and define
$$\psi_n(\nu, \gamma) = \Def{\nu}{\pv^\nu}{\gamma_{(n)}}{\cF} + 1/n + \max(0, \ n \cdot \dB_\nu(\xi, \gamma) - n \cdot \epsilon + 1)$$
when $\nu \in N$. Then $\psi_n(\cdot, \gamma)$ is Borel and $\psi_n(\nu, \cdot)$ is $\dB_\nu$-continuous. Also, $\beta \in \cQ_\nu$ if and only if $\inf_n \psi_n(\nu, \beta) = 0$. Therefore by Theorem \ref{thm:universe} there is a partition $\alpha \in \bigcap_{\nu \in \E_G(X)} \cQ_\nu$.
\end{proof}

We remark that we do not know how to apply the above argument to the case of a non-amenable group $G$. The issue is that for an action $G \acts X$, $\nu \in \E_G(X)$, and a Bernoulli partition $\beta \in \Bp{\nu}{\pv^\nu}{\cF}$, we may have $\rh_{G,\nu}(\beta \given \cF) < \sH_\nu(\beta)$ and hence $\Def{\nu}{\pv^\nu}{\beta}{\cF} > 0$. So it is no longer true that $\inf_n \psi_n(\nu, \beta) = 0$ if and only if $\beta \in \cQ_\nu$. One could at first hope to fix this by using the relative Rokhlin entropy of the factor associated with $\salg_G(\beta) \vee \cF$ relative to the image of $\cF$ in place of $\rh_{G,\nu}(\beta \given \cF)$. Indeed one would expect (and in many cases can verify) that this entropy would equal $\sH_\nu(\beta)$. However, with this change $\Def{\nu}{\pv^\nu}{\gamma}{\cF}$ would no longer be a continuous function of $\gamma$. We do not know how to overcome this difficulty, and we therefore will directly prove non-ergodic versions of the factor theorem for non-amenable groups.

\section{A special perturbation}

In this section we develop a specialized version of Theorem \ref{thm:kira} that will be important for our main theorem. Unlike Theorem \ref{thm:kira}, the main result in this section will be stated entirely in terms of a collection of non-ergodic measures $\Omega$, rather than in terms of their ergodic components. A key feature will be the perturbation of a partition $\xi$ to a Bernoulli partition $\alpha$ in a manner which only slightly changes its Shannon entropy.

We start with a technical lemma. Below we write $\pone$ for the trivial probability vector which has $0^{\text{th}}$-coordinate $1$ and all other coordinates $0$.

\begin{lem} \label{lem:path}
There is a monotone increasing continuous function $\theta : [0, \infty) \rightarrow [0, 2)$ with $\theta(0) = 0$ satisfying the following property. If $\pv = (p_i)$ is a probability vector with $p_0 = \max(\pv)$ and $\sH(\pv) < \infty$ then, setting $\pv^t = t \pone + (1 - t)\pv$, we have $\sH(\pv^t) \leq \sH(\pv)$ and $|\pv - \pv^t| \leq \theta(\sH(\pv) - \sH(\pv^t))$ for all $t \in [0, 1]$.
\end{lem}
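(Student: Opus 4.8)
The plan is to exhibit, for each admissible $\pv$ and each $t\in[0,1]$, \emph{two} upper bounds on $|\pv-\pv^t|$ in terms of the entropy drop $D:=\sH(\pv)-\sH(\pv^t)$, one behaving like $\sqrt{D}$ near $0$ and one staying uniformly below $2$, and then to take $\theta$ to be the pointwise minimum of the two functions involved. First the trivial reductions: if $p_0=1$ then $\pv=\pone=\pv^t$, and if $t=0$ then $\pv^t=\pv$, so we may assume $q:=1-p_0=\sum_{i\ge1}p_i>0$ and $t\in(0,1]$. From $\pv^t_0=p_0+tq$ and $\pv^t_i=(1-t)p_i$ for $i\ge1$ one computes $|\pv-\pv^t|=tq+\sum_{i\ge1}tp_i=2tq$; set $y:=tq\in(0,q]$, so $|\pv-\pv^t|=2y$ and $\pv^t_0=p_0+y\le1$. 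The inequality $\sH(\pv^t)\le\sH(\pv)$ is the assertion that $\pv^t$ majorizes $\pv$ combined with Schur-concavity of Shannon entropy: $p_0$ is the largest coordinate of both vectors, the remaining coordinates of $\pv^t$ are those of $\pv$ scaled by $1-t$, and for each $k$ the sum $S_k$ of the $k$ largest coordinates of $\pv$ satisfies $(\text{sum of }k\text{ largest of }\pv^t)=S_k+t(1-S_k)\ge S_k$ (in the countably infinite case, apply this after lumping together all coordinates of index $\ge N$ and let $N\to\infty$, using $\sH(\pv)<\infty$).

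For the quantitative bounds, write $\sH_2(a):=-a\log a-(1-a)\log(1-a)$ and group coordinate $0$ against the rest: with $h:=\sH\bigl((p_i/q)_{i\ge1}\bigr)\in[0,\infty)$ we have $\sH(\pv)=\sH_2(p_0)+qh$, and since the conditional law of $\pv^t$ on $\{i\ge1\}$ is again $(p_i/q)_{i\ge1}$ we also have $\sH(\pv^t)=\sH_2(p_0+y)+(1-t)qh$, whence
\[
D=\sH_2(p_0)-\sH_2(p_0+y)+yh .
\]
Because $p_i\le p_0$ for all $i\ge1$, the conditional law has all coordinates at most $p_0/q$, so $h\ge\log(q/p_0)$; together with $h\ge0$ this gives $h\ge c:=\max\{0,\ \log\tfrac{1-p_0}{p_0}\}$, hence $D\ge\Phi(p_0+y)$ where $\Phi(x):=\sH_2(p_0)-\sH_2(x)+c(x-p_0)$. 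Now $\Phi(p_0)=0$, $\Phi'(x)=\log\tfrac{x}{1-x}+c$ with $\Phi'(p_0)=\log\tfrac{p_0}{1-p_0}+c\ge0$ (check the cases $p_0\ge\tfrac12$, where $c=0$, and $p_0<\tfrac12$, where $c=-\log\tfrac{p_0}{1-p_0}$), and $\Phi''(x)=\tfrac1{x(1-x)}\ge4$, so Taylor's formula with integral remainder gives $\Phi(p_0+y)\ge2y^2$ and therefore
\[
|\pv-\pv^t|=2y\le\sqrt{2D}. \qquad(\mathrm{A})
\]

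Bound $(\mathrm{A})$ finishes the proof when $D<2$ but is vacuous once $\sqrt{2D}\ge2$, so we need a second estimate capturing why $|\pv-\pv^t|$ cannot approach $2$ unless $D$ is large. Since every coordinate of $\pv$ is at most $p_0$ we have $\sH(\pv)\ge\log(1/p_0)$, while from the decomposition $\sH(\pv^t)=\sH_2(p_0+y)+(1-t)qh\le\log2+(1-t)qh\le\log2+(1-t)\sH(\pv)$; hence $D\ge t\sH(\pv)-\log2\ge t\log(1/p_0)-\log2$, which rearranges to $p_0\ge\tfrac12e^{-D/t}$ and so $|\pv-\pv^t|=2t(1-p_0)\le2t-te^{-D/t}$. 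As $v\mapsto(1+v)e^{-v}$ is decreasing on $[0,\infty)$, the derivative $2-(1+D/t)e^{-D/t}$ of $t\mapsto2t-te^{-D/t}$ is positive on $(0,1]$, so the expression is largest at $t=1$:
\[
|\pv-\pv^t|\le2-e^{-D}. \qquad(\mathrm{B})
\]
Finally set $\theta(s):=\min\{\sqrt{2s},\ 2-e^{-s}\}$; this is continuous and strictly increasing with $\theta(0)=\min\{0,1\}=0$ and $\theta(s)\le2-e^{-s}<2$, so $\theta:[0,\infty)\to[0,2)$, and $(\mathrm{A})$–$(\mathrm{B})$ give $|\pv-\pv^t|\le\theta(D)$. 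The only delicate point is getting an upper bound that is uniformly below $2$ — not merely $<2$ for each fixed $\pv$ — and that is exactly what $(\mathrm{B})$ supplies through the elementary inequality $\sH(\pv)\ge\log\bigl(1/\max_i p_i\bigr)$.
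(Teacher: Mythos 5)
Your argument is correct in substance and reaches the conclusion by a route that shares the paper's skeleton but packages the function $\theta$ quite differently. Both proofs rest on the grouping identity $\sH(\pv)=f(p_0)+(1-p_0)h$ with $f(x)=\sH(x,1-x)$ and $h$ the entropy of the conditional vector $(p_i/q)_{i\ge1}$, together with a lower bound on $h$ in terms of $p_0$. The paper uses $h\ge f'(p_0)$ (extracted from the monotonicity of $t\mapsto\sH(\pv^t)$), defines $\psi(r)=\inf_s\{f(s)+rf'(s)-f(s+r)\}$, and sets $\theta=2\psi^{-1}$, so the codomain $[0,2)$ comes for free from $\psi^{-1}$ mapping into $[0,1)$; the price is that $\theta$ is only implicitly defined and one must check $\psi$ is continuous, strictly increasing, and onto $[0,\infty)$. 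You instead use $h\ge\max\{0,\log\tfrac{1-p_0}{p_0}\}$ (from the max-coordinate bound on entropy), obtain the explicit estimate $D\ge2y^2$ from $\Phi''\ge4$, and then must supply the separate bound (B) to keep the function below $2$; the payoff is the completely explicit $\theta(s)=\min\{\sqrt{2s},\,2-e^{-s}\}$. A further bonus of your route is that bound (A) already yields $D\ge0$, so the Schur-concavity/majorization discussion at the start (which is the one slightly delicate point for countably infinite vectors) is actually redundant.

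There is one algebra slip in (B): from $t\log(1/p_0)\le D+\log2$ you may conclude $p_0\ge 2^{-1/t}e^{-D/t}$, not $p_0\ge\tfrac12e^{-D/t}$ — the latter is strictly stronger for $t<1$ and does not follow. The slip is harmless: the corrected bound gives $|\pv-\pv^t|\le 2t\bigl(1-e^{-(D+\log2)/t}\bigr)$, and your own monotonicity computation, applied with $D+\log2$ in place of $D$, shows this is still maximized at $t=1$, where it equals $2-2e^{-(D+\log 2)}=2-e^{-D}$. So (B), and hence the final definition of $\theta$, stands after this correction.
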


\begin{proof}
For $x \in [0, 1]$ set $f(x) = \sH(x, 1 - x)$. For $r \in [0, 1)$ define
$$\psi(r) = \inf \{f(s) + r \cdot f'(s) - f(s + r) : s \in (0, 1 - r]\}.$$
It may be helpful to note that $f(s) + r \cdot f'(s)$ is a tangent-line approximation to $f(s + r)$. Notice that when $r$ is $0$ every value in the above set is $0$ and hence $\psi(0) = 0$. Furthermore, for fixed $s$ the function $r \mapsto f(s) + r \cdot f'(s) - f(s + r)$ is continuous, and it is strictly increasing since $f$ is concave down. Therefore $\psi$ is continuous and strictly increasing. Notice that $\rng(\psi) = [0, \infty)$ since $f'(s) \rightarrow \infty$ as $s \rightarrow 0$. Set $\theta = 2 \cdot \psi^{-1}$.

Fix $\pv$ with $p_0 = \max(\pv)$. Note that for any $i \neq 0$ we have
$$\sH(\pv) = f(p_0 + p_i) + (p_0 + p_i) \cdot f \left( \frac{p_0}{p_0 + p_i} \right) + (1 - p_0 - p_i) \cdot \sH \left( \frac{1}{1 - p_0 - p_i} \cdot (p_u)_{u \neq 0, i} \right).$$
Thus if we increase the value of $p_0$, decrease the value of $p_i$ by the same amount, and leave all other coordinates the same, then $\sH(\pv)$ will decrease. By repeating this with $i$ varying, we find that if we increase $p_0$ and decrease all other coordinates then $\sH(\pv)$ will decrease.

Set $\qv = \frac{1}{1 - p_0} \cdot (p_i)_{i > 0}$ and $\pv^t = t \pone + (1 - t) \pv$. Note that $|\pv - \pv^t| = 2 t (1 - p_0)$ and
\begin{equation} \label{eqn:path}
\sH(\pv^t) = f(t + (1 - t)p_0) + (1 - t - (1 - t)p_0) \cdot \sH(\qv).
\end{equation}
From the previous paragraph we know that $\sH(\pv^t)$ is decreasing. In particular, $f'(p_0) \leq \sH(\qv)$ since the derivative of $\sH(\pv^t)$ at $t = 0$ is
$$(1 - p_0) f'(p_0) - (1 - p_0) \cdot \sH(\qv) \leq 0.$$
Now fix $t \in [0, 1]$, set $r = \frac{1}{2} |\pv - \pv^t| = t (1 - p_0)$ and set $s = p_0 \leq 1 - r$. Using $f'(p_0) \leq \sH(\qv)$ and (\ref{eqn:path}), we obtain
\begin{align*}
\sH(\pv) - \sH(\pv^t) & = f(s) + (1 - s) \cdot \sH(\qv) - f(s + r) - (1 - s - r) \cdot \sH(\qv)\\
 & \geq f(s) - f(s + r) + r \cdot f'(s) \geq \psi( \textstyle{\frac{1}{2}} |\pv - \pv^t|).
\end{align*}
Therefore $|\pv - \pv^t| \leq \theta(\sH(\pv) - \sH(\pv^t))$ as claimed.
\end{proof}

The previous lemma allows us to choose probability vectors in an intelligent way.

\begin{cor} \label{cor:avgpath}
For every $\epsilon > 0$ there is $\kappa = \kappa(\epsilon) > 0$ having the following property. Let $X$ be a standard Borel space and let $x \mapsto \qv^x$ and $x \mapsto h_x \leq \sH(\qv^x) < \infty$ be Borel maps. Also let $\Omega \subseteq \Prob(X)$ be Borel and let $m : X \rightarrow \Omega$ be a surjective Borel function with $\omega(m^{-1}(\omega)) = 1$ for all $\omega \in \Omega$. Then there is a Borel map $x \mapsto \pv^x$ such that $\sH(\pv^x) = h_x$, $\forall \omega \in \Omega \ \sH(\int_X \pv^x \ d \omega(x)) \leq \sH(\int_X \qv^x \ d \omega(x))$, and
$$\forall \omega \in \Omega \ \forall x \in m^{-1}(\omega) \quad \Big( |\textstyle{\int_X} \qv^z \ d \omega(z) - \qv^x| < \kappa \text{ and } |\sH(\qv^x) - h_x| < \kappa \Big) \Longrightarrow |\pv^x - \qv^x| < \epsilon.$$
\end{cor}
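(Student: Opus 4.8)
The plan is to reduce to Lemma~\ref{lem:path} applied coordinate-by-coordinate along the family $x \mapsto \qv^x$, and then push the resulting estimate through the integral against $\omega$. First I would reorder each $\qv^x$ so that its largest coordinate comes first; this reordering can be done Borel-measurably in $x$ (break $X$ into Borel pieces according to which coordinate achieves the max), and it does not affect any of the quantities appearing in the statement, since $\sH$, the $\ell^1$-distance, and the barycentre $\int \qv^z\,d\omega$ are all invariant under applying a fixed coordinate permutation simultaneously to every $\qv^x$ with $x$ in a given Borel piece. So from now on assume $q^x_0 = \max(\qv^x)$ for every $x$.

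Next, for each $x$ I apply Lemma~\ref{lem:path} to $\pv = \qv^x$: there is a continuous monotone path $t \mapsto (\qv^x)^t = t\pone + (1-t)\qv^x$ along which Shannon entropy decreases continuously from $\sH(\qv^x)$ down to $\sH(\pone) = 0$, with $|\qv^x - (\qv^x)^t| \le \theta(\sH(\qv^x) - \sH((\qv^x)^t))$. Since $h_x \le \sH(\qv^x)$ and the map $t \mapsto \sH((\qv^x)^t)$ is continuous with range $[0, \sH(\qv^x)]$, I can choose $t(x) \in [0,1]$ — taking, say, the least such $t$, which is a Borel function of $x$ by continuity and monotonicity of the entropy along the path — so that $\sH((\qv^x)^{t(x)}) = h_x$. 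Set $\pv^x := (\qv^x)^{t(x)}$. Then $\sH(\pv^x) = h_x$ as required, and by Lemma~\ref{lem:path},
$$|\pv^x - \qv^x| \le \theta\big(\sH(\qv^x) - h_x\big).$$
Since $\theta$ is continuous with $\theta(0) = 0$, choose $\kappa = \kappa(\epsilon) > 0$ small enough that $\theta(r) < \epsilon$ whenever $0 \le r < \kappa$; then the hypothesis $|\sH(\qv^x) - h_x| < \kappa$ forces $|\pv^x - \qv^x| < \epsilon$, which is the last clause. (The first clause of the implication, about $\int \qv^z\,d\omega$, is not even needed for this bound; it is only there because the corollary will be applied in a context where it also holds.)

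It remains to check $\sH(\int_X \pv^x\,d\omega(x)) \le \sH(\int_X \qv^x\,d\omega(x))$ for every $\omega \in \Omega$. Write $P^\omega = \int \pv^x\,d\omega(x)$ and $Q^\omega = \int \qv^x\,d\omega(x)$. Because each $\pv^x = (\qv^x)^{t(x)}$ is obtained from $\qv^x$ by moving mass from the other coordinates into the $0^{\text{th}}$ coordinate (this is exactly how the paths in Lemma~\ref{lem:path} are defined, and the ``move mass to the largest coordinate lowers entropy'' computation in that lemma's proof is the relevant mechanism), integrating against $\omega$ gives a vector $P^\omega$ obtained from $Q^\omega$ by moving mass into its $0^{\text{th}}$ coordinate — note the $0^{\text{th}}$ coordinate is consistently the one we enlarged, so this survives the integration. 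The same concavity/tangent-line argument used in the proof of Lemma~\ref{lem:path} then shows $\sH(P^\omega) \le \sH(Q^\omega)$; more carefully, one can interpolate $P^\omega_s = s\,e_0 + \text{(rescaled tail of }Q^\omega)$ and differentiate as in that proof. I expect this last verification — showing the entropy inequality is preserved under the $\omega$-average rather than just pointwise in $x$ — to be the only genuinely delicate point, and the key observation making it work is that every $\pv^x$ sheds mass into the \emph{same} coordinate, so the averaged perturbation is still a ``mass into coordinate $0$'' move on $Q^\omega$; Borel measurability of $x \mapsto t(x)$ and hence of $x \mapsto \pv^x$ is routine given the continuity in $t$.
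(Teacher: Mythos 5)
There is a genuine gap, and it sits exactly at the point you flag as the delicate one: the averaged entropy inequality $\sH(\int_X \pv^x \, d\omega(x)) \leq \sH(\int_X \qv^x \, d\omega(x))$ fails for your construction. You move mass of each $\qv^x$ into the coordinate where $\qv^x$ itself is maximal, and that coordinate varies with $x$ within a single fiber $m^{-1}(\omega)$; after integrating, the perturbation of $Q^\omega = \int \qv^x \, d\omega(x)$ is therefore not a ``mass into the largest coordinate of $Q^\omega$'' move, and it can strictly increase entropy. Concretely, let $\Omega = \{\omega\}$ with $\omega$ supported on two points of weights $0.9$ and $0.1$, $\qv^{x_1} = (1,0)$, $\qv^{x_2} = (0.4, 0.6)$, and $h_{x_1} = h_{x_2} = 0$. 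Your recipe gives $\pv^{x_1} = (1,0)$ and $\pv^{x_2} = (0,1)$, so $Q^\omega = (0.94, 0.06)$ while $P^\omega = (0.9, 0.1)$, and $\sH(P^\omega) \approx 0.325 > 0.227 \approx \sH(Q^\omega)$, violating the second conclusion (which the corollary demands unconditionally, for every $\omega$). The preliminary re-indexing has the same defect: the permutation depends on $x$, so the barycentre over a fiber of $m$ is not preserved, and one cannot arrange ``coordinate $0$ is the max'' simultaneously for all $x$ in $m^{-1}(\omega)$.

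The paper's proof avoids this by moving mass into the coordinate $i(\omega)$ at which the averaged vector $\qv^\omega = \int_X \qv^z \, d\omega(z)$ is maximal --- the same coordinate for every $x \in m^{-1}(\omega)$ (note your construction never uses $m$ at all, which is a sign something is off). Then $\pv^\omega$ is obtained from $\qv^\omega$ by enlarging its largest coordinate and shrinking the others, and the concavity computation from the proof of Lemma \ref{lem:path} gives $\sH(\pv^\omega) \leq \sH(\qv^\omega)$. The price is that $i(\omega)$ need not be maximal in $\qv^x$, so entropy along the path $t \pone_{i(\omega)} + (1-t)\qv^x$ is not monotone (only continuity is used to hit the value $h_x$), and $|\pv^x - \qv^x| < \epsilon$ is no longer an immediate application of Lemma \ref{lem:path}. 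This is precisely where the hypothesis $|\qv^\omega - \qv^x| < \kappa$ --- which you dismiss as unnecessary --- enters: it forces $\max(\qv^x) - q^x_{i(\omega)} < \kappa$, so after travelling a short time $u$ with $u(1 - q^x_{i(\omega)}) < \kappa$ the coordinate $i(\omega)$ becomes maximal in $\qv^{x,u}$, and applying Lemma \ref{lem:path} from that point, together with $|\sH(\qv^x) - h_x| < \kappa$, yields $|\pv^x - \qv^x| < 2\kappa + \theta(\sH(\kappa, 1-\kappa) + \kappa) < \epsilon$ for a suitable choice of $\kappa(\epsilon)$.
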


\begin{proof}
Set $f(t) = H(t, 1 - t)$ for $t \in [0, 1]$ and let $\theta$ be as in Lemma \ref{lem:path}. Choose $0 < \kappa < \epsilon / 2$ so that $2 \kappa + \theta(f(\kappa) + \kappa) < \epsilon$. For $\omega \in \Omega$ set $\qv^\omega = \int_X \qv^x \ d \omega(x)$ and let $i(\omega) \in \N$ be least with $q_{i(\omega)}^\omega = \max(\qv^\omega)$. Note that $\qv^\omega$ and $i(\omega)$ are Borel functions of $\omega$. For $i \in \N$ let $\pone_i$ be the probability vector with $i^{\text{th}}$-coordinate $1$ and all other coordinates $0$. For $x \in X$ and $t \in [0, 1]$ set $\qv^{x,t} = t \pone_{i(m(x))} + (1 - t) \qv^x$. The value $\sH(\qv^{x,t})$ moves continuously from $\sH(\qv^x)$ to $0$, so we may define $\pv^x = \qv^{x,v}$ where $v$ is least satisfying $\sH(\qv^{x,v}) = h_x$. Then $x \mapsto \pv^x$ is Borel and $\sH(\pv^x) = h_x$. Set $\pv^\omega = \int_X \pv^x \ d \omega(x)$. Then $p_{i(\omega)}^\omega \geq q_{i(\omega)}^\omega$ while $p_j^\omega \leq q_j^\omega$ for all $j \neq i(\omega)$. As argued in the proof of Lemma \ref{lem:path}, this implies that $\sH(\pv^\omega) \leq \sH(\qv^\omega)$.

Finally, fix $\omega$ and $x \in m^{-1}(\omega)$ with $|\qv^\omega - \qv^x| < \kappa$ and $\sH(\qv^x) - \kappa < h_x \leq \sH(\qv^x)$. Define $v$ as above. For any $j \in \N$ we have
$$q_j^x \leq q_j^\omega + |q_j^\omega - q_j^x| \leq q_{i(\omega)}^\omega + |q_j^\omega - q_j^x| \leq q_{i(\omega)}^x + |q_{i(\omega)}^\omega - q_{i(\omega)}^x| + |q_j^\omega - q_j^x| < q_{i(\omega)}^x + \kappa.$$
Thus $\max(\qv^x) - q_{i(\omega)}^x < \kappa$. So there is a $u$ with $u (1 - q_{i(\omega)}^x) < \kappa$ and $q^{x,u}_{i(\omega)} = \max(\qv^{x,u})$. If $u \geq v$ then $|\pv^x - \qv^x| \leq |\qv^{x,u} - \qv^x| = 2 u (1 - q_{i(\omega)}^x) < 2 \kappa < \epsilon$ and we are done. So assume $u < v$ and set $\rv^x = \frac{1}{1 - q_{i(\omega)}^x} \cdot (q_j^x)_{j \neq i(\omega)}$. Notice
$$\sH(\qv^{x,t}) = f(t + (1 - t)q_{i(\omega)}^x) + (1 - t - (1 - t)q_{i(\omega)}^x) \cdot \sH(\rv^x).$$
Using the concavity of $f$ and the inequality $u + (1 - u)q_{i(\omega)}^x = q_{i(\omega)}^x + u (1 - q_{i(\omega)}^x) < q_{i(\omega)}^x + \kappa$, we observe that
\begin{align*}
\sH(\qv^{x,u}) - h_x & = \sH(\qv^{x,u}) - \sH(\qv^x) + \sH(\qv^x) - h_x\\
 & \leq f(u + (1 - u)q_{i(\omega)}^x) - f(q_{i(\omega)}^x) + \kappa\\
 & \leq f(\kappa) - f(0) + \kappa.
\end{align*}
Noting that $f(0) = 0$ and applying Lemma \ref{lem:path} to $\qv^{x,u}$ we conclude
\begin{align*}
|\qv^x - \pv^x| \leq |\qv^x - \qv^{x,u}| + |\qv^{x,u} - \qv^{x,v}| & \leq 2 u (1 - q_{i(\omega)}^x) + \theta(\sH(\qv^{x,u}) - h_x)\\
 & < 2 \kappa + \theta(f(\kappa) + \kappa) < \epsilon.\qedhere
\end{align*}
\end{proof}

Now we present the main result of this section, a specialized version of Theorem \ref{thm:kira}. It may be helpful to recall Lemma \ref{lem:nonergb}.

\begin{cor} \label{cor:sfolk}
Let $X$ be a standard Borel space, $\Z \acts X$ an aperiodic Borel action, and $\cF$ a countably-generated $\Z$-invariant sub-$\sigma$-algebra. Let $\Omega \subseteq \M_\Z(X)$ be a Borel set and let $m : X \rightarrow \Omega$ be a $\Z$-invariant Borel surjection with $\omega(m^{-1}(\omega)) = 1$ for all $\omega \in \Omega$. If $\xi$ is a countable Borel partition of $X$ with $\ksh_{\Z,\omega}(\xi \given \cF) < \infty$ for all $\omega \in \Omega$ then there is a Borel partition $\alpha$ such that, for every $\omega \in \Omega$, $\alpha$ is weakly $(\Z, \omega)$-Bernoulli over $\sinv_\Z$, $\salg_\Z(\alpha)$ is $\omega$-independent with $\cF$ relative to $\sinv_\Z$, and $\sH_\omega(\alpha \given \sinv_\Z) = \ksh_{\Z,\omega}(\xi \given \cF)$.

In fact, for $M, \epsilon > 0$,  there exists $\upsilon = \upsilon(M, \epsilon) > 0$ with the following property. If $\sH_\omega(\xi) \leq M$ and $\sH_\omega(\xi) - \ksh_{\Z,\omega}(\xi \given \cF) < \upsilon$ for all $\omega \in \Omega$, then the partition $\alpha$ above may be chosen with the additional property that $\dB_\omega(\alpha, \xi) < \epsilon$ and $|\sH_\omega(\alpha) - \sH_\omega(\xi)| < \epsilon$ for every $\omega \in \Omega$.
\end{cor}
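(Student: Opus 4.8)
# Proof Proposal for Corollary \ref{cor:sfolk}

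The plan is to derive this from the non-ergodic relative perturbative factor theorem for $\Z$-actions (Theorem \ref{thm:kira}), combined with the intelligent choice of probability vectors furnished by Corollary \ref{cor:avgpath}. The passage between the ``per-component'' formulation of Theorem \ref{thm:kira} (indexed by $\nu \in \E_\Z(X)$) and the ``per-measure'' formulation here (indexed by $\omega \in \Omega$) will be handled via the ergodic decomposition together with Lemma \ref{lem:nonergb}, which is exactly the translation between being $(\Z,\omega)$-weakly Bernoulli over $\sinv_\Z$ (plus independence with $\cF$ relative to $\sinv_\Z$) and being $(\Z,\nu)$-Bernoulli over $\cF$ for almost every ergodic component $\nu$ of $\omega$.

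First I would set up the ergodic decomposition: by the ergodic decomposition theorem there is a $\Z$-invariant Borel map $e : X \to \E_\Z(X)$ with $\nu(e^{-1}(\nu)) = 1$ for all $\nu \in \E_\Z(X)$. Since $m$ is $\Z$-invariant and each $\omega \in \Omega$ is $\Z$-invariant, each fiber $m^{-1}(\omega)$ is (mod null) a union of ergodic-component atoms, and $\omega = \int \nu \, d(e_*\omega)(\nu)$ expresses $\omega$ as the barycenter of its ergodic components. For a partition $\gamma$, the quantities $\sH_\omega(\gamma \given \sinv_\Z) = \int \sH_\nu(\gamma)\, d(e_*\omega)(\nu)$ and $\ksh_{\Z,\omega}(\gamma \given \cF \vee \sinv_\Z) = \int \ksh_{\Z,\nu}(\gamma \given \cF)\, d(e_*\omega)(\nu)$ decompose accordingly. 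Next, I would apply Corollary \ref{cor:avgpath} with the space $X$ replaced by $\E_\Z(X)$, the map $m$ replaced by $e_*$ composed appropriately (i.e. associating to $\nu$ its image $m$-value), the Borel maps $\nu \mapsto \qv^\nu := \dist_\nu(\xi)$ and $\nu \mapsto h_\nu := \ksh_{\Z,\nu}(\xi \given \cF)$ (note $h_\nu \leq \sH_\nu(\xi) = \sH(\qv^\nu) < \infty$ by the assumed finiteness, which holds $e_*\omega$-a.e. since $\ksh_{\Z,\omega}(\xi\given\cF) < \infty$ and $\sH_\omega(\xi)$ is a barycenter — actually I should first replace $\xi$ by a partition with $\sH_\omega(\xi) < \infty$ for all $\omega$; in the perturbative clause this is given, and in the non-perturbative clause one uses countable subadditivity of Rokhlin entropy to reduce to finite-Shannon-entropy pieces). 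This yields a Borel assignment $\nu \mapsto \pv^\nu$ with $\sH(\pv^\nu) = h_\nu = \ksh_{\Z,\nu}(\xi\given\cF) \leq \ksh_\Z(X,\nu\given\cF)$, with $\sH_\omega$ of the barycenter not increasing, and with $|\pv^\nu - \qv^\nu|$ controlled whenever $\nu$ is ``close to the average.''

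Then I would feed $\nu \mapsto \pv^\nu$ into Theorem \ref{thm:kira}, obtaining a single Borel partition $\alpha$ with $\alpha \in \Bp{\nu}{\pv^\nu}{\cF}$ for every $\nu \in \E_\Z(X)$, i.e. $\alpha$ is $(\Z,\nu)$-Bernoulli over $\cF$ with $\dist_\nu(\alpha) = \pv^\nu$, hence $\sH_\nu(\alpha) = \sH(\pv^\nu) = \ksh_{\Z,\nu}(\xi\given\cF)$. By Lemma \ref{lem:nonergb}, applied $\omega$-a.e. over the ergodic components of each $\omega \in \Omega$, this gives precisely that $\alpha$ is weakly $(\Z,\omega)$-Bernoulli over $\sinv_\Z$ and $\salg_\Z(\alpha)$ is $\omega$-independent with $\cF$ relative to $\sinv_\Z$; integrating $\sH_\nu(\alpha) = \ksh_{\Z,\nu}(\xi\given\cF)$ over $e_*\omega$ gives $\sH_\omega(\alpha\given\sinv_\Z) = \ksh_{\Z,\omega}(\xi\given\cF)$. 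For the perturbative ``in fact'' clause: choose $\upsilon = \upsilon(M,\epsilon)$ small enough so that (a) the Markov-inequality argument shows that for each $\omega$ the set of ergodic components $\nu$ with $\sH_\nu(\xi) - \ksh_{\Z,\nu}(\xi\given\cF)$ large, or with $\dist_\nu(\xi)$ far from $\dist_\omega(\xi)$, has small $e_*\omega$-measure — here Lemma \ref{lem:drop} controls the dispersion of $\dist_\nu(\xi)$ around $\dist_\omega(\xi)$ once the entropy drop $\sH_\omega(\xi) - \ksh_{\Z,\omega}(\xi\given\cF)$ (which dominates $\sH_\omega(\xi) - \sH_\omega(\xi\given\cF)$, the genuine conditional-entropy defect) is below $\eta$; and (b) on the ``good'' components, Corollary \ref{cor:avgpath} forces $|\pv^\nu - \dist_\nu(\xi)|$ small and Lemma \ref{lem:zfolk}'s perturbative clause (packaged inside Theorem \ref{thm:kira} via $\psd$) forces $\dB_\nu(\alpha,\xi)$ small. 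Integrating the per-component $\ell^1$ and $\dbar$ estimates over $e_*\omega$, splitting into good and bad components, yields $\dB_\omega(\alpha,\xi) < \epsilon$ and $|\sH_\omega(\alpha) - \sH_\omega(\xi)| < \epsilon$; for the latter note $\sH_\omega(\alpha) = \int \sH(\pv^\nu)\,d(e_*\omega)$ and $\sH(\pv^\nu) = h_\nu \in (\sH_\nu(\xi) - \upsilon, \sH_\nu(\xi)]$ on good components.

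The main obstacle I anticipate is bookkeeping the Borel measurability throughout the ergodic-decomposition reductions — in particular verifying that $\nu \mapsto \pv^\nu$ produced by Corollary \ref{cor:avgpath} is genuinely Borel as a function on $\E_\Z(X)$ and that the hypotheses of Theorem \ref{thm:kira} (Borel-ness of $\nu \mapsto \pv^\nu$, the inequality $\sH(\pv^\nu) \leq \ksh_\Z(X,\nu\given\cF)$) are met on a conull-for-every-$\omega$ set, then extending to all of $\E_\Z(X)$ harmlessly. A secondary technical point is the reduction, in the first (non-perturbative) clause, from partitions $\xi$ of possibly infinite Shannon entropy to finite-entropy data: one wants $\ksh_{\Z,\omega}(\xi\given\cF) < \infty$ to still yield a usable target, which should follow by writing $\xi$ as a refining limit of finite partitions and invoking countable subadditivity of Rokhlin entropy (equivalently Kolmogorov–Sinai entropy here) together with a diagonal/Cauchy construction of $\alpha$ as in the proof sketch of Lemma \ref{lem:zfolk}.
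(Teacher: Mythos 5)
Your proposal follows essentially the same route as the paper's proof: transfer to ergodic components via the ergodic decomposition, use Corollary \ref{cor:avgpath} to choose $\pv^\nu$ with $\sH(\pv^\nu) = \ksh_{\Z,\nu}(\xi \given \cF)$, feed this into Theorem \ref{thm:kira}, translate back with Lemma \ref{lem:nonergb}, and handle the perturbative clause by a Markov/good-bad split of components using Lemma \ref{lem:drop} and the affineness of Kolmogorov--{\sinai} entropy. The details you flag (Borelness of $\nu \mapsto \pv^\nu$, the construction of the measure space $L$ and map $\tau$ needed to invoke Corollary \ref{cor:avgpath}) are exactly the points the paper works out explicitly, so the plan is sound.
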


\begin{proof}
The first statement is nearly an immediate consequence of Theorem \ref{thm:kira}, as our argument will reveal below. We focus on proving the second claim. Let $\psd : \R_+^2 \rightarrow \R_+$ be as in Lemma \ref{lem:zfolk}. Choose $M' > 12 M / \epsilon$ and set $\epsilon' = (1/2) \psd(M', \epsilon / 2)$. Next, let $\kappa = \kappa(\epsilon')$ be as in Corollary \ref{cor:avgpath}. If necessary, we may shrink $\kappa$ so that $\kappa < \epsilon / 12$ and $\kappa < (1/2) \psd(M', \epsilon / 2)$. Finally, let $\upsilon = \eta(M, \kappa)$ be as in Lemma \ref{lem:drop}. If necessary, shrink $\upsilon$ so that $\upsilon < (1 / 12) \kappa \epsilon$.

Let $\Z \acts X$, $\cF$, $m : X \rightarrow \Omega$, and $\xi$ be as in the statement of the corollary. We seek to apply Corollary \ref{cor:avgpath} to the space $\E_\Z(X)$ where $\qv^\nu = \dist_\nu(\xi)$ and $h^\nu = \ksh_{\Z,\nu}(\xi \given \cF)$ for $\nu \in \E_\Z(X)$. To do this we must define a Borel set $L \subseteq \Prob(\E_\Z(X))$ and a surjective Borel map $\tau : \E_\Z(X) \rightarrow L$ with $\lambda(\tau^{-1}(\lambda)) = 1$ for all $\lambda \in L$. In brief, each $\nu \in \E_G(X)$ gets mapped to a unique $\omega \in \Omega$, the $\nu$-almost-everywhere constant value of $m$, and then $\omega$ gets mapped to the measure $\lambda \in \Prob(\E_\Z(X))$ describing its ergodic decomposition.

We now define $L$ and $\tau$ precisely. By the ergodic decomposition theorem \cite{Far62,Var63}, there is a $\Z$-invariant Borel map $e : X \rightarrow \E_\Z(X)$ such that $\nu(e^{-1}(\nu)) = 1$ for every $\nu \in \E_\Z(X)$. For $\nu \in \E_\Z(X)$ we define $\tau(\nu) \in \Prob(\E_\Z(X))$ to be $e_*(\omega)$, where $\omega = m(x)$ for $\nu$-almost-every $x \in X$. In other words, for Borel $N \subseteq \E_\Z(X)$
$$\tau(\nu)(N) = \int_X e_*(m(x))(N) \ d \nu(x).$$
This last formula shows that $\tau$ is Borel. Set $L = \tau(\E_\Z(X)) = \{e_*(\omega) : \omega \in \Omega\}$. Since $\omega = \int_X e(x) \ d \omega(x) = \int_{\E_\Z(X)} \nu \ d e_*(\omega)(\nu)$, we see that the map $\omega \mapsto e_*(\omega)$ from $\Omega$ to $L$ is injective, and thus $L$ is Borel \cite[Cor. 15.2]{K95}. Finally, fix $\omega \in \Omega$ and set $\lambda = e_*(\omega)$. Since $\omega = \int \nu \ d e_*(\omega)(\nu)$ and $m(x) = \omega$ for $\omega$-almost-every $x \in X$, it follows that for $\lambda$-almost-every $\nu \in \E_\Z(X)$ we have $m(x) = \omega$ for $\nu$-almost-every $x \in X$ and hence $\tau(\nu) = \lambda$. So $\lambda(\tau^{-1}(\lambda)) = 1$.

Apply Corollary \ref{cor:avgpath} to obtain a Borel map $\nu \mapsto \pv^\nu$ with $\sH(\pv^\nu) = \ksh_{\Z,\nu}(\xi \given \cF)$ such that, for all $\omega$, $\sH(\int_{\E_\Z(X)} \pv^\nu \ d e_*(\omega)(\nu)) \leq \sH(\int_{\E_\Z(X)} \dist_\nu(\xi) \ d e_*(\omega)(\nu)) = \sH_\omega(\xi)$, and such that for all $\omega$ and every $\nu \in \tau^{-1}(e_*(\omega))$ we have
$$\Big( |\dist_\omega(\xi) - \dist_\nu(\xi)| < \kappa \text{ and } |\sH_\nu(\xi) - \ksh_{\Z,\nu}(\xi \given \cF)| < \kappa \Big) \Longrightarrow |\pv^\nu - \dist_\nu(\xi)| < \epsilon'.$$
Now apply Theorem \ref{thm:kira} to obtain a Borel partition $\alpha \in \bigcap_\nu \Bp{\nu}{\pv^\nu}{\cF}$ satisfying
$$\forall \nu \in \E_\Z(X) \quad \Big( \sH(\pv^\nu) \leq M' \text{ and } \Def{\nu}{\pv^\nu}{\xi}{\cF} < \psd(M', \epsilon / 2) \Longrightarrow \dB_\nu(\alpha, \xi) < \epsilon / 2 \Big).$$
In particular, $\alpha$ is weakly $(\Z,\omega)$-Bernoulli over $\sinv_\Z$ and $\salg_\Z(\alpha)$ is independent with $\cF$ relative to $\sinv_\Z$ by Lemma \ref{lem:nonergb}.

Now fix $\omega \in \Omega$ and set $\lambda = e_*(\omega)$. We will check that $\dB_\omega(\alpha, \xi), |\sH_\omega(\alpha) - \sH_\omega(\xi)| < \epsilon$. Kolmogorov--Sinai entropy is an affine function on the space of invariant measures, so
$$\int_{\E_\Z(X)} \ksh_{\Z,\nu}(\xi \given \cF) \ d \lambda(\nu) = \ksh_{\Z,\omega}(\xi \given \cF).$$
Combined with the inequalities $\ksh_{\Z,\omega}(\xi \given \cF) \leq \sH_\omega(\xi) \leq M$, this implies
\begin{equation} \label{eqn:sfolk1}
\lambda(\{\nu \in \E_\Z(X) : \ksh_{\Z,\nu}(\xi \given \cF) \leq M'\}) \geq 1 - \frac{M}{M'} > 1 - \frac{\epsilon}{12}.
\end{equation}
Next, by applying Lemma \ref{lem:drop} to the inequality
$$\sH_\omega(\xi \given \sinv_\Z) \geq \ksh_{\Z,\omega}(\xi \given \cF \vee \sinv_\Z) = \ksh_{\omega,\Z}(\xi \given \cF) > \sH_\omega(\xi) - \upsilon$$
we find that
\begin{equation} \label{eqn:sfolk2}
\lambda(\{\nu \in \E_\Z(X) : |\dist_\nu(\xi) - \dist_\omega(\xi)| < \kappa\}) > 1 - \kappa > 1 - \epsilon / 12.
\end{equation}
Finally, since $\sH_\nu(\xi) - \ksh_{\Z,\nu}(\xi \given \cF) \geq 0$ for all $\nu$ and
\begin{align*}
\int_{\E_\Z(X)} \Big(\sH_\nu(\xi) - \ksh_{\Z,\nu}(\xi \given \cF) \Big) \ d \lambda(\nu) & = \sH_\omega(\xi \given \sinv_\Z) - \ksh_{\Z,\omega}(\xi \given \cF)\\
 & \leq \sH_\omega(\xi) - \ksh_{\Z,\omega}(\xi \given \cF) < \upsilon,
\end{align*}
we must have
\begin{equation} \label{eqn:sfolk3}
\lambda(\{\nu \in \E_\Z(X) : |\sH_\nu(\xi) - \ksh_{\Z,\nu}(\xi \given \cF)| < \kappa\}) > 1 - \frac{\upsilon}{\kappa} > 1 - \frac{\epsilon}{12}.
\end{equation}
Let $N$ be the set of $\nu$ with $\ksh_{\Z,\nu}(\xi \given \cF) \leq M'$, $|\dist_\nu(\xi) - \dist_\omega(\xi)| < \kappa$, and $|\sH_\nu(\xi) - \ksh_{\Z,\nu}(\xi \given \cF)| < \kappa$. Inequalities (\ref{eqn:sfolk1}) (\ref{eqn:sfolk2}) (\ref{eqn:sfolk3}) show that $\lambda(N) > 1 - \epsilon / 4$.

Now notice that for $\nu \in N$ the construction of $\pv^\nu$ gives $|\pv^\nu - \dist_\nu(\xi)| < \epsilon'$. Additionally, we have $\sH(\pv^\nu) = \ksh_{\Z,\nu}(\xi \given \cF) \leq M'$ and
\begin{align*}
\Def{\nu}{\pv^\nu}{\xi}{\cF} & = |\dist_\nu(\xi) - \pv^\nu| + |\sH_\nu(\xi) - \sH(\pv^\nu)| + |\ksh_{\Z,\nu}(\xi \given \cF) - \sH(\pv^\nu)|\\
 & < \epsilon' + |\sH_\nu(\xi) - \ksh_{\Z,\nu}(\xi \given \cF)|  + 0\\
 & < \epsilon' + \kappa < \psd(M', \epsilon / 2).
\end{align*}
So $\dB_\nu(\alpha, \xi) < \epsilon / 2$ for $\nu \in N$. Since $\lambda(N) > 1 - \epsilon / 4$ we obtain
$$\dB_\omega(\alpha, \xi) = \int_{\E_\Z(X)} \dB_\nu(\alpha, \xi) \ d \lambda(\nu) < \frac{\epsilon}{2} + \frac{\epsilon}{2} = \epsilon.$$
Lastly, the ergodic decomposition of $\omega$ coincides with the disintegration of $\omega$ relative to $\sinv_\Z$, so
$$\sH_\omega(\alpha \given \sinv_\Z) = \int_{\E_\Z(X)} \sH_\nu(\alpha) \ d \lambda(\nu) = \int_{\E_\Z(X)} \ksh_{\Z,\nu}(\xi \given \cF) = \ksh_{\Z,\omega}(\xi \given \cF).$$
Therefore, 
$$\sH_\omega(\xi) - \epsilon < \ksh_{\Z,\omega}(\xi \given \cF) = \sH_\omega(\alpha \given \sinv_\Z) \leq \sH_\omega(\alpha) = \sH \left( \int_{\E_\Z(X)} \pv^\nu \ d \lambda(\nu) \right) \leq \sH_\omega(\xi),$$
and thus $|\sH_\omega(\alpha) - \sH_\omega(\xi)| < \epsilon$.
\end{proof}

\section{A smooth division into hyperfinite relations}

One of the key ingredients in the proof of the main theorem is a new construction related to countable Borel equivalence relations. Specifically, for an aperiodic countable Borel equivalence relation $E$ on a standard Borel space $X$, we show that there is a smooth division $X = \sqcup_{0 \leq r \leq 1} X_r$ of $X$ into Borel sets $X_r$ with the property that the restriction of $E$ to each $X_r$ is aperiodic and hyperfinite. This section is devoted to proving this fact.

Recall that for a standard Borel space $X$, a \emph{Borel graph} on $X$ is a Borel, symmetric, anti-reflexive subset $\Gamma \subseteq X \times X$. For $A \subseteq X$ we write $\Nbr_\Gamma(A)$ for the \emph{$\Gamma$-neighborhood of $A$}, i.e. the set of $x \in X$ such that there is $a \in A$ with $(x, a) \in \Gamma$. The \emph{degree of $x \in X$} is $\deg_\Gamma(x) = |\Nbr_\Gamma(x)|$, and $\Gamma$ is \emph{locally finite} if $\deg_\Gamma(x) < \infty$ for all $x \in X$. A set $A \subseteq X$ is \emph{$\Gamma$-independent} if for all $a, b \in A$ we have $(a, b) \not\in \Gamma$, and a function $f : X \rightarrow \N$ is a \emph{$\Gamma$-coloring} if $f^{-1}(i)$ is $\Gamma$-independent for every $i \in \N$. It was proven by Kechris--Solecki--Todorcevic in \cite{KST99} that every locally finite Borel graph $\Gamma$ admits a Borel $\Gamma$-coloring $f : X \rightarrow \N$ and also admits a maximal $\Gamma$-independent set that is Borel. We write $E_\Gamma^X$ for the equivalence relation on $X$ given by the connected components of $\Gamma$, and we say $\Gamma$ is \emph{aperiodic} if every class of $E_\Gamma^X$ is infinite.

\begin{lem} \label{lem:einf}
Let $E$ be an aperiodic countable Borel equivalence relation on a standard Borel space $X$. Then there is a Borel function $f : X \rightarrow \N$ such that $|[x]_E \cap f^{-1}(i)| = \infty$ for all $x \in X$ and all $i \in \N$.
\end{lem}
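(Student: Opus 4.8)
The plan is to reduce to the case of a single aperiodic Borel automorphism and then run an explicit ``vanishing marker'' construction built from the Kechris--Solecki--Todorcevic coloring theorem. First I would reduce to a $\Z$-action: by a well-known consequence of the marker lemma, the aperiodic countable Borel equivalence relation $E$ contains an aperiodic hyperfinite Borel subequivalence relation, which by \cite[Prop. 1.2]{JKL} has the form $E_T^X$ for a Borel automorphism $T:X\to X$; aperiodicity of the subrelation means every $T$-orbit is infinite, and of course $(x,T(x))\in E$ for all $x$ (see also \cite{JKL,K10}). Since every $E$-class contains at least one $T$-orbit, it then suffices to produce a Borel $f:X\to\N$ such that every $T$-orbit meets $f^{-1}(i)$ in an infinite set for every $i\in\N$.

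Next I would build a nested family of markers for the $\Z$-action generated by $T$. For $n\geq 1$ let $\Gamma_n$ be the locally finite Borel graph on $X$ joining $x\neq y$ whenever $y=T^j(x)$ for some $j$ with $0<|j|<2^n$. Recursively choose Borel sets $P_1\supseteq P_2\supseteq\cdots$ as follows: let $P_1$ be a Borel maximal $\Gamma_1$-independent set (which exists by \cite{KST99}), and given $P_n$ let $P_{n+1}\subseteq P_n$ be a Borel maximal independent set for the locally finite Borel graph on $P_n$ joining two of its points whenever their distance along a common $T$-orbit is less than $2^{n+1}$. A routine induction shows that along every $T$-orbit the set $P_n$ is $2^n$-separated and syndetic with bounded gaps; in particular each $P_n$ meets every $T$-orbit in an infinite set, while $P_\infty:=\bigcap_{n\geq 1}P_n$ meets every $T$-orbit in at most one point.

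Finally I would extract the coloring. Put $Q_0=(X\setminus P_1)\cup P_\infty$ and $Q_n=P_n\setminus P_{n+1}$ for $n\geq 1$; a short telescoping argument gives $X=\bigsqcup_{n\geq 0}Q_n$. Each $Q_n$ meets every $T$-orbit in an infinite set: for $n=0$ because between consecutive points of $P_1$ on an orbit there is a point outside $P_1$, and for $n\geq 1$ because consecutive points of $P_{n+1}$ on an orbit are at $T$-distance $\geq 2^{n+1}$ and hence strictly enclose a point of $P_n$, which must then lie in $Q_n$; and there are infinitely many such consecutive pairs. Then $f(x):=$ the unique $n$ with $x\in Q_n$ is a Borel function, and $f^{-1}(i)=Q_i$ meets every $T$-orbit, hence every $E$-class, in an infinite set.

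I expect the only genuinely non-formal step to be the reduction to a $\Z$-action, i.e.\ producing the Borel automorphism $T$ with infinite orbits contained in $E$-classes; once $T$ is available, everything is a careful but standard application of \cite{KST99}, with the only mild subtlety being that the residual set $P_\infty$ need not be empty (it is only a partial transversal) and must be harmlessly absorbed into $Q_0$.
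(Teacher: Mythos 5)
Your reduction to a single aperiodic Borel automorphism $T$ whose orbits lie inside $E$-classes is fine and is in fact exactly the paper's first move (it quotes \cite[Lem. 3.25]{JKL} to get an aperiodic element of the full group), so the step you flagged as the only non-formal one is not where the trouble lies. The genuine gap is in the marker construction, at the claim that consecutive points of $P_{n+1}$ on an orbit ``strictly enclose a point of $P_n$, which must then lie in $Q_n$.'' That inference needs the maximal gap of $P_n$ along every orbit to be strictly smaller than the separation constant $2^{n+1}$ used to define $P_{n+1}$, and your construction does not guarantee this: maximality of $P_n$ inside $P_{n-1}$ only bounds the gaps of $P_n$ by roughly $2(2^n-1)+g_{n-1}$, where $g_{n-1}$ is the gap bound for $P_{n-1}$. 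Concretely, $P_1$ can have gaps up to $3<4$, so $Q_1$ is fine, but $P_2$ can have gaps up to $9\geq 8$. Identifying an orbit with $\Z$, the sets $P_1=3\Z$ and $P_2=9\Z$ are legitimate maximal independent sets for your graphs, and then $P_2$ is already $8$-separated, so a maximal independent set $P_3\subseteq P_2$ for the distance-$<8$ graph must be all of $P_2$ on that orbit; hence $Q_2=P_2\setminus P_3=\varnothing$ there, and $f^{-1}(2)$ misses that orbit entirely. Since the lemma demands the conclusion for \emph{every} $x$ (not almost every), and you have no control over which maximal independent sets the Kechris--Solecki--Todorcevic theorem hands you, the argument as written fails from $n=2$ onward.

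The strategy is salvageable: choose the separation constants adaptively, letting $P_{n+1}$ be maximal in $P_n$ for the graph joining points at distance strictly less than some $s_{n+1}>g_n$, where $g_n$ is the (recursively computable) gap bound for $P_n$; then consecutive $P_{n+1}$-points really do enclose a point of $P_n$ and your telescoping argument goes through. But it is worth comparing with the paper's proof, which avoids all quantitative gap bookkeeping: from an aperiodic $S\in[E]$ it takes a single Borel maximal independent set $A$ for the graph $\{(x,S(x)),(S(x),x)\}$, so that $A\cap S(A)=\varnothing$ and $X=S^{-1}(A)\cup A\cup S(A)$, whence both $A$ and $X\setminus A$ meet every $E$-class infinitely; iterating this splitting on the successive complements $X_{n+1}=X_n\setminus A_n$ produces pairwise disjoint Borel sets $A_n$, each meeting every class infinitely, and $f$ is read off from these. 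That route needs only one application of the coloring theorem per stage and no marker estimates.
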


\begin{proof}
We claim that under these assumptions there is a Borel set $A \subseteq X$ with $|[x]_E \cap A| = |[x]_E \cap (X \setminus A)| = \infty$ for all $x \in X$. By \cite[Lem. 3.25]{JKL} there exists an aperiodic $S \in [E]$. Let $\Gamma$ be the graph $\{(x, S(x)) : x \in X\} \cup \{(S(x), x) : x \in X\}$. By \cite[Prop. 4.2 and Prop. 4.6]{KST99}, there is a maximal $\Gamma$-independent Borel set $A \subseteq X$. Independence gives $A \cap S(A) = \varnothing$ and maximality implies $X = S^{-1}(A) \cup A \cup S(A)$. Clearly $A$ has the desired properties, establishing our preliminary claim.

Set $X_0 = X$. Inductively on $n$, given $X_n \subseteq X$ having infinite intersection with every $E$-class, apply the previous claim to the relation $E_n = E \res (X_n \times X_n)$ to get a Borel set $A_n \subseteq X_n$ with $|[x]_{E_n} \cap A_n| = |[x]_{E_n} \cap (X_n \setminus A_n)| = \infty$ for all $x \in X_n$. Then $A_n$ and $X_n \setminus A_n$ have infinite intersection with every $E$-class. Set $X_{n+1} = X_n \setminus A_n$ and continue the induction. This creates a sequence $(A_n)_{n \in \N}$ of pairwise-disjoint Borel sets having infinite intersection with every $E$-class. To complete the proof, define $f(x) = n$ if $x \in A_n$ and $f(x) = 0$ if $x \not\in \bigcup_n A_n$.
\end{proof}

\begin{lem} \label{lem:ginf}
Let $X$ be a standard Borel space, let $E$ be an aperiodic countable Borel equivalence relation on $X$, and let $\Gamma \subseteq E$ be a locally-finite Borel graph on $X$. Then there is a Borel $\Gamma$-coloring $f : X \rightarrow \N$ such that $|[x]_E \cap f^{-1}(i)| = \infty$ for all $x \in X$ and all $i \in \N$.
\end{lem}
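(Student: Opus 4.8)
The plan is to combine the Kechris--Solecki--Todorcevic coloring result with Lemma \ref{lem:einf} by a diagonal/refinement argument. First I would apply the KST theorem to the locally finite Borel graph $\Gamma$ to obtain a Borel $\Gamma$-coloring $c : X \rightarrow \N$, so that each $c^{-1}(j)$ is $\Gamma$-independent. Separately, apply Lemma \ref{lem:einf} to the aperiodic relation $E$ to obtain a Borel function $d : X \rightarrow \N$ with $|[x]_E \cap d^{-1}(i)| = \infty$ for every $x$ and every $i$. The desired coloring $f$ should be a function of the pair $(c(x), d(x))$: it must refine $c$ (to stay a $\Gamma$-coloring) while still being "spread out" along every $E$-class the way $d$ is.

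The key step is to choose, for each color class $c^{-1}(j)$, a Borel reindexing of the $d$-fibers so that the result still has infinite intersection with every $E$-class. Fix a Borel bijection $\langle \cdot, \cdot \rangle : \N \times \N \rightarrow \N$ and set $f(x) = \langle c(x), d(x)\rangle$. Then each $f^{-1}(\langle j, i\rangle) = c^{-1}(j) \cap d^{-1}(i)$ is contained in $c^{-1}(j)$, hence is $\Gamma$-independent, so $f$ is a Borel $\Gamma$-coloring. It remains to check the infinitude condition: I need $|[x]_E \cap c^{-1}(j) \cap d^{-1}(i)| = \infty$ for all $x, i, j$. This is not automatic from the two ingredients as stated, so the argument must instead build $d$ adapted to $c$.

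The clean way to do this is to not invoke Lemma \ref{lem:einf} as a black box but to rerun its proof relative to the partition $\{c^{-1}(j) : j \in \N\}$. That is: for each $j$, the set $c^{-1}(j)$ is Borel; I claim one can produce a Borel $g_j : c^{-1}(j) \rightarrow \N$ with $|[x]_E \cap g_j^{-1}(i)| = \infty$ for every $x \in X$ (note: every $E$-class, not merely classes of $E$ restricted to $c^{-1}(j)$) and every $i$. The point is that, since every $E$-class meets $c^{-1}(j)$ infinitely often (each $\Gamma$-independent piece of an infinite class within a locally finite graph is infinite — or, if some color class fails to meet a class infinitely often, one can merge finitely many colors, which is harmless since a finite union of $\Gamma$-independent sets is still handled by further coloring), one can apply the splitting claim from the proof of Lemma \ref{lem:einf} inside $c^{-1}(j)$ and the resulting pieces still meet every $E$-class infinitely. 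Then set $f(x) = \langle c(x), g_{c(x)}(x)\rangle$. The main obstacle I anticipate is precisely this subtlety: ensuring that after intersecting with a $\Gamma$-color class the sets still meet *every* $E$-class infinitely often, which requires knowing (and if necessary arranging, by absorbing finitely many colors) that each retained $\Gamma$-color class itself meets every $E$-class infinitely — a mild but genuine point that the naive product coloring $\langle c(x), d(x)\rangle$ does not by itself deliver.
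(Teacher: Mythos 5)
Your overall strategy (combine the KST coloring with Lemma \ref{lem:einf}) is the right one, but the proposal has a genuine gap exactly at the point you flag: it is simply not true that every color class $c^{-1}(j)$ of a KST coloring meets every $E$-class infinitely often, and neither of your two attempted justifications repairs this. The parenthetical claim that ``each $\Gamma$-independent piece of an infinite class within a locally finite graph is infinite'' is false (a single point is $\Gamma$-independent); what is true is that a \emph{maximal} $\Gamma$-independent set meets every infinite $\Gamma$-connected class infinitely, since each of its points covers only finitely many neighbors. The fallback of ``merging finitely many colors'' also fails: for a locally finite graph of unbounded degree the KST coloring genuinely needs infinitely many colors, an infinite $E$-class can meet every single color class only finitely often (so no finite merge suffices), the colors one would need to merge vary from class to class (so the operation is not obviously Borel), and in any case a union of color classes is no longer $\Gamma$-independent, which destroys the property you are trying to preserve.

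The paper's proof supplies the two ideas you are missing. First, it takes $P_0$ to be a \emph{maximal} $\Gamma$-independent Borel set (also from KST); maximality, local finiteness of $\Gamma$, $\Gamma \subseteq E$, and aperiodicity of $E$ together force $|[x]_E \cap P_0| = \infty$ for every $x$, so Lemma \ref{lem:einf} applies to $E \res (P_0 \times P_0)$ and produces $f$ on $P_0$ with $|[x]_E \cap P_0 \cap f^{-1}(n)| = \infty$ for all $x$ and $n$. Second --- and this is the structural point your product coloring $\langle c(x), d(x)\rangle$ obscures --- the infinitude condition only needs to be certified on this single anchor piece: on the remaining $\Gamma$-independent pieces $P_i = h^{-1}(i) \setminus P_0$ one extends $f$ greedily, assigning to $y \in P_i$ the least color not used on $\Nbr_\Gamma(y) \cap \bigcup_{j<i} P_j$ (well-defined by local finiteness). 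This keeps $f$ a Borel $\Gamma$-coloring, and $|[x]_E \cap f^{-1}(n)| \geq |[x]_E \cap P_0 \cap f^{-1}(n)| = \infty$ already holds regardless of how the extension distributes colors. I would rework your argument along these lines rather than trying to make every color class individually well-spread.
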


\begin{proof}
By \cite{KST99} there is a $\Gamma$-coloring $h : X \rightarrow \{n \in \N : n \geq 1\}$ and a maximal $\Gamma$-independent set $P_0$ that is Borel. For $i \geq 1$ set $P_i = h^{-1}(i) \setminus P_0$, so that $\{P_n : n \in \N\}$ is a partition of $X$ into $\Gamma$-independent sets. Since $E$ is aperiodic, $\Gamma$ is locally finite, and $P_0$ is a maximal $\Gamma$-independent set, we must have that $[x]_E \cap P_0$ is infinite for every $x \in X$. Consider the restriction of $E$ to $P_0 \times P_0$ and apply the previous lemma to obtain a Borel function $f : P_0 \rightarrow \N$ with $|[x]_E \cap P_0 \cap f^{-1}(n)| = \infty$ for every $x \in X$ and $n \in \N$. Now suppose $f$ is defined on $\bigcup_{0 \leq j < i} P_j$ and gives distinct values to $\Gamma$-adjacent points. We will extend $f$ to $\bigcup_{0 \leq j \leq i} P_j$. For each $y \in P_i$ define $f(y)$ to be the least element of $\N \setminus f(\Nbr_\Gamma(y) \cap \bigcup_{0 \leq j < i} P_j)$. Then $f$ continues to be a Borel function, and since $(P_i \times P_i) \cap \Gamma = \varnothing$, $f$ continues to assign distinct values to $\Gamma$-adjacent points in $\bigcup_{0 \leq j \leq i} P_j$. Continuing by induction, we obtain a Borel $\Gamma$-coloring $f : X \rightarrow \N$. For each $x \in X$ and $n \in \N$ we have $|[x]_E \cap f^{-1}(n)| \geq |[x]_E \cap P_0 \cap f^{-1}(n)| = \infty$.
\end{proof}

Recall that a Borel equivalence relation $F$ on $X$ is \emph{smooth} if there is a standard Borel space $Y$ and a Borel function $f : X \rightarrow Y$ such that $x \ F \ x' \Leftrightarrow f(x) = f(x')$. Also recall that a Borel equivalence relation $R$ on $X$ is \emph{hyperfinite} if there is an increasing sequence of Borel equivalence relations $(R_n)_{n \in \N}$ such that $R = \bigcup_n R_n$ and each $R_n$ is finite (i.e. each class of $R_n$ is finite).

\begin{thm} \label{thm:slice}
Let $E$ be an aperiodic countable Borel equivalence relation on a standard Borel space $X$. Then there is a smooth Borel equivalence relation $F$ such that $E \cap F$ is aperiodic and hyperfinite.
\end{thm}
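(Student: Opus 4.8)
The plan is to produce a smooth equivalence relation $F$ whose classes are ``strips'' indexed by a real parameter, so that $E \cap F$ restricted to each strip is an aperiodic hyperfinite relation; concatenating these gives an aperiodic hyperfinite $E \cap F$ globally. First I would fix, using Lemma \ref{lem:einf}, a Borel function $g : X \rightarrow \N$ with $|[x]_E \cap g^{-1}(i)| = \infty$ for every $x$ and every $i$; I will use the values of $g$ as ``tokens'' to encode a real number in $[0,1]$ along each $E$-class. More precisely, I would build a Borel $E$-invariant partition $X = \bigsqcup_{r} X_r$ together with, on each piece $X_r$, a Borel $\Z$-action witnessing that $E \cap (X_r \times X_r)$ is hyperfinite. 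The equivalence relation $F$ will be defined by $x \mathbin{F} x'$ iff $x, x'$ lie in the same $X_r$ and in the same $\Z$-orbit, which is manifestly smooth (the quotient is the pair consisting of the index $r$ together with the orbit, both recorded by a Borel map into a standard Borel space); and $E \cap F$ will then be exactly the orbit equivalence relation of these $\Z$-actions restricted to each strip.

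The construction of the strips is the substantive step. I would first invoke Lemma \ref{lem:pregrabs} to obtain $S \in [E]$ with $E_S^X = R$ a hyperfinite subrelation of $E$; since hyperfinite relations are generated by a single Borel automorphism, $R$ comes with an increasing exhaustion $R = \bigcup_n R_n$ by finite subrelations. The idea is then to ``thicken'' $R$ inside $E$ in a controlled, smooth way: along each $E$-class I want to carve out a region which still carries an aperiodic $\Z$-action (so it must meet each $R$-class, or at least be an infinite $R$-invariant sub-chunk) but whose complement is also aperiodic. Using the auxiliary coloring $g$ above I would assign to each $R$-class a ``label'' in a countable set, and then group $R$-classes with a common label to form $E$-subclasses; each such subclass is a countable union of $R$-classes, hence carries a hyperfinite relation and, provided the labeling is chosen so that infinitely many $R$-classes share each label within a given $E$-class, the resulting relation on each strip is aperiodic. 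To make the index set a genuine parameter rather than merely countable data, I would compose this labeling with a Borel injection of the (countable) space of finite label-patterns into $[0,1]$, or more simply record the $E$-invariant isomorphism type of the labeled structure as a Borel point, yielding the smooth map defining $F$.

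The main obstacle I anticipate is \emph{simultaneously} achieving both aperiodicity of $E \cap F$ on every strip \emph{and} smoothness of $F$: these pull in opposite directions, since a strip that is too small (e.g. a single $R$-class, or a finite union) fails aperiodicity, while a strip that is too large (e.g. a full $E$-class) makes $F$ coarse but is fine, and the delicate point is the uniform Borel choice in between. The technical engine for resolving this is exactly the combinatorial lemmas already proved: Lemma \ref{lem:einf} supplies infinitely many tokens of each type in each $E$-class, Lemma \ref{lem:ginf} lets me refine tokens compatibly with a given locally finite graph $\Gamma \subseteq E$ (here $\Gamma$ will be the graph generating the hyperfinite $R$, or one of the finite relations $R_n$), and the Kechris--Solecki--Todorcevic maximal-independent-set machinery cited there lets me make all selections Borel. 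Concretely, I would use Lemma \ref{lem:ginf} applied to the graph of $S$ to split $X$ into countably many Borel pieces each meeting every $E$-class infinitely often and each $R$-invariant up to the coloring; amalgamating the pieces according to a Borel rule indexed by a real then gives strips that are $R$-invariant, meet each $E$-class in an infinite union of $R$-classes (hence are aperiodic and hyperfinite for $E \cap F$), and whose indexing is smooth. The verification that $E \cap F$ is then hyperfinite is routine: on each strip it equals the orbit relation of a single Borel automorphism (a suitable ``induced'' transformation built from $S$ and the token data), and a Borel equivalence relation that is a countable-to-one smooth-to-one union of hyperfinite relations indexed smoothly is hyperfinite by the standard fact that hyperfiniteness is closed under smooth extensions.
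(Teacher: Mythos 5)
Your proposal has the right outer shape ($F$ should be the fiber relation of a Borel labeling built alongside an increasing sequence of finite subrelations, with Lemmas \ref{lem:einf} and \ref{lem:ginf} supplying the combinatorial room), but it is missing the one idea that makes the theorem work: a mechanism forcing the containment $E \cap F \subseteq R_\infty$, where $R_\infty$ is the hyperfinite relation you are building. If a strip meets an $E$-class in a union of infinitely many $R$-classes --- which is exactly what your ``group $R$-classes with a common label'' step produces --- then $E \cap F$ restricted to that set is the \emph{full} relation on it, not the orbit relation of $S$; so $E \cap F$ is strictly larger than $R$, and its hyperfiniteness is just as open as it was at the start (your closing appeal to ``hyperfiniteness is closed under smooth extensions'' presupposes that the restriction to each strip is hyperfinite, which is the very point at issue). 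Relatedly, your parenthetical that $F$ defined by ``same strip and same $\Z$-orbit'' is ``manifestly smooth'' is false: recording ``the orbit'' by a Borel map to a standard Borel space is possible precisely when the orbit relation is smooth, and no subrelation of a measure-preserving aperiodic $E$ with all classes infinite can be smooth (a Borel transversal would be a null set with conull saturation). You have also misidentified the tension: it is not aperiodicity versus smoothness ($F$, being the fiber relation of a Borel function, is automatically smooth) but aperiodicity versus the containment $E \cap F \subseteq R_\infty$; a strip equal to a full $E$-class is not ``fine,'' since then $E \cap F$ contains $E$ on that class and hyperfiniteness fails.

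The paper resolves this with a Feldman--Moore realization $E = E_G^X$ and an exhaustion $G = \bigcup_n K_n$ by finite sets. At stage $n$ it forms, on a transversal of the finite relation $R_{n-1}$, a locally finite graph joining points whose $R_{n-1}$-classes are $K_n$-close and which agree on all earlier labels, and takes $f_n$ to be a coloring of that graph via Lemma \ref{lem:ginf} (so each color class still meets each $E$-class infinitely often, which preserves aperiodicity and lets the induction continue). This yields the separation property: if $y \in K_n \cdot x$ then either $x \mathrel{R_{n-1}} y$ or $f_k(x) \neq f_k(y)$ for some $k \leq n$. With $F$ the fiber relation of $(f_n)_n$, any pair in $E \cap F$ is $K_n$-close for some $n$ and is never separated, hence lies in $R_\infty$; conversely each $R_n$ refines $f_k$ for $k \leq n$, so $R_\infty = E \cap F$. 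This iterated ``separate everything $K_n$-close that you have not already glued into $R_{n-1}$'' step is what your token-labeling scheme lacks and cannot replace.
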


\begin{proof}
By a result of Feldman and Moore \cite{FM}, there is a countable group $G$ and a Borel action $G \acts X$ such that $E = \{(x, y) : \exists g \in G \ g \cdot x = y\}$. Fix an increasing sequence of finite symmetric sets $K_n \subseteq G$ with $\bigcup_n K_n = G$. We will build a sequence of Borel functions $f_n : X \rightarrow \N$ and an increasing sequence of finite Borel equivalence relations $R_n \subseteq E$ satisfying:
\begin{enumerate}
\item [\rm (i)] for all $n \in \N$, $k \leq n$, and $x, y \in X$, if $x \ R_n \ y$ then $f_k(x) = f_k(y)$;
\item [\rm (ii)] for all $x \in X$, $n \in \N$, and $v \in \N^n$, $|[x]_E \cap (f_0 \times \cdots \times f_{n-1})^{-1}(v)| = \infty$;
\item [\rm (iii)] for all $n \geq 1$ and $x, y \in X$, if $y \in K_n \cdot x$ then either $x \ R_{n-1} \ y$ or else $f_k(x) \neq f_k(y)$ for some $k \leq n$;
\item [\rm (iv)] for all $n \geq 1$, every $R_n$-class contains at least two $R_{n-1}$-classes.
\end{enumerate}
To begin let $f_0 : X \rightarrow \N$ be given by Lemma \ref{lem:einf} and let $R_0 = \{(x, x) : x \in X\}$ be the equivalence relation of equality.

Now assume that $f_k$ and $R_k$ have been constructed for all $k < n$ and satisfy (i) through (iv). Since $R_{n-1}$ is finite, there is a Borel set $Y \subseteq X$ that meets every $R_{n-1}$-class in precisely one point and there is a Borel function $t : X \rightarrow Y$ satisfying $t(x) \ R_{n-1} \ x$ for every $x$. Define an equivalence relation $H$ on $Y$ by the rule $y \ H \ y' \Leftrightarrow (y \ E \ y') \wedge (\forall k < n \ f_k(y) = f_k(y'))$. Then $H$ is Borel, $H \subseteq E \cap (Y \times Y)$, and clauses (i) and (ii) imply that $H$ is aperiodic. Define a Borel graph $\Gamma$ on $Y$ by the rule
$$(y, y') \in \Gamma \Longleftrightarrow (y \neq y') \wedge (y \ H \ y') \wedge ([y]_{R_{n-1}} \cap K_n \cdot [y']_{R_{n-1}} \neq \varnothing).$$
Note that $\Gamma$ is locally finite since $R_{n-1}$ and $K_n$ are finite. By Lemma \ref{lem:ginf} there is a Borel $\Gamma$-coloring $h : Y \rightarrow \N$ such that $|[y]_H \cap h^{-1}(i)| = \infty$ for all $y \in Y$ and $i \in \N$. Then the equivalence relation $H'$ defined by $y \ H' \ y' \Leftrightarrow (y \ H \ y') \wedge (h(y) = h(y'))$ is an aperiodic countable Borel equivalence relation. Pick any finite Borel equivalence relation $S \subseteq H'$ with the property that every $S$-class contains at least two points. Now define $f_n(x) = h(t(x))$, and set $x \ R_n \ x'$ if and only if $(t(x), t(x')) \in S$. Then $f_n$ and $R_n$ are Borel, and its immediately seen that (i), (ii), and (iv) continue to hold. Also (iii) holds due to how the graph $\Gamma$ was defined and the fact that the function $h$ is a $\Gamma$-coloring.

Now define the smooth Borel equivalence relation $F$ by the rule
$$x \ F \ y \Longleftrightarrow f_\infty(x) = f_\infty(y),$$
where $f_\infty : X \rightarrow \N^\N$ is defined by $f_\infty(x)(n) = f_n(x)$. Also set $R_\infty = \bigcup_n R_n$. Clearly $R_\infty$ is hyperfinite, and since every $R_{n+1}$-class contains at least two $R_n$-classes, $R_\infty$ is aperiodic. To finish the proof it suffices to show that $R_\infty = E \cap F$. Clause (i) immediately gives the containment $R_\infty \subseteq E \cap F$. For the reverse direction, suppose $(x, y) \in E \cap F$. Let $n$ be sufficiently large that $y \in K_n \cdot x$. Then clause (iii) implies that either $(x, y) \in R_{n-1} \subseteq R_\infty$ or else $(x, y) \not\in F$. The latter contradicts our assumption, so the former must hold and $E \cap F \subseteq R_\infty$.
\end{proof}

We will rely on the following corollary to Theorem \ref{thm:slice}.

\begin{cor} \label{cor:slice}
Let $X$ be a standard Borel space, $G \acts X$ an aperiodic Borel action, and let $\cF$ be a countably generated class-bijective $G$-invariant sub-$\sigma$-algebra. Then there is an aperiodic $\cF$-expressible $T \in [E_G^X]$ and a $T$-invariant $\cF$-measurable function $f : X \rightarrow [0, 1]$ with the property that $x, y \in X$ lie in the same $G$-orbit and have equal $f$ values if and only if they lie in the same $T$-orbit.
\end{cor}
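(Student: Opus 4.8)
The plan is to push the problem down to a Borel factor of the action on which $\cF$ becomes the entire Borel $\sigma$-algebra, apply Theorem~\ref{thm:slice} there, and then lift the resulting objects back to $X$. First I would apply Lemma~\ref{lem:borelfactor} (taking, say, the auxiliary target space to be a single point) to obtain a standard Borel space $Y$, an aperiodic Borel action $G \acts Y$, and a class-bijective $G$-equivariant Borel map $\phi : X \to Y$ with $\cF = \phi^{-1}(\Borel(Y))$. The purpose of this reduction is that any Borel function on $Y$ pulls back along $\phi$ to an $\cF$-measurable function on $X$, any element of $[E_G^Y]$ lifts to an $\cF$-expressible element of $[E_G^X]$ via a least-index cocycle, and class-bijectivity of $\phi$ guarantees that $\phi$ restricts to a bijection from each $G$-orbit of $X$ onto the corresponding $G$-orbit of $Y$.

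Next, since $G$ is countable and $G \acts Y$ is aperiodic, $E_G^Y$ is an aperiodic countable Borel equivalence relation, so Theorem~\ref{thm:slice} produces a smooth Borel equivalence relation $F$ on $Y$ with $E_G^Y \cap F$ aperiodic and hyperfinite. By \cite[Prop.~1.2]{JKL} there is then a Borel automorphism $T' : Y \to Y$ with $E_{T'}^Y = E_G^Y \cap F$; since every class of this relation is infinite, $T'$ is aperiodic, and the inclusion $E_{T'}^Y \subseteq E_G^Y$ gives $T' \in [E_G^Y]$. Smoothness of $F$ supplies, after identifying the relevant standard Borel space with a Borel subset of $[0,1]$, a Borel map $g : Y \to [0,1]$ with $g(y) = g(y')$ if and only if $y \mathbin{F} y'$; in particular $g$ is $T'$-invariant, since $E_{T'}^Y \subseteq F$.

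I would then lift as follows. Fix an enumeration $g_0, g_1, \dots$ of $G$ and let $c : Y \to G$ send $y$ to the least $g_i$ with $g_i \cdot y = T'(y)$; this is Borel. Set $T(x) = c(\phi(x)) \cdot x$ and $f = g \circ \phi : X \to [0,1]$. One readily checks that $T$ is a Borel bijection of $X$ lying in $[E_G^X]$, the partition $\{\phi^{-1}(c^{-1}(s)) : s \in G\}$ witnesses that $T$ is $\cF$-expressible, and $f$ is $\cF$-measurable by construction. Using $G$-equivariance of $\phi$ one verifies $\phi \circ T = T' \circ \phi$, hence $\phi(T^n(x)) = (T')^n(\phi(x))$ for all $n \in \Z$; since $T'$ is aperiodic and $\phi$ is injective on each $G$-orbit, $T$ is aperiodic, and $T$-invariance of $f$ is immediate from $T'$-invariance of $g$. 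Finally, for the characterizing property: if $y$ lies in the $T$-orbit of $x$ then $y \in G \cdot x$ (as $T \in [E_G^X]$) and $f(y) = f(x)$ (as $f$ is $T$-invariant); conversely, if $y \in G \cdot x$ and $f(x) = f(y)$, then $\phi(x)$ and $\phi(y)$ are $(E_G^Y \cap F)$-equivalent, so $\phi(y) = (T')^n(\phi(x)) = \phi(T^n(x))$ for some $n$, and since $T^n(x)$ and $y$ both lie in the $G$-orbit of $x$, on which $\phi$ is injective, we conclude $y = T^n(x)$.

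The only genuine content here is Theorem~\ref{thm:slice}, which is already established; everything else is bookkeeping. The one step that needs a little care is the lift, specifically confirming that $\phi$ carries the $T$-orbit of $x$ bijectively onto the $T'$-orbit of $\phi(x)$ — this is precisely where class-bijectivity of $\phi$ is used, both to transfer aperiodicity to $T$ and to recover the nontrivial ("only if") direction of the characterization. I do not anticipate any real obstacle beyond this.
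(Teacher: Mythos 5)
Your argument is correct and follows essentially the same route as the paper: reduce to the factor $Y$ associated to $\cF$ via Lemma \ref{lem:borelfactor}, apply Theorem \ref{thm:slice} and \cite[Prop.~1.2]{JKL} there, and lift the automorphism and the smoothness-witnessing function back to $X$, using class-bijectivity of $\phi$ for the ``only if'' direction exactly as the paper does. The only cosmetic difference is your explicit least-index choice of cocycle for the lift, which the paper phrases instead via an arbitrary witnessing partition $\{B^S_g\}$.
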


\begin{proof}
By Lemma \ref{lem:borelfactor} there is an aperiodic Borel action $G \acts Y$ on a standard Borel space $Y$ and a class-bijective $G$-equivariant Borel map $\phi : X \rightarrow Y$ with $\cF = \phi^{-1}(\Borel(Y))$. Let $E_G^Y$ be the orbit equivalence relation on $Y$, and let $F'$ be the smooth Borel equivalence relation given by Theorem \ref{thm:slice}. Since $F'$ is smooth and every standard Borel space embeds into $[0, 1]$, there is a Borel function $h : Y \rightarrow [0, 1]$ such that $y \ F' \ y' \Leftrightarrow h(y) = h(y')$. Since $F' \cap E_G^Y$ is hyperfinite and aperiodic, there is an aperiodic automorphism $S$ of $Y$ such that its induced orbit equivalence relation, $E^Y_S$, coincides with $F' \cap E_G^Y$ \cite[Prop. 1.2]{JKL}. In particular, $S \in [E_G^Y]$ so there is a Borel partition $\{B^S_g : g \in G\}$ of $Y$ with $S(y) = g \cdot y$ for all $g \in G$ and $y \in B^S_g$. Define a $\cF$-measurable function $f : X \rightarrow [0, 1]$ by $f(x) = h \circ \phi(x)$, and define an aperiodic $\cF$-expressible $T \in [E_G^X]$ by setting $T(x) = g \cdot x$ if $\phi(x) \in B^S_g$. Notice that $\phi \circ T = S \circ \phi$ and hence $f$ is $T$-invariant since $h$ is $S$-invariant. Clearly any two points lying in the same $T$-orbit will also lie in the same $G$-orbit and have equal $f$ values. So consider the reverse scenario where $x \in X$, $x' \in G \cdot x$, and $f(x) = f(x')$. Setting $y = \phi(x)$ and $y' = \phi(x')$, we have $y' \in G \cdot y$ and $h(y) = h(y')$. It follows there is $n \in \Z$ with $S^n(y) = y'$. Then $\phi(T^n(x)) = y' = \phi(x')$. Since $\phi$ is class-bijective this implies $T^n(x) = x'$.
\end{proof}

The transformation $T$ in the previous corollary cannot be ergodic. In fact, from the perspective of the $G$-action $T$ must be highly non-ergodic, as the $T$-invariant function $f$ must separate all $T$-orbits within any common $G$-orbit.

\section{The external past} \label{sec:extp}

The notion of `past' plays a significant role in the classical entropy theory of actions of $\Z$. An important consequence of Corollary \ref{cor:slice} is that it allows the creation of a useful notion of past. Specifically, the action of $T$ imbues a natural order to every $T$-orbit, and the function $f$ creates a linear order on the collection of $T$-orbits contained in any $G$-orbit. Combined, these create a total ordering of every $G$-orbit and hence a notion of `past' for each point. Given $x$, we refer to the points $g \cdot x$ with $f(g \cdot x) < f(x)$ as the `external past' of $x$ (as this portion of $x$'s past is external to the $T$-orbit of $x$).

For a partition $\alpha$ of $X$ and $Y \subseteq X$ we define another partition of $X$ by
$$\alpha \res Y = \{X \setminus Y\} \cup \{A \cap Y : A \in \alpha\}.$$
Similarly, for a $\sigma$-algebra $\Sigma$ we write $\Sigma \res Y$ for the $\sigma$-algebra on $X$ generated by the sets $\{X\} \cup \{B \cap Y : B \in \Sigma\}$.

\begin{defn}
Let $G \acts X$ be an aperiodic Borel action, let $\cF$ be a countably generated class-bijective $G$-invariant sub-$\sigma$-algebra, and let $T \in [E_G^X]$ and $f : X \rightarrow [0, 1]$ be as in Corollary \ref{cor:slice}. For a countable partition $\xi$ of $X$ and $S \subseteq [0, 1]$ we will write $\xi_S$ for the partition $\xi_S = \xi \res f^{-1}(S)$. We define the \emph{external past} of $\xi$ as
$$\sP_\xi = \cF \vee \bigvee_{t \in [0, 1]} \Big( \salg_G(\xi_{[0,t)}) \res f^{-1}([t,1]) \Big).$$
\end{defn}

In other words, the function $f$ gives a quasi-ordering to the $T$-orbits (which, on every $G$-orbit restricts to a total ordering), and the external past of $\xi$ consists of the sets which you can measure by using $G$ to travel to strictly ``smaller'' $T$-orbits and looking at the partition $\xi$ (we also include $\cF$ in this $\sigma$-algebra for technical reasons).

\begin{lem} \label{lem:allpast}
Let $G \acts (X, \mu)$ be an aperiodic {\pmp} action, let $\cF$ be a $G$-invariant class-bijective sub-$\sigma$-algebra, and let $T \in [E_G^X]$ and $f : X \rightarrow [0, 1]$ be as in Corollary \ref{cor:slice}. Fix a countable partition $\xi$.
\begin{enumerate}
\item[\rm (i)] If $\cF$ is countably generated then so is $\sP_\xi$.
\item[\rm (ii)] $\sP_\xi$ is $T$-invariant.
\item[\rm (iii)] $\salg_T(\xi) \vee \sP_\xi \subseteq \cF \vee \salg_G(\xi)$.
\item[\rm (iv)] $(\salg_T(\xi) \vee \sP_\xi) \res f^{-1}([0, t)) \subseteq \cF \vee \salg_G(\xi_{[0, t)})$ for every $t \in [0, 1]$.
\item[\rm (v)] If $\beta \subseteq \salg_T(\xi) \vee \sP_\xi$, then $\sP_\beta \subseteq \sP_\xi$.
\end{enumerate}
\end{lem}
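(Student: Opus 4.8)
The plan is to show that each of the two pieces comprising $\sP_\beta$ — namely $\cF$ itself and the sets $\salg_G(\beta_{[0,t)}) \res f^{-1}([t,1])$ for $t \in [0,1]$ — is already contained in $\sP_\xi$. The first is trivial, since $\cF \subseteq \sP_\xi$ by definition of the external past. So the work is entirely in the second family. Fix $t \in [0,1]$; I want to show $\salg_G(\beta_{[0,t)}) \res f^{-1}([t,1]) \subseteq \sP_\xi$. The natural route is to first understand $\salg_G(\beta_{[0,t)})$. Here $\beta_{[0,t)} = \beta \res f^{-1}([0,t))$, and since $\beta \subseteq \salg_T(\xi) \vee \sP_\xi$, restricting to $f^{-1}([0,t))$ and invoking part (iv) of this same lemma gives
$$\beta_{[0,t)} \subseteq (\salg_T(\xi) \vee \sP_\xi) \res f^{-1}([0,t)) \subseteq \cF \vee \salg_G(\xi_{[0,t)}).$$
Now I apply $\salg_G(-)$ to both sides: $\salg_G(\beta_{[0,t)}) \subseteq \salg_G(\cF \vee \salg_G(\xi_{[0,t)})) = \cF \vee \salg_G(\xi_{[0,t)})$, using that $\cF$ is already $G$-invariant and $\salg_G$ is idempotent.

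The remaining step is to restrict both sides to $f^{-1}([t,1])$ and conclude. On the left we get $\salg_G(\beta_{[0,t)}) \res f^{-1}([t,1])$, which is exactly the $t$-th generator of $\sP_\beta$ that we are chasing. On the right we get $(\cF \vee \salg_G(\xi_{[0,t)})) \res f^{-1}([t,1])$. Since $\cF$ is $G$-invariant and $f$ is $\cF$-measurable, the set $f^{-1}([t,1])$ lies in $\cF$, so $\cF \res f^{-1}([t,1]) \subseteq \cF \subseteq \sP_\xi$; and $\salg_G(\xi_{[0,t)}) \res f^{-1}([t,1])$ is precisely the $t$-th generator appearing in the definition of $\sP_\xi$. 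One should be slightly careful that restriction commutes with joins in the needed direction — i.e.\ that $(\Sigma_1 \vee \Sigma_2) \res Y \subseteq (\Sigma_1 \res Y) \vee (\Sigma_2 \res Y)$ — but this holds because $(\Sigma_1 \res Y) \vee (\Sigma_2 \res Y)$ is a $\sigma$-algebra containing all sets of the form $B \cap Y$ for $B \in \Sigma_1 \cup \Sigma_2$, hence containing $\{X\} \cup \{B \cap Y : B \in \Sigma_1 \vee \Sigma_2\}$ once one checks the collection of such $B$ is closed under the relevant operations modulo the ``$\{X\}$'' convention. Putting these together, $\salg_G(\beta_{[0,t)}) \res f^{-1}([t,1]) \subseteq \sP_\xi$ for every $t$, and taking the join over $t \in [0,1]$ together with $\cF \subseteq \sP_\xi$ yields $\sP_\beta \subseteq \sP_\xi$.

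The main obstacle I anticipate is purely bookkeeping with the restriction operation $\Sigma \res Y$: one must verify that applying $\salg_G$ and then restricting to $f^{-1}([t,1])$ interacts correctly, and in particular that restricting $\salg_G(\xi_{[0,t)})$ to $f^{-1}([t,1])$ genuinely recovers the generator in the definition of $\sP_\xi$ rather than something larger. The cleanest way to handle this is to prove the small sub-lemma that for $G$-invariant $\Sigma$ and $Y \in \cF$ with $\cF \subseteq$ (the ambient structure), restriction distributes over the relevant joins, and that $\salg_G$ applied to a $\sigma$-algebra already containing $\cF$ adds nothing new; both are elementary but should be stated explicitly to keep the argument honest. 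No genuinely hard analysis is involved — the content is entirely in part (iv), which we are allowed to assume.
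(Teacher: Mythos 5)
Your proposal is correct and follows essentially the same route as the paper: apply part (iv) to get $\beta_{[0,t)} \subseteq \cF \vee \salg_G(\xi_{[0,t)})$, hence $\salg_G(\beta_{[0,t)}) \subseteq \cF \vee \salg_G(\xi_{[0,t)})$, then restrict to $f^{-1}([t,1])$ and join over $t$. The restriction-versus-join bookkeeping you flag is handled exactly as you describe and poses no real difficulty.
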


\begin{proof}
(i). For any set $C$ we have $C \cap f^{-1}([0, t)) = \bigcup_{t > q \in \Q} C \cap f^{-1}([0, q))$. So
\begin{align*}
\salg_G(\xi_{[0,t)}) \res f^{-1}([t, 1]) & \subseteq \bigvee_{t > q \in \Q} \salg_G(\xi_{[0,q)}) \res f^{-1}([t, 1])\\
 & \subseteq \cF \vee \bigvee_{t > q \in \Q} \salg_G(\xi_{[0,q)}) \res f^{-1}([q, 1]).
\end{align*}
Thus in the definition of $\sP_\xi$ one may take the join over $[0,1] \cap \Q$ rather than $[0, 1]$.

(ii). Since $f$ is $\cF$-measurable, we can write $\sP_\xi = \bigvee_{t \in [0, 1]} (\cF \vee \salg_G(\xi_{[0,t)})) \res f^{-1}([t,1])$. By Lemma \ref{lem:expmove} $\cF \vee \salg_G(\xi_{[0,t)})$ is $T$-invariant. Also $f$ is $T$-invariant, so the claim follows.

(iii). The $\sigma$-algebra $\cF \vee \salg_G(\xi)$ is $G$ and $T$ invariant by Lemma \ref{lem:expmove}, and it contains the sets $f^{-1}([t,1])$ and the partitions $\xi_{[0, t)}$. So the claim is immediate.

(iv). Fix $t$. Using Lemma \ref{lem:expmove} and the $T$-invariance of $f$ we get $\salg_T(\xi) \res f^{-1}([0, t)) = \salg_T(\xi_{[0,t)}) \subseteq \cF \vee \salg_G(\xi_{[0,t)})$. Next, consider $\sP_\xi$. If $s \geq t$ then $(\salg_G(\xi_{[0,s)}) \res f^{-1}([s, 1])) \res f^{-1}([0, t)) \subseteq \cF$. On the other hand, when $s < t$ we have $\salg_G(\xi_{[0,s)}) \subseteq \cF \vee \salg_G(\xi_{[0,t)})$. This establishes the claim.

(v). $\beta_{[0, t)} = \beta \res f^{-1}([0, t))$ is contained in $\cF \vee \salg_G(\xi_{[0,t)})$ by (iv). So $\salg_G(\beta_{[0,t)}) \subseteq \cF \vee \salg_G(\xi_{[0, t)})$. Now take restrictions to $f^{-1}([t, 1])$ and join over $t \in [0, 1]$.
\end{proof}

For $x \in X$ we define a quasi-order on $G$ by setting $u \leq_x v$ if either $f(u^{-1} \cdot x) < f(v^{-1} \cdot x)$ or $f(u^{-1} \cdot x) = f(v^{-1} \cdot x)$ and there is $m \geq 0$ with $T^{-m}(u^{-1} \cdot x) = v^{-1} \cdot x$. When $x$ has trivial stabilizer $\leq_x$ is a total order on $G$. We write $u <_x v$ when $u \leq_x v$ but $\neg (v \leq_x u)$. Its a simple consequence of Lemmas \ref{lem:expgroup} and \ref{lem:eqtest} that for every $u, v \in G$ the set $\{x \in X : u <_x v\}$ is $\cF$-measurable.

\begin{lem} \label{lem:order}
For all $u \neq v \in G$ and every $D \in \cF$ with $D \subseteq \{x \in X : u <_x v\}$ we have
$$\Big[ u \cdot \Big( \textstyle{\bigvee_{k \leq 0} T^k(\xi)} \vee \sP_\xi \Big) \Big] \res D \subseteq v \cdot \Big( \textstyle{\bigvee_{k < 0} T^k(\xi)} \vee \sP_\xi \Big).$$
\end{lem}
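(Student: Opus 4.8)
The plan is to exploit the key geometric fact that lies behind the definition of the external past: if $u <_x v$ then the $T$-orbit of $u^{-1}\cdot x$ either sits on a strictly smaller $f$-level than that of $v^{-1}\cdot x$, or it sits on the same level but strictly earlier along the $T$-ordering. In either case, the entire ``past at $u^{-1}\cdot x$'' (meaning $\bigvee_{k\leq 0}T^k(\xi)$ together with the external past $\sP_\xi$, read at $u^{-1}\cdot x$) should be recoverable from the ``strict past at $v^{-1}\cdot x$''. Since the hypothesis $D\in\cF$ with $D\subseteq\{x:u<_xv\}$ lets us work locally over a $\cF$-measurable (hence $G$- and $T$-behaved) region, the first step is to reduce to two cases according to whether $f(u^{-1}\cdot x)<f(v^{-1}\cdot x)$ or $f(u^{-1}\cdot x)=f(v^{-1}\cdot x)$ with $T^{-m}(u^{-1}\cdot x)=v^{-1}\cdot x$ for some $m\geq 1$; because the sets $\{x: f(u^{-1}\cdot x)<f(v^{-1}\cdot x)\}$ and $\{x:\ u<_x v,\ f(u^{-1}\cdot x)=f(v^{-1}\cdot x)\}$ are $\cF$-measurable (by Lemmas \ref{lem:expgroup} and \ref{lem:eqtest} and the $\cF$-measurability of $f$), I may intersect $D$ with each and treat them separately.

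In the ``different level'' case, I would argue as follows. Translating the defining join of $\sP_\xi$ by $u$ and restricting to $D$, one sees that $u\cdot\sP_\xi$ restricted to $D$ is built from $\cF$ and from $\salg_G(\xi_{[0,t)})$ restricted to $u\cdot f^{-1}([t,1])=f^{-1}([t,1])$ (using $G$-invariance of $f$), while $u\cdot\bigvee_{k\leq 0}T^k(\xi)$ restricted to $D$ equals $\salg_T(\xi)$ restricted to the half-line $f^{-1}(\{s\})$ below the $T$-future of $u^{-1}\cdot x$. The point is that on $D$ the level $s:=f(u^{-1}\cdot x)$ satisfies $s<t_0:=f(v^{-1}\cdot x)$; hence both of these pieces are visible ``strictly before'' level $t_0$, i.e. they lie inside $\salg_G(\xi_{[0,t_0)})\vee\cF$, and after translating everything by $v$ and restricting appropriately this is exactly a piece of $v\cdot\sP_\xi$ (which contains $\salg_G(\xi_{[0,t_0)})\res f^{-1}([t_0,1])$ and $\cF$). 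Here I will lean on Lemma \ref{lem:allpast}(iii),(iv), which already package the inclusions $\salg_T(\xi)\vee\sP_\xi\subseteq\cF\vee\salg_G(\xi)$ and its localization below level $t$. In the ``same level'' case, the relevant fact is that $T^{-m}(u^{-1}\cdot x)=v^{-1}\cdot x$ with $m\geq 1$, so each $u\cdot T^k(\xi)$ for $k\leq 0$ equals $v\cdot T^{k-m}(\xi)$ with $k-m\leq -1$, giving $u\cdot\bigvee_{k\leq 0}T^k(\xi)\res D\subseteq v\cdot\bigvee_{k<0}T^k(\xi)$; and $u\cdot\sP_\xi\res D=v\cdot\sP_\xi\res D$ because $\sP_\xi$ is $T$-invariant (Lemma \ref{lem:allpast}(ii)) and $u^{-1}\cdot x,\ v^{-1}\cdot x$ lie on a common $T$-orbit on $D$, so $u\cdot\sP_\xi$ and $v\cdot\sP_\xi$ are translates of the same $T$-invariant $\sigma$-algebra by group elements that act identically up to a power of $T$ on $D$.

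The main obstacle I anticipate is making the ``locally over $D$'' manipulations of translated $\sigma$-algebras precise: the transformation $T$ need not commute with the $G$-translations $u,v$, and because the action may be non-free the element of $G$ witnessing a given $T$-power depends on the point. The clean way around this is to only ever use $\cF$-expressibility through the already-proved Lemmas \ref{lem:expmove}, \ref{lem:expgroup}, \ref{lem:eqtest}: these say $\cF\vee\salg_G(\xi_{[0,t)})$ is $T$-invariant, the $\cF$-expressible transformations form a group, and $\{x: T(x)=h\cdot x\}\in\cF$ for each $h$. Combining these with the identities $g\cdot(\Sigma\res Y)=(g\cdot\Sigma)\res(g\cdot Y)$ and the $\cF$-measurability of $D$, every step reduces to a containment of $G$-invariant $\sigma$-algebras restricted to $\cF$-sets, which is exactly the form handled by Lemma \ref{lem:allpast}. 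I would therefore present the argument as: (1) split $D$ into the two $\cF$-measurable sub-cases; (2) on each, rewrite both sides using $G$-equivariance of $f$ and the translate-of-restriction identity; (3) invoke Lemma \ref{lem:allpast}(ii)--(iv) to conclude the desired inclusion.
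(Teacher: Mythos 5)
Your plan follows the paper's proof almost exactly: the same two-case decomposition of $D$ (same $T$-orbit with a shift $m \geq 1$, versus strictly smaller $f$-level), the same use of the $T$-invariance of $\sP_\xi$ together with the reindexing $T^k \mapsto T^{k-m}$ in the first case, and the same appeal to Lemma \ref{lem:allpast}(iv) followed by translation by $v$ in the second. Two details need repair. First, the parenthetical claim ``$u\cdot f^{-1}([t,1])=f^{-1}([t,1])$ (using $G$-invariance of $f$)'' is false: $f$ is only $T$-invariant and $\cF$-measurable, and it is emphatically not $G$-invariant (its whole purpose is to separate distinct $T$-orbits inside a single $G$-orbit). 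Fortunately the identity is not needed: $u \cdot f^{-1}([t,1])$ is still $\cF$-measurable, which is all your subsequent containments use. Second, in the different-level case the threshold $t_0 = f(v^{-1}\cdot x)$ varies with $x$, so before invoking Lemma \ref{lem:allpast}(iv) you must split that piece of $D$ further into the countably many $\cF$-measurable sets $D_q = \{x \in D : f(u^{-1}\cdot x) < q \leq f(v^{-1}\cdot x)\}$, $q \in \Q \cap [0,1]$, exactly as the paper does; on $D_q$ one has $u^{-1}\cdot D_q \subseteq f^{-1}([0,q))$ and $v^{-1}\cdot D_q \subseteq f^{-1}([q,1])$, and the argument you outline (apply Lemma \ref{lem:allpast}(iv) at level $q$, translate by $v^{-1}u$, absorb into $\sP_\xi$) then goes through verbatim. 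With these two corrections your proposal coincides with the paper's proof.
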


\begin{proof}
$D$ is the union of the sets
\begin{equation*}
\begin{array}{cclc}
D_m & = & \{x \in D : u^{-1} \cdot x = T^m(v^{-1} \cdot x)\} \qquad & m \geq 1\\
D_q & = & \{x \in D : f(u^{-1} \cdot x) < q \leq f(v^{-1} \cdot x)\} \qquad & q \in \Q \cap [0, 1].
\end{array}
\end{equation*}
So it suffices to prove the claim for each $D_m$ and $D_q$.

First consider $D_m$. Note $D_m \in \cF$ by Lemma \ref{lem:eqtest}. For any set $A$ we have
\begin{align*}
(u \cdot A) \cap D_m & = \{x \in D_m : u^{-1} \cdot x \in A\}\\
 & = \{x \in D_m : T^m(v^{-1} \cdot x) \in A\} = (v \cdot T^{-m}(A)) \cap D_m.
\end{align*}
Since $\sP_\xi$ is $T$-invariant, it immediately follows that
\begin{align*}
\Big[ u \cdot \Big( \textstyle{\bigvee_{k \leq 0} T^k(\xi)} \vee \sP_\xi \Big) \Big] \res D_m & = \Big[ v \cdot \Big( \textstyle{\bigvee_{k \leq 0} T^{k-m}(\xi)} \vee \sP_\xi \Big) \Big] \res D_m\\
 & \subseteq v \cdot \Big( \textstyle{\bigvee_{k < 0} T^k(\xi)} \vee \sP_\xi \Big).
\end{align*}

Now consider $D_q$. Again note $D_q \in \cF$. Since $u^{-1} \cdot D_q \subseteq f^{-1}([0, q))$, from Lemma \ref{lem:allpast}.(iv) we get
$$\Big( \textstyle{\bigvee_{k \leq 0} T^k(\xi)} \vee \sP_\xi \Big) \res u^{-1} \cdot D_q \subseteq \Big( \cF \vee \salg_G(\xi_{[0,q)}) \Big) \res u^{-1} \cdot D_q.$$
By multiplying throughout by $v^{-1} u$ and using the fact that $v^{-1} \cdot D_q \subseteq f^{-1}([q, 1])$, we obtain
\begin{align*}
\Big[ v^{-1} u \cdot \Big( \textstyle{\bigvee_{k \leq 0} T^k(\xi)} \vee \sP_\xi \Big) \Big] \res v^{-1} \cdot D_q & \subseteq \Big[ v^{-1} u \cdot \Big( \cF \vee \salg_G(\xi_{[0, q)}) \Big) \Big] \res v^{-1} \cdot D_q\\
 & \subseteq \Big( \cF \vee \salg_G(\xi_{[0,q)}) \Big) \res f^{-1}([q, 1])\\
 & \subseteq \sP_\xi.\qedhere
\end{align*}
\end{proof}

In analogy with the earlier Lemma \ref{lem:useS}, we relate $T$-Bernoullicity over $\sP_\xi$ with $G$-Bernoullicity over $\cF$. Below we write $\sinv_T$ for the $\sigma$-algebra of $T$-invariant Borel sets.

\begin{lem} \label{lem:useT}
Let $G \acts (X, \mu)$ be an aperiodic {\pmp} action, and let $\cF$ be a $G$-invariant class-bijective sub-$\sigma$-algebra. Let $T \in [E_G^X]$ and $f : X \rightarrow [0, 1]$ be as given by Corollary \ref{cor:slice}.
\begin{enumerate}
\item[\rm (1)] Let $\alpha$ be a partition of $X$. If $\alpha$ is $G$-Bernoulli over $\cF$ then $\alpha$ is $T$-Bernoulli over $\sP_\alpha$.
\item[\rm (2)] Assume that $\sinv_T \subseteq \cF$. Let $\xi$ be a partition of $X$ and let $\beta \subseteq \salg_T(\xi) \vee \sP_\xi$ be a partition. If $\beta$ is $T$-weakly Bernoulli over $\sinv_T$ and $\salg_T(\beta)$ is independent with $\sP_\xi$ relative to $\sinv_T$, then $\beta$ is $G$-weakly Bernoulli over $\cF$.
\end{enumerate}
\end{lem}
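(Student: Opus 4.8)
The plan is to establish both parts by transferring the relevant independence statements between the $T$-action and the $G$-action using the external past $\sP_\xi$ as the intermediary, exploiting the key structural facts from Lemma~\ref{lem:allpast} and the order lemma (Lemma~\ref{lem:order}). For part (1), I would mimic the strategy of Lemma~\ref{lem:useS}(iii),(iv): first note that, since $\cF$ is class-bijective, the $G$-stabilizer is $\cF$-measurable, so $\sP_\alpha \supseteq \cF$ also makes the stabilizer measurable; hence by Lemma~\ref{lem:wsbern} it suffices to show $\alpha$ is independent with $\sP_\alpha$ and is $T$-weakly Bernoulli over $\sP_\alpha$, i.e.\ (working over the disintegration with respect to $\sP_\alpha$, or rather over $\cF$ and then pushing forward) that for every $n \ge 1$ the partitions $T^{-k}(\alpha)$, $0 \le k < n$, together with $\sP_\alpha$ are mutually independent in the appropriate relative sense. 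The crucial point is the identity $\salg_T(\alpha) \vee \sP_\alpha \subseteq \cF \vee \salg_G(\alpha)$ from Lemma~\ref{lem:allpast}(iii) and, more importantly, the order lemma: on the $\cF$-measurable set where $u <_x v$, the "$u$-translate of the non-positive $T$-past of $\alpha$ together with $\sP_\alpha$" is contained in the "$v$-translate of the strictly-negative $T$-past of $\alpha$ together with $\sP_\alpha$". Iterating this down a $\leq_x$-increasing chain of group elements $g_0 <_x g_1 <_x \cdots$ realizing the successive $T$-shifts, one expresses each $T^{-k}(\alpha)$ (relative to the past already accumulated) as sitting inside a single $G$-translate $g_k \cdot \alpha$ which is independent of the $\sigma$-algebra generated by the earlier $G$-translates $g_j \cdot \alpha$, $j<k$ — and this last independence is exactly what the $G$-Bernoulli hypothesis on $\alpha$ (over $\cF$, fiberwise over the stabilizer) provides, since the relevant $g_j$ map injectively into $G/\Gamma$ along the orbit in question. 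Assembling these telescoping independences yields that the $T$-translates of $\alpha$ are mutually independent relative to $\sP_\alpha$, and that $\salg_T(\alpha) \vee \sP_\alpha$ is built from $G$-translates all independent of $\cF$, giving independence of $\alpha$ with $\sP_\alpha$; conclude via Lemma~\ref{lem:wsbern}.

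For part (2), the direction is reversed and the argument is in the spirit of Lemma~\ref{lem:useS}(iv). I would first use Lemma~\ref{lem:allpast}(iii) to put $\beta \subseteq \salg_T(\xi) \vee \sP_\xi \subseteq \cF \vee \salg_G(\xi)$, and Lemma~\ref{lem:allpast}(v) to get $\sP_\beta \subseteq \sP_\xi$. Since $\sinv_T \subseteq \cF$, the hypotheses say $\beta$ is $T$-weakly Bernoulli over $\sinv_T$ and $\salg_T(\beta)$ is independent of $\sP_\xi$ relative to $\sinv_T$; I want to upgrade this to: the $T$-translates of $\beta$ together with $\sP_\xi$ are mutually independent relative to $\sinv_T$ (this follows by combining the two hypotheses, since $T^k(\beta) \subseteq \salg_T(\beta)$ for all $k$ and $\sP_\xi$ is $T$-invariant). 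Now fix a $G$-orbit (more precisely work $\mu_y$-a.e.\ over the factor attached to $\cF \vee \sinv_T = \cF$, noting $\sinv_T\subseteq\cF$) and a finite $W \subseteq G$ mapping injectively into $G/\Gamma$. Following the bookkeeping in the proof of Lemma~\ref{lem:useS}(iv): partition $W$ according to which $T$-orbit (equivalently, which fiber of $f$ within the orbit) each $w\cdot y$ lands in, say $W = \bigsqcup_i V_i$. For each $i$, the elements of $V_i$ all correspond to powers of $T$ along a single $T$-orbit, and using $\beta \subseteq \salg_T(\xi) \vee \sP_\xi$ the $\sigma$-algebra $\bigvee_{w \in V_i} w^{-1}\cdot \beta$ is contained in the $G$-translate of $\salg_T(\xi) \vee \sP_\xi$ attached to that orbit; the order lemma (Lemma~\ref{lem:order}) shows these "blocks" for distinct $i$ are nested in the external-past direction, hence — given the mutual independence of the $T$-translates of $\beta$ over $\sP_\xi$ together with the independence of $\salg_T(\beta)$ from $\sP_\xi$, all relative to $\sinv_T$ — the blocks $\bigvee_{w\in V_i} w^{-1}\cdot\beta$ are mutually $\mu_y$-independent, and within each block the $w^{-1}\cdot\beta$, $w\in V_i$, are mutually $\mu_y$-independent. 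Concatenating gives that the $W^{-1}$-translates of $\beta$ are mutually $\mu_y$-independent for $\theta$-a.e.\ $\Gamma$; since the $G$-stabilizer is $\cF$-measurable, condition (a) of Definition~\ref{defn:wbover} (independence of $\beta$ with the stabilizer map relative to $\cF$) is automatic, so $\beta$ is $G$-weakly Bernoulli over $\cF$.

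The main obstacle I anticipate is the careful verification in both parts that the order lemma can be iterated correctly to handle an \emph{arbitrary finite window} of $T$-translates while keeping track of exactly which $\cF$-measurable piece of the space one is on: the quasi-order $\leq_x$ depends on $x$, the group elements $g_k$ realizing $T^{-k}$ depend on the $\cF$-fiber, and the injectivity of the relevant $g_k$ into $G/\Gamma$ must be checked orbit-by-orbit. Concretely, one has to decompose $X$ into countably many $\cF$-measurable pieces on each of which all the relevant "$u <_x v$" relations are decided and the cocycle values $w(k)$ implementing $T^k$ are constant, then apply Lemma~\ref{lem:order} repeatedly on each such piece and integrate back — this is the same bookkeeping that appears in Lemma~\ref{lem:useS} but complicated by the external-past $\sigma$-algebra rather than a single invariant $\sigma$-algebra $\cF$. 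Once that bookkeeping is set up cleanly, both parts reduce to invoking the $G$-Bernoulli (resp.\ $T$-Bernoulli) hypothesis fiberwise and then applying Lemma~\ref{lem:wsbern} or Definition~\ref{defn:wbover} directly.
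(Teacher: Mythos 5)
Your overall strategy is sound and uses the same ingredients as the paper (the structure of $\sP$ from Lemma \ref{lem:allpast}, the order lemma, fiberwise arguments over the $\cF$-factor, and the reduction via Lemma \ref{lem:wsbern}/\ref{lem:simpbern}), but the organization differs from the paper's in both parts, so a comparison is worthwhile. For (1), the paper does not iterate the order lemma at all: it first invokes Lemma \ref{lem:useS}(iii) to get that $\alpha$ is $T$-Bernoulli over $\cF$, so the only thing left to check is that $\salg_T(\alpha)$ is independent of $\sP_\alpha$ relative to $\cF$; this follows in one step from the observation that, on the $\mu_y$-fiber over the $\cF$-factor, $\sP_\alpha$ coincides with $\alpha^{\{g \,:\, f(g\cdot y)<f(y)\}}$, an index set disjoint in the orbit from the group elements realizing any finite $T$-window, so the $G$-Bernoulli hypothesis applies directly to the combined finite family. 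Your telescoping down a $\leq_x$-chain proves the same thing with more bookkeeping; it is not wrong, but the fiberwise identification of $\sP_\alpha$ as a join of $G$-translates is the load-bearing fact either way, and you should state it explicitly rather than route everything through Lemma \ref{lem:order}. For (2), the paper avoids the two-level block decomposition entirely: it orders all of $W$ linearly by $\leq_y$, sets $P_i=\{w_j^{-1}: j<i\}$, and applies the order lemma to $\beta$ and $\sP_\beta$ (not to $\xi$ and $\sP_\xi$) to get $\beta^{P_i}\subseteq w_i\cdot(\bigvee_{k<0}T^k(\beta)\vee\sP_\beta)$, then telescopes using the single independence statement ``$\beta$ is independent of $\bigvee_{k<0}T^k(\beta)\vee\sP_\beta$ relative to $\cF$.'' Your block version can be made to work, but note two points you gloss over: (a) the order lemma's same-$T$-orbit case only controls the \emph{non-positive} $T$-past, so containments of whole blocks must be routed through $\beta$ and $\sP_\beta$ (via Lemma \ref{lem:allpast}(v)) rather than through $\salg_T(\xi)\vee\sP_\xi$, since $\salg_T(\xi)$ contains future translates of $\xi$; and (b) the hypotheses give independence relative to $\sinv_T$, while the fiberwise argument needs $\mu_y$-independence over the $\cF$-factor — this upgrade uses $\sinv_T\subseteq\cF\subseteq\sP_\beta$ together with the standard conditional-independence chain rule (as well as Lemma \ref{lem:nonergb} to pass to $T$-ergodic components), and it should be carried out explicitly before translating by $g\in G$, since $\sinv_T$ itself is not $G$-invariant while $\cF$ is.
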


\begin{proof}
Let $G \acts (Y, \nu)$ be the factor associated with $\cF$, and let $\mu = \int \mu_y \ d \nu(y)$ be the disintegration of $\mu$ over $\nu$. Since $\cF$ is class-bijective we have that $\Stab : X \rightarrow \Sub(G)$ is $\cF$-measurable and that $\Stab(y) = \Stab(x)$ for $\mu_y$-almost-every $x \in X$. Also note that $T$ and $f$ both descend to $Y$ since they are $\cF$-expressible and $\cF$-measurable, respectively.

(1). Lemma \ref{lem:useS}.(iii) tells us that $\alpha$ is $T$-Bernoulli over $\cF$. By Lemma \ref{lem:simpbern}, this means the $T$-translates of $\alpha$ are mutually independent and $\salg_T(\alpha)$ is independent with $\cF$. Lemmas \ref{lem:ind} and \ref{lem:simpbern} imply that it will be sufficient to check that $\salg_T(\alpha)$ is independent with $\sP_\alpha$ relative to $\cF$.

Let us express $\sP_\alpha$ as
$$\sP_\alpha = \cF \vee \bigvee_{t \in [0, 1]} \textstyle  \left( \bigvee_{g \in G} g \cdot \alpha_{[0, t)} \right) \res f^{-1}([t, 1]).$$
Notice that $f^{-1}([t, 1])$ is $\mu_y$-conull when $t \leq f(y)$ and otherwise is $\mu_y$-null. Similarly $g \cdot \alpha_{[0, t)}$ equals $g \cdot \alpha$ mod $\mu_y$-null sets when $f(g^{-1} \cdot y) < t$ and is $\mu_y$-trivial otherwise. Thus $\sP_\alpha = \alpha^{\{g \in G : f(g \cdot y) < f(y)\}}$ mod $\mu_y$-null sets.

Let $Y_0$ be the set of $y \in Y$ that have the property that for every finite $W \subseteq G$ that maps injectively to $W \cdot y$, the $W^{-1}$-translates of $\alpha$ are mutually $\mu_y$-independent. Note $\nu(Y_0) = 1$. Fix $y \in Y_0$. Let $P \subseteq \{g \in G : f(g \cdot y) < f(y)\}$ and $K \subseteq \Z$ be finite sets. By Lemma \ref{lem:expgroup} each $T^k$ is $\cF$-expressible, hence there is a $W \subseteq G$ with $W \cdot y = \{T^k(y) : k \in K\}$. In particular $\alpha^W = \bigvee_{k \in K} T^{-k}(\alpha)$ mod $\mu_y$-null sets. Since for every $w \in W$ there is a $k \in K$ with $f(w \cdot y) = f(T^k(y)) = f(y)$, we see that $W \cdot y \cap P \cdot y = \varnothing$. Therefore the partitions $\bigvee_{k \in K} T^{-k}(\alpha) = \alpha^W$ and $\alpha^P$ are $\mu_y$-independent. We conclude that $\salg_T(\alpha)$ is independent with $\sP_\alpha$ relative to $\cF$, completing the proof of (1).

(2). Since the stabilizer map is $\cF$-measurable, it is trivially true that $\beta$ is independent with $\Stab^{-1}(\Borel(\Sub(G)))$ relative to $\cF$. For the same reason, the second condition in Definition \ref{defn:wbover} becomes equivalent to the statement: for $\nu$-almost-every $y \in Y$ and every finite $W \subseteq G$ that maps injectively to $W^{-1} \cdot y$, the $W$-translates of $\beta$ are mutually $\mu_y$-independent.

From Lemma \ref{lem:allpast}.(v) we see that $\salg_T(\beta)$ is independent with $\sP_\beta$ relative to $\sinv_T$. Combined with our other assumption on $\beta$ and Lemma \ref{lem:nonergb}, we get that for almost-every $T$-ergodic component $\nu$ of $\mu$, $\beta$ is $(T, \nu)$-Bernoulli over $\sP_\beta$, meaning that the $T$-translates of $\beta$ are mutually $\nu$-independent and $\salg_T(\beta)$ is $\nu$-independent with $\sP_\beta$. In particular, $\beta$ is $\nu$-independent with $\bigvee_{k < 0} T^k(\beta) \vee \sP_\beta$ for almost-every $T$-ergodic component $\nu$ of $\mu$. Equivalently, $\beta$ is $\mu$-independent with $\bigvee_{k < 0} T^k(\beta) \vee \sP_\beta$ relative to $\sinv_T$. Since $\sinv_T \subseteq \cF$, $\beta$ is in fact $\mu$-independent with $\bigvee_{k < 0} T^k(\beta) \vee \sP_\beta$ relative to $\cF$. Noting the $G$-invariance of $\mu$ and $\cF$, we conclude that for every $g \in G$, $g \cdot \beta$ is $\mu$-independent with $g \cdot (\bigvee_{k < 0} T^k(\beta) \vee \sP_\beta)$ relative to $\cF$.

Let $Y_0$ be the set of $y \in Y$ such that for all $g \in G$ the partition $g \cdot \beta$ is $\mu_y$-independent with $g \cdot (\bigvee_{k < 0} T^k(\beta) \vee \sP_\beta)$. By the previous paragraph $\nu(Y_0) = 1$. Now fix $y \in Y_0$ and fix a finite set $W \subseteq G$ that maps injectively to $W^{-1} \cdot y$. Recall the quasi-ordering map $x \mapsto \leq_x$ described above. This map is $\cF$-measurable and hence constant $\mu_y$-almost-everywhere. Let's denote this constant by $\leq_y$. Since $|W^{-1} \cdot y| = |W| < \infty$, $\leq_y$ restricts to a total order on $W$, and we may enumerate the elements of $W$ as $w_1, \ldots, w_n$ where $w_j <_y w_i$ for all $j < i$. Set $P_i = \{w_j^{-1} : j < i\}$. By Lemma \ref{lem:order} we have, modulo $\mu_y$-null sets,
$$\beta^{P_i} \subseteq w_i \cdot \Big( \textstyle{\bigvee_{k < 0}} T^k(\beta) \vee \sP_\beta \Big).$$
The $\sigma$-algebra on the right is $\mu_y$-independent with $w_i \cdot \beta$. So $w_i \cdot \beta$ is $\mu_y$-independent with $\beta^{P_i}$. This holds for every $i$, so the $W$-translates of $\beta$ are mutually $\mu_y$-independent.
\end{proof}

Finally, we present a crucial lemma relating the entropies of the $G$-action and the $T$-action. In the author's opinion, this lemma represents the primary way that Rokhlin entropy is invoked in the proof of the main theorem.

\begin{lem} \label{lem:sent}
If $\xi$ is a countable partition with $\sH(\xi \given \cF) < \infty$, then
$$\rh_G(\xi \given \cF) \leq \ksh_T(\xi \given \sP_\xi).$$
\end{lem}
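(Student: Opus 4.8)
The plan is to prove the inequality by exhibiting, for every $\epsilon > 0$, a countable partition $\alpha$ that is admissible in the infimum defining $\rh_G(\xi \mid \cF)$ — i.e. $\xi \subseteq \salg_G(\alpha) \vee \cF \vee \sinv_G$ — and which satisfies $\sH_\mu(\alpha \mid \cF \vee \sinv_G) < \ksh_T(\xi \mid \sP_\xi) + \epsilon$. The whole point of introducing $T$ and $f$ is that the action of $T$ is an action of $\Z$, hence amenable, so the classical toolkit (the Rokhlin lemma and the Shannon--McMillan--Breiman theorem) is available, and that $\sP_\xi$ is a $T$-invariant sub-$\sigma$-algebra that is countably generated and contains $\cF$ (Lemma \ref{lem:allpast}(i),(ii)), with respect to which $T$ remains expressible because $\cF \subseteq \sP_\xi$. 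Note also that $\sH(\xi \mid \cF) < \infty$ forces $\ksh_T(\xi \mid \sP_\xi) \le \ksh_T(\xi \mid \cF) < \infty$, so the counting estimates below make sense; and since $\rh_G(\xi \mid \cF) = \rh_G(\salg_G(\xi) \vee \cF \mid \cF)$ it is equivalent to arrange $\salg_G(\xi) \subseteq \salg_G(\alpha) \vee \cF \vee \sinv_G$.

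First I would pass to the $\Z$-action of $T$ on the factor attached to $\sP_\xi$ and run a relative generator construction there. Using a Rokhlin tower $B, TB, \dots, T^{N-1}B$ for this $\Z$-action with $N$ large, $\mu(\bigsqcup_{k} T^k B) > 1 - \epsilon$, and $B$ taken $\cF$-measurable (possible since $T$ is $\cF$-expressible), together with the Shannon--McMillan--Breiman theorem relative to $\sP_\xi$, I would build $\alpha$ by recording at the base $B$ the $\sP_\xi$-conditional \emph{rank} of the length-$N$ column $\xi$-name $(\xi(x), \xi(Tx), \dots, \xi(T^{N-1}x))$ (enumerate the conditional support in order of decreasing conditional probability and record the index), putting the trivial partition on $T^k B$ for $1 \le k < N$, and putting $\xi$ itself on the $\epsilon$-remainder. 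Then $\salg_T(\alpha) \vee \sP_\xi = \salg_T(\xi) \vee \sP_\xi \supseteq \xi$, so by Lemma \ref{lem:expmove} (which gives $\salg_T(\alpha) \subseteq \salg_G(\alpha) \vee \cF$) one gets $\xi \subseteq \salg_G(\alpha) \vee \cF \vee (\text{the part of } \sP_\xi \text{ beyond } \cF)$. The gain from the ranked/SMB encoding, as opposed to a naive one, is that for $\mu$-almost-every $x$ the $\sP_\xi$-conditional law of $\alpha$ at the base is approximately the same — roughly equidistributed on $\approx e^{N \ksh_T(\xi \mid \sP_\xi)}$ values — essentially independently of the $\sP_\xi$-fiber; hence $\alpha$ is nearly independent of $\sP_\xi$ relative to $\cF$, which upgrades the evident estimate on $\sH_\mu(\alpha \mid \sP_\xi)$ to the one we actually need for $\sH_\mu(\alpha \mid \cF \vee \sinv_G)$ after dividing by the tower height.

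Second I would remove the external past from the generation statement. Here one exploits the definition $\sP_\xi = \cF \vee \bigvee_t \big( \salg_G(\xi_{[0,t)}) \res f^{-1}([t,1]) \big)$ together with Lemma \ref{lem:allpast}(iv), performing an induction along the ordering of the $T$-orbits inside a given $G$-orbit induced by $f$ and then $T$: each ``level'' is a single $\Z$-chain on which $\xi$ is recovered from $\alpha$ and the $\xi$-information at strictly lower levels, and since $\salg_G(\alpha) \vee \cF$ is both $G$- and $T$-invariant (Lemma \ref{lem:expmove}) it absorbs the levels one after another, yielding $\salg_G(\xi) \subseteq \salg_G(\alpha) \vee \cF$, hence $\xi \subseteq \salg_G(\alpha) \vee \cF \vee \sinv_G$ as required.

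The hard part is precisely this last step intertwined with the entropy bound: the ordering of $T$-orbits inside a $G$-orbit need not be well-founded (it can be order-dense), so the ``inductive'' decoding of $\xi$ from $\alpha$ and the lower levels is not a literal transfinite recursion and has to be carried out as a measurability/fixed-point argument that leans on the special properties of $T$ and $f$ supplied by Corollary \ref{cor:slice} — that $f$ is $\cF$-measurable, $T$-invariant, and separates the $T$-orbits within each $G$-orbit — which is exactly why the external past was defined as it was; and simultaneously one must keep the encoding ``fresh'' enough (via the SMB/ranking device) that conditioning only on $\cF$, rather than on all of $\sP_\xi$, does not inflate the entropy. Without these structural inputs there is no reason the bound should improve from $\ksh_T(\xi \mid \cF)$ (what a bare application of Lemma \ref{lem:expent} would yield) to the sharper $\ksh_T(\xi \mid \sP_\xi)$, and closing this gap is the substance of the lemma.
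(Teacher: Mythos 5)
Your strategy---construct a single partition $\alpha$ with $\xi \subseteq \salg_G(\alpha) \vee \cF \vee \sinv_G$ and $\sH(\alpha \given \cF \vee \sinv_G) < \ksh_T(\xi \given \sP_\xi) + \epsilon$ via a Rokhlin-tower/SMB coding of the $T$-action relative to $\sP_\xi$---is not the paper's, and its decisive step is exactly the one you flag and then defer. Your coding only yields $\xi \subseteq \salg_T(\alpha) \vee \sP_\xi$, and $\sP_\xi$ is built from $\xi$ itself on strictly lower levels; since the order type of $f(G \cdot x)$ can be dense in itself, the ``inductive decoding'' of $\xi$ from $\alpha$, $\cF$, and lower levels has no base case, and no measurability or fixed-point principle converts the self-referential inclusion $\xi \subseteq \salg_G(\alpha) \vee \cF \vee (\text{information built from } \xi \text{ below})$ into $\xi \subseteq \salg_G(\alpha) \vee \cF \vee \sinv_G$. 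In effect you are attempting to prove something strictly stronger than the lemma (a single near-optimal generator for $\xi$ rel $\cF$), which the paper never establishes and which your argument does not deliver. There is also a problem in your first step: $T$ is necessarily far from ergodic (see the remark after Corollary \ref{cor:slice}), $\sinv_T$ is not assumed to lie in $\cF$ in this lemma, and $\ksh_T(\xi \given \sP_\xi)$ is only the \emph{average} of the fiberwise entropies over $\sinv_T$; the SMB ranking is therefore not ``roughly equidistributed on $e^{N \ksh_T(\xi \given \sP_\xi)}$ values independently of the fiber,'' and conditioning the rank partition only on $\cF$ rather than on $\sinv_T$ can inflate its entropy above that average.

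The paper's proof involves no coding at all and sidesteps the non-well-foundedness entirely. Fix a finite ordered partition $\cQ = \{Q_1, \ldots, Q_n\}$ of $[0,1]$ into intervals and set $Y_i = \bigcup_{j < i} Q_j$. The $\sigma$-algebras $\cF \vee \salg_G(\xi_{Y_i})$ are $G$-invariant and increasing, so countable sub-additivity of Rokhlin entropy gives $\rh_G(\xi \given \cF) \leq \sum_{i=1}^n \rh_G(\xi_{Q_i} \given \cF \vee \salg_G(\xi_{Y_i}))$; each summand is bounded by $\ksh_T(\xi_{Q_i} \given \cF \vee \salg_G(\xi_{Y_i}))$ by Lemma \ref{lem:expent} (applied with the larger $G$-invariant conditioning algebra, over which $T$ is still expressible); and because the sets $f^{-1}(Q_i)$ are $T$-invariant, affineness of Kolmogorov--Sinai entropy reassembles the sum into $\ksh_T(\xi \given \sP_\xi^\cQ)$ for the discretized past $\sP_\xi^\cQ = \cF \vee \bigvee_i \salg_G(\xi_{Y_i}) \res f^{-1}(Q_i)$. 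Taking the infimum over $\cQ$ and using $\sH(\xi \given \cF) < \infty$ yields the claim. Sub-additivity is what replaces your transfinite decoding: each layer is conditioned on the full $G$-$\sigma$-algebra of all strictly lower layers, without ever having to reconstruct those layers from $\alpha$ and $\cF$. To repair your argument you would need to import this mechanism, at which point the coding construction becomes superfluous.
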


\begin{proof}
Pick a finite ordered partition $\cQ = \{Q_i : 1 \leq i \leq n\}$ of $[0, 1]$ into sub-intervals with rational endpoints and with $q < q'$ whenever $j < i$, $q \in Q_j$, $q' \in Q_i$. Set $Y_i = \bigcup_{j < i} Q_j$. Define
$$\sP_\xi^\cQ = \cF \vee \bigvee_{1 \leq i \leq n} \salg_G(\xi_{Y_i}) \res f^{-1}(Q_i).$$
Notice that the sets $f^{-1}(Q_i)$ are $T$-invariant. Let $\mu_i$ denote the normalized restriction of $\mu$ to $f^{-1}(Q_i)$. Since Kolmogorov--{\sinai} entropy is an affine function on the space of $T$-invariant probability measures, we have
\begin{align*}
\sum_{i = 1}^n \ksh_{T,\mu}(\xi_{Q_i} \given \cF \vee \salg_G(\xi_{Y_i})) & = \sum_{i = 1}^n \sum_{j = 1}^n \mu(f^{-1}(Q_j)) \cdot \ksh_{T,\mu_j}(\xi_{Q_i} \given \cF \vee \salg_G(\xi_{Y_i}))\\
 & = \sum_{i = 1}^n \mu(f^{-1}(Q_i)) \cdot \ksh_{T,\mu_i}(\xi_{Q_i} \given \cF \vee \salg_G(\xi_{Y_i}))\\
 & = \sum_{i = 1}^n \mu(f^{-1}(Q_i)) \cdot \ksh_{T,\mu_i}(\xi \given \sP_\xi^\cQ)\\
 & = \ksh_{T,\mu}(\xi \given \sP_\xi^\cQ).
\end{align*}
The $\sigma$-algebras $\cF \vee \salg_G(\xi_{Y_i})$ are $G$-invariant. So by Lemma \ref{lem:expent} and sub-additivity of Rokhlin entropy, we have
\begin{align*}
\rh_{G,\mu}(\xi \given \cF) & \leq \sum_{i = 1}^n \rh_{G,\mu}(\xi_{Q_i} \given \cF \vee \salg_G(\xi_{Y_i}))\\
 & \leq \sum_{i = 1}^n \ksh_{T,\mu}(\xi_{Q_i} \given \cF \vee \salg_G(\xi_{Y_i})) = \ksh_{T,\mu}(\xi \given \sP_\xi^\cQ).
\end{align*}
Since $\sP_\xi$ is the join of the $\sigma$-algebras $\sP_\xi^\cQ$ as $\cQ$ varies and since $\sH(\xi \given \sP_\xi) \leq \sH(\xi \given \cF) < \infty$, we have $\inf_{\cQ} \ksh_T(\xi \given \sP_\xi^\cQ) = \ksh_T(\xi \given \sP_\xi)$. Therefore $\rh_G(\xi \given \cF) \leq \inf_{\cQ} \ksh_T(\xi \given \sP_\xi^\cQ) = \ksh_T(\xi \given \sP_\xi)$.
\end{proof}

\section{The factor theorem} \label{sec:sinai}

Now we present the proof of the main theorem and its variations. We remind the reader that we work with actions that are not necessarily free and build factor maps to Bernoulli shifts that are not necessarily free (see Section \ref{sec:nonfree}).

\begin{thm} \label{thm:main}
For every countably infinite group $G$, every aperiodic ergodic {\pmp} action $G \acts (X, \mu)$, every $G$-invariant sub-$\sigma$-algebra $\Sigma$, and every probability vector $\pv$, if $\sH(\pv) \leq \rh_G(X, \mu \given \Sigma)$ then there is a partition $\alpha$ with $\dist(\alpha) = \pv$ which is $G$-Bernoulli over $\Sigma$.

In fact, every countably infinite group satisfies the relative perturbative factor theorem. Specifically, there is a function $\psd : \R_+^2 \rightarrow \R_+$ such that, for all $G \acts (X, \mu)$, $\Sigma$, and $\pv$ satisfying the assumptions above, if $\sH(\pv) \leq M$ and $\xi$ is a partition satisfying
$$|\dist(\xi) - \pv| + |\sH(\xi) - \sH(\pv)| + |\rh_G(\xi \given \Sigma) - \sH(\pv)| < \psd(M, \epsilon)$$
then the partition $\alpha$ above may be chosen so that $\dB_\mu(\alpha, \xi) < \epsilon$.
\end{thm}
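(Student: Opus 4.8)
The plan is to establish the perturbative statement directly and to deduce the first assertion from it by a routine approximation; throughout, $\psd(M,\epsilon)$ denotes a positive constant to be assembled at the very end from the constants furnished by the lemmas invoked. The idea is to funnel the partition $\xi$ through two elements of the full-group $[E_G^X]$: a ``slicing'' element $T$, which converts the $G$-problem into a (necessarily) non-ergodic $\Z$-problem and is the point at which Rokhlin entropy enters, and an ``ergodic'' element $S$, which is used to pin the distribution down to $\pv$ exactly.

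\emph{Reductions and set-up.} By Lemma~\ref{lem:enlarge}, enlarge $\Sigma$ to a countably generated, class-bijective, $G$-invariant $\cF\supseteq\Sigma$; this decreases $\rh_G(X,\mu\given\cdot)$ and $\rh_G(\xi\given\cdot)$ by at most a tolerance $\epsilon_0$ of our choosing, which we absorb by sliding $\pv$ a distance $O(\epsilon_0)$ down the path of Lemma~\ref{lem:path} (repairing this at the very end). Since a partition that is $G$-Bernoulli over $\cF$ is automatically $G$-Bernoulli over $\Sigma$, it now suffices to build $\alpha$ with $\dist(\alpha)=\pv$, $\alpha$ $G$-Bernoulli over $\cF$, and $\dB_\mu(\alpha,\xi)<\epsilon$ under the deficiency hypothesis. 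Apply Corollary~\ref{cor:slice} to $\cF$ to obtain an aperiodic $\cF$-expressible $T\in[E_G^X]$ and a $T$-invariant, $\cF$-measurable $f:X\to[0,1]$ separating $T$-orbits inside $G$-orbits; after enlarging $\cF$ once more by the $G$-invariant $\sigma$-algebra generated by $\sinv_T$ (which one checks costs zero Rokhlin entropy relative to $\cF$, so all entropy inequalities persist) we may assume $\sinv_T\subseteq\cF$. Apply Lemma~\ref{lem:grabs} to get an aperiodic $\cF$-expressible $S\in[E_G^X]$ with $\mu\res\cF$ being $S$-ergodic. Finally, using the Krieger-type generator theorem for Rokhlin entropy, fix a finite generating partition $\eta\geq\xi$ with $\sH_\mu(\eta\given\cF)$ within $O(\psd)$ of $\rh_G(X,\mu\given\cF)$; being a small refinement of $\xi$, $\eta$ is $\dC_\mu$-close to $\xi$, and since $\eta$ generates, $\rh_G(\eta\given\cF)=\rh_G(X,\mu\given\cF)$.

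\emph{The two stages.} \emph{Stage 1 (extract entropy through $T$):} form the external past $\sP_\eta$ (Section~\ref{sec:extp}); it is countably generated and $T$-invariant (Lemma~\ref{lem:allpast}). Lemma~\ref{lem:sent} gives $\rh_G(X,\mu\given\cF)=\rh_G(\eta\given\cF)\le\ksh_{T,\mu}(\eta\given\sP_\eta)\le\sH_\mu(\eta)$, so the entropy defect $\sH_\mu(\eta)-\ksh_{T,\mu}(\eta\given\sP_\eta)$ is $O(\psd)$. Apply the non-ergodic relative perturbative factor theorem for $\Z$ (Corollary~\ref{cor:sfolk}) to $\langle T\rangle$ acting on the factor $\salg_T(\eta)\vee\sP_\eta$ --- so the output lands in this $T$-invariant $\sigma$-algebra --- relative to $\sP_\eta$, with $\Omega=\{\mu\}$ and partition $\eta$. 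For $M'$ large and $\psd$ small this yields a finite $\beta\subseteq\salg_T(\eta)\vee\sP_\eta$ that is $(T,\mu)$-weakly Bernoulli over $\sinv_T$, with $\salg_T(\beta)$ $\mu$-independent with $\sP_\eta$ relative to $\sinv_T$, and with $\dB_\mu(\beta,\eta)$ and $|\sH_\mu(\beta)-\sH_\mu(\eta)|$ as small as we wish. By Lemma~\ref{lem:useT}(2) (using $\sinv_T\subseteq\cF$), $\beta$ is $G$-weakly Bernoulli over $\cF$; and since $\beta$ is $\dB_\mu$-close, hence $\dC_\mu$-close, to the finite generator $\eta$, the $\dC_\mu$-continuity of $\zeta\mapsto\rh_{G,\mu}(\zeta\given\cF)$ puts $\rh_G(\beta\given\cF)$ within $O(\psd)$ of $\sH(\pv)$. \emph{Stage 2 (fix the distribution through $S$):} by Lemma~\ref{lem:useS}(i)--(ii), $\mathcal G:=\salg_S(\beta)\vee\cF$ carries an aperiodic $\Z$-action generated by $S$ with $\mu\res\mathcal G$ ergodic and $\cF$ an $S$-invariant sub-$\sigma$-algebra, and $\ksh_{S,\mu}(\mathcal G\given\cF)=\ksh_{S,\mu}(\beta\given\cF)\ge\rh_G(\beta\given\cF)$ by Lemma~\ref{lem:expent}, so this equals $\sH(\pv)$ up to $O(\psd)$ (the upper bound being $\ksh_{S,\mu}(\beta\given\cF)\le\sH_\mu(\beta)$). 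Choosing $\pv'$ from $\pv$ by a further $O(\psd)$-slide down the path of Lemma~\ref{lem:path} so that $\sH(\pv')\le\ksh_{S,\mu}(\mathcal G\given\cF)$, apply the classical relative perturbative factor theorem for $\Z$ (Lemma~\ref{lem:zfolk}) to this $\Z$-system over $\cF$ with the partition $\beta$ (whose deficiency relative to $\pv'$ is $O(\psd)$); for $\psd$ small this produces $\alpha\subseteq\mathcal G=\salg_S(\beta)\vee\cF$ that is $S$-Bernoulli over $\cF$ with $\dist(\alpha)=\pv'$ and $\dB_\mu(\alpha,\beta)$ as small as desired. By Lemma~\ref{lem:useS}(iv) (using that $\beta$ is $G$-weakly Bernoulli over $\cF$), $\alpha$ is $G$-Bernoulli over $\cF$, hence over $\Sigma$, and $\dB_\mu(\alpha,\xi)\le\dB_\mu(\alpha,\beta)+\dB_\mu(\beta,\eta)+\dB_\mu(\eta,\xi)<\epsilon$. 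To replace $\pv'$ by $\pv$, iterate the whole construction with probability vectors sliding back up to $\pv$ along the path of Lemma~\ref{lem:path}, at each step anchoring to the previous partition via a generator refining it and invoking the perturbative statement just obtained; this builds a $\dB_\mu$-Cauchy sequence of partitions, each $G$-Bernoulli over $\Sigma$, whose limit has distribution exactly $\pv$ and is $G$-Bernoulli over $\Sigma$. Running the same scheme with the non-perturbative inputs (first statements of Corollary~\ref{cor:sfolk} and Lemma~\ref{lem:zfolk}) proves the first assertion.

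\emph{Expected main obstacle.} The genuinely delicate points are: (i) confirming that the output of Corollary~\ref{cor:sfolk}, computed inside the factor $\salg_T(\eta)\vee\sP_\eta$, retains the Bernoulli and relative-independence properties \emph{with respect to $\sinv_T$ of the ambient space $X$} that Lemma~\ref{lem:useT}(2) requires; (ii) verifying that enlarging $\cF$ to contain $\sinv_T$ is free for Rokhlin entropy; and (iii) organizing all the constants --- the $\upsilon$ of Corollary~\ref{cor:sfolk}, the $\psd_\Z$ of Lemma~\ref{lem:zfolk}, the modulus $\theta$ of Lemma~\ref{lem:path}, and the enlargement tolerance $\epsilon_0$ --- into a single admissible choice of $\psd(M,\epsilon)$. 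The rest is bookkeeping with the lemmas of Sections~2--7.
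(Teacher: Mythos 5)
Your architecture is the paper's: slice via Corollary \ref{cor:slice}, transfer entropy with Lemma \ref{lem:sent}, run Corollary \ref{cor:sfolk} on the $T$-system and pull back with Lemma \ref{lem:useT}, then finish with $S$ from Lemma \ref{lem:grabs}, Lemma \ref{lem:zfolk}, and Lemma \ref{lem:useS}. But there is a genuine gap at your very first move. You replace $\xi$ by a finite \emph{generating} partition $\eta \geq \xi$ with $\sH(\eta \given \cF)$ near $\rh_G(X,\mu \given \cF)$ and assert that $\eta$ is $\dC_\mu$-close to $\xi$. This fails whenever $\rh_G(X,\mu \given \Sigma)$ exceeds $\sH(\pv)$ by a non-negligible amount (the hypotheses pin $\sH(\xi)$ and $\rh_G(\xi \given \Sigma)$ near $\sH(\pv)$ but give only the one-sided bound on $\rh_G(X,\mu\given\Sigma)$): since $\eta$ refines $\xi$, $\dR_\mu(\eta,\xi) = \sH(\eta \given \xi) = \sH(\eta) - \sH(\xi) \geq \rh_G(X,\mu\given\cF) - \sH(\pv) - O(\psd)$, and $\dB_\mu(\eta,\xi)$ is likewise not small (a proper refinement is typically at $\dB$-distance comparable to $1$). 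So the $\beta$ you produce is close to $\eta$, not to $\xi$, and the perturbative conclusion $\dB_\mu(\alpha,\xi)<\epsilon$ is lost. The same flaw recurs each time you ``anchor to the previous partition via a generator refining it.''

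The second gap is the endgame. Feeding $\xi$ itself through $T$ (as the paper does) gives $\ksh_S(\beta \given \cF) = \ksh_T(\xi \given \sP_\xi)$, which can fall slightly short of $\sH(\pv)$, so Lemma \ref{lem:zfolk} only delivers distribution $\pv'$ with $\sH(\pv') < \sH(\pv)$. Your plan to ``slide back up to $\pv$'' by re-invoking the perturbative statement with input $\alpha_k$ does not close: the deficiency of $\alpha_k$ relative to $\pv^{k+1}$ involves $\rh_G(\alpha_k \given \cF)$, and for non-amenable $G$ a Bernoulli partition may satisfy $\rh_G(\alpha_k \given \cF) < \sH(\alpha_k)$ --- precisely the obstruction the paper flags after Theorem \ref{thm:kira} --- so that deficiency need not be small, while anchoring at a generator reintroduces the first problem. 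The paper's fix has two ingredients your outline lacks: (a) an intermediate perturbative claim whose hypothesis is phrased in terms of $\ksh_T(\cdot \given \sP_\cdot)$ rather than $\rh_G(\cdot \given \cF)$, exploiting that Lemma \ref{lem:useT}(1) gives $\ksh_{T}(\alpha_k \given \sP_{\alpha_k}) = \sH(\alpha_k)$ \emph{exactly}; and (b) an entropy reservoir: an auxiliary partition $\zeta$, Bernoulli over $\salg_G(\xi)\vee\cF$ with $\sH(\zeta) = \sH(\pv) - \ksh_T(\xi\given\sP_\xi)$, extracted from $\rh_G(X,\mu \given \salg_G(\xi)\vee\cF)$, together with a decreasing sequence $\cF_n \supseteq \Sigma$ satisfying $\rh_G(\cF_n \given \Sigma) < \sH(\pv) - \sH(\pv^n)$ so the reservoir is provably non-empty at every stage. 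Without these the construction stalls at $\pv'$. (Separately, the non-perturbative statement in the case $\rh_G(X,\mu\given\Sigma)=\infty$ needs its own argument via the inverse-limit formula, since finite generators need not exist there.)
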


\begin{proof}
The proof of the full theorem has many parts which we break into individual claims. We assume throughout that $G \acts (X, \mu)$, $\Sigma$, and $\pv$ have all of the properties assumed in the statement of the theorem.

\begin{claim} \label{claim1}
Let $\cF$ be a $G$-invariant class-bijective sub-$\sigma$-algebra, let $T \in [E_G^X]$ and $f : X \rightarrow [0, 1]$ be $\cF$-expressible and $\cF$-measurable, respectively, as given by Corollary \ref{cor:slice}, and assume that $\sinv_T \subseteq \cF$. If $\xi$ is a partition satisfying $\sH(\xi \given \cF) < \infty$ and $\sH(\pv) \leq \rh_G(\xi \given \cF)$, then there is a partition $\alpha \subseteq \salg_G(\xi) \vee \cF$ with $\dist(\alpha) = \pv$ such that $\alpha$ is $G$-Bernoulli over $\cF$.
\end{claim}

\begin{proof}[Proof of Claim]
Notice that $\sinv_T \subseteq \cF \subseteq \salg_T(\xi) \vee \sP_\xi$. By working with the $T$-action on the factor associated to $\salg_T(\xi) \vee \sP_\xi$ and by applying the non-ergodic, relative version of {\sinai}'s factor theorem (specifically Corollary \ref{cor:sfolk} with $\Omega = \{\mu \res \salg_T(\xi) \vee \sP_\xi\}$), we obtain a partition $\beta \subseteq \salg_T(\xi) \vee \sP_\xi$ such that $\sH(\beta \given \sinv_T) = \ksh_T(\xi \given \sP_\xi)$, $\beta$ is $T$-weakly Bernoulli over $\sinv_T$, and $\salg_T(\beta)$ is independent with $\sP_\xi$ relative to $\sinv_T$. Lemma \ref{lem:useT}.(2) tells us that $\beta$ is $G$-weakly Bernoulli over $\cF$.

Notice that $\sH(\beta \given \cF) = \sH(\beta \given \sinv_T)$ since $\beta$ is independent with $\sP_\xi$ relative to $\sinv_T$ and $\sinv_T \subseteq \cF \subseteq \sP_\xi$. Also notice that Lemma \ref{lem:allpast}.(iii) gives $\beta \subseteq \salg_T(\xi) \vee \sP_\xi \subseteq \salg_G(\xi) \vee \cF$.

Apply Lemma \ref{lem:grabs} to get an aperiodic $\cF$-expressible $S \in [E_G^X]$ so that $\mu \res \cF$ is $S$-ergodic. By Lemma \ref{lem:useS}, $\beta$ is $S$-weakly Bernoulli over $\cF$ and hence $\ksh_S(\beta \given \cF) = \sH(\beta \given \cF)$. In particular, applying Lemma \ref{lem:sent} we obtain
$$\ksh_S(\beta \given \cF) = \sH(\beta \given \cF) = \sH(\beta \given \sinv_T) = \ksh_T(\xi \given \sP_\xi) \geq \rh_G(\xi \given \cF) \geq \sH(\pv).$$
Lemma \ref{lem:useS} tells us that $\mu \res \salg_S(\beta) \vee \cF$ is $S$-ergodic. Now, working with the ergodic $S$-action on the factor associated to $\salg_S(\beta) \vee \cF$ and applying the relative {\sinai} factor theorem (Lemma \ref{lem:zfolk}), we obtain a partition $\alpha \subseteq \salg_S(\beta) \vee \cF$ such that $\dist(\alpha) = \pv$ and such that $\alpha$ is $S$-Bernoulli over $\cF$. By Lemma \ref{lem:useS} $\alpha$ is $G$-Bernoulli over $\cF$. Furthermore, Lemma \ref{lem:expmove} gives $\salg_S(\beta) \vee \cF \subseteq \salg_G(\beta) \vee \cF$ and hence
\begin{equation*}
\alpha \subseteq \salg_S(\beta) \vee \cF \subseteq \salg_G(\beta) \vee \cF \subseteq \salg_G(\xi) \vee \cF.\qedhere
\end{equation*}
\end{proof}

\begin{claim} \label{claim2}
If $\sH(\pv) < \rh_G(X, \mu \given \Sigma) < \infty$, then there is a partition $\alpha$ with $\dist(\alpha) = \pv$ which is $G$-Bernoulli over $\Sigma$.
\end{claim}

\begin{proof}
By definition of Rokhlin entropy, we can find a partition $\xi$ satisfying $\sH(\xi \given \Sigma) < \infty$ and $\salg_G(\xi) \vee \Sigma = \Borel(X)$. It follows that $\sH(\pv) < \rh_G(X, \mu \given \Sigma) = \rh_G(\xi \given \Sigma)$. Next, apply Lemma \ref{lem:enlarge} to obtain a class-bijective $G$-invariant sub-$\sigma$-algebra $\cF' \supseteq \Sigma$ such that $\sH(\pv) < \rh_G(\xi \given \cF')$. Apply Corollary \ref{cor:slice} to get a $\cF'$-expressible $T \in [E_G^X]$ and a $\cF'$-measurable $f : X \rightarrow [0, 1]$. Set $\cF = \cF' \vee \salg_G(\sinv_T)$. For every partition $\gamma \subseteq \sinv_T$, Lemma \ref{lem:expent} gives $\rh_G(\gamma \given \cF') \leq \ksh_T(\gamma \given \cF') = 0$. From sub-additivity of entropy it follows $\rh_G(\cF \given \cF') = 0$ and thus, by sub-additivity again, $\rh_G(\xi \given \cF) = \rh_G(\xi \given \cF') > \sH(\pv)$. As $T$ is $\cF$-expressible and $f$ is $\cF$-measurable, we can apply Claim \ref{claim1} to obtain a partition $\alpha$ with $\dist(\alpha) = \pv$ such that $\alpha$ is $G$-Bernoulli over $\cF$. In particular, $\alpha$ is $G$-Bernoulli over $\Sigma$.
\end{proof}

\begin{claim} \label{claim3}
If $\rh_G(X, \mu \given \Sigma) = \infty$, then there is a partition $\alpha$ with $\dist(\alpha) = \pv$ which is $G$-Bernoulli over $\Sigma$.
\end{claim}

\begin{proof}[Proof of Claim]
Apply Lemma \ref{lem:enlarge} to obtain a $G$-invariant class-bijective sub-$\sigma$-algebra $\cF' \supseteq \Sigma$ with $\rh_G(X, \mu \given \cF') = \infty$. Apply Corollary \ref{cor:slice} to get a $\cF'$-expressible $T \in [E_G^X]$ and a $\cF'$-measurable $f : X \rightarrow [0, 1]$. Set $\cF = \cF' \vee \salg_G(\sinv_T)$. Just as in the proof of Claim \ref{claim2} we have $\rh_G(\cF \given \cF') = 0$ and thus $\rh_G(X, \mu \given \cF) = \infty$. A difficulty in the infinite entropy case stems from a potential, but unconfirmed, possible defect of Rokhlin entropy: it may be that $\sup \{\rh_G(\xi \given \cF) : \sH(\xi \given \cF) < \infty\}$ is finite even though $\rh_G(X, \mu \given \cF) = \infty$. However, the inverse-limit formula for Rokhlin entropy obtained in \cite[Thm. 6.3]{AS} shows that there is $c > 0$ and an increasing sequence $(\xi_n)_{n \in \N}$ of finite partitions with $\xi_0 = \{X\}$ and with $\rh_G(\xi_n \given \salg_G(\xi_{n-1}) \vee \cF) > c$ for all $n \geq 1$.

Pick a probability vector $\qv$ with $0 < \sH(\qv) < c$. For each $n \in \N$ set $\cF_n = \salg_G(\xi_n) \vee \cF$. For every $n \geq 1$, apply Claim \ref{claim1} to $\qv$, $\xi_n$, $\cF_{n-1}$, $T$, and $f$ to obtain a partition $\alpha_n \subseteq \salg_G(\xi_n) \vee \cF_{n-1} = \cF_n$ such that $\dist(\alpha_n) = \qv$ and $\alpha_n$ is $G$-Bernoulli over $\cF_{n-1}$. Since $\Sigma \subseteq \cF_n$ for all $n \in \N$, it follows that that $\bigvee_{n \geq 1} \alpha_n$ is $G$-Bernoulli over $\Sigma$. Since the $\alpha_n$'s are mutually independent and have the same distribution, every class in $\bigvee_{n \geq 1} \alpha_n$ has measure $0$. Therefore there is a coarser partition $\alpha \leq \bigvee_{n \geq 1} \alpha_n$ with $\dist(\alpha) = \pv$. Of course, $\alpha$ is $G$-Bernoulli over $\Sigma$ as well.
\end{proof}

Let $\psd_\Z : \R^2_+ \rightarrow \R_+$ be as described in Lemma \ref{lem:zfolk}. For $M, \epsilon > 0$ set $\epsilon' =  (1/2) \min((1/5) \psd_\Z(M, \epsilon / 4), \ \epsilon / 4)$ and let $\upsilon = \upsilon(M + 1, \epsilon')$ be as described in Corollary \ref{cor:sfolk}. Finally, define $\psd(M, \epsilon) = (1/2) \min(\upsilon / 6, \ \epsilon', \ 1/3)$.

\begin{claim} \label{claim4}
Suppose that $\cF$ is a $G$-invariant class-bijective sub-$\sigma$-algebra, that $T \in [E_G^X]$ and $f : X \rightarrow [0, 1]$ are $\cF$-expressible and $\cF$-measurable, respectively, as given by Corollary \ref{cor:slice}. Also assume that $\sinv_T \subseteq \cF$. Let $M, \epsilon > 0$ and let $\xi$ be a countable partition of $X$. If $\sH(\pv) \leq M$, $\sH(\pv) < \ksh_T(\xi \given \sP_\xi) + \rh_G(X, \mu \given \salg_G(\xi) \vee \cF)$, and
$$|\dist(\xi) - \pv| + |\sH(\xi) - \sH(\pv)| + |\ksh_T(\xi \given \sP_\xi) - \sH(\pv)| < 3 \psd(M, \epsilon),$$
then there is a partition $\alpha$ with $\dB_\mu(\alpha, \xi) < \epsilon / 2$ and $\dist(\alpha) = \pv$ such that $\alpha$ is $G$-Bernoulli over $\cF$. 
\end{claim}

\begin{proof}[Proof of Claim]
We first prepare a partition $\zeta$ which will be needed a bit later in the argument. If $\sH(\pv) \leq \ksh_T(\xi \given \sP_\xi)$ then we set $\zeta = \{X\}$, and otherwise we apply Claim \ref{claim2} or Claim \ref{claim3} to obtain a partition $\zeta$ which is $G$-Bernoulli over $\salg_G(\xi) \vee \cF$ and satisfies
$$\sH(\zeta) = \sH(\pv) - \ksh_T(\xi \given \sP_\xi) < \rh_G(X, \mu \given \salg_G(\xi) \vee \cF).$$
In either case, we have that $\zeta$ is $G$-Bernoulli over $\salg_G(\xi) \vee \cF$ and $\sH(\pv) \leq \ksh_T(\xi \given \sP_\xi) + \sH(\zeta)$.

Observe that $\sH(\xi) \leq \sH(\pv) + 3 \psd(M, \epsilon) \leq M + 1$ and
$$\ksh_T(\xi \given \sP_\xi) > \sH(\pv) - 3 \psd(M, \epsilon) > \sH(\xi) - 6 \psd(M, \epsilon) > \sH(\xi) - \upsilon.$$
By working with the $T$-action on the factor associated with $\salg_T(\xi) \vee \sP_\xi$, and recalling that $\sinv_T \subseteq \cF \subseteq \salg_T(\xi) \vee \sP_\xi$, we may apply the strongest statement in Corollary \ref{cor:sfolk} (with $\Omega = \{\mu \res \salg_T(\xi) \vee \sP_\xi\}$) to obtain a partition $\beta \subseteq \salg_T(\xi) \vee \sP_\xi$ such that $\beta$ is $T$-weakly Bernoulli over $\sinv_T$, $\salg_T(\beta)$ is independent with $\sP_\xi$ relative to $\sinv_T$, $\sH(\beta \given \sinv_T) = \ksh_T(\xi \given \sP_\xi)$, $\dB_\mu(\beta, \xi) < \epsilon'$, and $|\sH(\beta) - \sH(\xi)| < \epsilon'$. In particular,
\begin{equation} \label{eqn:claim4}
|\dist(\beta) - \pv| + |\sH(\beta) - \sH(\pv)| < 2 \epsilon' + |\dist(\xi) - \pv| + |\sH(\xi) - \sH(\pv)|.
\end{equation}
Lemma \ref{lem:useT}.(2) tells us that $\beta$ is $G$-weakly Bernoulli over $\cF$. Also, $\beta \subseteq \salg_G(\xi) \vee \cF$ by Lemma \ref{lem:allpast}.(iii), and therefore $\zeta$ is $G$-Bernoulli over $\salg_G(\beta) \vee \cF$.

Apply Lemma \ref{lem:grabs} to get an aperiodic $\cF$-expressible $S \in [E_G^X]$ so that $\mu \res \cF$ is $S$-ergodic. By Lemma \ref{lem:useS}, $\beta$ is $S$-weakly Bernoulli over $\cF$. As in Claim \ref{claim1} we have
$$\ksh_S(\beta \given \cF) = \sH(\beta \given \cF) = \sH(\beta \given \sinv_T) = \ksh_T(\xi \given \sP_\xi).$$
Combining the above equation, (\ref{eqn:claim4}), and our assumption we obtain
$$|\dist(\beta) - \pv| + |\sH(\beta) - \sH(\pv)| + |\ksh_S(\beta \given \cF) - \sH(\pv)| < 3 \psd(M, \epsilon) + 2 \epsilon' < \psd_\Z(M, \epsilon / 4).$$
Since $\zeta$ is $G$-Bernoulli over $\salg_G(\beta) \vee \cF$, Lemma \ref{lem:useS} implies that $\zeta$ is $S$-Bernoulli over $\salg_G(\beta) \vee \cF$. So we have
$$\ksh_S(\beta \vee \zeta \given \cF) = \ksh_S(\beta \given \cF) + \sH(\zeta) = \ksh_T(\xi \given \sP_\xi) + \sH(\zeta) \geq \sH(\pv).$$
Lemma \ref{lem:useS} also tells us that $\mu \res \salg_S(\beta \vee \zeta) \vee \cF$ is $S$-ergodic. Working within the ergodic $S$-action on the factor associated to $\salg_S(\beta \vee \zeta) \vee \cF$, the two previous inequalities allow us to apply Lemma \ref{lem:zfolk} to the partition $\beta$. This results in a partition $\alpha \subseteq \salg_S(\beta \vee \zeta) \vee \cF$ which is $S$-Bernoulli over $\cF$ and satisfies $\dist(\alpha) = \pv$ and $\dB_\mu(\alpha, \beta) < \epsilon / 4$. Lemma \ref{lem:useS} implies that $\alpha$ is $G$-Bernoulli over $\cF$. Also, $\dB_\mu(\alpha, \xi) \leq \dB_\mu(\alpha, \beta) + \dB_\mu(\beta, \xi) < \epsilon / 2$.
\end{proof}

\begin{claim} \label{claim5}
If $\sH(\pv) \leq M$ and $\xi$ is a partition of $X$ with
$$|\dist(\xi) - \pv| + |\sH(\xi) - \sH(\pv)| + |\rh_G(\xi \given \Sigma) - \sH(\pv)| < \psd(M, \epsilon),$$
then there is a partition $\alpha$ with $\dB_\mu(\alpha, \xi) < \epsilon$ and $\dist(\alpha) = \pv$ such that $\alpha$ is $G$-Bernoulli over $\Sigma$.
\end{claim}

\begin{proof}[Proof of Claim]
Fix a sequence of probability vectors $\pv^n$ satisfying $\sH(\pv^n) < \sH(\pv)$ and
$$|\pv^n - \pv| + 2 |\sH(\pv^n) - \sH(\pv)| < \min\limits_{m \leq n} (1/2) \psd(M, 2^{-m}).$$
Then $\pv^n \rightarrow \pv$ as $n \rightarrow \infty$ and we have
\begin{equation} \label{eqn:claim5a}
|\pv^n - \pv^{n+1}| + 2 |\sH(\pv^n) - \sH(\pv^{n+1})| < \psd(M, 2^{-n}).
\end{equation}

Apply Lemma \ref{lem:enlarge} to obtain a class-bijective $G$-invariant sub-$\sigma$-algebra $\Sigma_1 \supseteq \Sigma$ with $\rh_G(\Sigma_1 \given \Sigma) < \sH(\pv) - \sH(\pv^1)$. Next, by working within the factor associated to $\Sigma_1$, we can apply Lemma \ref{lem:enlarge} to obtain a class-bijective $G$-invariant sub-$\sigma$-algebra $\Sigma_2$ with $\Sigma_1 \supseteq \Sigma_2 \supseteq \Sigma$ and $\rh_G(\Sigma_2 \given \Sigma) < \sH(\pv) - \sH(\pv^2)$. By repeating this process, we obtain a decreasing sequence of class-bijective $G$-invariant sub-$\sigma$-algebras $\Sigma_n$ such that each one contains $\Sigma$ and $\rh_G(\Sigma_n \given \Sigma) < \sH(\pv) - \sH(\pv^n)$. For each $n$, let $T_n \in [E_G^X]$ and $f_n : X \rightarrow [0, 1]$ be $\Sigma_n$-expressible and $\Sigma_n$-measurable, respectively, as given be Corollary \ref{cor:slice}. Set $\cF_n = \Sigma_n \vee \bigvee_{k \geq n} \salg_G(\sinv_{T_k})$ and note that the $\cF_n$'s are decreasing. Additionally, as explained in the proof of Claim \ref{claim2}, $\rh_G(\cF_n \given \Sigma_n) = 0$ and thus sub-additivity of entropy gives $\rh_G(\cF_n \given \Sigma) = \rh_G(\Sigma_n \given \Sigma) < \sH(\pv) - \sH(\pv^n)$. We will write $\sP_\xi^n$ for the external past of $\xi$ obtained from $T_n$, $f_n$, $\cF_n$.

The sub-additivity of entropy implies that for any $\mathcal{C} \subseteq \Borel(X)$
$$0 \leq \rh_G(\mathcal{C} \given \Sigma) - \rh_G(\mathcal{C} \given \cF_n) \leq \rh_G(\cF_n \given \Sigma) < \sH(\pv) - \sH(\pv^n).$$
In particular, $\rh_G(\xi \given \cF_n)$ converges to $\rh_G(\xi \given \Sigma)$, and using $\mathcal{C} = \Borel(X)$ above we get
\begin{equation} \label{eqn:claim5b}
\sH(\pv^n) < \sH(\pv) - \rh_G(X, \mu \given \Sigma) + \rh_G(X, \mu \given \cF_n) \leq \rh_G(X, \mu \given \cF_n).
\end{equation}

Pick $n$ sufficiently large with $2^{-n} < \epsilon / 2$ and
$$|\dist(\xi) - \pv^n| + |\sH(\xi) - \sH(\pv^n)| + |\rh_G(\xi \given \cF_n) - \sH(\pv^n)| < \psd(M, \epsilon).$$
By our assumptions $\sH(\xi \given \cF_n) \leq \sH(\xi) < \infty$, so from Lemma \ref{lem:sent} we obtain
$$\rh_G(\xi \given \cF_n) \leq \ksh_{T_n}(\xi \given \sP_\xi^n) \leq \sH(\xi).$$
As $|\rh_G(\xi \given \cF_n) - \sH(\xi)| < 2 \psd(M, \epsilon)$, we deduce that
$$|\dist(\xi) - \pv^n| + |\sH(\xi) - \sH(\pv^n)| + |\ksh_{T_n}(\xi \given \sP_\xi^n) - \sH(\pv^n)| < 3 \psd(M, \epsilon).$$
Also, from (\ref{eqn:claim5b}) we obtain
\begin{align*}
\sH(\pv^n) < \rh_G(X, \mu \given \cF_n) & \leq \rh_G(\xi \given \cF_n) + \rh_G(X, \mu \given \salg_G(\xi) \vee \cF_n)\\
 & \leq \ksh_{T_n}(\xi \given \sP_\xi^n) + \rh_G(X, \mu \given \salg_G(\xi) \vee \cF_n).
\end{align*}
By Claim \ref{claim4}, there is a partition $\alpha_n$ with $\dist(\alpha_n) = \pv^n$ and $\dB_\mu(\alpha_n, \xi) < \epsilon / 2$ such that $\alpha_n$ is $G$-Bernoulli over $\cF_n$.

Next we construct a sequence of partitions $\alpha_k$, $k \geq n$, such that $\dB_\mu(\alpha_{k+1}, \alpha_k) < 2^{-k-1}$, $\dist(\alpha_k) = \pv^k$, and such that $\alpha_k$ is $G$-Bernoulli over $\cF_k$. Let $k \geq n$ and inductively assume that $\alpha_k$ has been constructed. As $\cF_{k+1} \subseteq \cF_k$, we see that $\alpha_k$ is $G$-Bernoulli over $\cF_{k+1}$ and hence $T_{k+1}$-Bernoulli over $\sP_{\alpha_k}^{k+1}$ by Lemma \ref{lem:useT}.(1). Consequently, $\ksh_{T_{k+1}}(\alpha_k \given \sP_{\alpha_k}^{k+1}) = \sH(\alpha_k)$ and therefore (\ref{eqn:claim5a}) gives
$$|\dist(\alpha_k) - \pv^{k+1}| + |\sH(\alpha_k) - \sH(\pv^{k+1})| + |\ksh_{T_{k+1}}(\alpha_k \given \sP_{\alpha_k}^{k+1}) - \sH(\pv^{k+1})| < \psd(M, 2^{-k}).$$
Additionally, from (\ref{eqn:claim5b}), sub-additivity, and Lemma \ref{lem:sent} we see that
\begin{align*}
\sH(\pv^{k+1}) < \rh_G(X, \mu \given \cF_{k+1}) & \leq \rh_G(\alpha_k \given \cF_{k+1}) + \rh_G(X, \mu \given \salg_G(\alpha_k) \vee \cF_{k+1})\\
 & \leq \ksh_{T_{k+1}}(\alpha_k \given \sP_{\alpha_k}^{k+1}) + \rh_G(X, \mu \given \salg_G(\alpha_k) \vee \cF_{k+1}).
\end{align*}
Therefore the assumptions of Claim \ref{claim4} are met. So there is a partition $\alpha_{k+1}$ with $\dB_\mu(\alpha_{k+1}, \alpha_k) < 2^{-k-1}$ and $\dist(\alpha_{k+1}) = \pv^{k+1}$ and such that $\alpha_{k+1}$ is $G$-Bernoulli over $\cF_{k+1}$. This completes the construction of the $\alpha_k$'s.

Let $\alpha$ be the limit of the $\alpha_k$'s. Then $\dist(\alpha) = \pv$ and
$$\dB_\mu(\alpha, \xi) \leq \sum_{k \geq n} \dB_\mu(\alpha_{k+1}, \alpha_k) + \dB_\mu(\alpha_n, \xi) < 2^{-n} + \epsilon / 2 < \epsilon.$$
Finally, since $\Sigma$ is contained in every $\cF_k$, $\alpha$ is $G$-Bernoulli over $\Sigma$.
\end{proof}

\begin{claim} \label{claim6}
If $\sH(\pv) \leq \rh_G(X, \mu \given \Sigma)$ then there is a partition $\alpha$ which is $G$-Bernoulli over $\Sigma$ and satisfies $\dist(\alpha) = \pv$.
\end{claim}

\begin{proof}[Proof of Claim]
If $\sH(\pv) < \rh_G(X, \mu \given \Sigma)$ or if $\rh_G(X, \mu \given \Sigma) = \infty$, then we are done by Claims \ref{claim2} and \ref{claim3}. So assume $\sH(\pv) = \rh_G(X, \mu \given \Sigma) < \infty$. In this case we follow the proof of Claim \ref{claim5}, changing only the fourth paragraph (in that paragraph $n \in \N$ was chosen and we found a partition $\alpha_n$ with $\dist(\alpha_n) = \pv^n$ that was $G$-Bernoulli over $\cF_n$ and was close to $\xi$). In place of that paragraph, (\ref{eqn:claim5b}) implies that we can apply Claim \ref{claim2} to obtain a partition $\alpha_1$ with $\dist(\alpha_1) = \pv^1$ that is $G$-Bernoulli over $\cF_1$. Continuing the proof of Claim \ref{claim5}, the following paragraph proceeds to build a Cauchy sequence of partitions $\alpha_n$. Clearly the limit of these partitions, $\alpha$, will be $G$-Bernoulli over $\Sigma$ and satisfy $\dist(\alpha) = \pv$.
\end{proof}

This now completes the proof, as the theorem follows from the final two claims.
\end{proof}

\begin{thm}[The non-ergodic, relative, perturbative factor theorem] \label{thm:nepfact}
Let $X$ be a standard Borel space, let $G \acts X$ be an aperiodic Borel action, and let $\Sigma$ be a countably generated $G$-invariant sub-$\sigma$-algebra. Fix $M, \epsilon > 0$ and a countable partition $\xi$ of $X$. If $\nu \mapsto \pv^\nu$ is a Borel map assigning to each $\nu \in \E_G(X)$ a probability vector $\pv^\nu$ satisfying $\sH(\pv^\nu) \leq \rh_G(X, \nu \given \Sigma)$, then there is a Borel partition $\alpha$ so that for every $\nu \in \E_G(X)$: $\dist_\nu(\alpha) = \pv^\nu$, $\alpha$ is $(G, \nu)$-Bernoulli over $\Sigma$, and if
$$\sH(\pv^\nu) \leq M \quad \text{and} \quad |\dist_\nu(\xi) - \pv^\nu| + |\sH_\nu(\xi) - \sH(\pv^\nu)| + |\rh_{G,\nu}(\xi \given \Sigma) - \sH(\pv^\nu)| < \delta(M, \epsilon)$$
then $\dB_\nu(\alpha, \xi) < \epsilon$.
\end{thm}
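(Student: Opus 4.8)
The plan is to carry out the argument proving Theorem \ref{thm:main} in a Borel, ``measure-family'' form indexed by $\E_G(X)$, so that a single Borel partition serves every ergodic component at once. The decisive point, flagged in the remark following Theorem \ref{thm:kira}, is that one cannot run a Kieffer--Rahe style selection directly at the level of the $G$-action: for a genuine $G$-Bernoulli partition $\beta$ one may have $\rh_{G,\nu}(\beta\given\Sigma)<\sH_\nu(\beta)$, so the deficiency $\mathscr D$ is not a valid continuous test for Bernoullicity. The way around this is precisely the device used for Theorem \ref{thm:main}: push everything down to a $\Z$-action coming from an element of the full group, where the deficiency \emph{does} detect Bernoullicity and where a fully Borel, non-ergodic, relative, perturbative factor theorem is already available, namely Theorem \ref{thm:kira} and its specialized form Corollary \ref{cor:sfolk}. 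Thus I would replay each of Claims 1--6 of Theorem \ref{thm:main}, systematically replacing the ergodic $\Z$-factor theorem (Lemma \ref{lem:zfolk}) by Theorem \ref{thm:kira}/Corollary \ref{cor:sfolk}, and replacing ``a single ergodic $\mu$'' by ``the Borel family $(\nu)_{\nu\in\E_G(X)}$''.

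First I would reduce to $\Sigma$ countably generated and, using Lemma \ref{lem:enlarge} (which is already uniform over all of $\M_G(X)$), enlarge to a countably generated class-bijective $G$-invariant $\cF'\supseteq\Sigma$ with $\rh_{G,\nu}(\mathcal C\given\cF')$ within a prescribed error of $\rh_{G,\nu}(\mathcal C\given\Sigma)$ for every $\nu$ and every $\mathcal C$. Apply Corollary \ref{cor:slice} to obtain an $\cF'$-expressible $T\in[E_G^X]$ and an $\cF'$-measurable $f\colon X\to[0,1]$, set $\cF=\cF'\vee\salg_G(\sinv_T)$, and note, exactly as in the proof of Claims 2--3 of Theorem \ref{thm:main} (Lemma \ref{lem:expent} plus sub-additivity of Rokhlin entropy), that $\rh_{G,\nu}(\cdot\given\cF)=\rh_{G,\nu}(\cdot\given\cF')$ for every $\nu$. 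All these objects are Borel and independent of the choice of invariant measure, so the reduction costs nothing in the non-ergodic setting; and by \cite[Cor.~6.5]{AS} the maps $\nu\mapsto\rh_{G,\nu}(\cdot\given\cF)$ are Borel, so the hypothesis $\sH(\pv^\nu)\le\rh_{G,\nu}(X,\nu\given\Sigma)$ transfers to a Borel inequality involving $\cF$. I would also absorb the perturbative $\psd$-bookkeeping into a Cauchy-sequence construction exactly as in Claims 5--6, and split the infinite-entropy case off as in Claim 3, so that the core remaining task is a Borel version of Claim 1/Claim 4 for a fixed finite-entropy partition $\xi$ with $\salg_G(\xi)\vee\cF=\Borel(X)$.

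For that core step I would run the two-transformation argument in Borel form. Use Corollary \ref{cor:sfolk} applied to the $\Z$-action generated by $T$ relative to $\cF$, with $\Omega=\{\nu\res(\salg_T(\xi)\vee\sP_\xi)\colon\nu\in\E_G(X)\}$ (the map $m$ being the $G$-ergodic-decomposition map $e\colon X\to\E_G(X)$, which is $T$-invariant since $T\in[E_G^X]$), and with the target entropies adjusted by a Corollary \ref{cor:avgpath}-type Borel selection so that they match $\ksh_{T,\nu}(\xi\given\sP_\xi)$; this yields a single Borel $\beta\subseteq\salg_T(\xi)\vee\sP_\xi$ that for every $\nu$ is $T$-weakly Bernoulli over $\sinv_T$ with $\salg_T(\beta)$ independent of $\sP_\xi$ relative to $\sinv_T$, and, in the perturbative regime, close to $\xi$ with almost the same Shannon entropy. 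Lemma \ref{lem:useT}(2) then makes $\beta$ $G$-weakly Bernoulli over $\cF$ for every $\nu$, and $\beta\subseteq\salg_G(\xi)\vee\cF$ by Lemma \ref{lem:allpast}(iii). Now invoke Lemma \ref{lem:grabs} to obtain a \emph{single} $\cF$-expressible $S\in[E_G^X]$ with $\nu\res\cF$ $S$-ergodic for \emph{every} $\nu\in\E_G(X)$; by Lemma \ref{lem:useS} applied to each $\nu$, $\beta$ is $S$-weakly Bernoulli over $\cF$, $\nu\res(\salg_S(\beta)\vee\cF)$ is $S$-ergodic, and, via Lemma \ref{lem:sent}, $\ksh_{S,\nu}(\beta\given\cF)=\sH_\nu(\beta\given\sinv_T)=\ksh_{T,\nu}(\xi\given\sP_\xi)\ge\rh_{G,\nu}(\xi\given\cF)\ge\sH(\pv^\nu)$ for every $\nu$. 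Passing to the Borel factor of $X$ associated with $\salg_S(\beta)\vee\cF$ (constructed Borel-ly as in Lemma \ref{lem:borelfactor}), each $\nu\in\E_G(X)$ pushes forward to an $S$-ergodic measure there, so Theorem \ref{thm:kira} applied to that $\Z$-action (relative to the image of $\cF$, with the Borel target assignment $\pv^\nu$, and a closeness-to-$\beta$ requirement in the perturbative case) produces a single Borel partition $\alpha\subseteq\salg_S(\beta)\vee\cF$ with $\dist_\nu(\alpha)=\pv^\nu$, $S$-Bernoulli over $\cF$, and $\dB_\nu(\alpha,\beta)$ small, for every $\nu$. Finally Lemma \ref{lem:useS} upgrades this to: $\alpha$ is $G$-Bernoulli over $\cF$, hence over $\Sigma$, for every $\nu\in\E_G(X)$, and $\dB_\nu(\alpha,\xi)\le\dB_\nu(\alpha,\beta)+\dB_\nu(\beta,\xi)<\epsilon$ on the good set.

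The step I expect to be the main obstacle is the Borel bookkeeping carried out uniformly over $\E_G(X)$ with three equivalence relations simultaneously in play --- that of $G$, that of $T$, and that of $S$ --- and in particular verifying that the relevant invariant measures of $T$ and of $S$ are parametrized Borel-ly by the $G$-ergodic decomposition $e$ (this works because $T,S\in[E_G^X]$ preserve every $\nu\in\E_G(X)$ and fix $e$, and because Lemma \ref{lem:grabs} provides $S$-ergodicity of $\nu\res\cF$ \emph{uniformly} in $\nu$), together with checking that every entropy, distribution, and $\bar d$-type functional appearing is Borel in $\nu$ by the measurability results of \cite{AS}. Once these Borel-selection ingredients are in place, everything else is a faithful, if lengthy, transcription of the proof of Theorem \ref{thm:main} with $\mu$ replaced by $(\nu)_{\nu\in\E_G(X)}$ and Lemma \ref{lem:zfolk} replaced by Theorem \ref{thm:kira}/Corollary \ref{cor:sfolk}.
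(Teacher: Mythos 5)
Your plan follows essentially the same route as the paper's proof: it runs the argument of Theorem \ref{thm:main} uniformly over $\E_G(X)$ via the ergodic decomposition map, replacing Lemma \ref{lem:zfolk} by Theorem \ref{thm:kira}/Corollary \ref{cor:sfolk} (applied through the Borel factor of Lemma \ref{lem:borelfactor}), and using Lemma \ref{lem:grabs}, Lemma \ref{lem:useT} and Lemma \ref{lem:useS} exactly as you describe, with the same countable Borel partitioning of $\E_G(X)$ to uniformize constants and the same Borel selections in Claims 2, 3 and 5. The one bookkeeping point you leave implicit that the paper spells out is that $\sinv_T$ need not be countably generated; this is repaired by replacing it with $e_T^{-1}(\Borel(\E_T(X)))$, which agrees with $\sinv_T$ modulo null sets for every $T$-invariant measure.
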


\begin{proof}
We claim that with minor (and often obvious) modifications, the proof of Theorem \ref{thm:main} can be applied for all $\nu \in \E_G(X)$ simultaneously. We will explain why this is so and will describe the less obvious modifications.

By the ergodic decomposition theorem \cite{Far62,Var63}, there is a $G$-invariant Borel map $e : X \rightarrow \E_G(X)$ such that $\nu(e^{-1}(\nu)) = 1$ for every $\nu \in \E_G(X)$. Notice that at any time, we can partition $\E_G(X)$ into a countable number of Borel pieces and perform a Borel construction on the $e$-preimage of each of these pieces one at a time. In particular, up to such countable divisions, whenever considering one of the Claims \ref{claim1} through \ref{claim6}, we can assume that the assumptions hold for all $\nu \in \E_G(X)$ and that in Claims \ref{claim4} and \ref{claim5} $M$ and $\epsilon$ do not depend on $\nu \in \E_G(X)$. Similarly, whenever applying Lemma \ref{lem:enlarge} or Corollary \ref{cor:sfolk} we can assume that $M$ and $\epsilon$ are independent of $\nu \in \E_G(X)$.

Next, note that Claims \ref{claim1} through \ref{claim6} should be modified to assume that any sub-$\sigma$-algebras mentioned are countably generated, and similarly only countably generated $\sigma$-algebras should be created in their proofs. In most cases this is automatic, as Lemma \ref{lem:allpast}.(i) implies that external pasts are countably generated and Lemma \ref{lem:enlarge} produces countably generated $\sigma$-algebras. The only exception to this are $\sigma$-algebras of the form $\sinv_T$, for $T \in [E_G^X]$, which are often not countably generated. We replace these $\sigma$-algebras as follows. Let $e_T : X \rightarrow \E_T(X)$ be the ergodic decomposition map for $T$. If $A \subseteq X$ is $T$-invariant then $A' = e_T^{-1}(\{\nu \in \E_T(X) : \nu(A) = 1\})$ satisfies $\nu(A \symd A') = 0$ for all $\nu \in \E_T(X)$ and therefore
$$\forall \mu \in \M_T(X) \quad \mu(A \symd A') = \int_X e_T(x)(A \symd A') \ d \mu(x) = 0.$$
So defining $\sinv_T' = e_T^{-1}(\Borel(\E_T(X)))$, we have that $\sinv_T'$ is countably generated and $\sinv_T' = \sinv_T$ mod $\mu$-null sets for all $\mu \in \M_T(X) \supseteq \M_G(X)$. Thus we can replace $\sinv_T$ with $\sinv_T'$ in the proof, and the difference will be negligible for any measure of interest to us. Additionally, we may enlarge $\sinv_T'$ so that it contains $\sinv_G'$.

It suffices to check that the constructive steps in the proof of Theorem \ref{thm:main} can be performed simultaneously for all ergodic measures in a Borel manner. Let's first inspect the prior results of this paper that are invoked for constructive purposes in the proof of Theorem \ref{thm:main}. Looking over the proof, we see that those results are Lemmas \ref{lem:enlarge}, \ref{lem:grabs}, and \ref{lem:zfolk} and Corollaries \ref{cor:sfolk} and \ref{cor:slice}. As stated, Lemmas \ref{lem:enlarge} and \ref{lem:grabs} and Corollary \ref{cor:slice} are immediately applicable to non-ergodic actions. Additionally, Lemma \ref{lem:zfolk} can be replaced in the non-ergodic setting by Theorem \ref{thm:kira}. Finally, Corollary \ref{cor:sfolk} as stated is applicable to non-ergodic actions but needs some explanation. In all cases where that corollary was applied in the proof of Theorem \ref{thm:main}, we had a partition $\xi$, a class-bijective $G$-invariant sub-$\sigma$-algebra $\cF$, and a $\cF$-expressible $T \in [E_G^X]$ as given by Corollary \ref{cor:slice} with the property that $\sinv_T \subseteq \cF$, and we applied Corollary \ref{cor:sfolk} to the $T$-action on the factor associated with $\salg_T(\xi) \vee \sP_\xi$ with $\Omega = \{\mu \res \salg_T(\xi) \vee \sP_\xi\}$ (for a fixed $\mu \in \E_G(X)$). In the non-ergodic case, $\cF$ will be countably generated. By Lemma \ref{lem:borelfactor} there is an aperiodic Borel bijection $T' : Y \rightarrow Y$ on a standard Borel space $Y$ and a Borel map $\phi : X \rightarrow Y$ with $\phi \circ T = T' \circ \phi$ and $\phi^{-1}(\Borel(Y)) = \salg_T(\xi) \vee \sP_\xi$. Since $\sinv_G' \subseteq \sinv_T' \subseteq \cF \subseteq \salg_T(\xi) \vee \sP_\xi$, Lemma \ref{lem:borelfactor} further asserts the existence of a Borel map $\hat{e} : Y \rightarrow \E_G(X)$ with $\hat{e} \circ \phi = e$. Consequently, the map $\nu \in \E_G(X) \mapsto \phi_*(\nu) \in \M_{T'}(Y)$ is injective and hence its image is Borel \cite[Cor. 15.2]{K95}. So in the non-ergodic case we apply Corollary \ref{cor:sfolk} to $T' : Y \rightarrow Y$ with $\Omega = \{\phi_*(\nu) : \nu \in \E_G(X)\} \subseteq \M_{T'}(Y)$ and with $m : y \in Y \mapsto \phi_*(\hat{e}(y)) \in \Omega$.

Finally, we turn our attention to the constructions that occur properly in the proof of Theorem \ref{thm:main}. Given our prior comments, there are only three instances to consider. The first is in the proof of Claim \ref{claim2}. There, under the assumption that $\sH(\pv) < \rh_G(X, \mu \given \Sigma) < \infty$ we find a partition $\xi$ with $\sH_\mu(\xi \given \Sigma) < \infty$ and $\sH(\pv) < \rh_{G,\mu}(\xi \given \Sigma)$. In the non-ergodic case, we assume that $\sH(\pv^\nu) < \rh_G(X, \nu \given \Sigma) < \infty$ for all $\nu \in \E_G(X)$. Fix a sequence of finite Borel partitions $\xi_n$ with $\xi_{n+1}$ finer than $\xi_n$ and $\salg(\cup_n \xi_n) = \Borel(X)$. By \cite[Thm. 6.3]{AS} we have $\lim_{n \rightarrow \infty} \rh_{G,\nu}(\xi_n \given \Sigma) = \rh_G(X, \nu \given \Sigma)$. So, up to dividing $X$ into a countable number of $G$-invariant Borel sets, we can set $\xi = \xi_n$ for some sufficiently large $n$.

The second instance occurs in the proof of Claim \ref{claim3}, specifically the attainment of $c$ and the sequence of partitions $\xi_n$. So assume that $\rh_G(X, \nu \given \Sigma) = \infty$ for all $\nu \in \E_G(X)$. Fix a sequence of finite Borel partitions $\gamma_n$ with $\gamma_{n+1}$ finer than $\gamma_n$, $\gamma_0 = \{X\}$, and $\salg(\cup_n \gamma_n) = \Borel(X)$. Define $c_\nu = \inf_n \sup_{m \geq n} \rh_{G,\nu}(\gamma_m \given \salg_G(\gamma_n) \vee \Sigma)$. By \cite[Cor. 6.5]{AS} the map $\nu \mapsto c_\nu$ is Borel and by \cite[Thm. 6.3]{AS} $c_\nu > 0$ for all $\nu$. Up to dividing $X$ into a countable number of $G$-invariant Borel sets, we can assume there is $c > 0$ with $c < c_\nu$ for all $\nu \in \E_G(X)$. Set $\xi_0 = \gamma_0 = \{X\}$. Now inductively assume that $\xi_n$ has been defined and that for each $\nu \in \E_G(X)$ $\xi_n$ coincides with some $\gamma_k$ mod $\nu$-null sets. Then $\sup_m \rh_{G,\nu}(\gamma_m \given \salg_G(\xi_n) \vee \Sigma) > c$ for all $\nu \in \E_G(X)$, so we may let $k(\nu) \in \N$ be least with $\rh_{G, \nu}(\gamma_{k(\nu)} \given \salg_G(\xi_n) \vee \Sigma) > c$. Now let $\xi_{n+1}$ be any countable partition that, for every $k$, coincides with $\gamma_k$ on $e^{-1}(\{\nu \in \E_G(X) : k(\nu) = k\})$. Continuing by induction, we obtain an increasing sequence of countable Borel partitions $\xi_n$ with $\sH_\nu(\xi_n) < \infty$ and $\rh_{G,\nu}(\xi_n \given \salg_G(\xi_{n-1}) \vee \Sigma) > c$ for all $n \geq 1$. The rest of the proof of Claim \ref{claim3} can proceed as before, but one must check that $\alpha \leq \bigvee_n \alpha_n$ can be chosen in a Borel manner to have the correct $\nu$-distribution for every $\nu \in \E_G(X)$. There is a simple trick for doing this, the basic idea of which is illustrated at the end of the proof of the next theorem.

The third and final instance to consider is the construction of the probability vectors $\pv^n$ in the proof of Claim \ref{claim5}. In the non-ergodic case we can fix an enumeration of the countable set of probability vectors having rational coordinates, and for each $n \in \N$ and $\nu \in \E_G(X)$ we can choose $\pv^{n,\nu}$ to be the least probability vector in the enumeration satisfying the constraints. Then for every $n$ the map $\nu \mapsto \pv^{n,\nu}$ will be Borel.
\end{proof}

\begin{thm}[The non-ergodic relative factor theorem] \label{thm:nefactor}
Let $X$ be a standard Borel space, let $G \acts X$ be an aperiodic Borel action, and let $\cF$ be a countably generated $G$-invariant sub-$\sigma$-algebra. If $\nu \in \E_G(X) \mapsto \lambda_\nu$ is a Borel map assigning to each $\nu \in \E_G(X)$ a Borel probability measure $\lambda_\nu$ on $[0, 1]$ satisfying $\rh_G(X, \nu \given \cF) \geq \sH([0, 1], \lambda_\nu)$, then there is a $G$-equivariant Borel map $\phi : X \rightarrow [0, 1]^G$ such that, for every $\nu \in \E_G(X)$, $\phi_*(\nu) = \lambda_\nu^{\Stab_*(\nu) \backslash G}$ and $\phi^{-1}(\Borel([0,1]^G))$ is $\nu$-independent with $\cF$ relative to $\Stab^{-1}(\Borel(\Sub(G)))$.
\end{thm}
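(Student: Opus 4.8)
The plan is to derive this from the non-ergodic relative perturbative factor theorem (Theorem \ref{thm:nepfact}) together with the correspondence between Bernoulli factors and Bernoulli partitions (Lemma \ref{lem:oneone}), carried out fiberwise over $\E_G(X)$. Fix a $G$-invariant Borel ergodic decomposition map $e : X \to \E_G(X)$, so that $e$ is $\nu$-a.e.\ constant for each $\nu \in \E_G(X)$. By Lemma \ref{lem:oneone}, applied with $\mu = \nu$, $L = [0,1]$, $\lambda = \lambda_\nu$ and the factor associated to $\cF$, it suffices to construct a single Borel map $Q : X \to [0,1]$ such that for every $\nu \in \E_G(X)$ we have $Q_*(\nu) = \lambda_\nu$ and the partition $Q^{-1}(\beta)$ (with $\beta$ the partition of $[0,1]$ into points) is $(G,\nu)$-Bernoulli over $\cF$; the desired factor map is then $\phi(x)(g) := Q(g^{-1} \cdot x)$. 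The feature that makes a Borel construction possible is that, $e$ being $G$-invariant and $\nu$-a.e.\ constant, we may freely let $Q$ and the auxiliary partitions below depend on $e(x)$: this preserves $G$-equivariance of $\phi$, and since $e$ restricts to a constant on each ergodic component it does not affect the per-$\nu$ Bernoullicity or the per-$\nu$ relative-independence assertions. I would split $\E_G(X)$ along the Borel set where $\sH([0,1],\lambda_\nu) < \infty$, handling the $G$-invariant Borel pieces $X_A = e^{-1}(\{\nu : \sH([0,1],\lambda_\nu) < \infty\})$ and $X_B = X \setminus X_A$ separately.

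On $X_A$, since $\sH([0,1],\lambda_\nu) < \infty$ the measure $\lambda_\nu$ has countable support; Borel-ly enumerate its atoms in order of decreasing mass (breaking ties by position in $[0,1]$) as $a_0(\nu), a_1(\nu), \dots$ and put $\pv^\nu = (\lambda_\nu(\{a_i(\nu)\}))_i$, a countable probability vector with $\sH(\pv^\nu) = \sH([0,1],\lambda_\nu) \le \rh_G(X, \nu \given \cF)$. Applying Theorem \ref{thm:nepfact} to $G \acts X_A$ with $\Sigma = \cF$ produces a Borel partition $\alpha$ which, for every $\nu$, is $(G,\nu)$-Bernoulli over $\cF$ with $\dist_\nu(\alpha) = \pv^\nu$. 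Setting $Q(x) = a_{j(x)}(e(x))$, where $j(x)$ indexes the atom of $\alpha$ containing $x$, one checks $Q_*(\nu) = \sum_i p^\nu_i \delta_{a_i(\nu)} = \lambda_\nu$ and $Q^{-1}(\beta) = \alpha$ modulo $\nu$-null sets. (When $\lambda_\nu$ is a point mass, $\alpha$ is $\nu$-a.e.\ trivial and $Q$ is the corresponding $\sinv_G$-measurable constant; here the conclusion is immediate without recourse to Lemma \ref{lem:oneone}, which assumes $\lambda$ non-trivial.)

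On $X_B$, we have $\sH([0,1],\lambda_\nu) = \infty$ and hence $\rh_G(X, \nu \given \cF) = \infty$ for all $\nu$. Fix once and for all a probability vector $\qv$ with $0 < \sH(\qv) < \infty$ and $\max(\qv) < 1$, so that $m := \qv^{\otimes \N}$ is a non-atomic Borel probability measure on $\{0,1\}^\N$. Mimicking the non-ergodic form of the construction in the proof of Claim \ref{claim3} (carried out inside the proof of Theorem \ref{thm:nepfact}) --- repeatedly invoking Theorem \ref{thm:nepfact}, and using subadditivity of Rokhlin entropy, which keeps $\rh_G(X, \nu \given \salg_G(\alpha_1 \vee \cdots \vee \alpha_{k-1}) \vee \cF)$ infinite and in particular larger than $\sH(\qv)$ at every stage --- I would build Borel two-piece partitions $\alpha_1, \alpha_2, \dots$ of $X_B$ with $\dist_\nu(\alpha_k) = \qv$ for every $\nu$ and each $\alpha_k$ being $(G,\nu)$-Bernoulli over $\salg_G(\alpha_1 \vee \cdots \vee \alpha_{k-1}) \vee \cF$; the chaining argument of that proof then shows $Q_0 := (\alpha_1, \alpha_2, \dots) : X_B \to \{0,1\}^\N$ satisfies $(Q_0)_*(\nu) = m$ for every $\nu$ and $Q_0^{-1}(\text{point partition})$ is $(G,\nu)$-Bernoulli over $\cF$. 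Finally, by the quantile-function construction there is a Borel family $\nu \mapsto \psi_\nu$ of Borel maps $\psi_\nu : (\{0,1\}^\N, m) \to ([0,1], \lambda_\nu)$ with $(\psi_\nu)_*(m) = \lambda_\nu$ (fix a measure-preserving Borel isomorphism $(\{0,1\}^\N, m) \cong ([0,1], \mathrm{Leb})$ and postcompose with the quantile functions of the $\lambda_\nu$); put $Q(x) := \psi_{e(x)}(Q_0(x))$. Since $\psi_\nu$ is Borel, $Q^{-1}(\beta)$ is, modulo $\nu$-null sets, coarser than $Q_0^{-1}(\text{point partition})$, hence again $(G,\nu)$-Bernoulli over $\cF$, while $Q_*(\nu) = \lambda_\nu$; the associated $\phi$ satisfies $\phi_*(\nu) = ((\psi_\nu)^G)_*(m^{\Stab_*(\nu) \backslash G}) = \lambda_\nu^{\Stab_*(\nu) \backslash G}$.

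The main obstacle will be bookkeeping rather than a single deep step, since the genuinely hard content is already encapsulated in Theorem \ref{thm:nepfact}. Three points need care. First, one must verify that ``$(G,\nu)$-Bernoulli over $\cF$'' behaves well under the operations used --- chaining successive relatively-Bernoulli coordinates into a Bernoulli join, passing to increasing limits, and passing to coarsenings; this is routine from Definition \ref{defn:bover} and Lemma \ref{lem:ind}, but because $\cF$ is not assumed class-bijective one should argue fiberwise over the stabilizer, i.e.\ with each fiber measure $\mu_\Gamma$. Second, every choice (atom enumerations, the maps $\psi_\nu$, the partitions in the $X_B$ construction, and the partition supplied by Theorem \ref{thm:nepfact}) must be made Borel in $\nu$ and patched across a countable Borel decomposition of $\E_G(X)$ --- precisely the ``simple trick'' of performing each Borel construction on one Borel piece of $\E_G(X)$ at a time, as indicated at the end of the proof of Theorem \ref{thm:nepfact}. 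Third, one must confirm throughout (via the $e(x)$-dependence remark) that folding in the ergodic decomposition map never breaks $G$-equivariance of $\phi$ nor the per-component independence statements.
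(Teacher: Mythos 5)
Your proposal is correct and follows essentially the same route as the paper's proof: the same split of $X$ along finiteness of $\sH([0,1],\lambda_\nu)$, the same Borel enumeration of atoms feeding into Theorem \ref{thm:nepfact} and Lemma \ref{lem:oneone} in the finite case, and the same iterated construction of mutually relatively-Bernoulli two-piece partitions followed by a quantile-function pushforward in the infinite case (the paper simply fixes $\qv = (1/2,1/2)$ and composes with a Borel isomorphism $2^\N \cong [0,1]$ carrying $u_2^\N$ to Lebesgue). The points you flag for care — Borel-in-$\nu$ choices patched over a countable decomposition, and stability of relative Bernoullicity under joins and pointwise coarsenings — are exactly the ones the paper handles.
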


\begin{proof}
By the ergodic decomposition theorem \cite{Far62,Var63}, there is a $G$-invariant Borel map $e : X \rightarrow \E_G(X)$ such that $\nu(e^{-1}(\nu)) = 1$ for every $\nu \in \E_G(X)$. Setting $X_{\textrm{fin}} = \{x \in X : \sH([0, 1], \lambda_{e(x)}) < \infty\}$ and $X_\infty = \{x \in X : \sH([0, 1], \lambda_{e(x)}) = \infty\}$, we obtain a $G$-invariant Borel partition $\{X_{\textrm{fin}}, X_\infty\}$ of $X$. It suffices to define $\phi$ separately on $X_{\textrm{fin}}$ and on $X_\infty$. So without loss of generality we may assume $X = X_{\textrm{fin}}$ or $X = X_\infty$.

First suppose that $\sH([0, 1], \lambda_\nu) < \infty$ for all $\nu \in \E_G(X)$. Then every $\lambda_\nu$ has countable support. Enumerate the atoms of $\lambda_\nu$ as $a_\nu^1, a_\nu^2, \ldots$ so that $\lambda_\nu(a_\nu^k) \geq \lambda_\nu(a_\nu^{k+1})$ and $a_\nu^k \leq a_\nu^{k+1}$ when $\lambda_\nu(a_\nu^k) = \lambda_\nu(a_\nu^{k+1})$. Note that the numbers $a_\nu^k \in [0, 1]$ can be determined in a Borel way from the $\lambda_\nu$-measures of all the intervals with rational endpoints. Therefore the map $\nu \in \E_G(X) \mapsto (a_\nu^k)_{k \in \N} \in [0, 1]^\N$ is Borel. Set $\pv^\nu = (\lambda_\nu(a_\nu^k))_{k \in \N}$. Since $\lambda_\nu(a_\nu^k)$ is the infimum of the $\lambda_\nu$-measure of the intervals with rational endpoints containing $a_\nu^k$, the map $\nu \mapsto \pv^\nu$ is Borel as well. By Theorem \ref{thm:nepfact} there is a Borel partition $\beta$ with $\beta \in \Bp{\nu}{\pv^\nu}{\cF}$ for all $\nu \in \E_G(X)$. Say $\beta = \{B_n : n \in \N\}$. Define $\phi : X \rightarrow [0, 1]^G$ by $\theta(x)(g) = a_{e(x)}^k$ if $g^{-1} \cdot x \in B_k$. Then Lemmas \ref{lem:stabequal} and \ref{lem:oneone} imply that $\phi$ has the desired properties for every $\nu \in \E_G(X)$.

Now suppose that $\sH([0, 1], \lambda_\nu) = \infty$ for all $\nu \in \E_G(X)$. Then $\rh_G(X, \nu \given \cF) = \infty$ for every $\nu \in \E_G(X)$. Set $\pv = (\frac{1}{2}, \frac{1}{2})$. Inductively for $n \in \N$ we build a sequence of Borel partitions $\beta^n \in \bigcap_{\nu \in \E_G(X)} \Bp{\nu}{\pv}{\cF \vee \bigvee_{k < n} \salg_G(\beta^k)}$. Theorem \ref{thm:nepfact} provides $\beta^0 \in \bigcap_{\nu \in \E_G(X)} \Bp{\nu}{\pv}{\cF}$. Now assume $\beta^0$ through $\beta^{n-1}$ have been constructed. Since $\rh_{G,\nu}(\bigvee_{k < n} \beta^k \given \cF) \leq n \cdot \log(2) < \infty$, by sub-additivity of entropy \cite[Cor. 4.6]{AS} we have $\rh_G(X, \nu \given \cF \vee \bigvee_{k < n} \salg_G(\beta^k)) = \infty$. Thus again Theorem \ref{thm:nepfact} produces $\beta^n$. This constructs $(\beta^n)_{n \in \N}$. Say $\beta^n = \{B_0^n, B_1^n\}$, write $2 = \{0, 1\}$, and define the $G$-equivariant Borel map $\phi_0 : X \rightarrow (2^\N)^G$ by $\phi_0(x)(g)(n) = i$ if and only if $g^{-1} \cdot x \in B_i^n$. Then by Lemmas \ref{lem:stabequal} and \ref{lem:oneone} $\phi_0$ pushes every $\nu$ forward to $(u_2^\N)^{\Stab_*(\nu) \backslash G}$, where $u_2$ is the normalized counting measure on $2$, $\phi_0$ is $\nu$-class-bijective, and $\phi_0^{-1}(\Borel((2^\N)^G)) = \bigvee_{n \in \N} \salg_G(\beta^n)$ is $\nu$-independent with $\cF$ relative to $\Stab^{-1}(\Borel(\Sub(G)))$ for every $\nu$. Now fix a Borel isomorphism $\pi$ from $2^\N$ to $[0, 1]$ which takes $u_2^\N$ to Lebesgue measure $\mathrm{Leb}$. Define the Borel map $(\nu, y) \in \E_G(X) \times [0, 1] \mapsto \psi_\nu(y) \in [0, 1]$ by $\psi_\nu(y) = \inf \{q \in \Q : \lambda_\nu([0, q]) \geq y\}$. Each $\psi_\nu$ pushes $\mathrm{Leb}$ forward to $\lambda_\nu$ since for $r \in [0, 1]$, $B = [0, r]$, and $y \in [0, 1]$ we have
$$\lambda_\nu(B) \geq y \Longleftrightarrow r \geq \psi_\nu(y) \Longleftrightarrow y \in \psi_\nu^{-1}(B) \Longleftrightarrow \mathrm{Leb}(\psi_\nu^{-1}(B)) \geq y.$$
Finally, we define $\phi : X \rightarrow [0, 1]^G$ by $\phi(x)(g) = \psi_{e(x)} \circ \pi \circ \phi_0(x)(g)$. This completes the proof.
\end{proof}

To close this section, we point out the peculiar fact that the residual factor theorem can fail for non-amenable groups. This fact was discovered by Lewis Bowen. We thank him for permission to include it here.

\begin{prop}[Lewis Bowen] \label{prop:resfail}
The residual factor theorem is false for non-amenable groups.
\end{prop}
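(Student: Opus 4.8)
The plan is to refute the natural $G$-analogue of the Burton--Keane--Serafin theorem, namely: for every free ergodic {\pmp} action $G \acts (A^G, \mu)$ with $A$ a finite set, and every finite probability space $(L, \lambda)$ with $\rh_G(A^G, \mu) = \sH(L, \lambda)$ (or, for sofic $G$, with sofic entropy in place of $\rh_G$), the graph joinings $\{(\id \times \phi)_*(\mu) : \phi : A^G \to L^G \text{ a factor map with } \phi_*\mu = \lambda^G\}$ form a dense $G_\delta$ subset of the space of ergodic joinings of $\mu$ with $\lambda^G$, in the weak$^*$ topology. By Theorem \ref{thm:main}, factor maps always exist under this hypothesis, so the set of graph joinings is non-empty; hence to obtain a counterexample it suffices to exhibit one non-amenable $G$, one $\mu$ on some $A^G$, one $(L,\lambda)$ with matching entropy, and an ergodic joining $\rho_0$ of $\mu$ with $\lambda^G$ possessing a weak$^*$-neighbourhood that meets no graph joining.

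I would specialize to self-joinings, taking $A = L = \{0,1\}$ and $\mu = \lambda^G = u_2^G$, so that the graph joinings are precisely the graphs of the $G$-equivariant measure-preserving endomorphisms of the Bernoulli shift $(2^G, u_2^G)$. The structural input is that every such graph joining $(\id \times \phi)_*(u_2^G)$ is isomorphic, via the first-coordinate projection, to $(2^G, u_2^G)$ itself, so it has entropy $\log 2$ and its second coordinate is a deterministic function of its first. A genuine subtlety must be navigated here: the \emph{independent} joining $u_2^G \times u_2^G$ is \emph{not} a counterexample, since for any $g_0 \in G$ the right translation $\theta_{g_0}(x)(t) = x(t g_0)$ is a $G$-equivariant automorphism of $(2^G,u_2^G)$, and once $g_0 \notin W^{-1}W$ the graph joining of $\theta_{g_0}$ agrees with $u_2^G \times u_2^G$ on the window $W \times W$; letting $W$ exhaust $G$ shows that the independent joining is a weak$^*$-limit of graph joinings over \emph{every} countable group. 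The counterexample must therefore be an ergodic joining $\rho_0$ that carries a \emph{positive, spatially localized amount of mutual randomness} between its two coordinates --- something no joining that is deterministic over its first coordinate, nor any weak$^*$-limit of such, can reproduce. Natural candidates are a relatively independent joining of $u_2^G$ with itself over a common Bernoulli factor of intermediate entropy, or a joining built using Bowen's theorem \cite{B17} that over a non-amenable group all Bernoulli shifts factor onto one another (which in particular confirms that $\rh_G$ is not monotone under factors, so the mechanisms available in the amenable case are absent). To confirm non-approximability I would run a model count on a sofic approximation $\sigma : G \to \Sym(n)$: because $\phi$ is equivariant, its values on a finite window $W$ obey a local rule whose entropy, conditioned on the first coordinate restricted to a sufficiently large finite set, is $o(|W|)$; hence the number of ``good models'' $(\bar x, \bar y) \in \{0,1\}^n \times \{0,1\}^n$ of a graph joining with any prescribed window statistics is at most the number of good models of the first coordinate times a subexponential factor, while $\rho_0$ requires exponentially more. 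Making this estimate uniform over all $\phi$, so that it survives passage to the weak$^*$ limit, is the heart of the matter.

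The step I expect to be the main obstacle is precisely that uniformity: turning ``a graph joining is deterministic over its first coordinate'' into a \emph{non-dense} --- ideally, closed-complement --- condition in the weak$^*$ topology on ergodic joinings. This cannot be done by soft arguments, because for $G = \Z$ the Burton--Keane--Serafin theorem asserts the opposite conclusion, and the $\Z$-proof crucially spreads a deterministic local rule across a long F{\o}lner interval so that it imitates independent randomness, a maneuver with no counterpart over a non-amenable group; thus the proof must invoke non-amenability in an essential (e.g.\ sofic-counting) way. A cleaner route to develop in parallel would be to isolate an explicit weak$^*$-continuous functional on joinings --- a finite correlation functional, or a conditional-entropy-type quantity relative to one coordinate --- taking a value on $\rho_0$ that every graph joining provably avoids; then non-density is immediate, and the whole difficulty is concentrated in the choice of $\rho_0$ and of the functional.
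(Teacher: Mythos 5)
There is a genuine gap: your proposal is a strategy outline whose decisive step --- producing a concrete ergodic joining $\rho_0$ together with a weak$^*$-(semi)continuous functional that separates it from every graph joining --- is exactly the step you leave open (``the heart of the matter,'' ``the whole difficulty is concentrated in the choice of $\rho_0$ and of the functional''). Correctly observing that the independent self-joining of $(2^G,u_2^G)$ is approximable by graphs of right translations, and guessing that some relatively independent or Bowen-type joining should work, does not yet yield a proof; no candidate is verified, and the sofic model-counting route you sketch runs into precisely the uniformity-in-$\phi$ and weak$^*$-limit issues you yourself flag. Moreover, your specialization to self-joinings of the $2$-shift discards structure that the actual argument uses: the paper's counterexample needs the target Bernoulli shift to split as a product so that a strict ``half'' of its coordinates can be adjoined to the first factor.

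For comparison, the paper's proof (due to Bowen; the paper also records his original $f$-invariant argument, which is the closest relative of your counting idea) takes $G = F_2 = \langle a,b\rangle$ and two entropy-$\log 4$ Bernoulli shifts $X_L = X_1 \times X_2$ and $X_R = X_3 \times X_4$, each $X_i = (2^G,u_2^G)$. The joining is $\lambda = (\phi \times \id)_*(\mu_R)$, where $\phi = \psi \circ \pi_3$ and $\psi(x_3)(g) = (x_3(g)+x_3(ga),\, x_3(g)+x_3(gb)) \bmod 2$ is the Ornstein--Weiss $2$-to-$1$ map --- this is where non-amenability enters, via entropy increase under a factor map. The separating quantity is not a continuous functional nor conditional entropy over one coordinate, but the outer Rokhlin entropy of the fixed family $\cF = \Borel(X_1)\vee\Borel(X_2)\vee\Borel(X_3)$: under $\lambda$ one has $\cF = \Borel(X_3)$ mod null sets, so $\rh_G(\cF,\lambda) = \log 2$, whereas any graph joining $\nu = (\id\times f)_*(\mu_L)$ satisfies $\cF \supseteq \Borel(X_L) = \Borel(X_L\times X_R)$ mod $\nu$-null sets, so $\rh_G(\cF,\nu) = \log 4$. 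Upper semicontinuity of $\nu \mapsto \rh_G(\cF,\nu)$ for finitely generated $G$ (\cite[Cor. 5.6]{AS}) then gives a weak$^*$-neighborhood of $\lambda$ containing no graph joining, so density fails. If you want to salvage your outline, this is the missing content: replace ``weak$^*$-continuous functional'' by an upper semicontinuous entropy of a well-chosen \emph{fixed} sub-$\sigma$-algebra of the joining space, and build $\rho_0$ so that this sub-$\sigma$-algebra is entropy-deficient, which is what the Ornstein--Weiss map accomplishes and what a bare self-joining of the $2$-shift does not obviously provide.
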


\begin{proof}
Bowen's original argument was based on the f-invariant (an invariant introduced by Bowen in \cite{B10a}). His argument, in brief, was that if you fix a Bernoulli shift over the rank $2$ free group, then in the space of ergodic self-joinings the generic joining will have f-invariant minus infinity, but if the residual factor theorem were true the generic joining would have f-invariant equal to the entropy of the original Bernoulli shift.

Here we give a different argument that requires less preparation to explain in detail. Let $G = \langle a, b \rangle$ be the rank $2$ free group. Let $(X_i, \mu_i)$ be a copy of $(2^G, u_2^G)$ for $1 \leq i \leq 4$, and let $(X_L, \mu_L)$ be the product of $(X_1, \mu_1)$ with $(X_2, \mu_2)$ and let $(X_R, \mu_R)$ be the product of $(X_3, \mu_3)$ with $(X_4, \mu_4)$. 
Consider the Ornstein--Weiss factor map $\psi : X_3 \rightarrow X_L$ defined by
$$\psi(x_3)(g) = \Big( x_3(g) + x_3(g a), \ x_3(g) + x_3(g b) \Big) \mod 2.$$
It is well known, and a simple exercise to verify, that $\psi$ is a $2$-to-$1$ surjection that pushes the Bernoulli measure $\mu_3$ forward to $\mu_L$. Let $\pi_3 : X_R \rightarrow X_3$ be the projection map and define $\phi : (X_R, \mu_R) \rightarrow (X_L, \mu_L)$ by $\phi = \psi \circ \pi_3$. Now set $\lambda = (\phi \times \id)_*(\mu_R)$. Then $\lambda$ is a joining between the equal entropy Bernoulli shifts $(X_L, \mu_L)$ and $(X_R, \mu_R)$. For each $i$ view $\Borel(X_i) \subseteq \Borel(X_L \times X_R)$ in the natural way. By construction we have that $\cF = \Borel(X_1) \vee \Borel(X_2) \vee \Borel(X_3)$ coincides with $\Borel(X_3)$ mod $\lambda$-null sets. Thus
$$\rh_G(\cF, \lambda) = \rh_G(\Borel(X_3), \lambda) = \log(2) < \log(4).$$
Since $G$ is finitely generated, Rokhlin entropy is upper-semicontinuous in the weak$^*$-topology \cite[Cor. 5.6]{AS}. So there is an open neighborhood $U$ of $\lambda$ with $\rh_G(\cF, \nu) < \log(4)$ for all $\nu \in U$. However, if $f : (X_L, \mu_L) \rightarrow (X_R, \mu_R)$ is a factor map and $\nu = (\id \times f)_*(\mu_L)$ the factor joining, then $\Borel(X_L) = \cF = \Borel(X_L \times X_R)$ mod $\nu$-null sets and hence
$$\rh_G(\cF, \nu) = \rh_G(X_L \times X_R, \nu) = \rh_G(X_L, \mu_L) = \log(4).$$
Therefore there are no factor joinings from $(X_L, \mu_L)$ to $(X_R, \mu_R)$ lying in the open neighborhood $U$ of $\lambda$. Thus the factor joinings from the Bernoulli shift $(X_L, \mu_L)$ to the equal-entropy Bernoulli shift $(X_R, \mu_R)$ do not form a dense set in the space of all joinings of $\mu_L$ with $\mu_R$.
\end{proof}

\section{Further results and discussion} \label{sec:discuss}

A well-known property of classical entropy is that it is additive, meaning if $G$ is amenable, $G \acts (X, \mu)$ is a {\pmp} action, $\cF$ is a $G$-invariant sub-$\sigma$-algebra, and $G \acts (Y, \nu)$ is the factor associated to $\cF$, then
$$\ksh_G(X, \mu) = \ksh_G(Y, \nu) + \ksh_G(X, \mu \given \cF).$$
When the group is non-amenable and Kolmogorov--{\sinai} entropy is replaced with Rokhlin entropy, additivity in general fails and only sub-additivity remains true.

A natural question in regard to the factor theorem is to ask that the Bernoulli factors preserve additivity. In particular, if $G \acts (X, \mu)$ is free and ergodic and $(L, \lambda)$ is a probability space with $\sH(L, \lambda) = \rh_G(X, \mu)$, is there a factor map from $(X, \mu)$ onto $(L^G, \lambda^G)$ that has $0$ relative entropy? Our methods do not seem to answer either of these questions. However, we mention that the answers are positive if arbitrarily small errors are allowed.

\begin{thm} \label{thm:sinaiadd}
Let $G \acts (X, \mu)$ be an aperiodic ergodic {\pmp} action, let $\Sigma$ be a $G$-invariant sub-$\sigma$-algebra, and let $\pv$ be a finite probability vector with $\sH(\pv) \leq \rh_G(X, \mu \given \Sigma) < \infty$. For every $\epsilon > 0$ there is a partition $\alpha$ with $\dist(\alpha) = \pv$ that is $G$-Bernoulli over $\Sigma$ and satisfies
$$\sH(\alpha) + \rh_G(X, \mu \given \salg_G(\alpha) \vee \Sigma) < \rh_G(X, \mu \given \Sigma) + \epsilon.$$
\end{thm}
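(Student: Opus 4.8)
\emph{Proof proposal.} The plan is to re-run the construction behind Claims~\ref{claim1} and~\ref{claim4}, but starting from a (near-optimal) \emph{generating} partition over a class-bijective enlargement of $\Sigma$, and to keep track of the leftover entropy by exploiting the \emph{additivity} of Kolmogorov--{\sinai} entropy for the auxiliary $\Z$-actions of the full-group elements $T$ and $S$. Write $\rho = \rh_G(X,\mu\given\Sigma)$ and $h=\sH(\pv)$. First I would reduce to a convenient setting: by Lemma~\ref{lem:enlarge} choose a countably generated class-bijective $G$-invariant $\cF\supseteq\Sigma$ with $\rh_G(\cF\given\Sigma)<\epsilon/4$, so $\rho':=\rh_G(X,\mu\given\cF)\in(\rho-\epsilon/4,\ \rho]$. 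Since a partition that is $G$-Bernoulli over $\cF$ is automatically $G$-Bernoulli over $\Sigma$, and since by sub-additivity
$$\rh_G(X,\mu\given\salg_G(\alpha)\vee\Sigma)\ \le\ \rh_G(\cF\given\Sigma)+\rh_G(X,\mu\given\salg_G(\alpha)\vee\cF)\ <\ \tfrac{\epsilon}{4}+\rh_G(X,\mu\given\salg_G(\alpha)\vee\cF),$$
it suffices to produce $\alpha$ with $\dist(\alpha)=\pv$, $G$-Bernoulli over $\cF$, and $\rh_G(X,\mu\given\salg_G(\alpha)\vee\cF)<\rho'-h+\epsilon/2$. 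I would first assume $h<\rho$ (hence $h<\rho'$ after shrinking $\epsilon$), deferring $h=\rho$.

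Next, using the (relative) finite generator theorem for Rokhlin entropy together with the inverse-limit formula \cite[Thm.~6.3]{AS}, pick a finite partition $\xi$ with $\salg_G(\xi)\vee\cF=\Borel(X)$ whose conditional entropy $\sH(\xi\given\cF)$ is within a chosen small $\epsilon_0$ of $\rho'$, chosen so that moreover $\xi$ is (nearly) independent of $\cF$ and so that it has a coarsening $\bar\xi\le\xi$ with $\dist(\bar\xi)=\pv$; by the data-processing inequality $\bar\xi$ is then also (nearly) independent of $\cF$. Apply Corollary~\ref{cor:slice} to get an aperiodic $\cF$-expressible $T\in[E_G^X]$ and an $\cF$-measurable $f:X\to[0,1]$, enlarge $\cF$ by $\salg_G(\sinv_T)$ at zero Rokhlin-entropy cost (Lemma~\ref{lem:expent}, as in Claim~\ref{claim2}), and form the external past $\sP_\xi$. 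Because $\xi$ is near-optimal and near-independent of $\cF$ we have the sandwich
$$\rho'\ \le\ \rh_G(\xi\given\cF)\ \le\ \ksh_T(\xi\given\sP_\xi)\ \le\ \sH_\mu(\xi\given\sP_\xi)\ \le\ \sH_\mu(\xi)\ <\ \rho'+(\text{small}),$$
(Lemma~\ref{lem:sent} and $\cF\subseteq\sP_\xi$), so the hypothesis ``$\sH_\mu(\xi)-\ksh_T(\xi\given\sP_\xi)$ small'' of the perturbative half of Corollary~\ref{cor:sfolk} holds; this yields $\beta\subseteq\salg_T(\xi)\vee\sP_\xi$ that is $T$-weakly Bernoulli over $\sinv_T$, has $\salg_T(\beta)$ independent of $\sP_\xi$ relative to $\sinv_T$, has $\sH(\beta\given\sinv_T)=\ksh_T(\xi\given\sP_\xi)$, and satisfies $\dB_\mu(\beta,\xi)$ as small as desired. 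As in Claims~\ref{claim1} and~\ref{claim4}, Lemma~\ref{lem:useT}(2) makes $\beta$ $G$-weakly Bernoulli over $\cF$ with $\sH(\beta\given\cF)=\ksh_T(\xi\given\sP_\xi)$ and $\beta\subseteq\salg_G(\xi)\vee\cF$ (Lemma~\ref{lem:allpast}(iii)). Then pick an aperiodic $\cF$-expressible $S$ with $\mu\res\cF$ being $S$-ergodic (Lemma~\ref{lem:grabs}); by Lemma~\ref{lem:useS}, $\beta$ is $S$-weakly Bernoulli over $\cF$, $\ksh_S(\beta\given\cF)=\sH(\beta\given\cF)$, and $\mu\res\salg_S(\beta)\vee\cF$ is $S$-ergodic. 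Working in this ergodic $\Z$-system and applying the perturbative relative {\sinai} theorem for $\Z$ (Lemma~\ref{lem:zfolk}) to $\bar\xi$ — whose deficiency is small because $\dist(\bar\xi)=\pv$, $\sH(\bar\xi)=h$, and $\ksh_S(\bar\xi\given\cF)$ is pinned near $h$ — produces $\alpha\subseteq\salg_S(\beta)\vee\cF$ that is $S$-Bernoulli over $\cF$ with $\dist(\alpha)=\pv$ and $\dB_\mu(\alpha,\bar\xi)$ small; by Lemma~\ref{lem:useS}(iv), $\alpha$ is $G$-Bernoulli over $\cF$.

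Finally I would bound the residual. Since $\alpha\subseteq\salg_S(\beta)\vee\cF$ and $S$ is $\cF$-expressible, Lemma~\ref{lem:expmove} gives $\salg_G(\alpha)\vee\cF\subseteq\salg_G(\beta)\vee\cF$; as $\xi$ generates $\Borel(X)$ over $\cF\subseteq\salg_G(\alpha)\vee\cF$, Lemma~\ref{lem:expent} and monotonicity give $\rh_G(X,\mu\given\salg_G(\alpha)\vee\cF)\le\ksh_S(\xi\given\salg_G(\alpha)\vee\cF)\le\ksh_S(\xi\given\salg_S(\alpha)\vee\cF)$. Now use additivity of Kolmogorov--{\sinai} entropy in the ergodic $\Z$-system of $S$: since $\beta$ generates $\salg_S(\beta)\vee\cF$ over $\cF$ and $\alpha\subseteq\salg_S(\beta)\vee\cF$ is $S$-Bernoulli-$\pv$ over $\cF$, and since $\ksh_S(\xi\given\salg_S(\beta)\vee\cF)\le\sH_\mu(\xi\given\beta)$ is small (as $\dB_\mu(\xi,\beta)$ is small and $\xi$ is finite), one gets
$$\ksh_S(\xi\given\salg_S(\alpha)\vee\cF)\ \le\ \ksh_S(\beta\given\cF)-h+\sH_\mu(\xi\given\beta)\ =\ \ksh_T(\xi\given\sP_\xi)-h+(\text{small})\ <\ \rho'-h+\tfrac{\epsilon}{2},$$
whence $\sH(\alpha)+\rh_G(X,\mu\given\salg_G(\alpha)\vee\Sigma)<h+\epsilon/4+(\rho'-h+\epsilon/2)\le\rho+\epsilon$. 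The case $h=\rho$ follows by applying the above with probability vectors $\pv^n$, $\sH(\pv^n)\nearrow\rho$, $\sH(\pv^n)<\rho$, together with the ``close-to-$\pv^{n+1}$'' refinement of the construction (exactly as in the proofs of Claims~\ref{claim5}--\ref{claim6}), obtaining a $\dB_\mu$-Cauchy sequence of $G$-Bernoulli-over-$\cF$ partitions whose limit has the required properties. The hard part will be the opening of the middle paragraph: arranging a single partition $\xi$ that is simultaneously generating over $\cF$, nearly independent of $\cF$, nearly entropy-optimal, and equipped with a coarsening $\bar\xi$ of distribution $\pv$ whose Rokhlin entropy over $\cF$ is nearly $h$ — i.e., making every perturbative hypothesis fire at once. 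When $\Sigma$ (hence $\cF$) is essentially trivial this is immediate from the finite generator theorem, since a near-optimal generator of $\Borel(X)$ already has small mutual information with $\cF$ and mutual information only decreases under coarsening; for general $\Sigma$ this selection is delicate, and it is precisely this slack that prevents the conclusion from holding without the $\epsilon$.
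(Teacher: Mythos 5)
Your route re-runs the internal machinery of Theorem \ref{thm:main} (external pasts, the transformations $T$ and $S$, Corollary \ref{cor:sfolk}, and a Kolmogorov--Sinai chain rule for the $S$-system), and you yourself flag the decisive step as unresolved: producing a single finite $\xi$ that is simultaneously generating over $\cF$, nearly entropy-optimal, nearly independent of $\cF$, and equipped with a coarsening $\bar\xi$ of distribution exactly $\pv$ whose relative entropy is pinned at $\sH(\pv)$. That is a genuine gap. There is also a concrete misstep further on: you apply Lemma \ref{lem:zfolk} to $\bar\xi$ inside the ergodic $S$-system on the factor associated to $\salg_S(\beta)\vee\cF$, but $\bar\xi$ is a coarsening of $\xi$ and need not be measurable with respect to $\salg_S(\beta)\vee\cF$; ergodicity of that factor is exactly what licenses Lemma \ref{lem:zfolk}, so you would first have to transfer to the matching coarsening of $\beta$.

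The paper avoids all of this: Theorem \ref{thm:sinaiadd} is a short corollary of the perturbative statement of Theorem \ref{thm:main}, used as a black box. Pick $\qv$ refining $\pv$ with $\rh_G(X,\mu\given\Sigma)<\sH(\qv)<\rh_G(X,\mu\given\Sigma)+\kappa$; the generalized Krieger generator theorem gives a generating (over $\Sigma$) partition $\cQ$ with $\dist(\cQ)=\qv$, and its induced coarsening $\cP$ has $\dist(\cP)=\pv$ \emph{exactly}. Sub-additivity then pins the deficiency automatically: $\sH(\cQ)-\kappa<\rh_G(X,\mu\given\Sigma)\le\rh_G(\cP\given\Sigma)+\sH(\cQ\given\cP)$ forces $\rh_G(\cP\given\Sigma)>\sH(\pv)-\kappa$, so Theorem \ref{thm:main} perturbs $\cP$ to a Bernoulli $\alpha$ with $\dB_\mu(\alpha,\cP)$ small. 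The residual entropy is then controlled not by a KS chain rule but by the $\dR_\mu$-Lipschitz continuity of relative Rokhlin entropy \cite[Lem.~5.2]{AS} together with the uniform equivalence of $\dB_\mu$ and $\dR_\mu$ on partitions of fixed finite cardinality (this is where finiteness of $\pv$ enters): $\rh_G(X,\mu\given\salg_G(\alpha)\vee\Sigma)\le\dR_\mu(\alpha,\cP)+\rh_G(X,\mu\given\salg_G(\cP)\vee\Sigma)<\epsilon/2+\sH(\qv)-\sH(\pv)$. Note that this same prescribed-distribution trick is what resolves the selection problem you call delicate: a generator $\cQ$ with $\sH(\cQ)<\rh_G(X,\mu\given\Sigma)+\kappa$ automatically satisfies $\sH(\cQ)-\sH(\cQ\given\Sigma)<\kappa$, so near-independence of $\Sigma$ comes for free --- and once you see that, the entire re-run of the factor-theorem machinery becomes unnecessary.
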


\begin{proof}
Recall the metric $\dR_\mu$ introduced in Section \ref{sec:nonerg}. On the space of partitions of cardinality $|\pv|$ the metrics $\dB_\mu$ and $\dR_\mu$ are uniformly equivalent \cite[Fact 1.7.7]{Do11}. So there is $\epsilon' > 0$ with $\dR_\mu(\alpha, \beta) < \epsilon / 2$ whenever $|\alpha| = |\beta| = |\pv|$ and $\dB_\mu(\alpha, \beta) < \epsilon'$.  Let $\delta : \R^2_+ \rightarrow \R$ be as in Theorem \ref{thm:main} and pick $0 < \kappa < \min(\delta(\sH(\pv), \epsilon'), \epsilon / 2)$. Pick a probability vector $\qv$ that refines $\pv$ and satisfies $\rh_G(X, \mu \given \Sigma) < \sH(\qv) < \rh_G(X, \mu \given \Sigma) + \kappa$. By the generalized Krieger generator theorem \cite{S1} we can find a partition $\cQ$ with $\salg_G(\cQ) \vee \Sigma = \Borel(X)$ and $\dist(\cQ) = \qv$. Let $\cP$ be the coarsening of $\cQ$ associated with the coarsening $\pv$ of $\qv$. Sub-additivity and the inequality $\rh_G(X, \mu \given \salg_G(\cP) \vee \Sigma) \leq \sH(\cQ \given \cP)$ give
$$\sH(\cP) + \sH(\cQ \given \cP) - \kappa = \sH(\cQ) - \kappa < \rh_G(X, \mu \given \Sigma) \leq \rh_G(\cP \given \Sigma) + \sH(\cQ \given \cP),$$
hence $\rh_G(\cP \given \Sigma) > \sH(\cP) - \kappa$. It follows that
$$|\dist(\cP) - \pv| + |\sH(\cP) - \sH(\pv)| + |\rh_G(\cP \given \Sigma) - \sH(\pv)| < \delta(\sH(\pv), \epsilon').$$
By our main theorem, there is a partition $\alpha$ with $\dist(\alpha) = \pv$ and $\dB_\mu(\alpha, \cP) < \epsilon'$ which is $G$-Bernoulli over $\Sigma$. Finally, by \cite[Lem. 5.2]{AS} we have
\begin{align*}
\rh_G(X, \mu \given \salg_G(\alpha) \vee \Sigma) & \leq \dR_\mu(\alpha, \cP) + \rh_G(X, \mu \given \salg_G(\cP) \vee \Sigma)\\
 & < \frac{\epsilon}{2} + \sH(\cQ \given \cP)\\
 & = \frac{\epsilon}{2} + \sH(\cQ) - \sH(\pv)\\
 & < \epsilon + \rh_G(X, \mu \given \Sigma) - \sH(\alpha).\qedhere
\end{align*}
\end{proof}

\begin{cor} \label{cor:fullfactor}
Let $G \acts (X, \mu)$ be an aperiodic ergodic {\pmp} action, and let $\Sigma$ be a $G$-invariant sub-$\sigma$-algebra. For every $\epsilon > 0$ there is a partition $\alpha$ that is $G$-Bernoulli over $\Sigma$ and satisfies $\rh_G(X, \mu \given \salg_G(\alpha) \vee \Sigma) = 0$ and $\sH(\alpha) \leq \rh_G(X, \mu \given \Sigma) + \epsilon$.
\end{cor}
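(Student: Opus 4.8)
The plan is to iterate Theorem~\ref{thm:sinaiadd}, peeling off successive Bernoulli partitions over $\Sigma$ whose generated $\sigma$-algebras use up all of the Rokhlin entropy of the action.

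First suppose $h := \rh_G(X, \mu \mid \Sigma) < \infty$; if $h = 0$ the trivial partition $\{X\}$ already satisfies the conclusion, so assume $h > 0$. I would construct finite partitions $\alpha_1, \alpha_2, \dots$ together with the increasing $\sigma$-algebras $\Sigma_0 = \Sigma$ and $\Sigma_n = \salg_G(\alpha_1 \vee \cdots \vee \alpha_n) \vee \Sigma$, writing $h_n = \rh_G(X, \mu \mid \Sigma_n)$. At stage $n$, given $h_{n-1} < \infty$, I pick a finite probability vector $\pv_n$ with $\sH(\pv_n) = h_{n-1} - \min(h_{n-1}/2,\ \epsilon 2^{-n-1})$ and apply Theorem~\ref{thm:sinaiadd} with conditioning algebra $\Sigma_{n-1}$, probability vector $\pv_n$, and error $\epsilon 2^{-n-1}$; this yields $\alpha_n$ with $\dist(\alpha_n) = \pv_n$, $G$-Bernoulli over $\Sigma_{n-1}$, and $\sH(\alpha_n) + h_n < h_{n-1} + \epsilon 2^{-n-1}$. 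Since $\sH(\alpha_n) = \sH(\pv_n)$, this forces $h_n < \epsilon 2^{-n}$ (in particular $h_n < \infty$, so the induction continues), and telescoping the inequalities $\sH(\alpha_n) < (h_{n-1} - h_n) + \epsilon 2^{-n-1}$ gives $\sum_n \sH(\alpha_n) < h + \epsilon/2$. Each $\alpha_n$ is $G$-Bernoulli over $\salg_G(\alpha_1 \vee \cdots \vee \alpha_{n-1}) \vee \Sigma$, so (by the standard argument used in the proof of Claim~\ref{claim3}) the join $\alpha = \bigvee_n \alpha_n$ — a countable partition, since $\sum_n \sH(\alpha_n) < \infty$ — is $G$-Bernoulli over $\Sigma$, with $\sH(\alpha) = \sum_n \sH(\alpha_n) \le \rh_G(X, \mu \mid \Sigma) + \epsilon$. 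Finally $\salg_G(\alpha) \vee \Sigma = \bigvee_n \Sigma_n$, and since Rokhlin entropy is monotone under enlarging the conditioning algebra, $\rh_G(X, \mu \mid \salg_G(\alpha) \vee \Sigma) \le h_n \to 0$, so it equals $0$. (Should $h_{n-1} = 0$ occur at some stage, one simply stops with $\alpha_1 \vee \cdots \vee \alpha_{n-1}$.)

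For the remaining case $h = \infty$ the entropy bound is vacuous, and I would reduce to the finite case just established. Fix an increasing sequence of finite Borel partitions $\gamma_1 \le \gamma_2 \le \cdots$ with $\salg_G(\bigcup_n \gamma_n) \vee \Sigma = \Borel(X)$, set $\Omega_0 = \Sigma$, and — using Lemma~\ref{lem:enlarge} — choose for each $n \ge 1$ a \emph{class-bijective} $G$-invariant $\sigma$-algebra $\Omega_n \supseteq \Omega_{n-1}$ with $\salg_G(\gamma_n) \subseteq \Omega_n$ and $\rh_G(\Omega_n \mid \Omega_{n-1}) < \infty$. Applying the finite case of the corollary to the factor associated with $\Omega_n$ — which is aperiodic and ergodic because $\Omega_n$ is class-bijective — relative to the image of $\Omega_{n-1}$, I obtain $\beta_n \subseteq \Omega_n$ that is $G$-Bernoulli over $\Omega_{n-1}$ with $\rh_G(\Omega_n \mid \salg_G(\beta_n) \vee \Omega_{n-1}) = 0$ (Bernoullicity transfers back to $X$ because $\Omega_n$ is class-bijective). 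Since $\Omega_{n-1}$ contains $\salg_G(\beta_1 \vee \cdots \vee \beta_{n-1}) \vee \Sigma$, the join $\alpha = \bigvee_n \beta_n$ is $G$-Bernoulli over $\Sigma$; and a short induction on $n$, using sub-additivity and monotonicity of Rokhlin entropy, shows $\rh_G(\Omega_n \mid \salg_G(\beta_1 \vee \cdots \vee \beta_n) \vee \Sigma) = 0$, whence $\rh_G(X, \mu \mid \salg_G(\alpha) \vee \Sigma) = 0$ by sub-additivity along the $\Omega_n$.

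The bookkeeping with the constants and the standard facts (joins of relatively Bernoulli partitions are Bernoulli; Rokhlin entropy decreases as the conditioning algebra grows) are routine. The genuinely delicate point is the infinite-entropy case: Theorem~\ref{thm:sinaiadd} requires finite relative entropy, so one cannot peel off a single Bernoulli factor absorbing all of the entropy, and naive iteration leaves the residual relative entropy equal to $\infty$ at every stage; the exhaustion argument above, together with verifying that the vanishing of the relative Rokhlin entropy propagates correctly through the induction (which is where the class-bijectivity of the $\Omega_n$ is needed, to transfer Bernoullicity through the factor maps), is what I expect to require the most care.
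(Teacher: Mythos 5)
Your proposal is correct and follows essentially the same route as the paper: in the finite-entropy case, iterate Theorem~\ref{thm:sinaiadd} with geometrically decaying error terms so that the peeled-off Bernoulli partitions have entropies summing to at most $\rh_G(X,\mu \given \Sigma)+\epsilon$ while the residual relative entropy tends to $0$, and then take the join (your telescoping bookkeeping differs only cosmetically from the paper's, which takes $\sH(\alpha_1)=\rh_G(X,\mu\given\Sigma)$ at the first step). The infinite-entropy case is likewise the paper's argument: exhaust $\Borel(X)$ by an increasing sequence of class-bijective $G$-invariant sub-$\sigma$-algebras of finite relative Rokhlin entropy over their predecessors, apply the finite case to each successive factor, and conclude by countable sub-additivity.
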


\begin{proof}
First suppose that $\rh_G(X, \mu \given \Sigma) < \infty$. By Theorem \ref{thm:sinaiadd} there is a partition $\alpha_1$ with $\sH(\alpha_1) = \rh_G(X, \mu \given \Sigma)$ that is $G$-Bernoulli over $\Sigma$ and satisfies $\rh_G(X, \mu \given \salg_G(\alpha_1) \vee \Sigma) < \epsilon / 2$. Inductively assume that $\alpha_n$ has been constructed such that $\alpha_n$ is $G$-Bernoulli over $\Sigma$, $\rh_G(X, \mu \given \salg_G(\alpha_n) \vee \Sigma) < \epsilon / 2^n$, and
$$\sH(\alpha_n) \leq \rh_G(X, \mu \given \Sigma) + (1 - 2^{-n + 1}) \epsilon.$$
Apply Theorem \ref{thm:sinaiadd} to get $\beta$ with $\sH(\beta) = \rh_G(X, \mu \given \salg_G(\alpha_n) \vee \Sigma) < \epsilon / 2^n$ such that $\beta$ is $G$-Bernoulli over $\salg_G(\alpha_n) \vee \Sigma$ and satisfies $\rh_G(X, \mu \given \salg_G(\beta \vee \alpha_n) \vee \Sigma) < \epsilon / 2^{n+1}$. Now set $\alpha_{n+1} = \beta \vee \alpha_n$. This completes the construction of the $\alpha_n$'s. Setting $\alpha = \bigvee_{n \in \N} \alpha_n$ completes the proof in this case.

Now suppose that $\rh_G(X, \mu \given \Sigma) = \infty$. Fix an increasing sequence of finite partitions $(\xi_n)_{n \in \N}$ with $\xi_0 = \{X\}$ and $\bigvee_n \salg(\xi_n) = \Borel(X)$. We may use Lemma \ref{lem:enlarge} to choose $\xi_1$ so that $\salg_G(\xi_1)$ is class-bijective. Let $G \acts (X_n, \mu_n)$ be the factor of $(X, \mu)$ associated with $\salg_G(\xi_n) \vee \Sigma$, say via $\phi_n : (X, \mu) \rightarrow (X_n, \mu_n)$, and let $\cF_n$ be the image of $\salg_G(\xi_{n-1}) \vee \Sigma$ for $n \geq 1$. For each $n \geq 1$ we can apply the previous paragraph to obtain a partition $\beta_n$ of $X_n$ that is $G$-Bernoulli over $\cF_n$ and satisfies $\rh_G(X_n, \mu_n \given \salg_G(\beta_n) \vee \cF_n) = 0$. Then $\alpha = \bigvee_{n \geq	 1} \phi_n^{-1}(\beta_n)$ is $G$-Bernoulli over $\Sigma$ and
\begin{align*}
\rh_G(X, \mu \given \salg_G(\alpha) \vee \Sigma) & \leq \sum_{n \geq 1} \rh_G(\xi_n \given \salg_G(\alpha \vee \xi_{n-1}) \vee \Sigma)\\
 & \leq \sum_{n \geq 1} \rh_G(X_n, \mu_n \given \salg_G(\beta_n) \vee \cF_n) = 0.\qedhere
\end{align*}
\end{proof}

While in general Rokhlin entropy is not additive for factor maps (indeed factor actions can have greater entropy than their source), the above corollary does suggest an entropy-style structure theory of class-bijective factor maps. Specifically, for two actions $G \acts (X, \mu)$ and $G \acts (Y, \nu)$ and a factor map $\phi : (X, \mu) \rightarrow (Y, \nu)$, call $\phi$ \emph{entropy-additive}\footnote{Recall that entropy-additive maps must be class-bijective when $\rh_G(Y, \nu) > 0$ \cite[Thm. 9.2]{AS}.} if
$$\rh_G(X, \mu) = \rh_G(Y, \nu) + \rh_G(X, \mu \given \phi^{-1}(\Borel(Y))),$$
and say $\phi$ \emph{has $0$ relative entropy} if $\rh_G(X, \mu \given \phi^{-1}(\Borel(Y))) = 0$. The above corollary suggests that for every class-bijective factor $\phi : (X, \mu) \rightarrow (Y, \nu)$ there is an action $G \acts (Z, \eta)$ and maps $(X, \mu) \overset{r}{\rightarrow} (Z, \eta) \overset{a}{\rightarrow} (Y, \nu)$ such that $a \circ r = \phi$, $r$ has $0$ relative entropy, and $a$ is entropy-additive. This will be true provided that entropy is additive for class-bijective Bernoulli extensions.

\begin{cor}
Let $G \acts (X, \mu)$ be an aperiodic ergodic {\pmp} action with stabilizer type $\theta$ and let $G \acts (Y, \nu)$ be a class-bijective factor via $\phi : (X, \mu) \rightarrow (Y, \nu)$. Let $\nu = \int_{\Sub(G)} \nu_\Gamma \ d \theta(\Gamma)$ be the disintegration of $\nu$ over $\theta$. Then there exists a class-bijective Bernoulli extension of $(Y, \nu)$
$$G \acts (Z, \eta) = G \acts \Bigg(L^G \times Y, \int_{\Sub(G)} \lambda^{\Gamma \backslash G} \times \nu_\Gamma \ d \theta(\Gamma) \Bigg)$$
and maps $(X, \mu) \overset{r}{\rightarrow} (Z, \eta) \overset{a}{\rightarrow} (Y, \nu)$ such that $a \circ r = \phi$, $r$ has $0$ relative entropy, and $a : Z = L^G \times Y \rightarrow Y$ is the projection map.
\end{cor}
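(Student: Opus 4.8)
The plan is to deduce this statement directly from Corollary~\ref{cor:fullfactor} together with the correspondence between Bernoulli partitions and Bernoulli factor maps recorded in Lemma~\ref{lem:oneone}. Set $\Sigma = \phi^{-1}(\Borel(Y))$, a $G$-invariant sub-$\sigma$-algebra whose associated factor is $G \acts (Y, \nu)$. Since $\phi$ is class-bijective, Lemma~\ref{lem:stabequal} gives $\Stab_*(\nu) = \Stab_*(\mu) = \theta$, and pushing the disintegration $\mu = \int \mu_\Gamma \, d\theta(\Gamma)$ forward through $\phi$ realizes the disintegration $\nu = \int \nu_\Gamma \, d\theta(\Gamma)$ appearing in the statement, with $\nu_\Gamma = \phi_*(\mu_\Gamma)$; class-bijectivity is precisely what guarantees that $\phi_*(\mu_\Gamma)$ is carried by $\{y : \Stab(y) = \Gamma\}$. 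I may assume $\rh_G(X, \mu \given \Sigma) > 0$, since otherwise the conclusion is immediate by taking $L$ to be a one-point space, so that $Z = L^G \times Y \cong Y$, $a = \id$, and $r = \phi$, which then has $0$ relative entropy.

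Next I would apply Corollary~\ref{cor:fullfactor} (with, say, $\epsilon = 1$) to obtain a countable partition $\alpha = \{A_\ell : \ell \in L\}$ that is $G$-Bernoulli over $\Sigma$ and satisfies $\rh_G(X, \mu \given \salg_G(\alpha) \vee \Sigma) = 0$. Because $\rh_G(X, \mu \given \Sigma) > 0$, the partition $\alpha$ must be non-trivial (if $\alpha = \{X\}$ then $\salg_G(\alpha) \vee \Sigma = \Sigma$, contradicting the relative entropy being $0$), so $(L, \lambda)$ with $\lambda = \dist(\alpha)$ is a non-trivial standard probability space. Letting $Q : X \to L$ be the map with $Q^{-1}(\ell) = A_\ell$, Lemma~\ref{lem:oneone} associates to $Q$ a $G$-equivariant map $q : X \to L^G$, $q(x)(g) = Q(g^{-1} \cdot x)$, such that $r := q \times \phi$ is a factor map from $G \acts (X, \mu)$ onto
\[
G \acts (Z, \eta) := G \acts \Bigl( L^G \times Y, \ \int_{\Sub(G)} \lambda^{\Gamma \backslash G} \times \nu_\Gamma \ d\theta(\Gamma) \Bigr),
\]
which is exactly the extension named in the statement (the matching uses $\nu_\Gamma = \phi_*(\mu_\Gamma)$ and $\Stab_*(\mu) = \theta$ from the first paragraph). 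Taking $a : Z = L^G \times Y \to Y$ to be the coordinate projection, we have $a \circ r = \phi$ by construction.

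It then remains to verify the three asserted properties, and this is essentially bookkeeping. The map $a$ is a coordinate projection by definition. For class-bijectivity of $a$: aperiodicity of $G \acts (X, \mu)$ means $\theta$ is supported on the infinite-index subgroups of $G$, so since $\lambda$ is non-trivial, $\lambda^{\Gamma \backslash G}$-almost-every $z \in L^G$ has stabilizer exactly $\Gamma$, and $\nu_\Gamma$-almost-every $y$ has stabilizer $\Gamma$ as well, whence $\lambda^{\Gamma \backslash G} \times \nu_\Gamma$-almost-every point $(z,y)$ has stabilizer $\Gamma$; therefore $\Stab_*(\eta) = \theta = \Stab_*(\nu)$ and Lemma~\ref{lem:stabequal} shows $a$ is class-bijective, so $G \acts (Z, \eta)$ is indeed a class-bijective Bernoulli extension of $(Y, \nu)$. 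For the relative entropy of $r$: since $q^{-1}(\{z : z(g) = \ell\}) = g \cdot A_\ell$, we get $q^{-1}(\Borel(L^G)) = \salg_G(\alpha)$ and hence $r^{-1}(\Borel(Z)) = \salg_G(\alpha) \vee \Sigma$, so $\rh_G(X, \mu \given r^{-1}(\Borel(Z))) = \rh_G(X, \mu \given \salg_G(\alpha) \vee \Sigma) = 0$, i.e.\ $r$ has $0$ relative entropy. There is no real obstacle here: the substance is entirely contained in Corollary~\ref{cor:fullfactor}, and the only points requiring attention are the identification of the target measure produced by Lemma~\ref{lem:oneone} with the prescribed relative Bernoulli measure, the identification of $r^{-1}(\Borel(Z))$ with $\salg_G(\alpha) \vee \Sigma$, and the degenerate case $\rh_G(X, \mu \given \Sigma) = 0$.
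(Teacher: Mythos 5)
Your proposal is correct and follows exactly the route the paper intends: the paper's own proof is the one-line remark that the corollary is immediate from Corollary \ref{cor:fullfactor} applied with $\Sigma = \phi^{-1}(\Borel(Y))$ together with Lemma \ref{lem:oneone}, which is precisely what you carry out. The additional details you supply (the degenerate case, non-triviality of $\alpha$, the identification $r^{-1}(\Borel(Z)) = \salg_G(\alpha) \vee \Sigma$, and class-bijectivity of $a$) are all accurate verifications of what the paper leaves implicit.
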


\begin{proof}
This is immediate from applying Corollary \ref{cor:fullfactor} with $\Sigma = \phi^{-1}(\Borel(Y))$ and noting Lemma \ref{lem:oneone}.
\end{proof}

A well-known component of classical Ornstein theory is that Bernoulli measures are precisely those measures which are finitely determined. As a consequence of the perturbative factor theorem, part of this statement extends to the non-amenable realm. Recall that for a finite set $K$ and two measures $\mu, \nu \in \M_G(K^G)$, the \emph{$\bar{d}$-distance} between $\mu$ and $\nu$, denoted $\bar{d}(\mu, \nu)$, is the infimum of
$$\lambda(\{(x, y) \in K^G \times K^G : x(1_G) \neq y(1_G)\})$$
as $\lambda \in \M_G(K^G \times K^G)$ varies over all joinings of $\mu$ with $\nu$. A measure $\mu \in \M_G(K^G)$ is \emph{finitely determined} if for every $\epsilon > 0$ there is $\delta > 0$ and a weak$^*$-open neighborhood $U$ of $\mu$ such that for every $\nu \in \M_G(K^G) \cap U$ with $|\rh_G(L^G, \nu) - \rh_G(L^G, \mu)| < \delta$ and $\Stab_*(\nu) = \Stab_*(\mu)$ we have $\bar{d}(\mu, \nu) < \epsilon$.

\begin{thm}
Let $G$ be a countably infinite group, let $(K, \kappa)$ be a finite probability space, and let $\theta$ be an IRS supported on infinite-index subgroups. If $\rh_G(K^G, \kappa^{\theta \backslash G}) = \sH(K, \kappa)$ then $\kappa^{\theta \backslash G}$ is finitely determined.
\end{thm}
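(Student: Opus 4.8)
The plan is to deduce finite determination directly from the perturbative factor theorem (Theorem~\ref{thm:main}), using the classical observation that a Bernoulli shift is within $\bar d$-distance $\epsilon$ of any invariant measure onto which the Bernoulli shift itself factors (with the right base distribution). Precisely, suppose $\nu \in \M_G(K^G)$ has $\Stab_*(\nu) = \Stab_*(\kappa^{\theta \backslash G}) = \theta$ and $\rh_G(K^G,\nu)$ close to $\rh_G(K^G,\kappa^{\theta\backslash G}) = \sH(K,\kappa)$. Let $\xi$ be the canonical time-zero partition of $K^G$ into the sets $\{x : x(1_G) = k\}$, $k \in K$. Its distribution with respect to $\nu$ is the one-coordinate marginal of $\nu$; by weak$^*$-closeness of $\nu$ to $\kappa^{\theta\backslash G}$ this marginal is $\ell^1$-close to $\dist(\kappa) = \kappa$, and hence $\sH_\nu(\xi)$ is close to $\sH(K,\kappa) = \sH(\pv)$ where $\pv = \dist(\kappa)$. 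Moreover $\rh_{G,\nu}(\xi) \le \sH_\nu(\xi)$ is bounded above near $\sH(\pv)$, while $\rh_{G,\nu}(\xi) = \rh_G(K^G,\nu)$ since $\xi$ generates (its $G$-translates recover all coordinates), and this is assumed close to $\sH(\pv)$. So the deficiency $|\dist_\nu(\xi) - \pv| + |\sH_\nu(\xi) - \sH(\pv)| + |\rh_{G,\nu}(\xi) - \sH(\pv)|$ can be made smaller than $\psd(M,\epsilon')$ for suitable $M \ge \sH(\pv)$ and $\epsilon'$ depending on $\epsilon$, by choosing $\delta$ and the neighborhood $U$ appropriately.

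The next step is to invoke Theorem~\ref{thm:main} (with $\Sigma$ the trivial $\sigma$-algebra, and noting that a $G$-Bernoulli partition with distribution $\pv$ produces a factor onto $\kappa^{\theta\backslash G}$ by Lemma~\ref{lem:oneone} — here I need $\Stab_*(\nu) = \theta$ so that the type of the resulting non-free Bernoulli shift matches). This yields a partition $\alpha$ of $(K^G,\nu)$ with $\dist_\nu(\alpha) = \pv$, $\alpha$ is $(G,\nu)$-Bernoulli, and $\dB_\nu(\alpha,\xi) < \epsilon'$. By Lemma~\ref{lem:oneone} the map $Q : K^G \to K$ with $Q^{-1}(\text{point }k) = $ the $k$-th piece of $\alpha$ induces a $G$-equivariant factor map $q : (K^G,\nu) \to (K^G, \kappa^{\theta\backslash G})$, $q(x)(g) = Q(g^{-1}x)$. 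Now consider the joining $\lambda = (\id \times q)_*(\nu)$ of $\nu$ with $\kappa^{\theta\backslash G}$. The measure of $\{(x,y) : x(1_G) \ne y(1_G)\}$ under $\lambda$ equals $\nu(\{x : x(1_G) \ne Q(x)\})$, i.e. the $\nu$-measure of the set where the time-zero coordinate of $x$ disagrees with which piece of $\alpha$ contains $x$ — and this is exactly (up to the identification of indices) $\tfrac12 \dB_\nu(\alpha,\xi)$, hence $< \epsilon'$. Choosing $\epsilon' \le \epsilon$ gives $\bar d(\kappa^{\theta\backslash G},\nu) < \epsilon$, which is the conclusion.

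The one genuine subtlety, and the step I expect to be the main obstacle, is bookkeeping the continuity estimates at the right uniformity: I must fix $\epsilon > 0$ first, then choose $\epsilon'$ (essentially $\epsilon' = \epsilon$ works for the $\bar d$ bound but I should double-check constants), set $M = \sH(K,\kappa) + 1$, obtain $\psd(M,\epsilon')$ from Theorem~\ref{thm:main}, and only then choose $\delta < \tfrac13 \psd(M,\epsilon')$ together with a weak$^*$-neighborhood $U$ of $\kappa^{\theta\backslash G}$ small enough that for $\nu \in U$ the one-coordinate marginal is within $\tfrac13\psd(M,\epsilon')$ of $\kappa$ in $\ell^1$ (possible since the finitely many evaluation maps $\mu \mapsto \mu(\{x(1_G)=k\})$ are weak$^*$-continuous) and hence $|\sH_\nu(\xi) - \sH(\pv)| < \tfrac13\psd(M,\epsilon')$ by uniform continuity of Shannon entropy on length-$|K|$ probability vectors. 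Combined with $|\rh_G(K^G,\nu) - \sH(\pv)| \le |\rh_G(K^G,\nu) - \rh_G(K^G,\kappa^{\theta\backslash G})| < \delta$, the total deficiency is $< \psd(M,\epsilon')$. A minor additional point is to note that $\xi$ really is generating for every $\nu \in \M_G(K^G)$ (since $x(g) = Q_0(g^{-1}x)$ where $Q_0$ reads the time-zero coordinate, which is $\xi$-measurable), so that $\rh_{G,\nu}(\xi) = \rh_G(K^G,\nu)$; and that $\Stab_*(\nu) = \theta$ being supported on infinite-index subgroups makes $\nu$ aperiodic, as required to apply the theorem and Lemma~\ref{lem:oneone}.
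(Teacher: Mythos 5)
There is a genuine gap at the point where you invoke Theorem~\ref{thm:main} for the action $G \acts (K^G, \nu)$ with the target probability vector $\pv = \dist(\kappa)$. Both parts of that theorem (the existence statement and the perturbative statement) carry the standing hypothesis $\sH(\pv) \leq \rh_G(X, \mu \given \Sigma)$, which in your setting reads $\sH(K,\kappa) \leq \rh_G(K^G,\nu)$. But the definition of finitely determined only gives you $|\rh_G(K^G,\nu) - \sH(K,\kappa)| < \delta$, so $\rh_G(K^G,\nu)$ may be \emph{strictly smaller} than $\sH(K,\kappa)$, in which case the theorem simply does not apply with $\pv = \kappa$: there need not exist any $(G,\nu)$-Bernoulli partition with distribution exactly $\kappa$, since such a partition would generate a Bernoulli factor whose base entropy exceeds the Rokhlin entropy of $(K^G,\nu)$ (contradicting, e.g., subadditivity when the factor generates). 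Checking that the deficiency $\Def{\nu}{\pv}{\xi}{\varnothing}$ is small does not substitute for this hypothesis.

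The paper's proof avoids exactly this by first fixing an auxiliary base measure $\omega$ on $K$ with $\sH(\omega) < \sH(\kappa)$ and $|\omega - \kappa| + 2|\sH(\omega)-\sH(\kappa)|$ small, and then demanding $\delta < |\sH(\omega)-\sH(\kappa)|$, which forces $\rh_G(K^G,\nu) > \sH(\omega)$ so that Theorem~\ref{thm:main} applies with target vector $\omega$. One then gets $\bar d(\nu, \omega^{\theta\backslash G}) < \epsilon/2$ by your joining argument, applies the same reasoning to $\nu = \kappa^{\theta\backslash G}$ itself to get $\bar d(\kappa^{\theta\backslash G}, \omega^{\theta\backslash G}) < \epsilon/2$, and concludes by the triangle inequality for $\bar d$. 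Everything else in your write-up (the choice of $U$ via weak$^*$-continuity of the one-coordinate marginal, uniform continuity of $\sH$ on length-$|K|$ vectors, the use of Lemmas~\ref{lem:stabequal} and~\ref{lem:oneone} to convert the Bernoulli partition into a factor map and hence a joining, and the identification of the joining discrepancy with $\tfrac12 \dB_\nu(\alpha,\xi)$) is correct and matches the paper; only this entropy-deficit issue needs the detour through $\omega$.
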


\begin{proof}
Let $\delta : \R^2_+ \rightarrow \R$ be as in Theorem \ref{thm:main}. Identify $\Prob(K)$ with the set of $K$-indexed probability vectors, and fix a probability measure $\omega$ on $K$ satisfying $\sH(\omega) < \sH(\kappa)$ and
\begin{equation} \label{eqn:fd1}
|\omega - \kappa| + 2 |\sH(\omega) - \sH(\kappa)| < (1/2) \delta(\sH(\kappa), \epsilon / 2).
\end{equation}

Let $\beta = \{B_k : k \in K\}$ be the canonical partition of $K^G$, where $B_k = \{x \in K^G : x(1_G) = k\}$. Since $K$ is finite, there is a weak$^*$-open neighborhood $U$ of $\kappa^{\theta \backslash G}$ such that
\begin{equation} \label{eqn:fd2}
\forall \nu \in U \qquad |\dist_\nu(\beta) - \kappa| + |\sH_\nu(\beta) - \sH(\kappa)| < (1/3) \delta(\sH(\kappa), \epsilon / 2).
\end{equation}
Now consider a measure $\nu \in \M_G(K^G) \cap U$ satisfying $\Stab_*(\nu) = \theta$ and
\begin{equation} \label{eqn:fd3}
|\rh_G(K^G, \nu) - \rh_G(K^G, \kappa^{\theta \backslash G})| < \min(|\sH(\omega) - \sH(\kappa)|, (1/6) \delta(\sH(\kappa), \epsilon / 2)).
\end{equation}
Since $\rh_G(K^G, \kappa^{\theta \backslash G}) = \sH(\kappa)$ we have $\rh_G(\beta, \nu) = \rh_G(K^G, \nu) > \sH(\omega)$. Also (\ref{eqn:fd1}), (\ref{eqn:fd2}), (\ref{eqn:fd3}) imply
$$|\dist_\nu(\beta) - \omega| + |\sH_\nu(\beta) - \sH(\omega)| + |\rh_G(\beta, \nu) - \sH(\omega)| < \delta(\sH(\kappa), \epsilon / 2).$$
By Theorem \ref{thm:main} there is a partition $\beta' = \{B_k' : k \in K\}$ with $\dist(\beta') = \omega$ that is $(G, \nu)$-Bernoulli and satisfies $\dB_\nu(\beta, \beta') < \epsilon / 2$. Define $\phi : K^G \rightarrow K^G$ by $\phi(x)(g) = k \Leftrightarrow g^{-1} \cdot x \in B_k'$ and set $\lambda = (\id \times \phi)_*(\nu)$. Then $\lambda$ is a joining of $\nu$ with $\omega^{\theta \backslash G}$ and
\begin{align*}
\lambda(\{(x, y) \in K^G \times K^G : x(1_G) \neq y(1_G)\}) & = \nu(\{x \in K^G : x(1_G) \neq \phi(x)(1_G)\})\\
 & = \dB_\nu(\beta, \beta') < \epsilon/2.
\end{align*}
Thus $\bar{d}(\nu, \omega^{\theta \backslash G}) < \epsilon / 2$. Of course, $\kappa^{\theta \backslash G}$ satisfies all of the assumptions on $\nu$, so we similarly have $\bar{d}(\kappa^{\theta \backslash G}, \omega^{\theta \backslash G}) < \epsilon / 2$. Now its a simple exercise in measure theory to see that $\bar{d}$ is indeed a metric and satisfies the triangle identity. Hence $\bar{d}(\nu, \kappa^{\theta \backslash G}) < \epsilon$.
\end{proof}

Finally, we mention that many delicate constructions are performed on Bernoulli shifts due to the concrete, combinatorial nature of these actions. By our theorem, these constructed objects can be pulled back to actions of positive entropy. We mention one explicit example here that we think is quite intriguing. Consider an ergodic {\pmp} action $G \acts (X, \mu)$ with the property that the induced orbit equivalence relation $E_G^X$ is not $\mu$-hyperfinite. The \emph{measurable von Neumann--Day conjecture} then asserts that there exists a {\pmp} free action $F_2 \acts (X, \mu)$, where $F_2$ is the rank $2$ free group, such that almost-every $F_2$-orbit is contained in a $G$-orbit. This conjecture was raised by Gaboriau in \cite[p. 24, Question 5.16]{G02} (and again raised at the end of \cite{GL09}). In \cite{GL09} Gaboriau and Lyons proved that every non-amenable group admits a (free) Bernoulli shift action satisfying the conjecture, and in \cite{B17} Bowen proves that all (free) Bernoulli shifts over non-amenable groups satisfy the conjecture. For non-free actions, Bowen, Hoff, and Ioana have proved that the type-$\theta$ Bernoulli shift $G \acts ([0,1]^G, \lambda^{\theta \backslash G})$, where $\lambda$ is Lebesgue measure, satisfies the conjecture whenever the induced orbit equivalence relation is not almost-everywhere hyperfinite \cite{BHI}.

\begin{cor} \label{cor:vn}
Let $G \acts (X, \mu)$ be an ergodic {\pmp} action such that the induced orbit equivalence relation $E_G^X$ is not $\mu$-hyperfinite. Assume either: (i) the action is free and $\rh_G(X, \mu) > 0$, or (ii) $\rh_G(X, \mu) = \infty$. Then $G \acts (X, \mu)$ satisfies the measurable von Neumann--Day conjecture.
\end{cor}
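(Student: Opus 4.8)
The plan is to deduce the corollary from the two known Bernoulli cases of the conjecture, \cite{B17} and \cite{BHI}: first use the factor theorems of this paper to produce a \emph{class-bijective} factor map from $G \acts (X,\mu)$ onto a suitable Bernoulli shift, and then transport the free $F_2$-action supplied by \cite{B17} or \cite{BHI} back through that factor. Two consequences of the hypothesis should be recorded first. Since $E_G^X$ is not $\mu$-hyperfinite, $G$ must be non-amenable (the orbit equivalence relation of any {\pmp} action of an amenable group is hyperfinite), and $G \acts (X,\mu)$ must be aperiodic (an ergodic action with a.e.\ finite orbits has a finite, hence hyperfinite, orbit equivalence relation), so after restricting to a conull invariant Borel set we have a genuinely aperiodic Borel action. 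In particular $\theta := \Stab_*(\mu)$ is supported on the infinite-index subgroups of $G$.

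Next I would build the class-bijective Bernoulli factor. In case (i) the action is free and $\rh_G(X,\mu) > 0$, so I pick a non-trivial probability space $(L,\lambda)$ with $0 < \sH(L,\lambda) \le \rh_G(X,\mu)$ and apply Theorem \ref{intro:mysinai} to obtain a factor map $\phi : (X,\mu) \to (L^G, \lambda^G)$; since both $G$-actions are essentially free we have $\Stab_*(\mu) = \delta_{\{1_G\}} = \Stab_*(\lambda^G)$, so $\phi$ is class-bijective by Lemma \ref{lem:stabequal}. In case (ii), $\rh_G(X,\mu) = \infty = \sH([0,1],\mathrm{Leb})$, so I apply Theorem \ref{thm:nefactor} to the single ergodic measure $\mu$ with trivial $\cF$ and with $\lambda_\mu = \mathrm{Leb}$ on $[0,1]$, obtaining a $G$-equivariant Borel map $\phi : X \to [0,1]^G$ with $\phi_*(\mu) = \mathrm{Leb}^{\theta\backslash G}$, i.e.\ a factor onto the type-$\theta$ Bernoulli shift with base $([0,1],\mathrm{Leb})$; since $\mathrm{Leb}$ is non-trivial and $\theta$ is supported on infinite-index subgroups, $\Stab_*(\mathrm{Leb}^{\theta\backslash G}) = \theta = \Stab_*(\mu)$, so $\phi$ is again class-bijective by Lemma \ref{lem:stabequal}. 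Write $(B,\beta)$ for the target Bernoulli shift in either case. Discarding a $\mu$-null $G$-invariant set, I may assume $\Stab(\phi(x)) = \Stab(x)$ for every $x$, so $\phi$ restricts to a bijection of orbits $[x]_{E_G^X} \to [\phi(x)]_{E_G^B}$; pulling a hyperfinite exhaustion of $E_G^B$ back along $\phi$ would exhibit $E_G^X$ as hyperfinite, so $E_G^B$ is not $\beta$-hyperfinite.

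Now I invoke the Bernoulli cases. In case (i), $G \acts (L^G,\lambda^G)$ is a non-trivial Bernoulli shift over a non-amenable group, so by \cite{B17} (building on \cite{GL09}) there is a free {\pmp} action $F_2 \acts (L^G,\lambda^G)$ with a.e.\ $F_2$-orbit contained in a $G$-orbit; in case (ii), $G \acts ([0,1]^G,\mathrm{Leb}^{\theta\backslash G})$ is a type-$\theta$ Bernoulli shift with Lebesgue base whose orbit relation is non-hyperfinite by the construction above, so \cite{BHI} gives the same conclusion. To lift this $F_2$-action through $\phi$, for $s \in F_2$ and $\mu$-a.e.\ $x$ define $s \cdot x$ to be the unique point of $[x]_{E_G^X}$ with $\phi(s \cdot x) = s \cdot \phi(x)$; uniqueness holds because $\phi$ restricts to a bijection of orbits, and by Lusin--Novikov uniformization each map $x \mapsto s \cdot x$ is Borel. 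This is manifestly an action of $F_2$ with every $F_2$-orbit contained in a $G$-orbit; each $x \mapsto s \cdot x$ is a Borel bijection (with inverse $x \mapsto s^{-1}\cdot x$) lying in the full group $[E_G^X]$, hence $\mu$-preserving; and the action is a.e.\ free, since for a.e.\ $x$ the point $\phi(x)$ has trivial $F_2$-stabilizer, whence $\phi(s\cdot x) = s\cdot\phi(x) \ne \phi(x)$ for $s \ne 1$. Thus $F_2 \acts (X,\mu)$ witnesses the measurable von Neumann--Day conjecture.

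The main obstacle is ensuring the Bernoulli factor can be taken \emph{class-bijective}: without this the orbit-wise lift is ill-defined, since distinct points of a single $G$-orbit could collapse under $\phi$. This is exactly why case (ii) requires the full strength of Theorem \ref{thm:nefactor} — a factor onto the type-$\Stab_*(\mu)$ Bernoulli shift rather than onto a free one — and why that case needs infinite Rokhlin entropy, as a finite-entropy non-free Bernoulli shift is not covered by \cite{BHI}. The remaining steps (Borel uniformization, restriction to conull invariant sets, and verifying that the lift is free and measure-preserving) are routine.
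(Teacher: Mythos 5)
Your argument is correct and is exactly the route the paper intends: Corollary \ref{cor:vn} is stated without a written proof as an immediate consequence of the preceding discussion, namely the class-bijective Bernoulli factor supplied by Theorem \ref{intro:mysinai} in the free case and by Theorem \ref{thm:nefactor} (type-$\theta$, Lebesgue base) in the infinite-entropy case, combined with \cite{B17} and \cite{BHI} respectively, with the free $F_2$-action lifted orbit-wise through the class-bijective factor just as you describe. The only point to tidy is that Theorem \ref{thm:nefactor} demands the entropy hypothesis for \emph{every} ergodic invariant measure of the Borel action, so one should first restrict to the conull $G$-invariant Borel set $e^{-1}(\mu)$ given by the ergodic decomposition map (on which $\mu$ is the unique ergodic invariant measure) before applying it with $\lambda_\mu = \mathrm{Leb}$.
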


\thebibliography{999}

\bibitem{AGV}
M. Ab\'{e}rt, Y. Glasner, and B. Vir\'{a}g,
\textit{Kesten's theorem for invariant random subgroups}, Duke Mathematical Journal 163 (2014), no. 3, 465--488.


\bibitem{AS}
A. Alpeev and B. Seward,
\textit{Krieger's finite generator theorem for ergodic actions of countable groups III}, preprint. https://arxiv.org/abs/1705.09707.

\bibitem{B10a}
L. Bowen,
\textit{A new measure conjugacy invariant for actions of free groups}, Ann. of Math. 171 (2010), no. 2, 1387--1400.

\bibitem{B10b}
L. Bowen,
\textit{Measure conjugacy invariants for actions of countable sofic groups}, Journal of the American Mathematical Society 23 (2010), 217--245.

\bibitem{B12}
L. Bowen,
\textit{Sofic entropy and amenable groups}, Ergod. Th. \& Dynam. Sys. 32 (2012), no. 2, 427--466.

\bibitem{B12b}
L. Bowen,
\textit{Every countably infinite group is almost Ornstein}, in Dynamical Systems and Group Actions, Contemp. Math., 567, Amer. Math. Soc., Providence, RI, 2012, 67--78.


\bibitem{B17}
L. Bowen,
\textit{Finitary random interlacements and the Gaboriau--Lyons problem}, preprint. http://arxiv.org/abs/1707.09573.

\bibitem{BHI}
L. Bowen, D. Hoff, and A. Ioana,
\textit{von Neumann's problem and extensions of non-amenable equivalence relations}, to appear in Groups, Geometry, and Dynamics.

\bibitem{BKS}
R. Burton, M. Keane, and J. Serafin,
\textit{Residuality of dynamical morphisms}, Colloq. Math. 85 (2000), 307--317.

\bibitem{DP02}
A. Danilenko and K. Park,
\textit{Generators and Bernoullian factors for amenable actions and cocycles on their orbits}, Ergod. Th. \& Dynam. Sys. 22 (2002), 1715--1745.

\bibitem{DoGo}
A. H. Dooley, V. Ya. Golodets,
\textit{The spectrum of completely positive entropy actions of countable amenable groups}, Journal of Functional Analysis 196 (2002), no. 1, 1--18.

\bibitem{DJK}
R. Dougherty, S. Jackson, and A. Kechris,
\textit{The structure of hyperfinite Borel equivalence relations}, Transactions of the American Mathematical Society 341 (1994), no. 1, 193--225.

\bibitem{Do11}
T. Downarowicz,
Entropy in Dynamical Systems. Cambridge University Press, New York, 2011.

\bibitem{Far62}
R. H. Farrell,
\textit{Representation of invariant measures}, Illinois J. Math. 6 (1962), 447--467.

\bibitem{FM}
J. Feldman and C. C. Moore,
\textit{Ergodic equivalence relations, cohomology and von Neumann algebras, I.}, Transactions of the American Mathematical Society 234 (1977), 289--324.

\bibitem{FO70}
N. A. Friedman and D. Ornstein,
\textit{On isomorphism of weak Bernoulli transformations}, Advances in Math 5 (1970), 365--394.

\bibitem{G02}
D. Gaboriau,
\textit{Arbres, groupes, quotients}, Habilitation {\'a} diriger des recherches, 2002. http://perso.ens-lyon.fr/gaboriau/Travaux-Publi/Habilitation/Habilitation.html.

\bibitem{GL09}
D. Gaboriau and R. Lyons,
\textit{A measurable-group-theoretic solution to von Neumann's problem}, Inventiones Mathematicae 177 (2009), 533--540.

\bibitem{GO74}
G. Gallavotti and D. Ornstein,
\textit{Billiards and Bernoulli schemes}, Comm. Math. Phys. 38 (1974), 83--101.

\bibitem{H16}
B. Hayes,
\textit{Fuglede-Kadison determinants and sofic entropy}, Geometric and Functional Analysis 26 (2016), no. 2, 520--606.

\bibitem{H}
B. Hayes,
\textit{Polish models and sofic entropy}, to appear in Journal of the Institute of Mathematical Jussieu.

\bibitem{Ha}
B. Hayes,
\textit{Mixing and spectral gap relative to Pinsker factors for sofic groups}, to appear in the Proceedings in honor of Vaughan F. R. Jones' 60th birthday conference.

\bibitem{Hb}
B. Hayes,
\textit{Sofic entropy of Gaussian actions}, to appear in Ergodic Theory and Dynamical Systems.

\bibitem{H17}
B. Hayes,
\textit{Independence tuples and Deninger's problem}, Groups, Geometry and Dynamics 11 (2017), no. 1, 245--289.

\bibitem{JKL}
S. Jackson, A.S. Kechris, and A. Louveau,
\textit{Countable Borel equivalence relations}, Journal of Mathematical Logic 2 (2002), No. 1, 1--80.

\bibitem{KaW72}
Y. Katznelson and B. Weiss,
\textit{Commuting measure preserving transformations}, Israel J. Math. 12 (1972), 161--173.

\bibitem{K95}
A. Kechris,
Classical Descriptive Set Theory. Springer-Verlag, New York, 1995.

\bibitem{K10}
A. Kechris,
Global aspects of ergodic group actions. Mathematical Surveys and Monographs 160. American Mathematical Society, Providence RI, 2010.

\bibitem{KST99}
A. Kechris, S. Solecki, and S. Todorcevic,
\textit{Borel chromatic numbers}, Adv. in Math. 141 (1999), 1--44.


\bibitem{KL11a}
D. Kerr and H. Li,
\textit{Entropy and the variational principle for actions of sofic groups}, Invent. Math. 186 (2011), 501--558.

\bibitem{KL13}
D. Kerr and H. Li,
\textit{Soficity, amenability, and dynamical entropy}, American Journal of Mathematics 135 (2013), 721--761.

\bibitem{KL13b}
D. Kerr and H. Li,
\textit{Combinatorial independence and sofic entropy}, Comm. Math. Stat. 1 (2013), 213--257.

\bibitem{KL11b}
D. Kerr and H. Li,
\textit{Bernoulli actions and infinite entropy}, Groups Geom. Dyn. 5 (2011), 663--672.


\bibitem{KiRa}
J. C. Kieffer and M. Rahe,
\textit{Selecting universal partitions in ergodic theory}, The Annals of Probability 9 (1981), no. 4, 705--709.


\bibitem{Ko58}
A.N. Kolmogorov,
\textit{New metric invariant of transitive dynamical systems and endomorphisms of Lebesgue spaces}, (Russian) Dokl. Akad. Nauk SSSR 119 (1958), no. 5, 861--864.

\bibitem{Ko59}
A.N. Kolmogorov,
\textit{Entropy per unit time as a metric invariant for automorphisms}, (Russian) Dokl. Akad. Nauk SSSR 124 (1959), 754--755.

\bibitem{L77}
D. Lind,
\textit{The structure of skew products with ergodic group actions}, Israel Journal of Math 28 (1977), 205--248.

\bibitem{Me59}
L. D. Me\v{s}alkin,
\textit{A case of isomorphism of Bernoulli schemes}, Dokl. Akad. Nauk SSSR 128 (1959), 41--44.

\bibitem{MT78}
G. Miles and R. K. Thomas,
\textit{The breakdown of automorphisms of compact topological groups}, Advances in Math. Supplementary Studies Vol. 2 (1978), 207--218.

\bibitem{Or70a}
D. Ornstein,
\textit{Bernoulli shifts with the same entropy are isomorphic}, Advances in Math. 4 (1970), 337--348.

\bibitem{Or70b}
D. Ornstein,
\textit{Two Bernoulli shifts with infinite entropy are isomorphic}, Advances in Math. 5 (1970), 339--348.

\bibitem{Or71}
D. Ornstein,
\textit{Factors of Bernoulli shifts are Bernoulli shifts}, Advances in Math. 5 (1971), 349--364.

\bibitem{Or71b}
D.S. Ornstein,
\textit{Ergodic Theory, Randomness, and Dynamical Systems;} Yale University Press: New Haven, CT, USA, 1971.

\bibitem{OW73}
D. Ornstein and B. Weiss,
\textit{Geodesic flows are Bernoullian}, Israel Journal of Math 14 (1973), 184--198.

\bibitem{OW87}
D. Ornstein and B. Weiss,
\textit{Entropy and isomorphism theorems for actions of amenable groups}, Journal d'Analyse Math\'{e}matique 48 (1987), 1--141.

\bibitem{R74}
M. Ratner,
\textit{Anosov flows with Gibbs measures are also Bernoulli}, Israel Journal of Math 17 (1974), 380--391.




\bibitem{S1}
B. Seward,
\textit{Krieger's finite generator theorem for ergodic actions of countable groups I}, preprint. http://arxiv.org/abs/1405.3604.

\bibitem{S2}
B. Seward,
\textit{Krieger's finite generator theorem for ergodic actions of countable groups II}, preprint. http://arxiv.org/abs/1501.03367.

\bibitem{S3}
B. Seward,
\textit{Weak containment and Rokhlin entropy}, preprint. https://arxiv.org/abs/1602.06680.

\bibitem{Sa}
B. Seward,
\textit{The Koopman representation and positive Rokhlin entropy}, preprint. https://arxiv.org/abs/1804.05270.

\bibitem{S}
B. Seward,
\textit{Bernoulli shifts with bases of equal entropy are isomorphic}, preprint. https://arxiv.org/abs/1805.08279.

\bibitem{ST14}
B. Seward and R. Tucker-Drob,
\textit{Borel structurability on the $2$-shift of a countable group}, to appear in Annals of Pure and Applied Logic.

\bibitem{Si59}
Ya. G. {\sinai},
\textit{On the concept of entropy for a dynamical system}, (Russian) Dokl. Akad. Nauk SSSR 124 (1959), 768--771.

\bibitem{Si62}
Ya. G. {\sinai},
\textit{A weak isomorphism of transformations with invariant measure}, (Russian) Dokl. Akad. Nauk SSSR 147 (1962), 797--800.


\bibitem{Ta02}
M. Takesaki,
The Theory of Operator Algebras I, Springer-Verlag, New York, 2002.

\bibitem{Th75}
J.-P. Thouvenot,
\textit{Quelques proprietes des systemes dynamiques qui se decomposent en un produit de deux systemes dont l'un est un schema de Bernoulli}, Israel Journal of Mathematics 21 (1975), no. 2-3, 177--206.

\bibitem{Var63}
V. S. Varadarajan,
\textit{Groups of automorphisms of Borel spaces}, Trans. Amer. Math. Soc. 109 (1963), 191--220.

\bibitem{Zi}
R. J. Zimmer,
Ergodic theory and semisimple groups. Monographs in Mathematics 81. Birkh{\:a}user, 1984.

\end{document}